\title{
Embedding calculus and  Vassiliev spectral sequence}
\author{Syunji Moriya
}
\address{University of Aizu, Tsuruga, Ikki-machi, Aizu-Wakamatsu City, Fukushima, 965-8580  Japan}
\email{s-moriya@u-aizu.ac.jp}
\thanks{{\em 2020 Mathematics Subject Classification: 55R80, 57R40, 55U10}}
\theoremstyle{definition}
\newtheorem{defi}{Definition}[section]
\newtheorem{exa}[defi]{Example}
\newtheorem{rem}[defi]{Remark}
\theoremstyle{plain}
\newtheorem{prop}[defi]{Proposition}
\newtheorem{lem}[defi]{Lemma}
\newtheorem{thm}[defi]{Theorem}
\newtheorem{cor}[defi]{Corollary}
\newtheorem{conj}[defi]{Conjecture}
\newcommand{\eps}{\epsilon}
\newcommand{\CC}{\mathcal{C}}
\newcommand{\TT}{\mathcal{T}} 
\newcommand{\PK}{\mathcal{PK}} 
\newcommand{\TPhi}{\widetilde{\Phi}}
\newcommand{\MM}{\mathcal{M}}
\newcommand{\CH}{\mathrm{CH}}
\newcommand{\TOP}{\mathrm{Top}}
\newcommand{\Map}{\mathrm{Map}}
\newcommand{\SP}{\mathcal{SP}}
\newcommand{\CG}{\mathrm{Top}}
\newcommand{\tot}{\operatorname{Tot}}
\newcommand{\GG}{\mathsf{G}}
\newcommand{\TGG}{\tilde{\mathsf{G}}}
\newcommand{\RR}{\mathbb{R}}
\newcommand{\Sphere}{\mathbb{S}}
\newcommand{\FF}{\mathbb{F}}
\newcommand{\ZZ}{\mathbb{Z}}
\newcommand{\QQ}{\mathbb{Q}}
\newcommand{\LL}{\mathsf{L}}
\newcommand{\CECHF}{\check{\mathsf{C}}}
\newcommand{\kk}{\mathsf{k}}
\newcommand{\PP}{\mathrm{P}} 
\newcommand{\NN}{\mathcal{N}} 
\newcommand{\GP}{\widetilde{\mathrm{P}}} 
\newcommand{\PPP}{\mathrm{P}'}
\newcommand{\KKK}{\mathsf{K}} 
\newcommand{\FFF}{\mathscr{F}}
\newcommand{\WW}{\mathcal{W}}
\newcommand{\VV}{\mathcal{V}}
\newcommand{\SSS}{\mathcal{S}}
\begin{document}
\begin{abstract}
Vassiliev spectral sequence and  Sinha spectral sequence are both related to cohomology of the space of long knots $\RR\to \RR^3$. Although they have different origins, the Vassiliev $E_1$-page and the Sinha $E_2$-page are isomorphic (up to a degree shift). In this paper, we prove that they have isomorphic $E_\infty$-pages  if the coefficient ring is a field. Together with degeneracy of the Sinha sequence, this implies that the Vassiliev sequence degenerates at $E_1$-page over $\QQ$ including the non-diagonal part. 
Our result also implies that for any coefficient field, the space of finite type $n$ knot invariants is isomorphic to the space of weight systems of weight $\leq n$ if and only if the parts of the Sinha sequence of  bidegree $(-i,i)$  degenerate at $E_2$ for $i\leq 2n$. For the construction of the isomorphism, we use  a variant of Thom space model which was introduced in the author's previous paper and captures embedding calculus of the knot space in terms of fat diagonals. As a byproduct of the construction, we  give a partial computation on differentials of  unstable versions of the Vassiliev sequence which converge to finite dimensional approximations of the knot space.
\end{abstract}
\maketitle
\tableofcontents
\section{Introduction}
A long knot in $\RR^d$ is a smooth embedding $\RR\to\RR^d$ which coincides with a fixed linear embedding outside  a compact set. 
Let $K_d$ be the space of long knots in $\RR^d$ with $C^\infty$ topology. This paper 
concerns two spectral sequences for cohomology of  $K_3$. One is Vassiliev spectral sequence introduced in \cite{vassiliev}, using the Alexander duality and simplicial resolutions of  finite dimensional approximations of the  space of singular long knots.  The diagonal part  of this sequence is related to the finite type knot invariants.
The other is  Sinha spectral sequence, which originates in Goodwillie-Weiss embedding calculus, a powerful framework to deal with the homotopy type of embedding spaces in terms of configuration spaces (see \cite{goodwillie,GK, GW, weiss}). Sinha \cite{sinha, sinha1} gave a cosimplicial model encoding  the embedding calculus of the knot space and defined his  sequence as Bousfield-Kan type cohomology spectral sequence associated to the cosimplicial model. The cosimplicial model is closely related to the little disks operad,  which has promoted study of computation of the Sinha sequence. \\
\indent While the definitions of these two sequences are very different, Turchin \cite{turchin} proved that the Vassiliev $E_1$-page and Sinha $E_2$-page are isomorphic as bigraded modules up to degree shift.   So  it is natural to expect some relation between these sequences for the later pages. 
In this paper, we prove the following.
\begin{thm}\label{TE_infinity} Let $\kk$ be a field. 
\begin{enumerate}
\item The $E_\infty$ pages of the Vassiliev spectral sequence ${}^V\!  E_r$ and the Sinha spectral  sequence ${}^S\! E_r$  for the space $K_3$ of long knots in $\RR^3$, over $\kk$, are isomorphic as  (abstract) bigraded $\kk$-vector spaces with the standard degree shift  $(-p,q)\leftrightarrow (q-3p, 2p)$ given in \cite{turchin}.
\item The Vassiliev spectral sequence   over $\kk$ degenerates at the first page if and only if the Sinha spectral sequence  over $\kk$ degenerates at the second page. 
\end{enumerate}
\end{thm}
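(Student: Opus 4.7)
The plan is to use the variant of the Thom space model introduced in the author's previous paper, which captures the embedding calculus of $K_3$ in terms of fat diagonals, to produce a single filtered spectrum whose associated spectral sequence coincides (after the standard degree shift $(-p,q)\leftrightarrow(q-3p,2p)$) with the Sinha spectral sequence from the $E_2$-page onward, and with the Vassiliev spectral sequence from the $E_1$-page onward. Once this common filtered model is in place, part (1) is immediate: both spectral sequences are eventually the filtration spectral sequence of the same underlying object, so they share the same $E_\infty$-page.

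On the Sinha side, the fat-diagonal Thom model of $T_\infty K_3$ comes equipped with a natural filtration whose associated graded, after Spanier--Whitehead duality, recovers Sinha's cosimplicial model; the resulting filtration spectral sequence is ${}^S\!E_r$ for $r\geq 2$. On the Vassiliev side, I would use the finite-dimensional jet-space approximations of $K_3$ together with the simplicial resolution of their discriminants, and apply Alexander duality in the one-point compactified jet spaces to relate these resolutions to the fat-diagonal filtration of the Thom model. The crucial point is that the Vassiliev filtration by order of singularity and the fat-diagonal filtration are matched under this comparison, so ${}^V\!E_r$ for $r\geq 1$ also equals the filtration spectral sequence of the Thom model (after the degree shift). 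Turchin's isomorphism ${}^V\!E_1\cong{}^S\!E_2$ serves as a consistency check that pins down normalizations and signs.

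Part (2) is then a formal consequence of part (1) combined with Turchin's isomorphism. The Vassiliev sequence degenerates at $E_1$ if and only if ${}^V\!E_1\cong{}^V\!E_\infty$ as bigraded $\kk$-vector spaces; using Turchin's isomorphism to replace ${}^V\!E_1$ by ${}^S\!E_2$ and part (1) to replace ${}^V\!E_\infty$ by ${}^S\!E_\infty$, this is equivalent to ${}^S\!E_2\cong{}^S\!E_\infty$. Since each bidegree piece is finite-dimensional and $E_\infty$ is a subquotient of every later page, this abstract isomorphism forces all higher differentials to vanish, yielding degeneracy of the Sinha sequence at $E_2$, and the converse direction is identical.

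The hard part is the filtration comparison in the second paragraph. The Vassiliev filtration is combinatorial in nature, defined on a simplicial resolution of the discriminant of singular knots inside a jet-space approximation, while the Sinha filtration is the embedding-calculus tower filtration; a priori these filtrations live on different objects with different combinatorial descriptions. Constructing an explicit filtered equivalence between the jet-space simplicial resolution (viewed through Alexander duality) and the fat-diagonal Thom model, and then verifying that the two filtrations are genuinely identified, including the degree shift, the duality conventions, and the normalizations compatible with Turchin's isomorphism, is the technical core of the argument. It is precisely this compatibility that the author's previous fat-diagonal Thom model was engineered to provide, and once it is in hand the rest of the proof of Theorem \ref{TE_infinity} is essentially formal.
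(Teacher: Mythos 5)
Your overall architecture (Alexander duality on the discriminant side, Atiyah duality and the fat-diagonal Thom model on the embedding-calculus side, a filtered comparison between the two) is the right one, and your treatment of part (2) is fine. But there is a genuine gap at the heart of the argument: you claim to produce a \emph{single} filtered object whose filtration spectral sequence is simultaneously ${}^S\!E_r$ (from $E_2$) and ${}^V\!E_r$ (from $E_1$), asserting that ``the Vassiliev filtration by order of singularity and the fat-diagonal filtration are matched.'' They are not. The filtration on the Thom model $U_\TT$ that reproduces the Sinha sequence is the \emph{cardinality} filtration (number of geometric points, i.e.\ cosimplicial degree), whereas the Vassiliev sequence arises from the \emph{complexity} filtration (order of singularity) on the resolvent; these are two different filtrations on the same space. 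No single filtration can yield both: under the shift $(-p,q)\leftrightarrow(q-3p,2p)$ a Vassiliev $d_r$ changes the Sinha filtration index by $2r+1$, so the Vassiliev differentials do not even sit on the Sinha pages in a way compatible with a common filtration. Consequently ``both spectral sequences are the filtration spectral sequence of the same object, so they share the same $E_\infty$-page'' is not a valid inference --- two different filtrations on one complex need not have isomorphic bigraded $E_\infty$-pages, and here one cannot even appeal to a common abutment, since convergence to $H^*(K_3)$ is unknown in dimension $3$.

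What the paper does instead is: (i) use the zigzag $\bar\Sigma\to U_1\leftarrow U_2\to\cdots\leftarrow U_\TT$ to identify the \emph{cardinality}-filtered resolvent with the truncated Sinha sequence from $E_2$ (Theorem \ref{TunstableVS}); and then (ii) compare the $E_\infty$-pages of the two filtrations on $\Sigma$ by a dimension count. Every homogeneous $E_\infty$-class carries both a complexity $k$ and a cardinality $l$, and the $(k,l)$-truncated piece of either $E_\infty$-page is the associated graded of the image of $\bar H_*(\FFF_k\cap F_l)$ in $\bar H_*(\FFF_{k_1})$, respectively in $\bar H_*(F_{l_1})$. Choosing $2k_1<l_1$ forces $\FFF_{k_1}\subset F_{l_1}$ and gives one inequality of dimensions; choosing $l_1<2k_1$ gives the reverse; and taking $k,l\ll k_1,l_1\ll n$ handles the fact that the stable Vassiliev sequence is an inverse limit of unstable ones and does not come from a single filtered complex. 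Without some argument of this kind --- exploiting the interleaving of the two filtrations rather than pretending they coincide --- part (1) does not follow, and part (2), which you correctly reduce to part (1) via Turchin's isomorphism and finite-dimensionality, is left without a foundation.
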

See Definitions \ref{Dsinha}, \ref{DVassiliev} and \ref{Dunreduced_Vassiliev}. We remark that  the  part of the Sinha sequence of a bidegree which is not of the form  $(q-3p,2p)$ for any integers $p,q$, is zero from $E_1$.
Since the sequences are finite dimensional in each bidegree, second part of the theorem follows from first part via the result of \cite{turchin}.

Since degeneracy of the Sinha sequence is already known, see \cite{LTV, moriya, tsopmene},  we can solve affirmatively a remaining part of an old conjecture of Vassiliev which states that his sequence would degenerate at $E_1$-page: 
\begin{cor}[\cite{kontsevich1} for the diagonal part]
The Vassiliev spectral sequence ${}^V\! E_r$ degenerates at $E_1$-page for $\kk=\QQ$.\hfill\qedsymbol
\end{cor}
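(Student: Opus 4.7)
The corollary is essentially immediate from part (2) of Theorem \ref{TE_infinity}, so the plan is simply to combine that biconditional with the already established rational degeneracy of the Sinha sequence. Taking the coefficient field $\kk = \QQ$ in Theorem \ref{TE_infinity}(2), I obtain that ${}^V\! E_r$ degenerates at $E_1$ over $\QQ$ if and only if ${}^S\! E_r$ degenerates at $E_2$ over $\QQ$.

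Next, I would invoke the known degeneracy of ${}^S\! E_r$ over $\QQ$ at the $E_2$-page, established in \cite{LTV, moriya, tsopmene}; those arguments rely on rational formality of the little disks operad or on explicit cochain-level constructions, each of which forces the higher differentials to vanish. Applying the biconditional just stated, this yields at once that ${}^V\! d_r = 0$ for all $r \geq 1$ over $\QQ$, which is precisely the assertion of the corollary.

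For compatibility with prior work, I would record that the diagonal part of the corollary was already obtained by Kontsevich in \cite{kontsevich1} via the Kontsevich integral, which realizes all weight systems as finite type invariants and thus forces ${}^V\! d_1 = 0$ along the diagonal. The genuinely new content of the corollary is therefore the off-diagonal part, which is exactly where the identification of the full $E_\infty$-pages afforded by Theorem \ref{TE_infinity} becomes essential.

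No real obstacle arises in the corollary itself; the hard part lies entirely in Theorem \ref{TE_infinity}. That theorem supplies the comparison via the fat-diagonal Thom space model from the author's previous paper, which mediates between the Alexander-duality construction of the Vassiliev filtration on one side and the Bousfield--Kan cosimplicial totalization underlying the Sinha sequence on the other. Once that identification of $E_\infty$-pages is in hand, together with the finite-dimensionality noted after the theorem (which is what allows one to upgrade an abstract isomorphism of $E_\infty$-pages to a degeneracy statement through Turchin's isomorphism), the corollary follows by the short deduction above.
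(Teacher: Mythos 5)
Your proposal is correct and matches the paper's own (implicit) argument exactly: the corollary is deduced by combining Theorem \ref{TE_infinity}(2) for $\kk=\QQ$ with the known degeneracy of the Sinha sequence at $E_2$ over $\QQ$ from \cite{LTV, moriya, tsopmene}. The surrounding remarks on Kontsevich's diagonal case and on where the real work lies are accurate but not part of the proof itself.
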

 Vassiliev proposed the conjecture, motivated by analogy between the discriminant of the knot space and affine subspace  arrangements (see \cite{vassiliev2}). The diagonal part (degree 0 part) has been paid much attention since it is related to the finite type invariants. For the diagonal part and for $\kk=\QQ$, Kontsevich \cite{kontsevich1} proved the conjecture using the technique of configuration space integrals. In an unpublished version of the paper \cite{kontsevich1}, he seemed to outline a proof of degeneracy for the non-diagonal part and for the case of higher codimension, see \cite{vassiliev1,vassiliev2} (the Vassiliev and Sinha sequences are also defined for $K_d$ with $d\geq 4$).  Along this line, Cattaneo--Cotta-Ramusino--Longoni \cite{CCL} constructed non-trivial cocycles of the knot space $K_d$ for $d\geq 4$,  but a complete proof of the conjecture did not appear. Then, Lambrechts-Turchin-Voli\'c \cite{LTV} proved the conjecture for the case of higher codimension.  Their method is as follows: The {\em space of long knots modulo immersions} $\bar K_d$ is defined as the homotopy fiber of the inclusion from $K_d$ to the space of immersions which coincide with the fixed embedding outside  a compact set.  Sinha's cosimplicial model for $\bar K_d$ is related to the little $d$-disks operad (see \cite{sinha1}). Using the (relative) formality of the  operads \cite{kontsevich1, LV},  one can prove that Sinha's cohomological spectral sequence $\bar E_r$ for the space of long knots  modulo immersions degenerates at $E_2$-page. By \cite{turchin}, the map of spectral sequence $^SE_r\to \bar E_r$ induced by the canonical map $\bar K_d\to K_d$ gives a monomorphism on the $E_2$-page. Therefore,  $^SE_r$ also degenerates at $E_2$-page. Since both of $^SE_r$ and $^VE_r$ converge to $H^*(K_d)$  for $d\geq 4$, Vassiliev's degeneracy is obtained. In the case of $d=3$, the degeneracy of Sinha's sequence (for knots modulo immersion) and the injectivity of the map $^SE_r\to \bar E_r$ still hold, but the convergence to $H^*(K_3)$ is unknown and the relation between the $E_\infty$-pages was unclear, which is uncovered by Theorem \ref{TE_infinity}.  \\
\indent Recently, Marino-Salvatore \cite{MS} proposed a computer-aided proof of non-degeneracy of the Sinha sequence for $\kk=\FF_2$ using a combinatorial model of the cosimplicial space which is derived from the Fox-Neuwirth cell structure (see  \cite{marino}). With this and Theorem \ref{TE_infinity} (2), we can disprove the conjecture of Vassiliev for the coefficient. This is very interesting in analogy with the affine subspace arrangements which   homotopically split into a wedge sum of filtered quotients, and whose associated spectral sequence  degenerates at any coefficient.\\
\indent We shall look at application to the finite type invariants.  A knot invariant $\pi_0(K_3)\to \kk$ over a commutative ring $\kk$  is called {\em of (finite) type $n$} if a natural extension of the invariant to singular knots vanishes on the singular knots  with at least $n+1$ double points (see e.g. \cite{bar-natan, kosanovic} for the precise definition). This notion  was introduced in \cite{vassiliev}, see also Goussarov \cite{gusarov} and Birman-Lin \cite{BL}. While the completeness of the finite type invariants is still open,  they include many examples of known knot invariants. The relation between embedding calculus and finite type invariants has been extensively studied, see e.g. Voli\'c \cite{volic}, Budney-Conant-Koytcheff-Sinha \cite{BCKS} and Kosanovi\'c \cite{kosanovic}. The Vassiliev sequence is related to finite type invariants as follows. Let $\VV_n$ be the space of finite type $n$ invariants over $\kk$. The space $^VE_\infty^{-n,n}$ is isomorphic to $\VV_n/\VV_{n-1}$ via the Alexander duality (see \cite{BL}, where $\kk=\QQ$ is assumed, but the arguments there work for any coefficient).  A  feature of the finite type invariants over $\QQ$ is that they correspond to combinatorial objects called weight systems.  The {\em space $W_n$ of weight systems of weight $n$} is defined as the dual of the $\kk$-module generated by chord diagrams with $n$ chords modulo the $1T$ and $4T$ relations. The space $W_n$ is isomorphic to $^VE_1^{-n,n}$ so we have an inclusion $\VV_n/\VV_{n-1}\subset W_n$ (see \cite{bar-natan}). The correspondence in $\kk=\QQ$ is a consequence of the above degeneracy of the diagonal part of the Vassiliev sequence. \\
\indent For the diagonal part, we have the slightly finer version of Theorem \ref{TE_infinity} (2) as follows.
\begin{thm}\label{Cdegree0}
If $\kk$ is a principal ideal domain, we have $\VV_n/\VV_{n-1}= W_n$ (under the inclusion of the  previous paragraph) if and only if the diagonal part $\oplus_{i\leq 2n}{}^S\!E^{-i,i}_r$ of the Sinha spectral sequence (for $K_3$) up to the bidegree $(-2n,2n)$ degenerates at $E_2$-page.  
\end{thm}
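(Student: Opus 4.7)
The plan is to identify, via the spectral sequence, the condition $\VV_m/\VV_{m-1}= W_m$ with degeneration of the Vassiliev sequence at the diagonal bidegree $(-m,m)$, and then transport this to the Sinha side using Theorem \ref{TE_infinity}(1) together with Turchin's isomorphism. For each $m$, one has $W_m = {}^V\!E_1^{-m,m}$ (by the standard description of the Vassiliev $E_1$-page via chord diagrams modulo $1T$ and $4T$ relations) and $\VV_m/\VV_{m-1} = {}^V\!E_\infty^{-m,m}$ (by convergence of the Vassiliev spectral sequence together with Alexander duality, which is valid over any PID as in Bar-Natan--Lin). I would verify that under these identifications, the classical inclusion $\VV_m/\VV_{m-1}\hookrightarrow W_m$ coincides with the natural map ${}^V\!E_\infty^{-m,m}\hookrightarrow {}^V\!E_1^{-m,m}$; the latter is a genuine submodule inclusion because no nontrivial differentials enter the Vassiliev diagonal---their potential sources lie on the line $p+q=-1$, where the $E_1$-page vanishes by the connectivity of the Vassiliev simplicial resolution for $K_3$. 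Consequently, $\VV_m/\VV_{m-1}= W_m$ is equivalent to the Vassiliev sequence degenerating at $(-m,m)$, i.e., ${}^V\!E_1^{-m,m}={}^V\!E_\infty^{-m,m}$.

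Next, I would transfer this condition to the Sinha side. Under Turchin's degree shift $(-p,q)\leftrightarrow(q-3p,2p)$, the Vassiliev diagonal $(-m,m)$ corresponds to the Sinha diagonal $(-2m,2m)$. Turchin's theorem supplies ${}^V\!E_1^{-m,m}\cong {}^S\!E_2^{-2m,2m}$ and Theorem \ref{TE_infinity}(1) supplies ${}^V\!E_\infty^{-m,m}\cong {}^S\!E_\infty^{-2m,2m}$. Hence the equality ${}^V\!E_1^{-m,m}={}^V\!E_\infty^{-m,m}$ is equivalent to ${}^S\!E_2^{-2m,2m}={}^S\!E_\infty^{-2m,2m}$. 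Since ${}^S\!E^{-i,i}$ vanishes when $i$ is odd (as noted in the excerpt), the conjunction of these equivalences over $m=0,1,\ldots,n$ gives that $\VV_m/\VV_{m-1}= W_m$ for every $m\leq n$ if and only if $\bigoplus_{i\leq 2n}{}^S\!E^{-i,i}_r$ degenerates at $E_2$. One direction of the theorem is then immediate; for the other (that the single equality $\VV_n/\VV_{n-1}=W_n$ forces the same at all lower $m\leq n$) I would exploit the Hopf-algebra structure on $\bigoplus_m W_m$ induced by connect-sum, using that $\bigoplus_m W_m$ is generated in each degree by primitives and that the Vassiliev filtration on $H^0(K_3)$ is compatible with connect-sum, so a top-degree lifting of weight systems forces lifts of the primitive constituents.

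The main obstacle is extending Theorem \ref{TE_infinity}(1) from field to PID coefficients, since the stated theorem is only over a field. I would do this bidegree-by-bidegree along the diagonal: the diagonal $E_1$-page of Vassiliev and $E_2$-page of Sinha are free $\kk$-modules by their combinatorial descriptions in terms of chord diagrams, so a universal coefficient argument combined with the field case of Theorem \ref{TE_infinity}(1) applied to the residue fields of $\kk$ should identify the diagonal $E_\infty$-pages over a PID, after which the reasoning above carries through verbatim.
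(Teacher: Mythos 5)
Your overall route---identifying $\VV_m/\VV_{m-1}\subset W_m$ with the inclusion ${}^V\!E_\infty^{-m,m}\subset{}^V\!E_1^{-m,m}$ (legitimate since no differentials enter the diagonal), and transporting the degeneration condition to the Sinha side via Turchin's isomorphism together with Theorem \ref{TE_infinity}(1)---is essentially the paper's: its proof consists of remarking that over a PID the diagonal parts are submodules of free modules and hence free, and then rerunning the rank comparison of Theorem \ref{TE_infinity} restricted to the diagonal. The genuine gap is your Hopf-algebra step. You need the single equality $\VV_n/\VV_{n-1}=W_n$ to force $\VV_m/\VV_{m-1}=W_m$ for all $m\leq n$, and you propose to deduce this from primitive generation of $\bigoplus_m W_m$. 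This fails on two counts: primitive generation (Milnor--Moore) is a characteristic-zero statement and is unavailable over $\FF_p$ or a general PID; and, more fundamentally, even granting that $w\in W_m$ occurs as a factor of a weight system of weight $n$ that is realized, knowing that $w\cdot w'$ is the symbol of a type-$n$ invariant gives no procedure for producing a type-$m$ invariant with symbol $w$---there is no division in the associated graded. What your bidegree-by-bidegree equivalence actually yields is the cumulative statement ($\VV_n\cong\bigoplus_{m\leq n}W_m$ iff $\bigoplus_{i\leq 2n}{}^S\!E_r^{-i,i}$ degenerates at $E_2$), which is the form appearing in the paper's abstract; the single-degree hypothesis is, by your own chain, equivalent only to degeneration at the one bidegree $(-2n,2n)$.

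A second, smaller point: over a PID, equality of ranks of the free modules ${}^V\!E_\infty^{-m,m}\subseteq{}^V\!E_1^{-m,m}$ does not imply equality as submodules (compare $2\ZZ\subset\ZZ$); a differential out of the diagonal can be nonzero with torsion image, leaving $E_\infty$ a proper full-rank submodule of $E_1$. So neither a rank count over the fraction field nor the field case of Theorem \ref{TE_infinity}(1) applied to residue fields (between which the spectral sequences are related only by a delicate universal-coefficient comparison, since kernels do not commute with reduction) settles the matter by itself. You should either verify that the relevant targets of the outgoing differentials are torsion-free, or use a comparison compatible with the inclusions $E_\infty\subset E_1$ and $E_\infty\subset E_2$ rather than an abstract isomorphism of bigraded modules.
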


Combining this with an action of Grothendieck-Teichm\"uller group on $p$-completed little disks operads constructed by  Boavida de Brito-Horel \cite{BH,BH1} we have  the following (see section \ref{Sproof} for the proof). 
\begin{cor}\label{Cinvariants}
 Let $p$ be a prime. For $\kk=\ZZ_{(p)}\ or\ \FF_p$, we have $\VV_n/\VV_{n-1}= W_n$ if $n\leq p+1$, where $\ZZ_{(p)}$ and $\FF_p$ are the ring of $p$-local integers and the field of $p$ elements, respectively.
\end{cor}
Our results are cohomological. On the other hand, there are {\em homotopical} studies on the finite type invariants and embedding calculus. Kosanovi\'c \cite{kosanovic} proved that the {\em additive} finite type invariants correspond to the indecomposable Feynman diagrams if the homotopy embedding calculus spectral sequence degenerates at $E_2$ (among other things), and using this, Boavida de Brito-Horel \cite{BH} proved
a result similar to Corollary \ref{Cinvariants} for the additive  ones  (see section \ref{Sproof} for the comparison). The result of Marino-Salvatore suggests possibility of non-degeneracy of the diagonal part of the Sinha sequence. 
Since Theorem \ref{Cdegree0} contains `only if' statement whose homotopical counterpart is unknown, it might be  interesting to examine the relation between the finite type invariants and weight systems in positive characteristics.\\
\indent We shall explain the outline of the proof of Theorem \ref{TE_infinity}.  In \cite{moriya3}, the author constructed a Thom space model which is stably equivalent to the Sinha cosimplicial model, (or more precisely,  punctured knot model) by applying Atiyah duality to the configuration spaces to transfer the embedding calculus structure to the Thom spaces over fat diagonals of products of disks and use this model to compute some differentials of the Sinha sequence of codimension one. The Vassiliev sequence is defined via the Alexander duality, so both of the Thom space model and the sequence sit at the opposite side of the knot space in the similar dualities and 
actually, they look alike. 
A resolution of the Thom space model  consists of  subspaces labeled with graphs. These subspaces are Thom spaces of the diagonals which equate the components of the products corresponding to the vertices  in a common connected component of the labeling graph.  Vassiliev's simplicial resolution (or resolvent) consists of pairs of a graph and a singular knot which may have multiple point(s), and can be considered as  an element of  a diagonal subspace of the product $(\RR^3)^{\RR}$. Therefore, by evaluating the singular ones in the resolvent at multiple points, we have a natural map from the resolvent to the geometric realization of the resolution of the Thom space model, which is expected to induce an isomorphism of the spectral sequences. The shift of bidegree will occur by change of a  filtration. This idea of proof is very simple, but we encounter some technical problems, which makes the paper lengthy (see subsection \ref{SSintuition} for further outline of the construction).\\
\indent The construction of the isomorphism in Theorem \ref{TE_infinity} is mostly given at space-level so we expect a natural compatibility among this isomorphism and the maps between the genuine cohomology of the  knot space $K_3$ and the spectral sequences. We state such a  possible relation as Conjecture \ref{Conj_compati}. This conjecture is interesting since  it implies that the additive finite type tower  and embedding calculus $\pi_0$ tower have isomorphic inverse limits.\\ 
\indent The convergence of  the Vassiliev and Sinha sequences is still largely unknown in dimension $3$. For example, in general, it is unclear whether the elements of the $E_\infty$-page give non-trivial elements of the genuine cohomology (except for the diagonal part, and see Sakai \cite{sakai1} for a related problem). This is related to another conjecture of Vassiliev concerning unstable versions of his sequence which are directly obtained from  finite dimensional approximations of the space of (singular) knots. As a byproduct of the construction of the isomorphism, we will give a partial computation of the differentials of the unstable sequences, see  subsection \ref{SSunstable}. We expect that  further analysis of the construction will lead to better understanding of the relation between the sequences and the knot space. \\
\indent The outline of this paper is as follows. In section \ref{Spreliminary}, we recall the punctured knot model which approximates the knot space in terms of embedding calculus. We also recall  definition of the Vassiliev spectral sequence. It is defined using the resolvent of (finite dimensional approximations of) the space of  singular knots. We also give a variant of the resolvent which is more suitable for our purpose. Most of the contents of this section are not essentially new. In section \ref{Stranslation}, we replace the punctured knot model in the embedding calculus with the Thom space model which is a functor consisting of Thom spaces of fat diagonals. This model is similar to the models given in \cite{moriya2,moriya3} but it is different from the one in \cite{moriya3}, in that we take tangent vectors into account here, and also different from the one in \cite{moriya2}, in that  we consider tangent vectors $v$ with $0<|v|<1$ instead of the unit vectors  and that we involve the boundary $|v|=0, 1$ in the duality here. We need point-set level compatibility of structure maps so we take care about parameters such as the width of a regular neighborhood. We establish a chain-level equivalence between the punctured knot model and Thom space model via a configuration space model, and prove that a truncated version of the Sinha spectral sequence is isomorphic to the sequence associated to the Thom space model. The outline of the constructions in the section is similar to \cite{moriya3} but the proofs are somewhat more involved so we give detailed proofs for the most part. In sections \ref{SU_1}, \ref{SU_2} and \ref{SU_3}, we connect the variant of the resolvent and a resolution of the Thom space model by zigzag of maps which induce isomorphisms of spectral sequences. For an outline of these three sections, see subsection \ref{SSintuition}. Most of the technical constructions and relatively long proofs of this paper are given in sections \ref{Stranslation}, \ref{SU_1}, and \ref{SU_2}. In section \ref{SU_3}, we make minor adjustments.    In the last section \ref{Sproof}, we give proofs of the results stated in Introduction, state a conjecture, and give a partial result on the unstable spectral sequence.  \\

\noindent {\bf Acknowledgement:} The author is  grateful to Keiichi Sakai for giving valuable comments on an earlier version of this paper. 
\subsection{Notation and terminology}\label{SSNT}

\begin{enumerate}
\item $\CG$ denotes the category of unpointed topological spaces and continuous maps and $\CG_*$ the corresponding pointed category. 
For a space $X\in \CG$, we let $X^*\in \CG_*$ denote the one-point compactification of $X$.
\item For an unpointed (resp. pointed) topological space $X$, $C_*(X)$ (resp. $\bar C_*(X)$) denotes the  singular chain complex (resp. the reduced  singular chain complex) with coefficients in a fixed commutative ring $\kk$. We denote by $H_*(X)$ (resp. $\bar H_*(X)$) the corresponding ordinary homology group.
\item Let $\CH_{\kk}$ be the category of chain complexes and chain maps over  $\kk$.

\item We let $sc:[-\infty,\infty]\to [-1,1]$ denote the homeomorphism given by $sc(t)=\frac{2}{\pi}\arctan t$. Let $d_{sc}$ denote the distance on $[-\infty,\infty]$ given  by $d_{sc}(t_1,t_2)=|sc(t_1)-sc(t_2)|$.
\item For a finite set $S$, let $|S|$ be its cardinality.
\item Let $K$ be a (topological) poset, which is always regarded as a category. We denote  by $|K|$ the geometric realization of the nerve of $K$ which has the canonical topology (taking the topology of $K$ into account if $K$ is topological).  We often express an element $u$ of $|K|$ like $u=\tau_0x_0+\cdots +\tau_mx_m$ with some integer $m\geq 0$. In this kind of expression, we always assume that $x_i\in K$,  $\tau_i\in\RR$ and $\tau_i\geq 0$ for $0\leq i\leq m$,  and $\tau_0+\cdots +\tau_m=1$ and $\exists\  x_i\to x_{i+1}$ for $0\leq i\leq m-1$. For an element $u\in |K|$ with the expression $u=\tau_0x_0+\cdots +\tau_mx_m$ satisfying $\tau_m\not=0$ (in addition to the previous assumptions), we call the element $x_m$ the {\em last $K$-element of}  $u$ and denote the element by $\LL(u)$. Clearly, $\LL(u)$ always exists and  depends only on $u$.

\item We denote by $=_1$, $<_1$,$\leq_1, \dots$ etc., the relations between the first coordinates of elements of $\RR^3$. For example,  for two elements $x=(x_1,x_2, x_3), y=(y_1,y_2, y_3)\in \RR^3$ and a number $t\in \RR$, $x<_1y$ and $x=_1y$ mean $x_1<y_1$ and $x_1=y_1$, respectively, and  $x<_1 t$ means $x_1< t$.
\item Let $Y\in \CG$ and $X$  a quotient space of $Y$. To denote an element of $X$, we use the same symbol as its representative in $Y$ as long as it does not cause confusion. 
\end{enumerate}
\section{Preliminary}\label{Spreliminary}

\subsection{Punctured knot model $\PK$ and cosimplicial model $\SSS^\bullet$ }\label{SPK}
For technical reasons, we mainly deal with the punctured knot model defined below instead of the  cosimplicial model.  Throughout the paper, $n$ denotes a positive integer.
\begin{defi}\label{Dpartition0}
A {\em partition $P$ of } $[n+1]=\{0,1,\dots, n+1\}$ is a set of subsets of $[n+1]$ satisfying the following conditions.
\begin{enumerate}
\item $\cup_{\alpha \in P}\alpha =[n+1]$.
\item Each element of  $P$ is non-empty.
\item If $\alpha, \beta \in P$, either of $\alpha=\beta$ or $\alpha\cap \beta=\emptyset$ holds.
\item For each element  $\alpha \in P$, if numbers $i,j,k$ satisfy $i<j<k$ and $i,k\in \alpha$, the number $j$ also belongs to $\alpha$. 
\item $ |P|\geq 2$, in other words the set consisting of the single element $[n+1]$ is not a partition. 
\end{enumerate} 
We call an element of  $P$ a {\em piece of} $P$. We regard a partition as a totally ordered set via the order induced by $[n+1]$.  Let $P^\circ\subset P$ denote the subset of pieces which are neither the minimum nor the maximum. For $i\in [n+1]$, we write $i\in P^\circ$ if there is a piece $\alpha\in P^\circ$ satisfying $i\in \alpha$. A partition $Q$ is said to be a {\em subdivision of} $P$ if $Q\not=P$ and each piece of $Q$ is contained in some piece of $P$. We let $\PP_n$ denote the category (or poset) of partitions of $[n+1]$. Its objects are the partitions of $[n+1]$. 
A unique non-identity morphism $P\to Q$ exists if and only if $Q$ is a subdivision of $P$. 
\end{defi}
\begin{exa}\label{Epartition}
The following sets are examples of objects of $\PP_4$:
\[
P=\{\{0\},\{1, 2\},\{3,4,5\}\}, \qquad Q=\{\{0\},\{1,2\}, \{3\}, \{4,5\}\}.
\]
  We have $\{0\}<\{1,2\}<\{3,4,5\}$ for the pieces of $P$. For the piece $\alpha=\{3,4,5\}$ of $P$, $\max\alpha=5$ and $\min\alpha=3$. We see that $|P|=3$, $|Q|=4$, $P^\circ=\{\{1,2\}\}$, $\min P=\{0\}$, $\max P=\{3,4,5\}$, and  $Q$ is a subdivision of $P$.  
\end{exa}

\begin{defi}\label{DPK}
 We define a functor $\PK:\PP_n\to \CG$ as follows.
For a partition $P=\{\alpha_0<\cdots <\alpha_{p+1}\}$,  $S_P\subset \{1,\dots, n+1\}$ denotes the set of minimum elements in each of $\alpha_1,\dots, \alpha_{p+1}$ (so $| S_P|=p+1$).  Let $D^3\subset \RR^3$ be the $3$-dimensional unit closed disk, and $Int(D^3)$ its interior. The space $\PK(P)$ is the space of embeddings 
\[
f:[0,1]-\bigcup_{i\in S_P}\left(\frac{4i-1}{4(n+2)}, \frac{4i+1}{4(n+2)}\right) \quad \longrightarrow \quad D^3
\]   such that
 $f(0)=(-1,0,0)$, $f(1)=(1,0,0)$,  $f'(0)=f'(1)=(1/2,0,0)$ and $f(t)\in Int (D^3)$ for $0<t<1$.
For a subdivision $Q$ of $P$, the map $\PK(P)\to \PK(Q)$ is the  restriction induced by the inclusion $S_P\subset S_Q$.
\end{defi}
The functor $\PK$ is called the {\em punctured knot model}. Actually, there is an obvious natural transformation from the punctured knot model in \cite{sinha} for $M=[0,1]^3$ to $\PK$ which induces a weak homotopy equivalence on each term. 
\begin{defi}\label{Dsinha}
\begin{enumerate}
\item Let $\Delta_n$ be the category whose objects are $[k]=\{0,1,\dots, k\}$ ($0\leq k\leq n$) and whose morphisms are the weakly order preserving maps. 
\item Let $\SSS^\bullet$ denote Sinha's cosimplicial model for $K_3$ given in Corollary 4.22 of \cite{sinha} where we set $M=[0,1]^3$. 
The space $\SSS^p$ is   a version of Fulton-MacPherson compactification of the ordered configuration space of $p$ points with  unit tangent vectors in $\RR^3$. We denote by $\SSS^{\leq n}$ the restriction of $\SSS^\bullet$ to $\Delta_n$. We call the Bousfield-Kan cohomology spectral sequence associated to $\SSS^\bullet$ the {\em Sinha (spectral) sequence} and denote it by ${}^S\! E^{**}_r$.
\item We define a functor $\mathcal{F}:\PP_n\to \Delta_n$ by $P\mapsto [| S_P|-1]$ and $(P\to Q)\mapsto ([| S_P|-1]\cong S_P\subset S_Q\cong [|S_Q|-1])$, where $\cong$ denotes the order-preserving bijection. 

\end{enumerate}
\end{defi}

\begin{thm}[\cite{sinha, sinha1}]\label{TSinha}
 There is a zigzag of natural transformations which give a weak homotopy equivalence at each term, between $\mathcal{F}^*\SSS^{\leq n}$ and $\PK$. \hfill \qedsymbol
\end{thm}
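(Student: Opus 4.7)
The plan is to deduce the statement from Sinha's theorem via an elementary comparison of the two versions of the punctured knot model.

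Let $\PK^{Sinha}$ denote the punctured knot model from \cite{sinha} for $M = [0,1]^3$, which has the same source poset $\PP_n$ and the same functorial structure (restriction of the domain under subdivisions) as $\PK$, differing only in that the ambient $3$-manifold is the unit cube rather than the unit disk. Corollary 4.22 of \cite{sinha} provides a zigzag of natural transformations between $\mathcal{F}^*\SSS^{\leq n}$ and $\PK^{Sinha}$, each of which is a weak equivalence on every term. Hence it suffices to produce a termwise weak equivalence $\PK^{Sinha} \to \PK$ of functors from $\PP_n$.

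To construct such a natural transformation, I would fix once and for all a smooth orientation-preserving embedding $\varphi : [0,1]^3 \hookrightarrow D^3$ carrying the segment on which the standard linear knot lies in the cube onto the diameter from $(-1,0,0)$ to $(1,0,0)$, matching the tangent conditions $(1/2,0,0)$ at the endpoints, and sending $\mathrm{Int}([0,1]^3)$ into $\mathrm{Int}(D^3)$. Post-composition with $\varphi$ then yields, for each $P \in \PP_n$, a continuous map $\PK^{Sinha}(P) \to \PK(P)$. Naturality with respect to subdivisions $P \to Q$ is automatic because restriction of the domain commutes with post-composition with $\varphi$.

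To verify that post-composition with $\varphi$ is a weak equivalence on each term, I would fix a smooth isotopy $r_t : D^3 \to D^3$, $t \in [0,1]$, starting at the identity, fixing the diameter together with its $1$-jet at the endpoints, and with $r_1(D^3) = \varphi([0,1]^3)$. Applying $r_t$ pointwise to an embedding $f \in \PK(P)$ gives a path in $\PK(P)$ from $f$ to an embedding whose image lies in $\varphi([0,1]^3)$, which (after composing with $\varphi^{-1}$) supplies a homotopy inverse on each term. Composing with Sinha's zigzag then yields the theorem.

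The main obstacle is to choose $\varphi$ and $r_t$ compatibly with all the prescribed boundary conditions shared by every $\PK(P)$ — in particular, to ensure that the $1$-jets at the endpoints $0$ and $1$ are preserved throughout the isotopy, and that the interior of the cube really does map into the interior of the disk so that $r_t \circ f$ remains a valid element of $\PK(P)$ for all $t$. Once such a choice is made, the weak-equivalence statement and the naturality are both routine.
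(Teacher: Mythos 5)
Your proposal is correct and matches the paper's intent: the paper does not prove this theorem but cites Sinha's result for the cube model and remarks (just after Definition \ref{DPK}) that there is an ``obvious natural transformation'' from Sinha's punctured knot model for $M=[0,1]^3$ to $\PK$ inducing termwise weak equivalences, which is precisely the comparison you carry out via post-composition with $\varphi$ and the ambient isotopy $r_t$. You are simply supplying the details the paper leaves implicit, so no further commentary is needed.
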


\subsection{The functors $\mathcal{F}$, $\CECHF$ and  total complex}
\begin{defi}\label{Dnormal} Let $\MM$ be one of the categories $\TOP_*$, $\CH_\kk$.
\begin{enumerate}
\item We identify the poset $\PP_n$ with the poset $Q_n$ of non-empty subsets of $\{1,\dots, n+1\}$ by $P\mapsto S_P$ (see the previous subsection). The functor $\mathcal{F}$ is regarded as a functor $Q_n\to \Delta_n$. 
\item For a functor $X: \Delta_n^{op}\to \CH_\kk$, the {\em normalization} $NX$ is the double complex given by
\[
N_pX=\left\{
\begin{array}{cc}
X_p/(\sum_{0\leq i\leq p-1}s_i(X_{p-1}))&\text{ if } p\leq n, \vspace{2mm}\\
0 & \text{ if } p>n,
\end{array}\right.
\]
where $s_i$ denotes the degeneracy map, with one of the differentials given by the signed sum of the face maps and the other by the original differential of $X$
\item For the functor $X=C^*(\SSS^{\leq n})$ obtained by taking the singular cochain of the restricted cosimplicial model $\SSS^{\leq n}$ in the termwise manner,  we call the spectral sequence associated to the normalization $N(X)$ with the filtration by simplicial degree, the {\em $n$-truncated   Sinha spectral sequence}. There is a canonical map from the $n$-truncated sequence to the full  Sinha sequence given in Definition \ref{Dsinha}. 

\item
Let $Fun(C,D)$ be the category of functors $C\to D$ and natural transformations between them. We denote  by $\CECHF: Fun({Q_n}^{op},\MM)\to Fun(\Delta_n^{op}, \MM)$  the left Kan extension along $\mathcal{F}:{Q_n}^{op}\to \Delta_n^{op}$, i.e. the left adjoint of the pullback by $\mathcal{F}$.  Concretely speaking,  it associates to a functor $Y:{Q_n}^{op}\to \MM$, a functor $\CECHF(Y):\Delta_n^{op}\to \MM$ which sends $[k]$ to $\bigsqcup_{f}Y(f([k]))$ where $f:[k]\to \{1,\dots, n+1\}$ runs through the weakly order-preserving maps.  When $\MM=\CH_\kk$,  the normalization  $N\CECHF(Y)$ consists of the summands labeled by monomorphisms $f$. 
\end{enumerate}
\end{defi}
The following lemma follows from the facts that $\mathcal{F}$ is left cofinal (see \cite{sinha}) and that the total complex of normalization is equivalent to the  homotopy colimit (see \cite{hirschhorn} for homotopy colimits).
\begin{lem}\label{Lcofinal}
Let $X:\Delta^{op}_n\to \CH_\kk$ be a functor. The map $\tot(N\CECHF(\mathcal{F}^*X))\to \tot(NX)$ between the total complexes,  induced by the unit of the adjoint pair $(\CECHF, \mathcal{F}^*)$, is a quasi-isomorphism. Here, $\tot$ denotes the total complex of a double complex.\hfill\qedsymbol
\end{lem}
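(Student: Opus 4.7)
The plan is to realize both sides as homotopy colimits and then apply the left cofinality of $\mathcal{F}$ cited in Sinha's paper. First I would use the classical fact (recorded, e.g., in Hirschhorn) that for any functor $Y:\Delta_n^{op}\to \CH_\kk$, the total complex $\tot(NY)$ of its normalization is a model for the homotopy colimit $\hocolim_{\Delta_n^{op}}Y$. This is essentially the Dold--Kan statement combined with the fact that the bar resolution of a simplicial chain complex is computed, up to quasi-isomorphism, by normalized totalization. Applying this with $Y=X$ and $Y=\CECHF(\mathcal{F}^*X)$ yields natural quasi-isomorphisms
\[
\tot(NX)\simeq \hocolim_{\Delta_n^{op}}X,\qquad \tot(N\CECHF(\mathcal{F}^*X))\simeq \hocolim_{\Delta_n^{op}}\CECHF(\mathcal{F}^*X).
\]

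Next I would combine two standard properties of left Kan extensions. Because $\CECHF$ is a left adjoint it preserves colimits, and the explicit pointwise coproduct formula in Definition \ref{Dnormal}(4) is built from over-categories in a way that preserves enough cofibrancy to yield a natural quasi-isomorphism
\[
\hocolim_{\Delta_n^{op}}\CECHF(\mathcal{F}^*X)\simeq \hocolim_{Q_n^{op}}\mathcal{F}^*X.
\]
Under these identifications, the map of the lemma is carried to the canonical comparison map $\hocolim_{Q_n^{op}}\mathcal{F}^*X\to \hocolim_{\Delta_n^{op}}X$ induced by $\mathcal{F}$. Since $\mathcal{F}$ is left cofinal (the content of Sinha's statement), this comparison map is a weak equivalence of homotopy colimits, hence a quasi-isomorphism in $\CH_\kk$, which is what we want.

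The main delicate step will be verifying that this chain of identifications actually implements the map described in the lemma, rather than merely an abstract isomorphism of the two endpoints. The cleanest way to handle this is to perform the computation inside the projectively (or Reedy) cofibrant replacement: one fixes functorial cofibrant models so that all the natural transformations above are strictly compatible, and then checks that the counit $\CECHF\mathcal{F}^*X\to X$ becomes, after passage to $\hocolim$, the cofinality comparison. Given this bookkeeping, no further input is required beyond Sinha's cofinality result and the $\hocolim$-interpretation of $\tot\circ N$.
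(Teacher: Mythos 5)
Your proposal is correct and follows essentially the same route as the paper, which justifies the lemma in one sentence by citing exactly the two ingredients you use: the identification of $\tot\circ N$ with the homotopy colimit over $\Delta_n^{op}$ (via Hirschhorn) and the left cofinality of $\mathcal{F}$ from Sinha's work. (Minor point: the map $\CECHF(\mathcal{F}^*X)\to X$ is the counit of the adjunction, as you say at the end, despite the lemma's wording calling it the unit.)
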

The complex $N_k\CECHF(\mathcal{F}^*X)$ is $\bigoplus_f X([k])$, where $f$ runs through the order-preserving monomorphisms $[k]\to \{1,\dots, n+1\}$ and the map $N_k\CECHF(\mathcal{F}^*X)\to N_kX$ is identified with the map forgetting the label $f$, followed by the quotient map.
\subsection{Vassiliev spectral sequence}
In this section, we recall definition of the Vassiliev spectral sequence. For the whole theory of Vassiliev, see \cite{vassiliev, vassiliev1, BL}. We also introduce a variant of the resolvent (simplicial resolution) of the space of singular knots. 
\begin{defi}
\begin{enumerate}
\item A {\em classed configuration} is a pair $(A,B)$ where $A$ is a  possibly empty finite family of pairwise disjoint finite subsets of $\RR$ each of which has cardinality $\geq 2$, and $B$ is a possibly empty finite subset of $\RR$.  The {\em empty classed configuration} is such that $A$ and $B$ are empty. So  a non-empty classed configuration is such that at least one of $A$ and $B$ is non-empty. 
\item Let $F=(A,B)$ be a classed configuration. A {\em geometric point} of $F$ is an element of the subset
\[
(\cup_{a\in A}a)\cup B\quad \subset \quad \RR.
\] We call the cardinality of this subset the {\em cardinality} of $F$ and denote  by $|F|$ so the cardinality counts a geometric point only once even if it belongs to both of $B$ and an element of $A$. The {\em complexity} $c(F)$  is given by 
\[
c(F)= |\cup_{a\in A} a|-|A|+|B|
\]
\item Two classed configurations $F_1=(A_1,B_1), F_2=(A_2,B_2)$ are said to be {\em equivalent} if there is an orientation-preserving diffeomorphism $g : \RR \to \RR$ satisfying the following condition: The image of each element of $A_1$ by $g$ is equal to some element of $A_2$ and the map $A_1\ni a_1\mapsto g(a_1)\in A_2$ is a bijection $A_1\cong A_2$, and $g$ also induces a bijection $B_1\cong B_2$. 
\end{enumerate}
\end{defi}
\begin{rem}
A non-empty classed configuration $(A,B)$ is an $(A',b)$-configuration in the sense of \cite{vassiliev} for $A'=\{ |a|\mid a\in A\},\ b= |B|$. 
\end{rem}
To define the Vassiliev spectral sequence, we need  finite dimensional approximations of the knot space. Let $K_3'$ be the space of embeddings $f:\RR\to \RR^3$ satisfying $\langle f'(t), v_0\rangle\geq 1$ for all $t$ outside a compact set. Here $v_0=(1,1,1)/\,\sqrt[]{3}$ and $\langle-,-\rangle$ is the standard inner product on $\RR^3$. The inclusion $K_3\to K'_3$ is a weak homotopy equivalence so in this section, we also call an element of $K'_3$ a long knot.
\begin{defi}
\begin{enumerate}
\item We say a smooth map $f:\RR \to \RR^3$ {\em respects} a classed configuration $F=(A,B)$ if for each $a\in A$, $f$ maps all the points of $a$ to one point, which may differ for different elements of $A$, and $f$ satisfies $f'(t)=0$ for each $t\in B$. All maps respect the empty classed configuration.
\item Let $\tilde \Gamma_n$ be the space of  maps $f:\RR\to \RR^3$ of the form $f(t)=(P_1(t), P_2(t), P_3(t))$ where $P_i(t)$ is a polynomial of the following form
\[
t^{2n+1}+a_1t^{2n-1}+a_2t^{2n-2}+\cdots a_{2n}
\]
with $a_j\in \RR$, for $i=1,2,3$. The space  $\tilde \Gamma_n$ is naturally regarded as a $6n$-dimensional affine space. 
\item Let $V\subset \tilde \Gamma_N$ be an affine subspace and $F$ a classed configuration. We denote by $\chi(V,F)\subset V$  the subspace of maps  respecting $F$.
\end{enumerate}
\end{defi}
As $t$ tends to $\pm \infty$, the normalized tangent vector of $f(t)$ does to $(1,1,1)/\,\sqrt[]{3}$ for $f\in \tilde \Gamma_n$, so if $f$ is an embedding, we can regard it as a long knot.  The affine space $\tilde \Gamma_n$ is different from the corresponding space $\tilde \Gamma^d$ in \cite{vassiliev} but they give isomorphic (stable) spectral sequences, see Remark \ref{Rchoice_affine}.\\
\indent For  an odd number $w\geq 3$, we define an affine embedding $I:\tilde \Gamma_n\to \tilde \Gamma_N$, where $N=\frac{2wn+w-1}{2}$, by $I(f)(t)=f(t^w+t)$. The space $\tilde \Gamma_n$ contains maps with infinitely many singular points (see \cite{vassiliev}) so we perturb it in $\tilde \Gamma_N$ so that the result only contains  maps with finitely many singularities.
\begin{lem}[\cite{vassiliev}]\label{Lgeneral_posi}
Suppose $N\geq 12n+2$.
For almost any choice of a $6n$-dimensional affine  subspace $V$ of $\tilde \Gamma_N$, for any classed configuration $F$, we have the following statements. 
\begin{enumerate}
\item For almost any classed configuration $F'$ equivalent to $F$, the set $\chi(V,F')$ is   an affine subspace of codimension $3c(F)$ in $V$.
\item Suppose $c(F)=k\leq 2n$. Then in the set of all configurations equivalent to $F$, the  (semi-algebraic) subset of those configurations $F'$ for which  $\chi(V,F')=\emptyset$ is of codimension $\geq 6n-3k+1$, while the set of $F'$ such that the codimension of $\chi(V,F')$ in $V$ equals $3k-i,\ i\geq 1$ is a subset of codimension $\geq  i(6n-3k+i+1)$. In particular, when $k\leq (6n+1)/5$, the codimension of the set $\chi(V, F)$  is exactly equal to $3k$ (for any configuration $F$ with $c(F)=k$). 
\item Suppose $c(F)=k>2n$. Then in the set of configurations equivalent to $F$, the set of all $F'$ such that $\dim \chi(V,F')=l\geq 0$ is of codimension  $\geq (l+1)(3k-6n+l)$. In particular, the set of all $F'$ such that $\chi(V,F')\not=\emptyset$ is of codimension $\geq 3(k-2n)$ and is empty when $k>6n$.
 
\end{enumerate}
Here, we understand that  a subset of codimension larger than the dimension of the whole space is   empty. 
\end{lem}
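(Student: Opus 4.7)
The plan is to treat this as a parametric general-position statement. The starting observation is that \emph{respecting} a classed configuration $F=(A,B)$ imposes $3c(F)$ linear conditions on $f\in\tilde\Gamma_N$: each piece $a\in A$ contributes $3(|a|-1)$ coincidence equations $f(t)=f(t')$ (for $t,t'\in a$), and each $b\in B$ contributes the $3$ vanishing equations $f'(b)=0$. Hence for every configuration $F'$ equivalent to $F$, there is a linear map $\pi_{F'}\colon\tilde\Gamma_N\to\RR^{3c(F)}$ depending semi-algebraically on $F'$, with $\chi(\tilde\Gamma_N,F')=\ker\pi_{F'}$.

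The first step is to verify surjectivity of $\pi_{F'}$ for almost every $F'$: this reduces to a Vandermonde/Hermite-interpolation argument on polynomials of degree $\leq 2N+1$, using that the geometric points of $F'$ are distinct real numbers. For $N\geq 12n+2$ and $c(F)\leq 6n$, generic $F'$ makes $\pi_{F'}$ surjective, so $\chi(\tilde\Gamma_N,F')$ has codimension exactly $3c(F)$. Combining this with a Bertini/Sard-style argument on the Grassmannian $\mathrm{Gr}(6n,\tilde\Gamma_N)$---for almost every $6n$-dimensional $V$, the restriction $\pi_{F'}|_V$ remains surjective for almost every $F'$---yields part~(1).

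For parts~(2) and (3) I would stratify by the rank of $\pi_{F'}|_V$. Parametrize the moduli $\mathcal{E}_F$ of configurations equivalent to $F$ by the positions on $\RR$ of their geometric points, an open semi-algebraic set of dimension $|F|$. Form the incidence loci
\[
J_F^r=\{(V,F')\in\mathrm{Gr}(6n,\tilde\Gamma_N)\times\mathcal{E}_F\mid \mathrm{rk}(\pi_{F'}|_V)\leq r\}.
\]
The standard codimension of the rank-$\leq r$ locus among $3c(F)\times 6n$ real matrices is $(3c(F)-r)(6n-r)$. Combined with transversality of the evaluation $(V,F')\mapsto\pi_{F'}|_V$---whose derivative in the $F'$-direction is surjective up to one parametric defect coming from global reparametrization, accounting for the $+1$---this computes the codimension of $J_F^r$. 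Setting $r=3c(F)-i$ and projecting to $\mathrm{Gr}$, generic fibre-slicing reproduces the bound $i(6n-3c(F)+i+1)$ of part~(2); the bound $(l+1)(3k-6n+l)$ of part~(3) follows analogously, with $r$ now bounded by $6n$ (since the generic rank cannot exceed $\dim V$), and with emptiness for $k>6n$ forced by the ambient codimension of $\chi(\tilde\Gamma_N,F')$ exceeding $\dim V$.

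The main obstacle I expect is uniformity over combinatorial types of $F$: the conclusion must hold for a \emph{single} generic $V$ simultaneously over all equivalence classes of configurations with $c(F)\leq 6n$. Fortunately, for each fixed complexity there are only finitely many equivalence classes, and within each the bad locus is a semi-algebraic subset of $\mathrm{Gr}(6n,\tilde\Gamma_N)$ of positive codimension; a finite union of measure-zero sets is still measure-zero, so the intersection of their complements has full measure. A secondary book-keeping concern is that the hypothesis $N\geq 12n+2$ is precisely what makes the ambient space large enough for the Vandermonde surjectivity to hold at the largest admissible complexity $c(F)=6n$; tracking these bounds tightly is what delivers the sharpness statement ``codimension exactly $3k$'' in the sub-range $k\leq(6n+1)/5$ of part~(2).
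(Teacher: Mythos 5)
Your overall strategy coincides with the paper's: linearize the incidence conditions into a map $\delta(f,F')\in\RR^{3c(F)}$, prove generic surjectivity by a Vandermonde/Hermite-interpolation argument (this is where $N\geq 12n+2$ enters, since one needs $|F'|\leq 2k\leq 12n+2\leq N$ distinct evaluation points up to the top relevant complexity $k=6n+1$), and then obtain parts (2)--(3) by parametric transversality to rank strata of a matrix space. Your finiteness-of-combinatorial-types remark is also the right way to get a single generic $V$ working for all $F$ simultaneously.

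The one step that does not work as written is your derivation of the exponent $i(6n-3k+i+1)$. You stratify the space of $3c(F)\times 6n$ matrices, whose rank-$(3k-i)$ stratum has codimension $i(6n-3k+i)$ --- short of the target by $i$, not by a uniform $1$ --- and then invoke ``one parametric defect coming from global reparametrization'' to supply the missing $+1$. No such defect exists: the moduli of configurations equivalent to $F$ is simply an open semi-algebraic set of point positions in $\RR$, with no reparametrization quotient in play. The correct source of the $+1$ is that $V$ is an \emph{affine} subspace, so the relevant object is the affine map $\pi_{F'}|_V$, i.e.\ a $3k\times(6n+1)$ matrix (linear part plus constant column); its rank-$(3k-i)$ stratum has codimension $i\bigl((6n+1)-(3k-i)\bigr)=i(6n-3k+i+1)$, and likewise rank $6n-l$ gives $(l+1)(3k-6n+l)$ in part (3). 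The augmented column is also what governs the emptiness locus in part (2): $\chi(V,F')=\emptyset$ forces the linear part to drop rank by one while the augmented matrix does not, whence codimension $\geq 6n-3k+1$. This is exactly the matrix space the paper transversalizes against. With that replacement your argument matches the paper's; one further small point is that for $k>6n+1$ the paper does not argue directly but passes to a sub-configuration of complexity $6n+1$, for which emptiness follows since $3(k-2n)=12n+3$ already exceeds $\dim C_F\leq 2k=12n+2$.
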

\begin{proof}
This is standard argument about transversality so the proof is omitted in \cite{vassiliev}. We record an outline of the proof for the reader's convenience. A variant of Vandermonde determinant $\det (VD(t_1,\dots,t_N))$ where
\[VD(t_1,\dots, t_N)=
\begin{pmatrix}
R(t_1)\\
\vdots \\
R(t_N)\\
R'(t_1)\\
\vdots \\
R'(t_N)
\end{pmatrix},\qquad R(t)=(1,t,\dots, t^{2N-1}),
\]
is non-zero if $t_1,\dots,t_N$ are pairwise distinct, since the evaluation map
$\tilde \Gamma_N\ni f\mapsto (f(t_i), f'(t_i))_i\in \RR^{6N}$ is bijective.\\
\indent Put $c(F)=k$. Let $C_F$ be the space of classed configurations equivalent to $F$. We define a map $\delta:\tilde \Gamma_N\times C_F\to \RR^{3k}$ as follows. Let $F'=(A,B)\in C_F$. Write $A=\{\{t_{i1},\dots, t_{i,a_i}\}\mid 1\leq i\leq l\},\ B=\{p_1,\dots, p_b\}$ (so $\sum_ia_i-l+b=k$). We set
\[
\delta(f,F')=\bigl((f(t_{i1})-f(t_{i2}), \dots, f(t_{i1})-f(t_{i,a_i}))_{1\leq i\leq l}, f'(p_1),\dots, f'(p_b)\bigr)
\]
Clearly, $\chi(V, F')=\{f\in V\mid \delta(f,F')=0\}$.
Let $\Gamma_N^0$ be the space of 3-tuples of polynomials at most of degree $2N-1$, regarded as a vector space in the standard manner, and we set
\[
S=\{(q,u_1,\dots, u_{6n})\in \tilde\Gamma_N\times (\Gamma_N^0)^{6n}\mid u_1,\dots, u_{6n} \ \text{are linearly independent}\}.
\] 
We define $\tilde \delta:S\times C_F\times \RR^{6n}\to \RR^{3k}$ by $\tilde \delta(q, u_1,\dots, u_{6n};F'; x_1,\dots, x_{6n})=\delta(q+x_1u_1+\cdots +x_{6n}u_{6n}, F')$ and we take its adjoint $\tilde \delta':S\times C_F\to \{\text{affine maps}:\RR^{6n}\to\RR^{3k}\}\cong Mat(3k,6n+1)$. Here, the sum and scalar multiple are the standard ones for the polynomials and $Mat(3k,6n+1)$ is the space of $ 3k\times (6n+1) $ matrices. This map is identified as $\tilde \delta'(q,u_i, F')=(\delta(q,F'),\delta(u_i,F'))_{1\leq i\leq l}$ (for an element of $\Gamma_N^0$, $\delta$ is defined by the same formula as above). Let $u_{ijm}$ denote the coefficient of $t^m$ of the $j$-th component of $u_i$. We have
\[
\frac{\partial}{\partial u_{ijm}}\tilde \delta'={}^t((t_{i1}^m-t_{i2}^m,\dots, t_{i1}^m-t_{i,a_i}^m)_{1\leq i\leq l}\ ;\ mp_1^{m-1},\dots , mp_b^{m-1})
\]
placed from the $((j-1)k+1,i)$-th to $(jk,i)$-the entries of the matrix with zeros in the other entries. \\
\indent Suppose $c(F)=k\leq 2n$. The $2N$ vectors $\frac{\partial}{\partial u_{ijm}}\tilde \delta'$ with $m=0,\dots, 2N-1$  are linear combinations of the row vectors of $VD(t_1,\dots t_N)$ whose coefficient matrix is of maximum rank, and we also have $|F'|<2k\leq 4n\leq N$. Therefore,  these vectors span $\RR^k$, which implies that $\tilde \delta'$ is a submersion. For $s=(q,u_i)\in S$, let $\langle s\rangle=\{q+x_1u_1+\cdots +x_{6n}u_{6n}\mid x_i\in \RR\}$. For $F'\in C_F$,  $\chi(\langle s\rangle, F')\not=\emptyset$ and  the codimension of $\chi(\langle s \rangle ,F')$ in $ \langle s \rangle $ is $3k$ if and only if  $rank \ \tilde\delta'(s,F')=3k$. The subset $Mat^{3k-i}\subset Mat(3k,6n+1)$ of matrices of rank $3k-i$ has codimension $i(6n-3k+i+1)$. By the parametric transversality theorem, for almost all $s$, $\tilde \delta'_s:C_F\to Mat(3k,6n+1)$ is transversal to $Mat^{3k-i}$, and  the set of configurations $F'$ such that the codimension of  $\chi( \langle s \rangle ,F')$ in  $\langle s \rangle$ is $3k-i$, identified with $(\tilde\delta'_s)^{-1}(Mat^{3k-i})$, has codimension $i(6n-3k+i+1)$ unless it is empty. If $ \langle s \rangle = \langle s' \rangle $, the transversality of $\tilde \delta'_s$ and $\tilde \delta'_{s'}$ are equivalent so we have obtained the claim of part 2.\\
\indent For $2n\leq k\leq 6n+1$, the space $\chi( \langle s \rangle ,F')$ is non-empty and $\dim \chi( \langle s \rangle ,F')= l$ if and only if $rank\ \tilde \delta'_s(F')=6n-l$. By the same argument as above, we see the claim of the lemma holds since  $|F'|\leq 12n+2\leq N$. The dimension of $C_F$ is at most $2k$ so if $k =6n+1$, we have $\chi( \langle s \rangle ,F')=\emptyset$.\\
\indent For $k > 6n+1$, we can take a configuration $F_1$ of complexity $6n+1$ satisfying $\chi( \langle s \rangle ,F')\subset \chi( \langle s \rangle ,F_1)$, so  we have $\chi( \langle s \rangle ,F')=\emptyset$.
\end{proof}
{\bf Convention.} Throughout the rest of the paper, we fix $V$ satisfying the conditions of Lemma \ref{Lgeneral_posi} and write $V=\Gamma_n$. (By choosing $\Gamma_n$ sufficiently close to $I(\tilde\Gamma_n)$, we can make $\Gamma_n$ include a knot which has the same isotopy type as a knot in $I(\tilde \Gamma_n)$.) After Definition \ref{Dpsi_0}, we will add a condition on the  choice of $\Gamma_n$ which is satisfied if $\Gamma_n$ is close to $I(\tilde \Gamma_n)$. 
\begin{defi}
Let $AG_i$ denotes the space of $i$ dimensional affine subspaces of $\tilde \Gamma_N$ with the standard quotient topology of an open subset of the space of $6N\times (i+1)$ matrices.
\end{defi}
The following lemma will be used in  section \ref{SU_1}.
\begin{lem}\label{Leval}
Let $T$ be a positive number. There exists a neighborhood $\WW_T$ of $I(\tilde \Gamma_n)$ in $AG_{6n}$ such that  
for any $n$ distinct points $t_1,\dots, t_n\in [-T,T]$ and any $V\in \WW_T$, the map 
\[
ev_{t_1,\dots, t_n} :V\to \RR^{6n}\ \  \text{given by}\ \  V\ni f\mapsto (f(t_i),f'(t_i))_{1\leq i\leq n}\in \RR^{6n}
\]
is bijective. In particular, if $F$ is a classed configuration all of whose geometric points belong to $[-T,T]$ and whose cardinality is $\leq n$, the map $\WW_T\ni V\mapsto \chi(V,F)\in AG_{6n-3c(F)}$ is well-defined and  continuous.
\end{lem}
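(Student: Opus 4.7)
The plan is to establish bijectivity first at $V_0 := I(\widetilde{\Gamma}_n)$, propagate it to a neighborhood via a compactness/confluent-limit argument, and then derive the ``in particular'' statement from the bijectivity. For $f = I(g)$ with $g \in \widetilde{\Gamma}_n$ we have $f(t_i) = g(s_i)$ and $f'(t_i) = \sigma_i\, g'(s_i)$, where $s_i := t_i^w + t_i$ and $\sigma_i := w t_i^{w-1} + 1 \geq 1$ (since $w$ is odd, $w-1$ is even and $t^{w-1}\geq 0$). The map $s\colon \RR\to\RR$ is strictly increasing, so the $s_i$ are pairwise distinct whenever the $t_i$ are. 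Using the basis $\{s(t)^j\}_{0\leq j < 2n}$ in each of the three coordinate blocks of the direction space of $V_0$, the matrix of $ev_{t_1,\dots,t_n}$ becomes block-diagonal with three identical $2n\times 2n$ confluent-Vandermonde blocks, each of determinant $\pm\bigl(\prod_i \sigma_i\bigr)\prod_{i<j}(s_i-s_j)^4 \neq 0$. This proves bijectivity at $V_0$.

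For the neighborhood, suppose toward contradiction that no $\WW_T$ works: there exist $V_k \to V_0$ in $AG_{6n}$ and pairwise-distinct tuples $t^{(k)} \in [-T,T]^n$ for which $ev_{t^{(k)}}|_{V_k}$ is not bijective. Fix a linear complement $W$ of the direction space of $V_0$ inside that of $\widetilde{\Gamma}_N$, and realize each $V_k$ as a graph over $V_0$ via a small linear map $\phi_k$ from the direction space of $V_0$ to $W$ with $\phi_k \to 0$. Pick $v_k$ of unit norm in the direction space of $V_0$ such that $v_k + \phi_k(v_k)$ lies in the kernel of $ev_{t^{(k)}}$. After passing to subsequences, $v_k \to v^*$ with $\|v^*\|=1$ and $t^{(k)} \to t^*\in[-T,T]^n$. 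From the simultaneous vanishing of value and derivative at each $t^{(k)}_i$, each component of $v_k+\phi_k(v_k)$ is divisible by $\prod_i (t - t^{(k)}_i)^2$; passing to the limit and using $\phi_k(v_k)\to 0$, each component of $v^*$ is divisible by $\prod_\tau(t-\tau)^{2 m_\tau}$, where $\tau$ runs over the distinct values in $t^*$ with multiplicity $m_\tau$ and $\sum_\tau m_\tau = n$. Writing $v^* = g\circ s$ with $g$ in the direction space of $\widetilde{\Gamma}_n$ and using $s'(t)\geq 1$, each polynomial component of $g$ is divisible by $\prod_\tau(y-s(\tau))^{2 m_\tau}$, a polynomial of degree $2n$, while the components of $g$ have degree $\leq 2n-1$; this forces $g=0$, hence $v^*=0$, contradicting $\|v^*\|=1$.

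For the last assertion, $\chi(V,F)\subseteq V$ is cut out by $3c(F)$ scalar linear equations: $3(|a|-1)$ per class $a\in A$ (equating three-component values) and $3$ per point $p\in B$ (vanishing of three-component derivatives). Completing the geometric points of $F$ arbitrarily to a tuple of $n$ pairwise-distinct points in $[-T,T]$, the first part supplies an affine bijection $V\xrightarrow{\sim}\RR^{6n}$ for every $V\in\WW_T$, and the $3c(F)$ equations arise from this bijection by post-composition with a fixed linear map $\RR^{6n}\to\RR^{3c(F)}$ whose rows are value-differences within classes and derivative-coordinates at points of $B$; these rows are manifestly linearly independent, so the map is surjective. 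Consequently $\chi(V,F)$ is an affine subspace of codimension exactly $3c(F)$, and since the whole construction depends continuously on $V$, the resulting map $\WW_T \to AG_{6n-3c(F)}$ is continuous. The main difficulty throughout is the uniformity in $t$ in the neighborhood step: the naive perturbation estimate for the linear parts fails as the $t_i$ collide because the inverse of $ev_{t^{(k)}}|_{V_0}$ blows up there, and the confluent divisibility/jet-counting argument is designed precisely to bypass this.
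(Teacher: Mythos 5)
Your proposal is correct, and steps one and three (bijectivity at $I(\tilde\Gamma_n)$ via the confluent Vandermonde computation with the substitution $s=t^w+t$, and the derivation of the ``in particular'' clause by composing the affine bijection $ev_{t_1,\dots,t_n}$ with a fixed surjective linear map $\RR^{6n}\to\RR^{3c(F)}$) coincide in substance with the paper's argument. Where you diverge is the propagation to a neighborhood $\WW_T$. The paper observes that $\det A$ is divisible by $\prod_{i<j}(t_i-t_j)^{12}$, forms the quotient $D(v_1,\dots,v_{6n})$, computes it explicitly on $I(\tilde\Gamma_n)$ to get a uniform lower bound $|D|\geq|k|>0$ valid even at coincident $t_i$, and then invokes joint continuity of $D$ in the frame and in $(t_1,\dots,t_n)$ over the compact cube $[-T,T]^n$. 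You instead argue by contradiction: extract a unit kernel vector along a sequence $V_k\to I(\tilde\Gamma_n)$, pass to a limit $v^*$ in the direction space of $I(\tilde\Gamma_n)$, and kill it by the jet count (each component of $g$ with $v^*=g\circ s$ acquires $2n$ zeros counted with multiplicity, exceeding its degree bound $2n-1$). Both devices address exactly the same degeneration --- the naive perturbation bound fails as the $t_i$ collide --- and both ultimately rest on the injectivity of the confluent evaluation on $I(\tilde\Gamma_n)$. The paper's desingularized determinant buys an explicit, quantitative nonvanishing statement (used nowhere else, but cleaner to cite), while your compactness-plus-divisibility argument avoids the explicit formula for $D$ and the slightly delicate bookkeeping of frames representing points of $AG_{6n}$, at the cost of being non-effective. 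One small point worth making explicit in your write-up: passing divisibility by $\prod_i(t-t_i^{(k)})^2$ to the limit uses that polynomial division by a monic divisor of fixed degree is continuous in the coefficients (or a Hurwitz-type count of zeros), and transferring the vanishing orders from $v^*$ to $g$ uses that $s$ is a global, strictly increasing local diffeomorphism, so the points $s(\tau)$ stay distinct; you gesture at both, and both are correct.
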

\begin{proof}
Let $V\in AG_{6n}$.  Let $P_0,P_1,\dots, P_{6n}\in V$ be elements such that $P_1-P_0,\dots, P_{6n}-P_0$ are linearly independent. We denote by $v_i\in \RR^{6N}$ the vector made by arranging the coefficients of monomials in $P_i-P_0$. The matrix representing (the linear part of) the affine map $ev_{t_1,\dots, t_n}$ is given by the product
\[
A=\left(
\begin{array}{c}
R(t_1)\\
\vdots \\
R(t_n)\\
R'(t_1)\\
\vdots \\
R'(t_n)
\end{array}
\right)^{\oplus 3}
(v_1,\dots, v_{6n}),
\]
where $R(t)=(1,t,\dots, t^{2N-1})$. It is clear that $\det A$ is divided by $(t_i-t_j)^{12}$ ($1\leq i<j \leq n$). We set
\[
D(v_1,\dots, v_{6n})=\frac{\det A}{\prod_{i<j}(t_i-t_j)^{12}}.
\]
When $V=I(\tilde \Gamma_{n})$, we have 
\[
D(v_1,\dots, v_{6n})=k\prod_{i=1}^n(wt_i^{w-1}+1)\prod_{1\leq j<l\leq n}\left(
\frac{t_j^w-t_l^w}{t_j-t_l}+1\right)^{12}
\] for a choice of $P_0,\dots, P_{6n}$, where $k$ is a non-zero constant, where $w$ is the number in the definition of the map $I$. It follows that  $|D(v_1,\dots, v_{6n})|\geq |k|>0$ for $V=I(\tilde \Gamma_n)$ and for any (not necessarily pairwise distinct) $t_1,\dots, t_n$ since $w$ is odd. As $|D(v_1,\dots, v_{6n})|$ is continuous on  $v_1,\dots, v_{6n}, t_1,\dots, t_n$, we have proved the former part of the claim.
For the latter part, if $F$ satisfies the condition of the claim, by the property of evaluation map $ev_{t_1,\dots, t_n}$, $\chi(V,F)$ is non-empty and its codimension  in $V$ is $3c(F)$ for $V\in \WW_T$ (even if $V$ and $F$ do not satisfy the condition of Lemma \ref{Lgeneral_posi}). Therefore,  the spaces $V\in \WW_T$ and $\chi(\tilde \Gamma_N,F)$ intersect transversally in $\tilde \Gamma_N$, which ensures well-definedness and continuity of the map. 
\end{proof}
Let $Sing\subset \Gamma_n$ be the subset of singular maps, i.e. maps $f$ which have at least one pair $t_1<t_2$ with $f(t_1)=f(t_2)$ or one point $t$ with $f'(t)=0$. The complement $\Gamma_n-Sing$ is regarded as a subspace of the knot space $K_3'$. If we take $n$ sufficiently large, any finitely many isotopy types of knots are realized in the space $\Gamma_n-Sing$. By the Alexander duality, we have $\bar H^*(\Gamma_n-Sing)\cong \bar H_{6n-*-1}(Sing^*)$.\\
\indent We shall define the resolvent  of $Sing$, which admits a nice filtration.

\begin{defi}\label{Dgraph}
\begin{enumerate}
\item A {\em graph} $G$ in this paper consists of  a  vertex set $V(G)$  and an edge set $E(G)$. Each edge has at most two endopoints, which are vertices. If $v$ is an endpoint of an edge $e$, we say $v$ is {\em incident} with $e$ or $e$ is {\em incident} with $v$. An edge having only one endpoint is called a {\em loop}. A vertex is {\em discrete} if it is not incident with any edge (including a loop). By $e\in G$, we mean $e\in E(G)$.
\item Let $\GG^{g+}$ be the set of graphs $G$ satisfying the following conditions:
\begin{itemize}
\item The vertex set  $V(G)$ is a (possibly empty) finite subset of $\RR$ and 
the edge set $E(G)$ is a  subset of $\{(t,t')\in V(G)^2\mid t\leq t'\}$.  We understand that an edge $(t,t')$ is incident with $t$ and $t'$ (and it is a loop if $t=t'$). 
\item No vertex is discrete. 
\end{itemize}
For a graph $G\in \GG^{g+}$, let $G_1\in \GG^{g+}$ be the subgraph of $G$ consisting of the non-loop edges and their endpoints. The graph $G$  is called a {\em generating graph} of a classed configuration $F=(A,B)$ if $\pi_0(G_1)$ equals $A$ and the set of vertices incident with  loops of $G$  equals $B$.  We also say $F$ is the {\em (underlying  classed) configuration of} $G$.  We define the {\em cardinality} $|G|$ and  {\em complexity} $c(G)$ of $G$ as $|F|\ (=|V(G)|)$ and $c(F)$,  respectively. We say a map $f$ respects $G$ if it respects the underlying classed configuration of $G$.
\item Let $\GG^g\subset \GG^{g+}$ be the subset of graphs $G$ with $V(G)\not=\emptyset$.
\item Fix a polynomial map $\Psi:\RR^2\to \RR^M$ such that the convex hull of $\{\Psi(t,t')\mid (t,t')\in E(G)\}$ is an $( |E(G)|-1)$-dimensional  simplex for any generating graph with $c(G)\leq 6n$ and for any two distinct graphs with complexity $\leq 6n$, the corresponding simplices have no common inner points, where $M$ is a sufficiently large integer. We define a subspace $\Sigma^V\subset \Gamma_n\times \RR^M$ by declaring $(f,x)\in \Sigma^V$ if and only if $f$ respects $G$ and $x$ belongs to the convex hull of  $\{\Psi(t,t') \mid (t,t')\in E(G)\}$ for some $G\in \GG^g$. 
\end{enumerate}
\end{defi}
\begin{rem}
In \cite{vassiliev}, an element of $\GG^g$ is called a generating family  and a loop is denoted by $*$.
\end{rem}
\begin{prop}[\cite{vassiliev}]
The projection to $\Gamma_n$ induces a homotopy equivalence $(\Sigma^V)^*\to Sing^*$.\hfill \qedsymbol
\end{prop}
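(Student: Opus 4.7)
The plan is to show that the restricted projection $\pi : \Sigma^V \to Sing$ is a proper surjection with contractible point-preimages, and then upgrade this to a homotopy equivalence of one-point compactifications.

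First I would analyze the fibers. Given $f \in \Gamma_n$, let $G_f$ be the ``maximal'' element of $\GG^{g+}$ respected by $f$: its vertex set consists of all $t \in \RR$ which are either critical points of $f$ or belong to a nontrivial level set, its loops are exactly the critical points, and its non-loop edges are all pairs $(t, t')$ with $t < t'$ and $f(t) = f(t')$. A graph $G \in \GG^g$ is respected by $f$ iff $E(G) \subset E(G_f)$ and every vertex of $G$ is incident with an edge of $G$ (automatic from the definition of $\GG^g$ once the edge set is fixed). By the general position hypothesis on $\Psi$, the points $\{\Psi(e) : e \in E(G_f)\}$ are affinely independent; therefore the $\Delta_G$'s for admissible $G \subset G_f$ are precisely the faces of the simplex $\Delta_{G_f}$, and since any subset of edges of $G_f$ automatically yields an element of $\GG^g$ (no orphan vertex arises, because the vertex set is taken to be the endpoints of the chosen edges), the fiber is $\pi^{-1}(f) = \Delta_{G_f}$. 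This is nonempty iff $E(G_f) \neq \emptyset$ iff $f \in Sing$, and it is a compact, contractible simplex.

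Next I would verify that $\pi: \Sigma^V \to Sing$ is proper. Fibers are compact simplices by the previous step, so it remains to show $\pi$ is closed; equivalently, $\Sigma^V$ is closed in $Sing \times \RR^M$ and the preimage of a compact $K \subset Sing$ is bounded in $\RR^M$. Since the leading coefficient of every component of $f \in \Gamma_n$ is fixed and nonzero, for any compact $K$ there is a $T > 0$ such that the vertices of $G_f$ lie in $[-T, T]$ for all $f \in K$; hence the edges of every $G_j$ with $\pi(f_j, x_j) = f_j \in K$ lie in $[-T, T]^2$, so $x_j = \sum \lambda_{j,e} \Psi(e)$ is bounded by continuity of $\Psi$ on $[-T, T]^2$. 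For closedness, given $(f_j, x_j) \to (f, x)$ with $(f_j, x_j) \in \Sigma^V$ and $f \in Sing$, extract a subsequence along which the chosen graphs $G_j$ (viewed through their edge multisets in $[-T, T]^2$) converge to a limit edge set; continuity of the evaluation $f \mapsto (f(t), f'(t))$ forces the limit edges to lie in $E(G_f)$, so $(f, x) \in \Sigma^V$.

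Finally I would conclude. The proper surjection $\pi : \Sigma^V \to Sing$ with contractible fibers extends to a map $\pi^* : (\Sigma^V)^* \to Sing^*$ of one-point compactifications sending $\infty$ to $\infty$, and $\pi^*$ still has contractible point-preimages (the fiber over $\infty$ is a point). Invoking a Vietoris--Begle / Smale type theorem on proper maps with contractible point-preimages between sufficiently nice spaces (CW or stratified) then yields that $\pi^*$ is a weak homotopy equivalence. To justify this rigorously, I would introduce the complexity filtration $\Sigma^V_{\leq k} \subset \Sigma^V$ (the subspaces coming from graphs $G$ with $c(G) \leq k$) and the corresponding filtration on $Sing$, show stratum-by-stratum that $\pi$ restricts to a simplex-bundle-like map with contractible fibers, and conclude inductively by Mayer--Vietoris / the gluing lemma for homotopy equivalences. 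The main obstacle is the technical bookkeeping in this last step: one has to argue that the decomposition of $Sing$ by the isomorphism type of $G_f$ behaves well enough (locally trivial fibrations on strata up to the action of the small symmetries of configurations) so that the general position hypothesis of Lemma \ref{Lgeneral_posi} can be applied uniformly to keep the dimension count and the combinatorial structure under control across strata.
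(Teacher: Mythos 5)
Your plan is correct and follows the same route as the proof the paper omits (it is cited to Vassiliev): identify the fiber over $f\in Sing$ as the single simplex spanned by $\{\Psi(e)\mid e\in E(G_f)\}$, check properness using the fixed leading coefficients and the bound $c(G_f)\le 6n$ guaranteed by the choice of $\Gamma_n$, and conclude by the standard fact that a proper map of finite-dimensional (here semialgebraic) spaces with contractible point-preimages induces a homotopy equivalence of one-point compactifications. The fiber identification and the closedness/boundedness arguments are sound as stated.
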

The space $\Sigma^V$ is Vassiliev's resolvent in \cite{vassiliev} (up to difference of the affine space $\tilde \Gamma_n$). 
We will give an equivalent definition of slightly different appearance, which is perhaps more familiar for homotopy theorists. 
\begin{defi}\label{Dgenerating}
We regard $\GG^{g+}$ as a topological poset as follows. 
\begin{enumerate}
\item  $\exists \ G_1\to G_2\iff E(G_1)\subset E(G_2)$.
\item For $G\in\GG^{g+}$ and $i\geq 1$, let $U_i(G)\subset \GG^{g+}$ be the subset of  graphs $H$ such that for any  $e\in E(H)$, there is an edge  $e'\in E(G)$ satisfying   $|e'-e|\leq 1/i$, where $|-|$ denotes the standard Euclidean norm of   $\RR^2$.  We topologize $\GG^{g+}$ so that $\{U_i(G)\}_{i\geq 1}$ is an open basis of any $G\in \GG^{g+}$.
\end{enumerate} 
We regard $\GG^g$ as the topological  subposet of $\GG^{g+}$. Let $|\GG^g|$ be the realization of the nerve of $\GG^g$ (see subsection \ref{SSNT}). Let
\[
\Sigma^0_{nc}\subset \Gamma_n\times |\GG^g|
\] 
be the subspace of  elements   $(f,u)$ such that   $f$ respects  $\LL(u)$, where $\LL(u)$ is the last $\GG^g$-element of $u\in |\GG^g|$. We define a pointed space $\Sigma^0$ by the one-point compactification $\Sigma^0=(\Sigma^0_{nc})^*$.
\end{defi}
For example, for three points $t_1<t_2<t_3$ in $\RR$, when $t_1$, $t_2$ are fixed and $t_3$ tends to $t_2$, the graph $G$ with $E(G)=\{(t_1, t_2), (t_1, t_3)\}$ tends to the graph with the unique edge $(t_1,t_2)$ in  $\GG^g$.
The following is obvious.
\begin{lem}\label{Lhomeo}
 $\Sigma_{nc}^0$ is homeomorphic to $\Sigma^V $.
\end{lem}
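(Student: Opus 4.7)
The plan is to construct a fiberwise homeomorphism over $\Gamma_n$ identifying, over each $f$, the union of simplices $\Delta(G)=\{\sum_{e\in E(G)}\lambda_e\Psi(e):\lambda_e\ge 0,\ \sum_e\lambda_e=1\}\subset\RR^M$ (taken over $G\in \GG^g$ respected by $f$) with the realization of the corresponding subposet of $\GG^g$; this is the classical identification between a simplicial complex and its barycentric subdivision.

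Define $\Theta\colon\Sigma^V\to\Sigma_{nc}^0$ as follows. Given $(f,x)\in\Sigma^V$, pick $G\in\GG^g$ with $f$ respecting $G$ and $x=\sum_{e\in E(G)}\lambda_e\Psi(e)$, $\lambda_e\ge 0$, $\sum_e\lambda_e=1$. The affine independence of $\{\Psi(e)\}_{e\in E(G)}$ assumed in Definition \ref{Dgraph} (extended to unions $E(G_1)\cup E(G_2)$ by taking $M$ large enough) forces $\lambda$ to be unique and the support $G'=\{e:\lambda_e>0\}\in\GG^g$ to be independent of the choice of $G$. Let $\mu_0>\mu_1>\cdots>\mu_\ell>0$ be the distinct positive values of $\lambda$ and put $G'_j=\{e:\lambda_e\ge\mu_j\}$, giving a chain $G'_0\subsetneq\cdots\subsetneq G'_\ell=G'$ in $\GG^g$. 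Set $\tau_j=(\mu_j-\mu_{j+1})|E(G'_j)|$ with $\mu_{\ell+1}:=0$; then $\tau_j>0$, and a telescoping computation gives $\sum_j\tau_j=\sum_{e\in G'}\lambda_e=1$, so $u=\tau_0G'_0+\cdots+\tau_\ell G'_\ell$ defines an element of $|\GG^g|$. Since respecting is monotone under subgraph inclusion, $\LL(u)=G'\subseteq G$ is still respected by $f$, so $(f,u)\in\Sigma_{nc}^0$.

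The inverse sends $(f,u)\in\Sigma_{nc}^0$, written in reduced form $u=\sum_{i=0}^k\tau_iH_i$ with $H_0\subsetneq\cdots\subsetneq H_k$ and all $\tau_i>0$, to $(f,\sum_{e\in E(H_k)}\lambda_e\Psi(e))$ where $\lambda_e=\sum_{i:\,e\in E(H_i)}\tau_i/|E(H_i)|$; the identity $\sum_e\lambda_e=\sum_i\tau_i=1$ is immediate. That this is two-sided inverse to $\Theta$ is the standard calculation: starting from $u=\sum_j\tau_jH_j$, the recovered values $\mu_j=\sum_{i\ge j}\tau_i/|E(H_i)|$ are strictly decreasing, one has $G'_j=H_j$, and $(\mu_j-\mu_{j+1})|E(H_j)|=\tau_j$.

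The main obstacle will be continuity at stratum boundaries. On the interior of each $\Delta(G)$, both $\Theta$ and $\Theta^{-1}$ are given by explicit continuous formulas. At boundaries one must handle two degenerations: (i) some $\lambda_e$ tends to $0$, in which case the support $G'$ jumps to a proper subgraph and the chain $(G'_j)$ correspondingly shortens in $|\GG^g|$; (ii) two edges of $G$ collide in $\RR^2$, so $G$ itself limits to a graph with strictly fewer edges in $\GG^g$. Both degenerations are exactly captured by the basic open neighborhoods $U_i(G)\subset\GG^g$ from Definition \ref{Dgenerating}, so continuity across strata follows; the verification is routine but slightly fiddly, especially in checking convergence within the realization $|\GG^g|$ of the topological poset.
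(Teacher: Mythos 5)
Your combinatorial identification is the standard barycentric--subdivision bijection, and stratum by stratum it agrees with what the paper does. The gap is precisely the step you defer as ``routine but slightly fiddly'': continuity under your degeneration (ii), where edges of $G$ collide in $\RR^2$. Your inverse sends $u=\sum_i\tau_iH_i$ to $\sum_i\tau_i\,b(H_i)$, where $b(H)=\frac{1}{|E(H)|}\sum_{e\in E(H)}\Psi(e)$ is the ordinary barycenter, and $b$ is \emph{not} continuous on the topological poset $\GG^g$: if $G_k=\{e_1,e_2,e_3^{(k)}\}$ with $e_3^{(k)}\to e_2$, then $G_k\to G_\infty=\{e_1,e_2\}$ in $\GG^g$, but $b(G_k)\to \frac13\Psi(e_1)+\frac23\Psi(e_2)\neq \frac12\Psi(e_1)+\frac12\Psi(e_2)=b(G_\infty)$. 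Concretely, take $u_k=\tau_0\{e_1\}+\tau_1\{e_1,e_2\}+\tau_2G_k$ with $\tau_2>0$. In $|\GG^g|$ the underlying chain degenerates and $u_k\to\tau_0\{e_1\}+(\tau_1+\tau_2)\{e_1,e_2\}$, while the images under your inverse converge to a point whose $\Psi(e_2)$--coefficient is $\tau_1/2+2\tau_2/3$ rather than the required $(\tau_1+\tau_2)/2$. Dually, $\Theta$ itself fails: for $\lambda_1>\lambda_2>\lambda_3>0$ with $\lambda_1>\lambda_2+\lambda_3$, the points $x_k=\lambda_1\Psi(e_1)+\lambda_2\Psi(e_2)+\lambda_3\Psi(e_3^{(k)})$ converge in $\RR^M$, $\Theta(x_k)$ converges to $(\lambda_1-\lambda_2)\{e_1\}+(2\lambda_2+\lambda_3)\{e_1,e_2\}$, but $\Theta(\lim x_k)=(\lambda_1-\lambda_2-\lambda_3)\{e_1\}+2(\lambda_2+\lambda_3)\{e_1,e_2\}$. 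So the map you wrote down is a bijection but not a homeomorphism.

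This is exactly why the paper does not use ordinary barycenters: it defines a ``moving barycenter'' $v(G)$ recursively by $v(G)=\bigl(\sum_{i}d_i\,v(\partial_iG)\bigr)/\sum_id_i$ with weights $d_i=\prod_{j<k,\ j,k\neq i}|e_j-e_k|$. When $e_j\to e_k$, every weight except $d_j$ and $d_k$ acquires the vanishing factor $|e_j-e_k|$, so $v(G)\to v(G_\infty)$ and the subdivision map $(f,\sum_i\tau_iG_i)\mapsto(f,\sum_i\tau_iv(G_i))$ becomes continuous across the collision strata while still landing in the interior of the correct simplex (so the subdivision combinatorics you set up still applies). To repair your argument you must replace $b$ by such a collision-compatible choice of interior points; the continuity you need is false as stated.
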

\begin{proof}
We use the map $\Psi:\RR^2\to \RR^M$ fixed in Definition \ref{Dgraph}.
We shall define a continuous map $v:\GG^g\to \RR^M$ such that for each graph $G\in \GG^g$, $v(G)$ belongs to the interior of the simplex spanned by $\{\Psi(e)\mid e\in E(G)\}$, that is, $v(G)$ can be expressed as $v(G)=\sum_{e\in E(G)}s_e\Psi(e)$ with some numbers $s_e>0$ satisfying $\sum_{e\in E(G)}s_e=1$. We define $v$ by induction on $p=|E(G)|$.  When $p=1$, we set $v(G)=\Psi(e)$, where $e$ is the unique edge of $G$. When $p=2$, we set $v(G)=\frac{\Psi(e_1)+\Psi(e_2)}{2}$, where $E(G)=\{e_1,e_2\}$. Suppose we have defined $v$ for the graphs with $\leq p-1$ edges. We set
\[
v(G)=\frac{\sum_{i=1}^p d_i \, v(\partial_iG)}{\sum_{i=1}^pd_i}
\]
where $E(G)=\{e_1,\dots, e_p\}$ and $\partial_iG$ is the graph obtained by removing $e_i$ from $G$, and $d_i=\prod_{j<k, \, j,k\not=i}|e_j-e_k|$. We easily see that $v(G)$ is continuous and in the interior of the simplex corresponding to $G$. The simplex corresponding to $G$ with $p=|E(G)|$ is divided into the simplices spanned by $\{v(G_1'),\dots, v(G_{p-1}'), v(G)\}$, where $\{G_1',\dots, G_{p-1}'\}$ runs through the sequences of graphs with $G_1'\subset \cdots \subset G_{p-1}'\subset G$ and $|E(G_i')|=i$.   Therefore,  the map $\Sigma^0_{nc}\to \Sigma^V$ given by 
\[
(f,\tau_0G_0+\cdots +\tau_mG_m)\mapsto (f,\tau_0v(G_0)+\cdots +\tau_mv(G_m))
\]
is a homeomorphism. 
\end{proof}
Let $\FFF_k=\FFF_k(\Sigma^0)\subset \Sigma^0$ be the subspace consisting of the basepoint and elements  $(f,u)$   which satisfy $c(\LL(u))\leq k$.   
In view of Lemma \ref{Lgeneral_posi} (3), this defines a bounded filtration on $\Sigma^0$
\[
\FFF_0\subset \FFF_1\subset \cdots \subset \FFF_k\subset \cdots \subset \FFF_{6n}=\Sigma^0,
\]
 which we call the {\em complexity filtration} on $\Sigma^0$. This filtration precisely corresponds to the filtration in \cite{vassiliev} through the homeomorphism in the proof of  Lemma \ref{Lhomeo} (up to the choice of $\tilde \Gamma_n$). \\
\begin{defi}[\cite{vassiliev}]\label{DVassiliev}
We consider the spectral sequence associated to the filtered complex $(\bar C_*(\Sigma^0),\{\bar C_*(\FFF_k)\})$. After relabeling the part of  bidegree $(p,q)$ with the bidegree $(-p,6n-q-1)$, we denote the spectral sequence by $\bar E^{p,q}_r(n)$ and call it the {\em unstable  Vassiliev spectral sequence}. For $n'>n$, we can pick  isomorphisms $\eta_r : \bar E^{p,*}_r(n')\cong \bar E^{p,*}_r(n)$ defined in the range of $-p+r\leq 6n/5$, such that $\eta_r$ is compatible with the $d_r$-differentials as far as it is defined,  and that the map induced by $\eta_r$ on the subquotient $\bar E^{p,*}_{r+1}(n)$ coincides with $\eta_{r+1}$. This is because the domains of differentials $d_{r_1}$ with $r_1<r$ coming into $(p,*)$ is in the stable range of complexity $\leq 6n/5$, see Lemma \ref{Lgeneral_posi} and Remark \ref{Rchoice_affine}.
We fix a sequence of integers and isomorphisms 
\[
n_1<n_2<\cdots <n_i<\cdots,\qquad \eta^i_r: \bar E_r^{p,*}(n_i)\to \bar E_r^{p,*}(n_{i-1})
\]
defined in $-p+r\leq 6n_{i-1}/5$ and satisfying the above compatibility. We define a spectral sequence by the inverse limit 
\[
^V\!\bar E_r=\lim_{\leftarrow}\{\bar E_r(n_i), \eta^i_r\}_i\ ,
\]
which we call   the {\em (stable) Vassiliev spectral sequence}. If we say simply, Vassiliev (spectral) sequence, it means the stable sequence  according to most of the literatures.
 
\end{defi}
Since the stable range (the range of complexity $\leq 6n/5$)  becomes larger as $n$ becomes larger, for each $(p,q,r)$, there is an integer $i_0=i_0(p,q,r)$ such that $^V\!\bar  E^{p q}_r\cong \bar E^{p q}_r(n_i)$ for $i\geq i_0$.  Different choices of the isomorphisms $\eta^i_r$ give the isomorphic stable sequences.

\begin{rem}\label{Rchoice_affine}
\begin{enumerate}
\item We can see that different choices of $\Gamma_n$ also give the isomorphic stable sequences  as follows. The subspace $\chi(\tilde\Gamma_N, F)\subset \tilde \Gamma_N$ has codimension $3c(F)$ for any configuration $F$ with $|F|\leq N$ so if we define the resolvent $\Sigma^V(N)$ by replacing $\Gamma_n$ with $\tilde  \Gamma_N$,  for the filtration defined by complexity, we have a homeomorphism $\FFF_{k}(\Sigma^V(N))\cong \FFF_k(\Sigma^V)\times \RR^{6N-6n}$ for $k\leq 6n/5$ (see \cite{vassiliev1}). This induces an isomorphism between the corresponding unstable sequences (with possibly different $n$) in the range of $-p+r\leq 6n/5$.
\item Our choice of the space of polynomial maps $\tilde \Gamma_N$ is different from that of \cite{vassiliev}, where components of maps are monic polynomials with zero constant term but these choices gives isomorphic Vassiliev spectral sequence  since these spaces are embedded in a common affine space such as the space of all maps consisting of monic polynomials of degree $2N+1$, to which we can apply the argument as in the previous part.
\item The boundness of the filtration $\FFF_k$ on $\Sigma^0$ is necessary for the convergence to $\bar H_*(\Sigma^0)$, which is used in the identification of the finite type invariants and the diagonal part of the $E_\infty$-page of the Vassiliev sequence, see \cite{BL}. 
\end{enumerate}
\end{rem}
We shall define an unreduced version of the Vassiliev sequence.
\begin{defi}\label{Dunreduced_Vassiliev}
  Let $ 
 \tilde \Sigma_{nc} \subset \Gamma_n\times |\GG^{g+}| $
be the subspace of elements
$(f,u)$ such that  $f$ respects  $\LL(u)$.
We set $\tilde \Sigma=(\tilde \Sigma_{nc})^*$.
The space $\Sigma^0$ in Definition \ref{Dgenerating} is naturally regarded as a subspace of $\tilde \Sigma$ by the inclusion $\GG^g\subset \GG^{g+}$.
We set
\[
\Sigma=\tilde \Sigma/ \Sigma^0.
\]
Let $\{\FFF_k=\FFF_k(\Sigma)\}_k$ be the {\em complexity filtration on} $\Sigma$ defined by $\FFF_k=\{*\}\cup \{(f,u) \mid c(\LL(u))\leq k\}$. Let $ E_r^{p,q}(n)$ be the spectral sequence associated to $(\bar C_*(\Sigma), \{\bar C_*(\FFF_k(\Sigma))\})$ after the change of bidegree $(p,q)\leftrightarrow (-p,6n-q)$. 
The space $\Sigma$ is homeomorphic to the mapping cone of the map $\iota:\Sigma^0\to \Gamma_n^*$ defined by the inclusion $\chi(\LL(u))\subset \Gamma_n$ after the projection. A homeomorphism is given by
\[
Cone(\iota)\ni (t; f, \tau_0G_0+\cdots +\tau_mG_m)\mapsto  (f, t\emptyset+(1-t)(\tau_0G_0+\cdots +\tau_mG_m)) \in \Sigma,
\]
where $\emptyset$ denotes the graph with the empty vertex and edge sets.
The composition of this map and the natural collapsing map $Cone(\iota)\to S^1\wedge \Sigma^0$ preserves the filtration so gives a map of spectral sequences $ E^{p,q}_r(n)\to \bar E^{p,q}_r(n)$ which is an isomorphism for $(p,q)\not=(0,0)$ and equals  the map $\kk\to 0$ for $(p,q)=(0,0)$. By stabilizing the sequence $E_r(n)$ similarly to $\bar E_r(n)$, we obtain a spectral sequence 
\[
 ^V\!E_r=\lim_{\leftarrow} E_r(n).
\] 
We also call  $^V\!E_r$ (resp. $E_r(n)$) the {\em Vassliev spectral sequence} (resp. {\em unstable Vassiliev sequence}) since it only differs with ${}^V\bar E_r$ (resp. $\bar E_r(n)$) in the bidegree $(0,0)$ for any $r$. We distinguish them by the presence of `bar' if necessary.
\end{defi}
\indent We can prove the  theorems in Introduction using either of $\Sigma^0$ or $\Sigma$ similarly. We will use $\Sigma$ hereafter since its correspondent naturally appears in the space level duality. The arguments in this paper are independent of a choice of the isomorphisms $E_r(n')\cong E_r(n)$ used to define the stable sequence. \\ 
\indent We define a variant of $\Sigma$.
\begin{defi}\label{Dbar_Sigma}
\begin{enumerate}
\item  For  $P\in \PP_n$, let $\TGG(P)$ be the set of graphs with  the vertex set $V(G)=P$ and an edge set $E(G)\subset \{(\alpha,\beta)\mid \alpha,\beta\in P,\ \alpha\leq \beta\}$.  So, the vertices of $G$ are the pieces of $P$.
$\GG(P)\subset \TGG(P)$ denotes the subset of graphs with an edge set $E(G)\subset \{(\alpha,\beta)\mid \alpha, \beta \in P^\circ\}$. In other words, $\GG(P)$ is the subset of graphs where the minimum and maximum pieces are discrete. Let $\emptyset_P\in \GG(P)$ denote the graph with  the empty edge set. We give $\TGG(P)$ a structure of poset as follows : There is a unique map $G\to H$ if and only if $E(G)\subset E(H)$.  We also regard $\GG(P)$ as the subposet of $\TGG(P)$. We endow $\TGG(P)$ and $\GG(P)$ with the discrete topology.
\item For a map $P\to Q$ of partitions,  $\delta_{PQ}:Q\to P$ denotes the map of sets sending  $\alpha\in Q$ to the piece of $P$ containing $\alpha$. This map induces a map $\delta_{PQ}:\TGG(Q)\to \TGG(P)$. For a graph $G\in \TGG(Q)$, the  graph $\delta_{PQ}(G)$ has an edge $(\alpha,\beta)$ if and only if there are pieces $\alpha_1, \beta_1$ of $Q$ satisfying $\alpha_1\subset \alpha$, $\beta_1\subset \beta$ and $(\alpha_1,\beta_1)\in E(G)$.
\item Let $P\in \PP_n$ and $G\in \GG(P)$. Let $G_1\subset G$ be the subgraph such that the edge set consists of all the non-loop edges of $G$ and the vertex set consists of their endpoints. Let $b$ be the number of loops in $G$. We define the {\em complexity} $c(G)$ by $c(G)=|V(G_1)|-|\pi_0(G_1)|+b$.  By this definition,  discrete vertices do not affect the complexity. 
\item Let $\PPP_n\subset [-\infty,\infty]^n$ be the subset consisting of elements $(t_1,\dots, t_n)$ satisfying $t_1\leq t_2\leq \cdots \leq t_n$. We call an element of $\PPP_n$ a {\em geometric partition}. For each $A=(t_1,\dots, t_n)\in \PP'_n$, there is a unique partition $P$ such that $t_i=t_j$ if and only if $i$ and $j$ belong to a common piece of $P$, where we agree that $t_0=-\infty$ and $t_{n+1}=\infty$. 
We call  $P$ the {\em (underlying) abstract partition of} $A$. The points $t_1,\dots, t_n$ are called the {\em geometric points of} $A$.
\item A {\em partitional graph} is a pair $J=(A,G)$ consisting of $A=(t_1,\dots, t_n)\in \PPP_n$ and $G\in \GG(P)$, where $P$ is the underlying partition of $A$. We  call $A$, $P$ and $G$ the {\em (underlying) geometric partition,  abstract partition, and  abstract graph of $J$}, respectively. We also call the geometric points of $A$ the {\em geometric points of $J$}. For $\alpha\in P$, set $t_\alpha=t_{i}$ with $i=\min\alpha$, where $t_0=-\infty$ and $t_{n+1}=+\infty$ as above. We always identify the graph $G$ with the graph with the vertex set $\{t_\alpha\mid \alpha\in P\}$ and the edge set $\{(t_\alpha, t_\beta)\mid (\alpha,\beta)\in G\}$.  So  a partitional graph is regarded as a graph whose vertices are points of $[-\infty,\infty]$ with the multiplicity which equals  the cardinality of the corresponding piece. For a partitional graph $J$, we call the  graph in $\GG^{g+}$ obtained by forgetting the discrete vertices and the multiplicity from $J$ the {\em underlying generating graph of $J$}. The underlying classed configuration of the  generating graph  is called the {\em underlying classed configuration} of $J$. The {\em complexity $c(J)$ of  $J$} is the complexity of its abstract graph, which equals the complexity of the underlying generating graph. 

\item Let $\GP_n$ be the set of partitional graphs. We topologize $\GP_n$ as follows:
For  $p\geq 1$ and $(A,G)\in \GP_n$  with $A=(t_1,\dots, t_n)$, let $V_p(A,G)\subset \GP_n$ be the subset of $(B,H)$ satisfying the following conditions.
\begin{itemize}
\item The abstract partition $Q$ of $B$ is a subdivision of the abstract partition $P$ of $A$, or equals  $P$,
\item $\delta_{PQ}(H)=G$, and
\item $d_{sc}(s_i,t_i)<1/p$ (see subsection \ref{SSNT}).
\end{itemize}

We topologize $\GP_n$ so that  $\{V_p(A,G)\}_{p\geq 1}$ is an open basis of $(A,G)$ for any $(A,G)\in\GP_n$. 
\item We give $\GP_n$ a structure of topological poset as follows : There is a morphism $(A_1,G_1)\to (A_2,G_2)$ if and only if $A_1=A_2$ and  there is a map $G_1\to G_2$ in $\GG(P)$, where $P$ is the partition of $A_1$.   Let $u$ be an element of the realization $|\GP_n|$. The {\em (underlying) abstract and geometric partitions, abstract graph, geometric points and complexity $c(u)$ of} $u$ are defined to be those of the last $\GP_n$-object of $u$, respectively. Let $P$ be the abstract partition of $u$. We define the {\em cardinality} $|u|$ of $u$ by $|u|=|P^\circ|$.
\item Let $\bar \Sigma'_{nc}\subset \Gamma_n\times|\GP_n|$ denote the subspace of elements $(f, u)$  such that $f$ respects the underlying classed configuration of  the last $\GP_n$-element of $u$.  Let $\bar \Sigma^0_{nc}\subset \bar \Sigma'_{nc}$ be the subset of elements of a form $(f,\tau_0J_0+\cdots +\tau_mJ_m)$ (for some $m\geq 0$, with $\tau_0\not=0$)  such that  the graph  $J_0$ has a non-empty edge set. We set
\[
\bar \Sigma=(\bar \Sigma_{nc}')^*/(\bar \Sigma_{nc}^0)^*.
\]
\item We define a  map $\phi_0:\bar \Sigma\to \Sigma$ by forgetting the discrete vertices and the multiplicity of vertices.
\item We define the {\em cardinality filtration} $\{F_l=F_l(\bar \Sigma)\}$ and  {\em complexity filtration } $\{\FFF_k=\FFF_k(\bar \Sigma)\}$ on $\bar \Sigma$ by $F_l=\{*\}\cup \{(f,u) \mid |u|\leq l\}$ and $\FFF_k=\{*\}\cup\{(f,u)\mid c(u)\leq l\}$. We also define the {\em cardinality filtration} $\{F_l(\Sigma)\}$ on $\Sigma$ similarly by using the cardinality of the vertex set of the last $\GG^{g+}$-element of an element $u\in |\GG^{g+}|$. We sometimes write $\Sigma_{\leq l}=F_l(\Sigma)$.
\item For $P_0\in \PP_n$, we denote by $\bar \Sigma(P_0)\subset \bar \Sigma$ the subspace of the basepoint and elements $(f,u)$ such that $P_0$ is equal to or a subdivision of the underlying abstract partition of $u$. 
\item Let $X=\Sigma$ or $\bar\Sigma$. We also consider the complexity and cardinality filtrations on a subquotient of $X$ obtained as the image of the intersection of the  subspace and each stage of the  filtration by the quotient map. For example, the complexity filtration $\{\FFF_k\}_k$ on $\Sigma_{\leq l}$ is given by $\FFF_k=\FFF_k(\Sigma)\cap \Sigma_{\leq l}$. 
\end{enumerate}
\end{defi}
For a polynomial $P(t)=a_{2N-1}t^{2N-1}+\cdots +a_0$, we set
\[
|P|=\sqrt[]{a_0^2+\cdots +a_{2N-1}^2}
\]
For  a triple $(P_1,P_2,P_3)$ of polynomials of degree $\leq 2N-1$, we set 
$|(P_1,P_2,P_3)|=\sqrt[]{|P_1|^2+|P_2|^2+|P_3|^2}$. 
Throughout the paper, we fix the  distance on $\tilde \Gamma_N$  defined by $d(f,g)=|f-g|$.
\begin{prop}\label{Lisom_sigma}
We use the notations in Definition \ref{Dbar_Sigma}. Let $P_0\in \PP_n$ be a partition with $l_0:=|P_0^\circ|<\frac{3n}{5}$.
The  map $\phi_0 :\bar \Sigma\to \Sigma$  is continuous and its restriction \[
\bar \Sigma(P_0)\to \Sigma_{\leq l_0}
\] induces an  isomorphism between $E_2$-pages of the spectral sequences associated the complexes $\bar C_*(\bar \Sigma(P_0))$ and $\bar C_*(\Sigma_{\leq l_0})$ with  the cardinality filtrations.
\end{prop}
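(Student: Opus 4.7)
I first verify continuity of $\phi_0$ by unwinding the topologies. A basic open $V_p(A, G)$ in $\GP_n$ consists of partitional graphs $(B, H)$ whose abstract partition $Q$ refines (or equals) the partition $P$ of $A$, with $\delta_{PQ}(H) = G$, and whose geometric points lie within $1/p$ of those of $A$ in $d_{sc}$. Forgetting discrete vertices and multiplicity sends such $(B, H)$ to a generating graph in some $U_i(\phi_0(A, G)) \subset \GG^{g+}$, since each edge of $\phi_0(B, H)$ lies close to an edge of $\phi_0(A, G)$ as $p\to \infty$. Continuity in the $\Gamma_n$-factor is automatic, and passage to one-point compactifications and the quotient by $(\bar\Sigma_{nc}^0)^*$ respects this. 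Since $\phi_0$ is obviously filtered, it induces a map of spectral sequences, and the $E_2$-claim reduces to showing the induced map on $E_1$ has kernel and cokernel that are acyclic under $d_1$.

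\textbf{Analysis of filtration quotients.} For $l \leq l_0$, I would stratify $F_l / F_{l-1}$ on both sides by the combinatorial type of the last simplicial vertex of a chain $u$. On $\Sigma_{\leq l_0}$, the strata are indexed by generating graphs $G \in \GG^{g+}$ with $|V(G)| = l$ (up to variation of vertex positions); on $\bar\Sigma(P_0)$, by pairs $(P, G)$ with $|P^\circ| = l$, $P$ equal to or coarsened by $P_0$, and $G \in \GG(P)$. Each stratum is a Thom-type assembly of $\chi(\Gamma_n, F)$ (for the underlying classed configuration $F$ of the generating graph) with a relative cell from the simplex structure of $|\GG^{g+}|$ or $|\GP_n|$ near the fixed combinatorial type. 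The stable-range hypothesis $l_0 < 3n/5$ ensures via Lemma \ref{Lgeneral_posi} that every $\chi(\Gamma_n, F)$ appearing has exact codimension $3c(F)$, so each stratum carries a Thom isomorphism of the expected shape.

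\textbf{Comparison at $E_1$ and $d_1$-cancellation.} Under $\phi_0$, the $(P, G)$-strata on the source split in two: when every vertex of $P^\circ$ is incident to some edge of $G$, $\phi_0$ carries the stratum bijectively to the $\phi_0(G)$-stratum on the target of the same cardinality $l$, and the induced $E_1$-map is an isomorphism on this summand; when $d \geq 1$ vertices of $P^\circ$ are discrete in $G$, the image lies in $F_{l-d} \subset F_{l-1}$ and hence vanishes in the target $E_1$-quotient. For each such discrete-vertex stratum in $E_1(\bar\Sigma(P_0))$, I would pair it with its contracted partner obtained by merging the discrete piece into an adjacent piece; the $\GP_n$-arrow realizing this contraction provides a $d_1$-cancellation, exhibiting the discrete-vertex strata as an acyclic subcomplex of $E_1(\bar\Sigma(P_0))$ whose quotient is identified via $\phi_0$ with $E_1(\Sigma_{\leq l_0})$. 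Passing to $E_2$ yields the claimed isomorphism.

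\textbf{Main obstacle.} The principal difficulty is making the acyclic pairing precise with correct signs and topological compatibility. The simplicial structure on $|\GP_n|$ near a degeneration (where two geometric points collide) interacts nontrivially with the cardinality-filtration boundaries, and one must verify that "merging a discrete piece into its neighbor" is really realized by a chain homotopy of filtration quotients rather than a mere set-theoretic bijection. A secondary subtlety is that a single $\chi(\Gamma_n, F)$-factor is shared among many $(P, G)$-strata through intersection patterns that vary with partition type, so one must use Lemma \ref{Lgeneral_posi} systematically --- via the bound $l_0 < 3n/5$ --- to ensure the transversal structure remains uniform across the pairing.
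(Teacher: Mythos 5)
Your setup through the stratification of the filtration quotients matches the paper's: the strata of $F_l/F_{l-1}$ are indexed by classed configurations (resp.\ partitional configurations), each is a Thom-type smash $S^l\wedge(|\GG^H|/|\GG^{<H}|)\wedge\chi(\Gamma_n,H)^*$, the bound $l_0<3n/5$ puts everything in the range where Lemma \ref{Lgeneral_posi} gives exact codimension, and the non-regular (discrete-vertex) strata die in the target $E_1$. The gap is in the final step. First, the kernel of the $E_1$-map is strictly larger than the span of the discrete-vertex strata: for a fixed classed configuration $H$ with $|H|=l$ there are in general many abstract partitions $P$ with $|P^\circ|=l$ refined by $P_0$ (one for each choice of multiplicities of the pieces), and \emph{each} of the corresponding regular strata maps homeomorphically onto the \emph{same} target stratum. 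So on the regular part the $E_1$-map is a fold map $\bigoplus_P(\cdot)\to(\cdot)$, not a projection onto a summand, and the quotient of $E_1(\bar\Sigma(P_0))$ by the discrete-vertex strata is a large direct sum of copies of $E_1(\Sigma_{\leq l_0})$, not $E_1(\Sigma_{\leq l_0})$ itself. Second, the discrete-vertex strata do not even form a $d_1$-subcomplex: the $d_1$-differential is realized by collisions of adjacent geometric points, and colliding a discrete piece with a non-discrete neighbor lands in a regular stratum of cardinality $l-1$, so your proposed pairing does not close up into an acyclic subcomplex.

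What is actually needed --- and what the paper does --- is to identify the entire $E_1$-page of $\bar\Sigma(P_0)$ with the normalization $N\CECHF\mathcal{F}^*Y$ of the left Kan extension of $Y=H^*(\SSS^{\leq l_0})$ along $\mathcal{F}:Q_n^{op}\to\Delta_n^{op}$ (the summands indexed by monomorphisms $f$ account simultaneously for the multiple partitions $P$ \emph{and}, inside each $Y([l])$, for the monomials with discrete indices, i.e.\ the non-regular strata), to identify $d_1$ with the simplicial differential of $NY$ via the auxiliary complexity filtration and the Bendersky--Gitler-type computation, and then to recognize the $E_1$-map induced by $\phi_0$ as the unit $N\CECHF\mathcal{F}^*Y\to NY$. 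That this unit is a quasi-isomorphism is Lemma \ref{Lcofinal}, resting on Sinha's left cofinality of $\mathcal{F}$; this cofinality input is the real content your cancellation argument is trying to reprove by hand, and it must dispose of both sources of kernel at once. Your continuity argument is also lighter than it should be: the delicate point is that when a non-discrete vertex of $\LL(u_k)$ escapes to $\pm\infty$, the constraint that $g_k$ respect the configuration forces $d(g_k,f_0)\to\infty$, so the limit is the basepoint on both sides; without this the induced map of one-point compactifications need not be continuous.
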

The following definition will be used in the proofs of Propositions \ref{Lisom_sigma},  \ref{Phomology_U_1} and \ref{Punstable_diff}.
\begin{defi}\label{Dpart_config}
A pair $(A,H)$ of a geometric partition $A$ and a classed configuration $H$ is called a {\em partitional configuration} if the geometric points of $H$ belong to the set of geometric points of $A$. Two partitional configurations $(A,H)$ and $(A',H')$ are said to be {\em equivalent} if there is a diffeomorphism $g:\RR\to \RR$ which gives an equivalence between $H$ and $H'$ and preserves the points of geometric partitions including the multiplicity. A partitional configuration $(A,H)$ is {\em regular} if the set of geometric points  of $A$ in $\RR$ equals the set of geometric points of  $H$.
\end{defi}

\begin{proof}[Proof of Proposition \ref{Lisom_sigma}]
 $\bar \Sigma$ is clearly a first-countable space so we shall show that $\phi_0$ is sequentially continuous. We fix a point $f_0\in \Gamma_n$. Let $\{(g_k, u_k)\}$ be a sequence in $\bar \Sigma$ with limit $(g_\infty, u_\infty)$. Set $J_k=\LL(u_k)$. If a non-discrete vertex of $J_k$ goes to $\pm \infty$ as $k\to \infty$, the distance $d(g_k,f_0)$ goes to infinity and we have $(u_\infty, g_\infty)=*$ (even when the coefficient of $J_k$ in $u_k$ goes to $0$). Therefore, if $(u_\infty,g_\infty)\not=*$, the absolute values of the non-discrete points of $J_{k}$ are bounded by a constant independent on $k$. This observation implies the continuity.\\
\indent We shall prove the latter claim. Let $Y:\Delta_{l_0}^{op}\to \CH_\kk$ be the functor given by taking cohomology of the $l_0$-truncated Sinha cosimplicial model $\SSS^{\leq l_0}$  in the termwise manner, $Y=H^*(\SSS^{\leq l_0})$. Here, cohomology group is regarded as a complex with zero-differential. Its $l$-th term  is the graded commutative algebra generated by elements $g_{ij}$  of degree $2$ ($1\leq i,j\leq l$) with relations $g_{ij}=-g_{ji} (i\not=j),\ (g_{ij})^2=0,\ g_{ij}g_{jk}+g_{jk}g_{ki}+g_{ki}g_{ij}=0$. The differential of $NY$ is given by the signed sum of the maps  $\partial_i(g_{jk})=g_{f(j),f(k)}$ where $f:\{1,\dots, n\}\to \{1,\dots, n-1\}$ is the order-preserving surjection which shrinks $i, i+1$.  \\
\indent We shall prove that  the map on the $E_1$-pages induced by the map $\phi_0:\bar \Sigma(P_0)\to \Sigma_{\leq l_0}$ is identified with the unit $N\CECHF \mathcal{F}^*Y\to NY$ of the adjoint in Lemma \ref{Lcofinal}.  Let $\{F_l\}_l$ denote the cardinality filtration on $\Sigma_{\leq l_0}$. We will first show the $E_1$-page of the spectral sequence associated to  $\{\bar C_*(F_l)\}_l$ is\vspace{1mm} isomorphic to the normalization $NY$ up to a degree shift. 
 \vspace{1mm}  We consider an auxiliary filtration $\{\bar\FFF_k\}$ which is the complexity filtration on the quotient $F_l/F_{l-1}$.The connected components of   $\bar \FFF_k-\bar \FFF_{k-1}$ correspond to the equivalence classes of classed configurations of cardinality $l$ and complexity $k$. If $\bar \FFF_k-\bar \FFF_{k-1}\not=\emptyset$, we have $k\leq 2l_0-1<6n/5$. In this range, by Lemma \ref{Lgeneral_posi}, the one-point compactification of a component of $\bar \FFF_k-\bar \FFF_{k-1}$ is  homeomorphic to a space of the following form:
\[
S^l\wedge (|\GG^H|/|\GG^{<H}|)\wedge \chi(\Gamma_n, H)^*.
\] 
Here, 
\begin{enumerate}
\item $S^l$ is the $l$-dimensional sphere regarded as the one-point compactification of $\{(t_1,\dots, t_l)\in \RR^l\mid t_1<\cdots <t_l\}$, 
\item $\GG^H$ is the subposet of $\GG^{g+}$ consisting of subgraphs of generating graphs of a fixed classed configuration $H$ whose geometric points are precisely $1,\dots, l\in \RR$ and whose complexity is $k$ (including the generating graphs themselves), and
\item $\GG^{<H}\subset \GG^H$ is the subposet of graphs which are not the generating graphs of  $H$,
\end{enumerate}
see \cite{vassiliev,vassiliev1} for details. The homology $\bar H_*( (|\GG^H|/|\GG^{<H}|)\wedge \chi(\Gamma_n, H)^*)$ is isomorphic to the subspace of $Y([l])^{2k}$ spanned by monomials whose set of  subscripts $(i,j)$ forms the edge set of a generating graph of $H$ with the degree shift $6n-2k\leftrightarrow 2k$. This can be shown by the same argument as the computation of the $E_2$ page of Bendersky-Gitler sequence (see e.g.\cite{FT}) or see \cite{vassiliev1} for another proof. The spectral sequence for $\{\bar \FFF_k\}$ degenerates at $E_1$-page by degree reason so $H_*(F_l/F_{l-1})\cong \oplus_{k\leq 2l}H_*(\bar \FFF_k/\bar \FFF_{k-1})$. With this decomposition, we shall identify the $d_1$-differential  $H_*(F_l/F_{l-1})\to H_*(F_{l-1}/F_{l-2})$ (extension problems will not occur again by degree reason). Let $\{\bar F_l\}_{l\geq 0}$ be the cardinality filtration on $\FFF_k/\FFF_{k-1}$ where $\FFF_k$ is the complexity filtration on $\Sigma_{\leq l_0}$. Obviously, we have $\bar \FFF_k/\bar \FFF_{k-1}=\bar F_l/\bar F_{l-1}$. Both of  the inclusion $\FFF_k\to \Sigma_{\leq l_0}$ and the quotient map $\FFF_k\to \FFF_k/\FFF_{k-1}$ induces the maps between the filtered quotient by cardinality as follows.
\[
F_l/F_{l-1}=F_l/F_{l-1}(\Sigma_{\leq l_0})\leftarrow F_l/F_{l-1}(\FFF_k)\to F_l/F_{l-1}(\FFF_k/\FFF_{k-1})=\bar F_l/\bar F_{l-1}=\bar \FFF_{k}/\bar \FFF_{k-1},
\]
where $F_l/F_{l-1}(Z)$ denotes $F_l(Z)/F_{l-1}(Z)$ with $F_i(Z)$ being the cardinality filtration on $Z$.
These maps induce  isomorphisms between  $(6n-2k+l)$-th homology.
In view of the above computation of $H_*(\bar \FFF_k/\bar \FFF_{k-1})$, the $d_1$-differential on the sequence associated to the filtration $\{\bar F_l\}_l$ on the filtered quotient is identified with the differential of the complex $NY$ since $d_1$ corresponds to the boundary of $\bar F_l-\bar F_{l-1}$ which appears as the limit of two successive vertices getting closer (see \cite{vassiliev} for details). The mentioned inclusion and quotient map induce maps between spectral sequences for the cardinality filtrations so we have proved the $E_1$-page for $\Sigma_{\leq l_0}$ is isomorphic to the normalization $NY$.  \\
\indent For $\bar \Sigma(P_0)$, the connected components of $\FFF_k(F_l'/F_{l-1}')-\FFF_{k-1}(F_l'/F_{l-1}')$ correspond to the equivalence classes of the partitional configurations $(A,H)$  such that $|A|=l$, $c(H)=k$ and $P_0$ is equal to or a subdivision of  the abstract partition of $A$. Here, $F_l'$ denotes the cardinality filtration on $\bar \Sigma(P_0)$ and $\FFF_i(F_l'/F_{l-1}')$ is the complexity filtration on the quotient. If $(A,H)$ is regular, the one-point compactification of the connected component is homeomorphic to  the compactification of the component labeled with the classed configuration $H$ which appeared in the above computation for $\Sigma_{\leq l_0}$.  If $(A,H)$ is not regular, the one-point compactification labeled with $(A,H)$  is homeomorphic to the smash product of a sphere and the compactification  labeled with $H$, where  dimension of the sphere is the number of geometric  points of $A$ which are not the points of $H$.  With these observations, we see that the $E_1$-page for $\{F_l'\}_l$ is isomorphic to $N\CECHF\mathcal{F}^*Y$ similarly to the case of $\Sigma_{\leq l_0}$. Since the non-regular components are sent to $*$ by $\phi_0$, the map between $E_1$-pages of the sequences for $\Sigma_{\leq l_0}$ and $\bar \Sigma(P_0)$ induced by $\phi_0$ is  identified with the unit map in Lemma \ref{Lcofinal} (see the paragraph after the lemma). Here, the lemma is applied to the restriction of $\mathcal{F}$ to the over-category $\PP_n/P_0\cong \PP_{l_0}$. Thus, by the  lemma, we have proved the latter claim.
\end{proof}

\begin{rem}\label{Rfiltration}
In the rest of the paper, we will prove  claims that a map induces an isomorphism of spectral sequences like Proposition \ref{Lisom_sigma}. In such claims, we mainly use cardinality filtrations while similar claims hold for complexity filtrations.  The reason for this  is that the cardinality filtration exists for the original Sinha sequence (before transferred by the duality) so it may be effective in a possible proof of Conjecture \ref{Conj_compati}. 
\end{rem}

\section{Translation of the punctured knot model }\label{Stranslation}
\subsection{Configuration space model $\CC$}
In the rest of the paper, for an integer $p$,  we express an element of $\RR^{6p}$ like $(x_i,v_i)_{1\leq i\leq p}$ with $x_i, v_i\in\RR^3$, or $(x,v)$ where $x=(x_1,\dots, x_p), \ v=(v_1,\dots, v_p)$. Intuitively speaking, the former components $x_i$ represent position and the latter components $v_i$ do velocity. For an element $y\in \RR^k$, $|y|$ stands for the standard Euclidean norm throughout the rest of the paper and we consider the distance on $\RR^k$ induced by the norm unless otherwise stated.  
\begin{defi}\label{Dpunctured}

Let $P\in \PP_n$ and set $p= |P|-2$. 
\begin{enumerate}

\item We define  positive numbers $\eps_P,\ \bar \eps_P$ by
\[
\eps_P=\frac{1}{10\cdot 8^{(n+5)(3n-p+5)}}, \qquad \bar \eps_P=\frac{1}{10\cdot 8^{(n+5)(p+5)}}\ .
\]

\item  We often label the $i$-th component of an element of $\RR^{6p}=(\RR^6)^{p}$ with the $i$-th piece of $P^\circ$, so an element of $\RR^{6p}$ is expressed  like $(x,v)=(x_\gamma,v_\gamma)_{\gamma \in P^\circ}$.
\item 
 For  $(v_\gamma)_{\gamma\in P^\circ}\in \RR^{3p}$ and  $\alpha,\beta\in P^\circ$, we set
\[
\begin{split}
c_\alpha= &\ \frac{|\alpha|}{20(n+2)}\ , \quad  \quad 
c^v_\alpha= c_\alpha |v_\alpha|\ ,\qquad c^v_{\alpha \beta}=   c^v_\alpha+c^v_\beta\ ,\\
&\\
c^v_{\leq \alpha} & =  \ c^{v}_\alpha+2\sum_{\gamma\in P, \gamma<\alpha}c^{v}_\gamma\ , \quad \quad
c^v_{\geq \alpha}=  c^v_\alpha+2\sum_{\gamma\in P, \gamma>\alpha}c^v_\gamma \ . \\
&
\end{split}
\]
Here, when $\gamma \in P-P^\circ$, we set $v_\gamma=(1/2,0,0)$.

\item Let $Q$ be a subdivision of $P$ and write $P=\{\alpha_0<\cdots <\alpha_{p+1}\}$ and $Q=\{\beta_0<\cdots <\beta_{q+1}\}$. We define an affine monomorphism $e_{PQ}:\RR^{6p}\to \RR^{6q}$ as follows. Let $(x_i,v_i)_{1\leq i\leq p}\in \RR^{6p}$. For convenience, we set 
\[
x_0=(-1+ c_{\alpha_0}/2,0,0),\quad x_{p+1}=(1-c_{\alpha_{p+1}}/2,0,0), \quad v_0=v_{p+1}=(1/2,0,0).
\] For $0\leq i\leq p+1$, suppose that $\alpha_i$ includes exactly $k$-pieces of $Q$, say $\beta_l,\dots, \beta_{l+k-1}$. We create the line segment which is centered at $x_i$, parallel to $v_i$, and of length $ 2c_{\alpha_i}|v_i|$, and divide this segment into the $k$ little segments of length $ 2c_{\beta_l}|v_i|,\dots , 2c_{\beta_{l+k-1}}|v_i|$ arranged in the direction of $v_i$. Let $y_{l+j-1}$ be the center of the $j$-th little segment and set $w_{l+j-1}=v_i$. We set $e_{PQ}((x_i,v_i)_{i})=(y_{m},w_m)_{1\leq m\leq q}$. For the maximum partition $Q$ consisting of the $n+1$ singletons,   we write $e_{PQ}=e_P$ (see Figure \ref{Flinear_emb}). It is clear that $e_{QR}\circ e_{PQ}=e_{PR}$ for a subdivision $R$ of $Q$. Let $\pi_P:\RR^{6n}\to e_P(\RR^{6p})=\RR^{6p}$ be the orthogonal projection. 
\end{enumerate}
\end{defi}
We will use the following observation many times implicitly.
\begin{lem}
If $Q$ is a subdivision of $P$, we have \quad
$\displaystyle
 \eps_P\leq \frac{\eps_Q}{8^{n+5}}\leq \frac{\bar \eps_Q}{8^{2(n+5)}} \leq \frac{\bar \eps_P}{8^{3(n+5)}}
$.
\hfill\qedsymbol
\end{lem}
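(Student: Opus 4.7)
The lemma is a purely arithmetic bookkeeping claim, so the plan is to expose each inequality as a comparison between exponents of $8$ and verify it from the two structural facts: (a) $Q\neq P$ is a subdivision, hence $q\geq p+1$ where $q=|Q|-2,\ p=|P|-2$; and (b) $q\leq n$, since any partition of $[n+1]$ has at most $n+2$ pieces.

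First I would write $\eps_P^{-1}=10\cdot 8^{(n+5)(3n-p+5)}$, $\bar\eps_P^{-1}=10\cdot 8^{(n+5)(p+5)}$, and similarly for $Q$. The three inequalities are then equivalent to the following three inequalities between exponents (after clearing the common factor $n+5$ where possible):
\begin{itemize2}
\item $\eps_P\leq \eps_Q/8^{n+5}$: need $(n+5)(3n-p+5)\geq (n+5)(3n-q+5)+(n+5)$, i.e.\ $q\geq p+1$.
\item $\eps_Q/8^{n+5}\leq \bar\eps_Q/8^{2(n+5)}$: need $(n+5)(3n-q+5)+(n+5)\geq (n+5)(q+5)+2(n+5)$, i.e.\ $2q\leq 3n-1$.
\item $\bar\eps_Q/8^{2(n+5)}\leq \bar\eps_P/8^{3(n+5)}$: need $(n+5)(q+5)+2(n+5)\geq (n+5)(p+5)+3(n+5)$, i.e.\ $q\geq p+1$.
\end{itemize2}

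Second, I would dispatch each inequality. The first and third reduce directly to fact (a). For the middle one, I would use fact (b) together with the elementary observation that $n\leq (3n-1)/2$ whenever $n\geq 1$, so $q\leq n\leq (3n-1)/2$, which gives $2q\leq 3n-1$.

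There is essentially no obstacle; the only thing one has to be careful about is not to lose the factor $n+5$ when adding the extra $k(n+5)$ coming from the denominators $8^{k(n+5)}$, and to remember that $Q$ being a \emph{subdivision} of $P$ means $Q\neq P$, so the inequality $q\geq p+1$ is strict in the required direction rather than merely $q\geq p$. Once those two points are kept in mind, the proof is a three-line arithmetic check.
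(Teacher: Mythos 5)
Your proof is correct: all three inequalities reduce, as you show, to $q\geq p+1$ (which holds because a subdivision is by definition a proper refinement) and to $2q\leq 3n-1$ (which follows from $q\leq n$ and $n\geq 1$). The paper omits the proof entirely as an obvious arithmetic check, and your verification is exactly the intended one.
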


\begin{defi}\label{Dconfig_model}
\begin{enumerate}
\item We define a functor $\mathcal{C}:\PP_n\to \CG$ as follows.
\begin{enumerate}
\item  For $P\in \PP_n$, $\mathcal{C}(P)$ is the subspace of $\RR^{6p}$ consisting of elements $(x_\gamma,v_\gamma)_{\gamma\in P^\circ}$ satisfying the following inequalities for each $\alpha\in P^\circ$ and pair $(\alpha,\beta)\in (P^\circ)^2$ with $\alpha\not=\beta$:
\[
\begin{split}
|x_\alpha|< 1- c^v_\alpha,& \qquad  |x_\alpha-x_\beta|  >  c^v_{\alpha\beta}-\frac{\eps_P}{8},\qquad 10(n+2)\bar \eps_P < |v_\alpha|< \frac{3}{4}, \\
-1+ (c^v_{\leq \alpha}+c^v_\alpha) & -\frac{\eps_P}{8}\  <_1 x_\alpha+c_\alpha v_\alpha, \quad  \quad 
x_\alpha-c_\alpha v_\alpha  <_1\  1- (c^v_{\geq \alpha}+c^v_\alpha)+\frac{\eps_P}{8}.
\end{split}
\]

\item If $Q$ is a subdivision of $P$,  the corresponding map $ \mathcal{C}(P)\to \mathcal{C}(Q)$ is given by the map $e_{PQ}$ defined above. We will see that this is well-defined in Lemma \ref{Le_incl}.

\end{enumerate} 
\item We can define an inclusion $\mathcal{C}(P)\to \PK (P)$ by creating the line segment centered at $x_\alpha$ of length $(2c_{\alpha}-\frac{1}{20(n+2)})|v_\alpha|$ parallel to $v_\alpha$ for $(x_\alpha,v_\alpha)_{\alpha\in P^\circ}$ similarly to the definition of $e_{PQ}$, and regard the union of the segments as an embedding with locally constant velocity. These inclusions form a natural transformation $\mathcal{C}\to \PK$. 
\end{enumerate}

\end{defi}

\begin{lem}\label{Le_incl}
If $Q$ is a subdivision of $P$, the inclusion $e_{PQ}(\mathcal{C}(P))\subset \mathcal{C}(Q)$ holds.
\end{lem}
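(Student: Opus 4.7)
\emph{Proof plan.} Fix $(x_\alpha, v_\alpha)_{\alpha \in P^\circ} \in \mathcal{C}(P)$ and write $(y_\beta, w_\beta)_{\beta \in Q^\circ} = e_{PQ}((x_\alpha, v_\alpha)_\alpha)$. For each $\beta \in Q^\circ$ let $\alpha(\beta) \in P$ be the unique piece with $\beta \subset \alpha(\beta)$, and use the conventions $x_{\min P}, x_{\max P}, v_{\min P}, v_{\max P}$ from Definition~\ref{Dpunctured}(4). The construction of $e_{PQ}$ gives $w_\beta = v_{\alpha(\beta)}$ and $y_\beta = x_{\alpha(\beta)} + s_\beta v_{\alpha(\beta)}$ with $s_\beta \in [-c_{\alpha(\beta)} + c_\beta,\ c_{\alpha(\beta)} - c_\beta]$, and one has the two exact identities
\[
y_\beta + c_\beta w_\beta = (x_\alpha + c_\alpha v_\alpha) - 2r^+ v_\alpha, \qquad y_\beta - c_\beta w_\beta = (x_\alpha - c_\alpha v_\alpha) + 2r^- v_\alpha,
\]
where $\alpha = \alpha(\beta)$, and $r^-$ (resp.\ $r^+$) is the sum of $c_{\beta'}$ over the $Q$-pieces $\beta' \subset \alpha$ strictly below (resp.\ at or above) $\beta$ in the linear order inherited from $Q$; these are the plan's main bookkeeping identities. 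I will then verify the five defining inequalities of $\mathcal{C}(Q)$ term by term.

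First, $|w_\beta| = |v_{\alpha(\beta)}|$ is either $1/2$ (if $\alpha(\beta) \notin P^\circ$) or lies in $(10(n+2)\bar\eps_P, 3/4)$, and since $\bar\eps_Q < \bar\eps_P$ this at once gives $10(n+2)\bar\eps_Q < |w_\beta| < 3/4$. Next, from $|y_\beta - x_{\alpha(\beta)}| \leq (c_{\alpha(\beta)} - c_\beta)|v_{\alpha(\beta)}| = c^v_{\alpha(\beta)} - c^w_\beta$ together with $|x_\alpha| < 1 - c^v_\alpha$ for $\alpha \in P^\circ$ (or the explicit formula for the extremal pieces, using that $\beta \in Q^\circ$ forces at least one further subdivision when $\alpha(\beta) \in \{\min P, \max P\}$), the bound $|y_\beta| < 1 - c^w_\beta$ drops out.

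The distance inequality splits into three subcases. When $\alpha(\beta) = \alpha(\beta')$ the two centers lie on a common line parallel to $v_{\alpha(\beta)}$, and the parametrization gives the exact expression
\[
|y_\beta - y_{\beta'}| = \bigl( c_\beta + 2\textstyle\sum_{\beta < \beta'' < \beta'} c_{\beta''} + c_{\beta'} \bigr)\, |v_{\alpha(\beta)}| \geq c^w_{\beta\beta'}.
\]
When $\alpha(\beta), \alpha(\beta') \in P^\circ$ are distinct, the triangle inequality combined with $|x_\alpha - x_{\alpha'}| > c^v_{\alpha\alpha'} - \eps_P/8$ and $|y_\beta - x_\alpha| \leq c^v_\alpha - c^w_\beta$ yields $|y_\beta - y_{\beta'}| > c^w_{\beta\beta'} - \eps_P/8 \geq c^w_{\beta\beta'} - \eps_Q/8$. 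When one of them, say $\alpha(\beta)$, equals $\min P$, I instead bound from below the first coordinate difference $(y_{\beta'} - y_\beta)_1$ using the explicit location of $y_\beta$ on the $x_1$-axis and applying the left-extension inequality for $\alpha(\beta')$; after rewriting $c^v_{\leq \alpha'} + c^v_{\alpha'} = 2\sum_{\gamma \leq \alpha'} c^v_\gamma$ and using $\sum_{\gamma < \alpha'} c^v_\gamma \geq c^v_{\min P} = c_{\min P}/2$, all terms cancel to leave $(y_{\beta'} - y_\beta)_1 > c^w_{\beta\beta'} - \eps_P/8$. The case $\alpha(\beta) = \max P$ is symmetric.

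Finally, for the two endpoint inequalities I use the identity $y_\beta + c_\beta w_\beta = (x_\alpha + c_\alpha v_\alpha) - 2r^+ v_\alpha$ together with the crude bound $-2r^+(v_\alpha)_1 \geq -2r^+|v_\alpha|$ (valid regardless of the sign of $(v_\alpha)_1$). Applying the $\mathcal{C}(P)$-inequality $(x_\alpha + c_\alpha v_\alpha)_1 > -1 + 2\sum_{\gamma \leq \alpha} c^v_\gamma - \eps_P/8$ and using $c_\alpha = r^- + r^+$ turns the right-hand side into $-1 + 2\sum_{\gamma < \alpha} c^v_\gamma + 2 r^- |v_\alpha| - \eps_P/8$. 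Rewriting $2 r^- |v_\alpha| = 2\sum_{\beta_1 \leq \beta,\, \beta_1 \subset \alpha,\, \beta_1 \in Q^\circ} c^w_{\beta_1}$ and comparing with $-1 + c^w_{\leq \beta} + c^w_\beta - \eps_Q/8 = -1 + 2\sum_{\beta_1 \leq \beta,\, \beta_1 \in Q^\circ} c^w_{\beta_1} - \eps_Q/8$ reduces everything (for $\alpha \in P^\circ$) to the numerical inequality $c_{\min Q} + \eps_Q/8 \geq \eps_P/8$, which holds since $\eps_P \leq \eps_Q$. The extremal cases $\alpha(\beta) \in \{\min P, \max P\}$ become direct computations because $x_\alpha, v_\alpha$ are explicit; for $\alpha(\beta) = \min P$ one gets $(y_\beta + c_\beta w_\beta)_1 = -1 + c_{\min Q} + 2\sum_{\beta_1 \leq \beta,\, \beta_1 \in Q^\circ} c^w_{\beta_1}$, and for $\alpha(\beta) = \max P$ the universal bound $\sum_\gamma c_\gamma \leq 1/20$ closes the gap with enormous room. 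The fifth inequality follows by the symmetric argument with $r^-$ and the left-extension condition.

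The only place any subtlety arises is the mixed distance case where one endpoint lies in $\min P$ or $\max P$: one cannot use the center separation $|x_\alpha - x_{\alpha'}|$ and must instead compare first coordinates using the extension inequalities, which is the main technical point of the plan.
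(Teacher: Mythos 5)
Your proposal is correct and follows essentially the same route as the paper's proof: a direct case-by-case verification of the defining inequalities of $\mathcal{C}(Q)$ using the explicit parametrization of $e_{PQ}$, the triangle inequality for pairs of distinct interior $P$-pieces, and $\eps_P\leq\eps_Q$; you additionally spell out the mixed distance case (one endpoint lying in $\min P$ or $\max P$), which the paper dismisses as ``similar and easier''. The only blemish is a small inconsistency in your convention for $r^+$: for the identity $y_\beta+c_\beta w_\beta=(x_\alpha+c_\alpha v_\alpha)-2r^+v_\alpha$ to hold, $r^+$ must be the sum over pieces strictly above $\beta$, so that $c_\alpha=r^-+c_\beta+r^+$ rather than $r^-+r^+$ --- but your subsequent computations are consistent with the correct convention, so nothing breaks.
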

\begin{proof}
Take $(x_\gamma,v_\gamma)_{\gamma\in P^\circ}\in \mathcal{C}(P)$ and put $e_{PQ}(x_\gamma,v_\gamma)=(y_\gamma,w_\gamma)_{\gamma\in Q^\circ}$.
Let $\alpha \in Q^\circ$ and take $\alpha_1\in P$ with $\alpha_1\supset \alpha$.  Let $\beta$ be the union of pieces $\gamma\in Q^\circ$ satisfying $\gamma\subset \alpha_1$ and $\gamma>\alpha$. Since $w_\gamma=v_{\gamma_1}$ for $\gamma\subset \gamma_1$ we have
\[
\begin{split}
y_\alpha+c_\alpha w_\alpha  &  =x_{\alpha_1}+c_{\alpha_1}v_{\alpha_1}-2c_\beta v_{\alpha_1}\\
&  >_1 -1+(c^v_{\leq \alpha_1}+c^v_{\alpha_1})-\eps_P/8-2c_{\beta}|v_{\alpha_1}| \\
&>-1+ (c_{\leq \alpha}^w+c_\alpha^w)-\eps_Q/8,
\end{split}
\]
which is one of the defining inequalities of $\mathcal{C}(Q)$. We shall consider the inequality involving $|y_{\alpha}-y_\beta|$ for $\alpha,\beta\in Q^\circ$. Let $\alpha_1,\beta_1$ be the pieces of $P$ satisfying $\alpha\subset\alpha_1,\beta\subset \beta_1$. We shall show the inequality for the case of $\alpha_1,\beta_1\in P^\circ$ and $\alpha_1\not=\beta_1$. The other cases are similar and easier. We easily see
\[
|y_\alpha-x_{\alpha_1}|\leq (c_{\alpha_1}-c_{\alpha})|v_{\alpha_1}|, \qquad |y_\beta-x_{\beta_1}|\leq (c_{\beta_1}-c_{\beta})|v_{\beta_1}|.
\] 
It follows that
\[
\begin{split}
|y_\alpha-y_\beta| & \geq |x_{\alpha_1}-x_{\beta_1}|-|y_\alpha-x_{\alpha_1}|-|y_\beta-x_{\beta_1}| \\ 
& > c^v_{\alpha_1,\beta_1}-\frac{\eps_P}{8} -(c_{\alpha_1}-c_{\alpha})|v_{\alpha_1}| -(c_{\beta_1}-c_{\beta})|v_{\beta_1}|   \\
& >c^w_{\alpha\beta}-\frac{\eps_Q}{8}.
\end{split}
\]
Similarly,  the rest of  defining  inequalities of $\mathcal{C}(Q)$ hold for $(y,w)$ and we have proved the claim.
\end{proof}

\begin{figure}
\begin{center}
\input{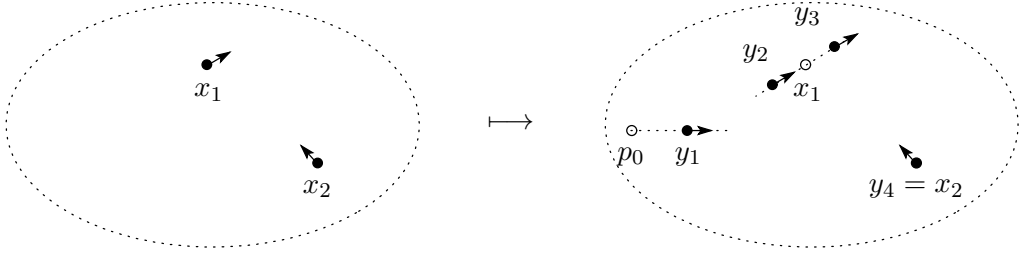}
\end{center}
\caption{the map $e_{P}$ for $P=\{\{01\},\{23\},\{4\},\{5\}\}$ ($p_0=(-1,0,0)$) } \label{Flinear_emb}
\end{figure}
We will see that the inclusion $\mathcal{C}(P)\to \PK(P)$ is a homology isomorphism later (see Lemma \ref{Lnathomologyiso}).

\subsection{Thom space model $\TT$}
In this subsection we construct the Thom space model, which is similar to the one given in \cite{moriya3}, but we take  tangent vectors into account here. More concretely, in \cite{moriya3}, we gave a space level lifting of Poincar\'e-Lefschetz duality $H^*(Conf_p(IntD^3))\cong H_*((D^3)^p, Fat\cup \partial (D^3)^p)$ which is compatible with the structure of embedding calculus, where $Conf_p(Int(D^3))$ is the ordered configuration space of $p$ points in the open unit disk, and $Fat$ is the fat diagonal, $Fat=\{(x_1,\dots, x_p)\in (D^3)^p \mid x_i=x_j\text{\ for some }\ i\not=j\}$.  Here, we give a similar lifting of the duality 
\[
H^*(Conf_p(IntD^3)')\cong H_*((D^3)^{2p}, \{ Fat\times (D^3)^p\}\cup \{(D^3)^p\times V_0\}\cup \partial (D^3)^{2p}),
\] where $Conf_p(Int D^3)'$ is the ordered configuration space of $p$ points with  tangent vectors $v$ satisfying $0<|v|<1$ attached,  and $V_0=\{(v_1,\dots, v_p)\in (D^3)^p\mid v_i=0\ \text{for some}\ i\}$.
\begin{defi}
Let $P\in \PP_n$. Put $p=|P^\circ|$. We  use the notations of Definition \ref{Dpunctured}.
\begin{enumerate}
\item 
We let $\nu_P\subset \RR^{6n}$ denote the open subset of elements $(y,w)$ satisfying the following conditions:
\begin{enumerate}
\item The distance between $(y,w)$ and the subspace $e_P(\RR^{6p})$ is $<\eps_P$,
\item $|y_i|< 1$ and $|w_i|<1$ for $1\leq i\leq n$, and
\item for $(x,v)=\pi_P(y,w)$,  the following inequalities hold for each  $\alpha\in P^\circ$
\[
-1+ (c^v_{\leq \alpha}+c^v_\alpha)-\eps_P   <_1 x_\alpha+ c_\alpha v_\alpha,  \qquad 
x_\alpha- c_\alpha v_\alpha    <_11- (c^v_{\geq \alpha}+c^v_\alpha )+\eps_P.
\]
\end{enumerate}
\item  For  $\alpha, \beta\in P^\circ$ with $\alpha\not=\beta $,  we define a   subspace $D_{\alpha\beta}$ of $\RR^{6p}$ by
\[
D_{\alpha\beta}=D_{\alpha\beta}(P)=\{(x,v) \mid |x_\alpha-x_\beta|\leq d_{\alpha\beta}(P,v)\}, 
\]
where 

\[
d_{\alpha\beta}(P,v)=\max\{\bar \eps_P,\  c_{\alpha \beta}^v-\eps_P\}.
\]
For $\alpha\in P^\circ$, we also set 
\[
D_{\alpha\alpha}=D_{\alpha\alpha}(P)=\{(x,v)\mid |v_\alpha|\leq \bar \eps_P\}.
\]

\item We set
\[
\TT_{\emptyset_P}=\RR^{6n}/(\RR^{6n}-\nu_P).
\]
 For two pieces $\alpha, \beta\in P^\circ$ with $\alpha\leq \beta$, 
we define a subspace $\TT_{\alpha\beta}\subset  \TT_{\emptyset_P}$ by
\[
 \TT_{\alpha\beta}=\{(y,w)\in \TT_{\emptyset_P}\mid \pi_P(y,w)\in D_{\alpha\beta}\}.
\]
(This definition means $*\in \TT_{\alpha\beta}$ since $\pi_P^{-1}(D_{\alpha\beta})\cap (\RR^{6n}-\nu_P)\not=\emptyset$, see subsection \ref{SSNT} for our convention.)
\end{enumerate}
\end{defi}
\begin{defi}
Let $V$ be an affine space and $Y$ a subset of $V$.
\begin{enumerate} 
\item We say $Y$ is {\em star-shaped} if there is a point $y_0\in Y$ such that for any $y\in Y$, the line segment between $y_0$ and $y$ is included in $Y$. We call a point $y_0$ satisfying this condition a {\em center of }$Y$.
\item 
 Suppose $Y$ is a compact star-shaped subset. We say $Y$ is {\em regular} (in $V$) if there is a center $y_0$ of $Y$ such that $y_0\in Int(Y)$ and for any $y\in Y$ other than $y_0$, there exists a unique $t>0$ such that $(1-t)y_0+ty\in \partial Y$. We call $y_0$ a {\em regular center} of $Y$. Here, $Int(Y)$ is the interior of $Y$ and $(1-t)y_0+ty$ denotes the point dividing the line segment between $y_0$ and $y$ by the rate $t : 1-t$. (When $t>1$, this is an externally dividing point.)
\end{enumerate}
\end{defi}

The following lemma is obvious.
\begin{lem}\label{Lnu_P_star}
The closure $\bar\nu_P$ of the open subset $\nu_P\subset \RR^{6n}$ is a regular star-shaped subset  with a regular center $e_P(0)$, where $e_P$ is the affine map used in the definition of $\nu_P$.\hfill\qedsymbol
\end{lem}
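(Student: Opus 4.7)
The plan is to verify the three defining conditions of being a regular star-shaped subset with center $c_0:=e_P(0)$: (i) $c_0\in\mathrm{Int}(\bar\nu_P)$, (ii) each segment from $c_0$ to a point of $\bar\nu_P$ stays in $\bar\nu_P$, and (iii) each open ray from $c_0$ meets $\partial\bar\nu_P$ at exactly one point.

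For (i), I would first note that $c_0=0\in\RR^{6n}$ from the construction of $e_P$ (the zero input produces segments of length zero, all located at the origin), and then check that $0\in\nu_P$ by direct substitution. The distance to $e_P(\RR^{6p})$ is $0$; the norm bounds $|y_i|,|w_i|<1$ are trivial; and in the $(c)$-inequalities one has $(x_\alpha+c_\alpha v_\alpha)_1=0$, while the only surviving contribution to $c^v_{\leq\alpha}+c^v_\alpha$ comes from the boundary piece $\min P\in P\setminus P^\circ$, whose velocity is fixed at $(1/2,0,0)$ by convention, giving $c^v_{\min P}=c_{\min P}/2$. Thus the inequality reduces to $-1+c_{\min P}-\eps_P<0$, which holds since $c_{\min P}\leq 1/20$; the symmetric argument handles the other $(c)$-inequality.

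For (ii), I would show that $q\in\nu_P$ and $t\in[0,1]$ imply $tq\in\nu_P$, and then conclude for $\bar\nu_P$ by taking closures. The key point is that $\pi_P$ is linear and $e_P(\RR^{6p})$ passes through the origin, so the distance to $e_P(\RR^{6p})$ and the coordinate norms scale linearly in $t$, and $\pi_P(tq)=t\,\pi_P(q)$. For the $(c)$-inequalities, the interior velocities $v_\alpha$ ($\alpha\in P^\circ$) scale by $t$ while the boundary velocities remain fixed at $(1/2,0,0)$; consequently the difference of the two sides of each $(c)$-inequality is an \emph{affine} (not merely linear) function of $t$, and an affine function which is $<0$ at $t=0$ and $\leq 0$ at $t=1$ stays $\leq 0$ throughout $[0,1]$.

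For (iii), I would observe that for $q\in\bar\nu_P\setminus\{0\}$ the set $\{t\geq 0:tq\in\bar\nu_P\}$ is the intersection of finitely many closed intervals containing $0$ (the same affine-in-$t$ analysis shows each single closed inequality defining $\bar\nu_P$ cuts out such an interval on the ray), hence is itself a closed interval $[0,t^*]$; the norm conditions $|ty_i|\leq 1$, $|tw_i|\leq 1$ together with $q\neq 0$ ensure $t^*<\infty$. Then $t^*q$ is the unique boundary crossing on the ray, as required. The only mild subtlety throughout is the bookkeeping for the boundary pieces $\gamma\in P\setminus P^\circ$ whose velocities are held constant under the homotopy; this prevents the relevant functions in $t$ from being purely linear, but the $\eps_P$-slack combined with $c_{\min P}\leq 1/20$ always ensures the strict negativity at $t=0$ that is needed to conclude.
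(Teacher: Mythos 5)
The paper offers no written proof (it declares the lemma obvious), so the only question is whether your argument is correct — and it is not, because you have misidentified the center. From Definition \ref{Dpunctured}(4), the components of $e_P(x,v)$ indexed by $j\in(\min P\cup\max P)\setminus\{0,n+1\}$ do not depend on the input at all: they are built from the \emph{fixed} data $x_0=(-1+c_{\min P}/2,0,0)$, $x_{p+1}=(1-c_{\max P}/2,0,0)$, $v_0=v_{p+1}=(1/2,0,0)$, so in particular $w_j=(1/2,0,0)$ for every such $j$. Hence $e_P(0)\neq 0$ and $e_P(\RR^{6p})$ does \emph{not} pass through the origin whenever $\min P$ or $\max P$ is not a singleton (e.g.\ $P=\{\{0,1\},\{2\},\{3\}\}$): the origin then lies at distance $\geq 1/2\gg\eps_P$ from $e_P(\RR^{6p})$, so $0\notin\nu_P$, your step (i) fails, and in (ii) the point $tq$ leaves $\nu_P$ as soon as $t$ is small. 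The whole scaling argument $q\mapsto tq$, together with the claims ``$\pi_P$ is linear'' and ``$\pi_P(tq)=t\pi_P(q)$'', collapses in this case.

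The repair is local but essential: run your argument along the segments $q^t=(1-t)e_P(0)+tq$ instead of $tq$. Since $\pi_P$ is affine and fixes $e_P(0)$, one gets $q^t-\pi_P(q^t)=t\bigl(q-\pi_P(q)\bigr)$, so condition (a) is preserved; the coordinate bounds (b) follow by convexity because each component of $e_P(0)$ has norm $<1$; and writing $\pi_P(q)=e_P(x,v)$ one finds $e_P^{-1}\pi_P(q^t)=t(x,v)$, after which your affine-in-$t$ analysis of the $(c)$-inequalities — with the non-scaling contributions $c^{v}_\gamma=c_\gamma/2$ for $\gamma\in P\setminus P^\circ$ treated exactly as you describe, and the slack supplied by $c_{\min P},c_{\max P}\leq 1/20$ — goes through, as does the interval argument in (iii) for rays emanating from $e_P(0)$. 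So your structural idea (each defining condition cuts out an interval on each ray from the center, by affineness in $t$) is the right one, but as written the proof establishes the statement only for partitions whose extreme pieces are singletons.
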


\begin{lem}\label{Ldiagonal_bound}
Let $Q$ be a subdivision of a partition $P\in \PP_n$. We have $\nu_P\subset \nu_Q$.
In particular, the identity on $\RR^{6n}$ induces the collapsing map 
$\delta'_{PQ}:\TT_{\emptyset_Q}\to \TT_{\emptyset_P}.
$
\end{lem}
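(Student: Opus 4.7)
The plan is to verify the three defining conditions of $\nu_Q$ for every $(y,w) \in \nu_P$; once $\nu_P \subset \nu_Q$ is proved, the identity on $\RR^{6n}$ sends $\RR^{6n} - \nu_Q$ into $\RR^{6n} - \nu_P$, so the induced map on quotients is the desired collapsing map $\delta'_{PQ} : \TT_{\emptyset_Q} \to \TT_{\emptyset_P}$. First, specializing the composition rule $e_{QR} \circ e_{PQ} = e_{PR}$ (from Definition~\ref{Dpunctured}(4)) to $R$ being the maximum partition yields $e_P = e_Q \circ e_{PQ}$, and hence $e_P(\RR^{6p}) \subset e_Q(\RR^{6q})$. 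Condition (a) follows at once:
\[
d((y,w),\, e_Q(\RR^{6q})) \;\leq\; d((y,w),\, e_P(\RR^{6p})) \;<\; \eps_P \;<\; \eps_Q,
\]
using the chain $\eps_P \leq \eps_Q/8^{n+5}$ from the numerical lemma stated just after Definition~\ref{Dpunctured}. Condition (b) is independent of the partition.

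The real work is condition (c). Set $(x,v) = \pi_P(y,w)$ and $(x',v') = \pi_Q(y,w)$, and compare $(x',v')$ with $e_{PQ}(x,v) \in \RR^{6q}$. Since $e_Q$ is an affine monomorphism and $\pi_Q$ is orthogonal projection onto its image, both $e_Q(e_{PQ}(x,v)) = e_P(x,v)$ and $e_Q(x',v')$ lie within distance $\eps_P$ of $(y,w)$. A direct inspection of $e_Q$ shows its linear part is coercive with constant depending only on $n$: the $w$-entries simply repeat the $v'$-components and the $y$-entries are affine in $(x',v')$ with offset coefficients bounded by $1/20$, so the restriction of $e_Q^{-1}$ to $e_Q(\RR^{6q})$ is Lipschitz with some constant $L_n > 0$ and consequently
\[
\bigl| (x',v') - e_{PQ}(x,v) \bigr| \;\leq\; 2 L_n\, \eps_P.
\]

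The condition (c) inequalities for $e_{PQ}(x,v)$ then follow by the very computation appearing in the proof of Lemma~\ref{Le_incl}: writing $e_{PQ}(x,v) = (\tilde y_\gamma, \tilde w_\gamma)_{\gamma \in Q^\circ}$, for $\beta \in Q^\circ$ contained in some $\alpha \in P^\circ$ one gets
\[
\tilde y_\beta + c_\beta\, \tilde w_\beta \;=\; x_\alpha + c_\alpha v_\alpha - 2 c_{\beta'} v_\alpha,
\]
with $\beta' = \bigcup\{\gamma \in Q^\circ : \gamma \subset \alpha,\ \gamma > \beta\}$; substituting the $P$-version of (c) for $(x,v) = \pi_P(y,w)$ yields $\tilde y_\beta + c_\beta \tilde w_\beta >_1 -1 + (c^{\tilde w}_{\leq \beta} + c^{\tilde w}_\beta) - \eps_P$. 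The boundary cases where $\beta$ lies inside $\min P$ or $\max P$ use the convention $v_\alpha = (1/2,0,0)$ and are analogous, and the upper inequality is symmetric. Transferring from $e_{PQ}(x,v)$ to the true projection $(x',v')$ costs at most $2 L_n \eps_P$ in the first coordinate, and the bound $\eps_P + 2 L_n \eps_P < \eps_Q$ is guaranteed by $\eps_Q/\eps_P \geq 8^{n+5}$, which easily dominates any constant $L_n$ polynomial in $n$. The main obstacle is precisely this bookkeeping of constants: one must ensure that the perturbation introduced by replacing $e_{PQ}(x,v)$ with $(x',v')$, together with the existing slack $\eps_P$, still fits inside $\eps_Q$; the exponential gap $8^{n+5}$ hard-coded into the definition of $\eps$ is what makes this possible.
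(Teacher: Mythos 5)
Your proposal is correct and follows essentially the same route as the paper: condition (a) via $e_P(\RR^{6p})\subset e_Q(\RR^{6q})$, and condition (c) by comparing $\pi_Q(y,w)$ with $e_{PQ}(\pi_P(y,w))$ (the two points differ by $O(\eps_P)$ since both map under $e_Q$ to points within $\eps_P$ of $(y,w)$), verifying the inequalities for $e_{PQ}(\pi_P(y,w))$ from the exact identity $\tilde y_\beta+c_\beta\tilde w_\beta=x_\alpha+c_\alpha v_\alpha-2c_{\beta'}v_\alpha$, and absorbing the error into the gap $\eps_P\ll\eps_Q$. The only cosmetic difference is that the paper notes $e_Q$ is distance-expanding, so your Lipschitz constant $L_n$ can be taken to be $1$ and the total slack needed is just $5\eps_P<\eps_Q$.
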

\begin{proof}
For $(y,w)\in \RR^{6n}$, we set
\[
\begin{split}
\pi_{Q}(y,w)&=(x_\gamma,v_\gamma)_{\gamma\in Q^\circ}, \\
\pi_{P}(y,w)&=(\bar x_{\gamma},\bar v_{\gamma})_{\gamma\in P^\circ},\\
e_{PQ}(\pi_P(y,w))&=(x'_\gamma,v'_\gamma)_{\gamma\in Q^\circ} .
\end{split}
\] Suppose $(y,w)\in \nu_P$.   
 Intuitively speaking, $(x_\gamma,v_\gamma)$ and $( x'_{\gamma},v'_{\gamma})$ are different in general but the difference is sufficiently small since we have chosen the width $\eps_P$ of the  neighborhood $\nu_P$ sufficiently small. We easily see that $(x'_{\gamma},v'_\gamma)$ satisfies the defining inequalities of $\nu_Q$ in which $\eps_Q$ is replaced with $\eps_P$. Since $\eps_P\ll \eps_Q$, the range given by the inequalities  is narrower  than  the range of  the original inequalities of $\nu_Q$ and the margin covers the difference between the two points $(x,v)$ and $(x',v')$ so we have $(y,w)\in \nu_Q$. We shall give a rigorous proof.  
 Since the image of $e_P$ is contained in the image of $e_Q$ and the map $\pi_{Q}$ sends $(y,w)$ to its closest point in the image of $e_Q$, we have
\begin{equation}
|(y,w)-e_Q(x_\gamma,v_\gamma)|\leq |(y,w)-e_Q(x'_\gamma,v'_\gamma)|=|(y,w)-e_P(\pi_P(y,w))|< \eps_P\ (<e_Q). \label{Eemb}
\end{equation}
Thus,  $(y,w)$ satisfies the first defining condition of $\nu_Q$. By this inequality, we have
\[
|e_Q(x'_\gamma,v'_\gamma)-e_Q(x_\gamma, v_\gamma)|\leq |e_Q(x'_\gamma,v'_\gamma)-(y,w)|+|(y,w)-e_Q(x_\gamma,v_\gamma)|\leq 2\eps_P.
\]
As $e_Q$ expands the distance, we have
\begin{equation}
|(x'_\gamma,v'_\gamma)_\gamma-(x_\gamma,v_\gamma)_\gamma |\leq 2\eps_P \quad (\text{so} \ |x'_\gamma-x_\gamma|,\ |v'_\gamma-v_\gamma|\leq 2\eps_P\ \text{for each}\ \gamma\in Q^\circ). \label{Edifference}
\end{equation}
Let $\alpha_1 \in P^\circ$ and $\alpha\in Q^\circ$ with $\alpha\subset \alpha_1 $. We have $\bar x_{\alpha_1 }- c_{\alpha_1 }\bar v_{\alpha_1 }<_11-(c^{\bar v}_{\geq \alpha_1 }+c^{\bar v}_{\alpha_1 })+\eps_P$. Let $\beta$ be the union of pieces of $Q^\circ$ which are included in $\alpha_1 $ and smaller than $\alpha$. By definition of $e_{PQ}$, we have $x'_\alpha=\bar x_{\alpha_1 }+(- c_{\alpha_1 }+2 c_\beta+ c_\alpha)\bar v_{\alpha_1 }$ and $v'_\alpha=\bar v_{\alpha_1 }$ so with the inequality (\ref{Edifference}),
\[
\begin{split}
x'_\alpha- c_\alpha v'_\alpha& =\bar x_{\alpha_1 }- c_{\alpha_1 }\bar v_{\alpha_1 }+2 c_{\beta}\bar v_{\alpha_1 } \\
&<_1 1-(c^{\bar v}_{\geq \alpha_1 }+c^{\bar v}_{\alpha_1 })+\eps_P+2 c_\beta|\bar v_{\alpha_1 }| \\
&=1-(c^{v'}_{\geq \alpha}+c^{v'}_{\alpha})+\eps_P,  \\
\therefore \ x_\alpha- c_\alpha v_\alpha &<_11-(c^{v}_{\geq \alpha}+c^{v}_{\alpha})+5\eps_P<1-(c^{v}_{\geq \alpha}+c^{v}_{\alpha})+\eps_Q.
\end{split}
\]
Similarly, we can show that the other inequality for the first coordinate holds so $(y,w)\in \nu_Q$. 
\end{proof}
\begin{lem}\label{Ldiagonal_incl}
Let $Q$ be a subdivision of $P$ and $\alpha, \beta \in Q^\circ $ pieces with $\alpha\leq \beta$.  Let $\alpha_1 , \beta_1  \in P$ be the pieces which include $\alpha$, $\beta$ respectively. Let $\delta'_{PQ}:T_{\emptyset_Q}\to T_{\emptyset_P}$ denote the map given in Lemma \ref{Ldiagonal_bound}.
We have 
\[
\delta'_{PQ}(\TT_{\alpha \beta})\ \subset \
\left\{
\begin{array}{cc}
\{*\} & (\text{if $\alpha_1 =\min P$ or $\beta_1 =\max P$}), \\
\TT_{\alpha_1,  \beta_1 } & (\text{ otherwise }).
\end{array}
\right.
\] 

\end{lem}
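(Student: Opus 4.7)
The plan is to unwind definitions and reduce the claim to explicit inequalities, using the estimate $|\pi_Q(y,w) - e_{PQ}(\pi_P(y,w))| \leq 2\eps_P$ established in the proof of Lemma \ref{Ldiagonal_bound} and the shift bound $|s_\gamma| \leq c_{\gamma_1} - c_\gamma$ for the offset of a subpiece $\gamma \subset \gamma_1$ under $e_{PQ}$. A non-basepoint element of $\TT_{\alpha\beta} \subset \TT_{\emptyset_Q}$ is a pair $(y,w) \in \nu_Q$ with $\pi_Q(y,w) \in D_{\alpha\beta}(Q)$; the map $\delta'_{PQ}$ sends it to itself in $\TT_{\emptyset_P}$ when $(y,w) \in \nu_P$ and to the basepoint otherwise. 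Writing $\pi_P(y,w) = (\bar X,\bar V)$ and $\pi_Q(y,w) = (x,v)$, the above estimates yield $|x_\gamma - \bar X_{\gamma_1}| \leq (c_{\gamma_1}-c_\gamma)|\bar V_{\gamma_1}| + O(\eps_P)$ and $|v_\gamma - \bar V_{\gamma_1}| \leq O(\eps_P)$ for every $\gamma \in Q^\circ$ sitting inside $\gamma_1 \in P^\circ$, with the analogous statement (using the fixed vector $(1/2,0,0)$ in place of $\bar V_{\gamma_1}$) when $\gamma_1 \in \{\min P,\max P\}$.

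The diagonal case $\alpha = \beta$ is immediate because $D_{\alpha\alpha}(Q)$ requires $|v_\alpha| \leq \bar\eps_Q$ while $\nu_Q$ requires $|v_\alpha| > 10(n+2)\bar\eps_Q$, so $\TT_{\alpha\alpha}$ already reduces to the basepoint inside $\TT_{\emptyset_Q}$. Suppose $\alpha < \beta$ and first consider the subcase $\alpha_1 < \beta_1$ with both in $P^\circ$. The triangle inequality
\[
|\bar X_{\alpha_1} - \bar X_{\beta_1}| \leq |x_\alpha - x_\beta| + |x_\alpha - \bar X_{\alpha_1}| + |x_\beta - \bar X_{\beta_1}|
\]
combined with the subpiece bounds gives
\[
|\bar X_{\alpha_1} - \bar X_{\beta_1}| \leq d_{\alpha\beta}(Q,v) + c^{\bar V}_{\alpha_1\beta_1} - c^v_{\alpha\beta} + O(\eps_P).
\]
Splitting according to whether $d_{\alpha\beta}(Q,v)$ equals $c^v_{\alpha\beta}-\eps_Q$ or $\bar\eps_Q$ and comparing with $d_{\alpha_1\beta_1}(P,v)=\max\{\bar\eps_P,c^{\bar V}_{\alpha_1\beta_1}-\eps_P\}$, the target inequality $|\bar X_{\alpha_1}-\bar X_{\beta_1}| \leq d_{\alpha_1\beta_1}(P,v)$ follows from the hierarchy $\eps_P \ll \eps_Q$ and $\bar\eps_Q \ll \bar\eps_P$ implicit in Definition \ref{Dpunctured}. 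Hence $\pi_P(y,w) \in D_{\alpha_1\beta_1}(P)$ and the image lies in $\TT_{\alpha_1\beta_1}$ as claimed.

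In each of the remaining subcases---$\alpha_1 = \beta_1$, or $\alpha_1 = \min P$, or $\beta_1 = \max P$---I would show that no non-basepoint element of $\TT_{\alpha\beta}$ lies in $\nu_P$, so that the image is contained in $\{*\}$. When $\alpha$ and $\beta$ sit in a common piece $\gamma_1$ of $P$, the $e_{PQ}$-geometry directly forces $|x_\alpha-x_\beta| \geq c^v_{\alpha\beta} - O(\eps_P)$. When $\alpha_1 = \min P$ and $\beta_1 > \alpha_1$, the convention pins $x_\alpha$ near $(-1,0,0)$, while the first-coordinate inequality for $\beta_1$ in the definition of $\nu_P$ yields $x_\beta^{(1)} \geq -1 + c_{\min P} + c^v_\beta - O(\eps_P)$, again producing $x_\beta^{(1)} - x_\alpha^{(1)} > c^v_{\alpha\beta} - O(\eps_P)$; the case $\beta_1 = \max P$ is symmetric. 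In every situation the lower bound strictly exceeds $d_{\alpha\beta}(Q,v)$ once one invokes the hierarchy of $\eps$-parameters together with the strict bound $c^v_{\alpha\beta} > \bar\eps_Q$ available on $\nu_Q$, contradicting $\pi_Q(y,w) \in D_{\alpha\beta}(Q)$. This matches the lemma's prescription, noting that $\TT_{\alpha_1\alpha_1}(P)$ itself equals the basepoint (by the same velocity incompatibility) when $\alpha_1=\beta_1 \in P^\circ$.

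The main obstacle is maintaining the quantitative margins in the borderline case $|\alpha|=|\beta|=1$, where $c^v_{\alpha\beta}$ exceeds $\bar\eps_Q$ by an arbitrarily small amount; the iterated exponential gap baked into the definitions of $\eps_P,\bar\eps_P,\eps_Q,\bar\eps_Q$ is precisely what absorbs the $O(\eps_P)$ error terms without collapsing the strict inequalities.
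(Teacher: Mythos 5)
Your treatment of the off-diagonal case $\alpha\not=\beta$ with $\alpha_1\not=\beta_1$ both interior, and of the $\min P$/$\max P$ cases, follows essentially the paper's route (the $2\eps_P$ comparison between $\pi_Q(y,w)$ and $e_{PQ}(\pi_P(y,w))$, the offset bound $|x'_\gamma-\bar x_{\gamma_1}|\leq (c_{\gamma_1}-c_\gamma)|\bar v_{\gamma_1}|$, and the velocity pinned to $(1/2,0,0)$ near the ends). However, there is a genuine error at the root of your handling of the remaining cases: you attribute to $\nu_Q$ the lower bound $|v_\alpha|>10(n+2)\bar\eps_Q$. That inequality is a defining condition of the configuration space $\mathcal{C}(Q)$ (Definition \ref{Dconfig_model}), not of the tubular neighborhood $\nu_Q$; the definition of $\nu_Q$ constrains only the distance to $e_Q(\RR^{6q})$, the norms $|y_i|,|w_i|<1$, and the first coordinates. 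Consequently $\TT_{\alpha\alpha}$ is \emph{not} the basepoint: it is the Thom space of a neighborhood of the zero-velocity locus $\{v_\alpha=0\}$, and it must be nontrivial for Proposition \ref{PThomhomotopy} (the factor $(D^3)^p\times V_0$) and the duality with configurations carrying tangent vectors $0<|v|<1$ to make sense.

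This error propagates into your case $\alpha_1=\beta_1$, where you claim the image is $\{*\}$ because ``$\TT_{\alpha_1\alpha_1}$ equals the basepoint.'' That is false, and so is the claim that no non-basepoint element of $\TT_{\alpha\beta}$ lies in $\nu_P$ in this case: take $(y,w)=e_Q(x,v)$ with $v_\alpha=v_\beta=0$ and $x_\alpha=x_\beta$ in the allowed range; this is a non-basepoint element of $\TT_{\alpha\beta}$ whose image in $\TT_{\emptyset_P}$ is again non-basepoint, with $|\bar v_{\alpha_1}|=0\leq\bar\eps_P$. The correct argument (as in the paper) splits on whether $c^v_{\alpha\beta}-\eps_Q>\bar\eps_Q$: if so, the geometry of $e_{PQ}$ forces $|x_\alpha-x_\beta|>d_{\alpha\beta}(Q,v)$, a contradiction; if not, one deduces $|\bar v_{\alpha_1}|\leq\bar\eps_P$, i.e.\ the image lands in $\TT_{\alpha_1\alpha_1}$ --- which is exactly why the lemma's conclusion in this case is $\TT_{\alpha_1\beta_1}$ rather than $\{*\}$. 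The genuine diagonal case $\alpha=\beta$ with $\alpha_1$ interior likewise requires the estimate $|\bar v_{\alpha_1}|\leq|v_\alpha|+2\eps_P<\bar\eps_Q+2\eps_P<\bar\eps_P$ rather than a vacuous dismissal. Your min/max argument does survive, but only because there the velocity is pinned near $(1/2,0,0)$ by $e_{PQ}$, which is where the bound $c^v_{\alpha\beta}-\eps_Q>\bar\eps_Q$ actually comes from --- not from any condition imposed by $\nu_Q$.
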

\begin{proof}
Throughout the proof, let $(y,w)$ be an element of  $\pi_Q^{-1}(D_{\alpha\beta})$. We also assume $(y,w)\in \nu_P$ since otherwise $\delta'_{PQ}(y,w)=*$. We use the notations $(x_\gamma,v_\gamma), \ (x'_\gamma,v'_\gamma)$ and $(\bar x_\gamma, \bar v_\gamma)$ from the proof of the  previous lemma. 
 We shall show the claim in the case that $\beta_1 $ is the maximum and $\alpha\not=\beta$. We may assume $\alpha=\alpha_1 $ since general subdivisions factor through this case. Intuitively speaking, since $\beta_1$ is the maximum, the $\beta_1$-component of $e_P$ is the fixed point  $((1-c_{\geq \beta_1}/2,0,0),(1/2,0,0))$, which equals $(\bar x_{\beta_1},\bar v_{\beta_1}$).   Since $\beta\subset \beta_1$,    the point $x_\beta$ is also pinned near $(1-c^v_{\geq \beta}, 0,0)$. Since $\alpha_1=\alpha$, $(\bar x_{\alpha_1}, \bar v_{\alpha_1})=(x_\alpha,v_\alpha)$.  The distance $|x_\alpha-x_\beta|$ is bounded by $d_{\alpha\beta}(Q,v)$, which is smaller than $c^v_{\alpha\beta}$, so
\begin{spacing}{1.2}
\noindent together with the observation on $x_\beta$, the first coordinate of $\bar x_\alpha-c_\alpha \bar v_\alpha$ must be larger than \\$1-(c^{\bar v}_{\geq \alpha}+c^{\bar v}_\alpha)+\eps_P$, the defining upper bound of $\nu_P$. We shall give a rigorous proof. 
By definition of $e_{PQ}$, $v'_\beta=(1/2,0,0)$. So,  by the inequality (\ref{Edifference}) of the proof of the previous lemma, $|v_\beta|\geq 1/2-2\eps_P$. We have
\end{spacing}
\[
 c^v_{\alpha \beta}-\eps_Q    \geq \frac{|v_\beta|}{20(n+2)}-\eps_Q\geq \frac{1}{10(n+2)}\bigl(\frac{1}{4}-\eps_P\bigr)-\eps_Q>\bar\eps_Q.
\]
This implies $d_{\alpha\beta}(Q,v)=c_{\alpha\beta}^v-\eps_Q$.
Again by definition of $e_{PQ}$, $x'_\beta=(1-\frac{1}{2} c_{\geq \beta},0,0)$. So
\[
\begin{split}
\bar x_\alpha-c_\alpha \bar v_\alpha=x_\alpha- &c_{\alpha}v_\alpha =  x'_\beta+(x_\alpha-x_\beta)-(x'_\beta-x_\beta)- c_{\alpha}v_\alpha \\
 &\geq _11-\frac{1}{2} c_{\geq \beta} -|x_\beta-x_\alpha|-|x'_\beta-x_\beta|- c_\alpha^v \\
&\geq 1-\frac{1}{2} c_{\geq \beta}-( c^v_{\alpha\beta}-\eps_Q)-2\eps_P- c^v_\alpha \\
&> 1 - (c^{\bar v}_{\geq \alpha}+c^{\bar v}_\alpha)+\eps_P. 
\end{split}
\]
Here we use the inequality (\ref{Edifference}) in the previous proof and the equality  $\bar v_{\gamma'}=v'_{\gamma}$ for $\gamma\subset \gamma'$. 
This inequality implies that the conditions $(y,w)\in\pi_Q^{-1}(D_{\alpha\beta})$ and  $(y,w)\in \nu_P$ do not hold simultaneously. Thus,  we have $\delta'_{PQ}(\TT_{\alpha\beta})=*$. 
We shall show the claim in the case that $\beta_1 $ is the maximum and $\alpha=\beta$. By the inequality (\ref{Edifference}) (in the previous proof), the condition $(y,w)\in \nu_P$ implies $|v_\beta|\geq \frac{1}{2}-2\eps_P$, which contradicts $|v_\beta|\leq \bar \eps_Q$, so we have $\delta'_{PQ}(\TT_{\alpha\beta})=*$. The case that $\alpha_1 $ is the minimum is completely similar.\\
\indent We shall show the remaining part of the claim.  In the rest of this proof, we assume that $\alpha_1 $ is not the minimum and $\beta_1 $ is not the maximum. We shall show the claim in the case $\alpha\not=\beta$ and $\alpha_1 =\beta_1 $. Intuitively speaking, by definition of the map $e_{PQ}$, the distance $|x'_\alpha-x'_\beta|$ is equal to $c_{\alpha\beta}|\bar v_{\alpha_1}|$ or more.  The distances between $x'_\alpha$ and $x_\alpha$, $x'_\beta$ and $x_\beta$ are sufficiently small. If $c^v_{\alpha\beta}-\eps_Q> \bar \eps_Q$,   $c_{\alpha\beta}|\bar v_{\alpha_1}|$ is larger than the upper bound $d_{\alpha\beta}(Q,v)$ of the distance $|x_\alpha-x_\beta|$, and the difference is too large to be covered by the distances between the points so we see $\pi_Q(y,w)\not \in D_{\alpha\beta}$, which is a contradiction. If $c^v_{\alpha\beta}-\eps_Q\leq \bar \eps_Q$, it follows that $|\bar v_{\alpha_1}|$ is relatively small. Together with $\bar \eps_Q\ll \bar \eps_P$, we have $|\bar v_{\alpha_1}|\leq \bar\eps_P$ or equivalently, $\pi_P(y,w)\in D_{\alpha_1,\alpha_1}$. We shall give a rigorous proof. Consider the case of  $ c_{\alpha\beta}^v-\eps_Q>\bar \eps_Q$. By  definition of the map $e_{PQ}$, the distance between $x'_\alpha$ and $x'_\beta$ is $\geq$ $ c^{v'}_{\alpha\beta}$. 
Together with the inequality (\ref{Edifference}), we have the following inequality.
\[
\begin{split}
|x_\alpha-x_\beta| & \geq |x'_\alpha-x'_\beta|-|x_\alpha-x'_\alpha|-|x_\beta-x'_\beta| \\
                          & \geq  c^{v'}_{\alpha\beta}-4\eps_P \\
                          & \geq  c^{v}_{\alpha\beta}-2\eps_P/10-4\eps_P 
>d_{\alpha\beta}(Q,v).
\end{split}
\]
This inequality implies $\pi_{Q}(y,w)\not\in D_{\alpha\beta}$, which contradicts the assumption so $\delta_{PQ}(y,w)=*$. Consider the other case $ c_{\alpha\beta}^v-\eps_Q\leq \bar \eps_Q$. By definition, we have
\[
\frac{(|\alpha||v_\alpha|+|\beta||v_\beta|)}{20(n+2)}\leq \eps_Q+\bar \eps_Q.
\]
By this inequality and the inequality (\ref{Edifference}), we have
\[
A(|\bar v_{\alpha_1 }|-2\eps_P)\leq \eps_Q+\bar \eps_Q\qquad \text{where}\qquad  A=\frac{(|\alpha|+|\beta|)}{20(n+2)}.
\]
So
\[
|\bar v_{\alpha_1 }|\leq 2\eps_P+\frac{\eps_Q+\bar\eps_Q}{A} <\bar\eps_P.
\]
Thus, we have $\pi_P(y,w)\in D_{\alpha_1 ,\alpha_1 }$.\\
\indent Secondly, we shall show the case of $\alpha=\beta$. Since $\pi_Q(y,w)\in D_{\alpha\alpha}$, we have $|v_\alpha|<\bar \eps_Q$.
So
\[
|\bar v_{\alpha_1 }|=|v'_\alpha|\leq |v_\alpha|+2\eps_P<\bar \eps_Q+2\eps_P<\bar\eps_P,
\]
which means $\pi_P(y,w)\in D_{\alpha_1 ,\alpha_1 }$.\\
\indent Thirdly we consider the case of  $\alpha_1 \not=\beta_1 $. Suppose  that $ c^v_{\alpha\beta}-\eps_Q>\bar \eps_Q$.
 By definition, we have $e_{PQ}(\bar x_{\gamma},\bar v_\gamma)=(x'_\gamma,v'_\gamma)$.  By the inequality (\ref{Edifference}) again, we have
\[
\begin{split}
|x'_\alpha-x'_\beta|\leq  &  |x'_\alpha-x_\alpha|+|x_\alpha-x_\beta|+|x_\beta-x'_\beta| \\
\leq & 4\eps_P+d_{\alpha\beta}(Q,v)
\leq  c^{v'}_{\alpha\beta}-\eps_Q+6\eps_P.
\end{split}
\]
 By the definition of $e_{PQ}$, we have 
\[
|\bar x_{\alpha_1 }-x'_\alpha|\leq  (c^{v'}_{\alpha_1 }-c^{v'}_{\alpha}).
\]
Together with the similar inequality for $\beta$, we see
\[
\begin{split}
|\bar x_{\alpha_1 }-\bar x_{\beta_1 }|\leq  & |\bar x_{\alpha_1 }-x'_\alpha|+|x'_\alpha-x'_\beta|+ |\bar x_{\beta_1 }-x'_\beta| \\
\leq & (c^{v'}_{\alpha_1 }-c^{v'}_{\alpha})+( c^{v'}_{\alpha\beta}-\eps_Q+6\eps_P)+ (c^{v'}_{\beta_1 }-c^{v'}_{\beta})\\
<&   d_{\alpha_1 \beta_1 }(P,\bar v).
\end{split}
\]
Suppose alternatively, $ c^v_{\alpha\beta}-\eps_Q\leq \bar \eps_Q$. By this condition, we easily see
\[
|v_\alpha|, |v_\beta|\leq 20(n+2)(\eps_Q+\bar \eps_Q)
\]
and 
\[
c^{\bar v}_{\alpha_1 }-c^{v'}_{\alpha} =\frac{ (|\alpha_1 |-|\alpha|)|\bar v_{\alpha_1 }|}{20(n+2)}\leq n(\eps_Q+\bar \eps_Q)+ \eps_P/10.
\]
With the similar inequality for $\beta$, we have
\[
\begin{split}
|\bar x_{\alpha_1 }-\bar x_{\beta_1 }|\leq  & |\bar x_{\alpha_1 }-x'_\alpha|+|x'_\alpha-x'_\beta|+ |\bar x_{\beta_1 }-x'_\beta| \\
\leq & c^{\bar v}_{\alpha_1 }-c^{v'}_{\alpha}+(\bar \eps_Q+4\eps_P)+ c^{\bar v}_{\beta_1 }-c^{v'}_{\beta}\\
<&   2n(\eps_Q +\bar \eps_Q)+2\eps_P/10+(\bar \eps_Q+4\eps_P)<\bar \eps_P.
\end{split}
\]
Thus, we have shown the claim.
\end{proof}
\begin{defi}\label{DThom}

 For $P\in \PP_n$, we define a space $\TT(P)\in \CG_*$ by 
\[
\TT(P)=\TT_{\emptyset_P} / \TT_{fat},\quad \text{where} \quad \TT_{fat}=\bigcup_{\alpha,\beta \in P^\circ, \alpha\leq\beta}\TT_{\alpha\beta}.
\]
 If $Q$ is a subdivision of $P$, the map $\delta'_{PQ}:\TT_{\emptyset_Q}\to \TT_{\emptyset_P}$ in Lemma \ref{Ldiagonal_bound} induces the map $\TT(Q)\to \TT(P)$ by Lemma \ref{Ldiagonal_incl}.  These spaces and maps form a functor $\TT:(\PP_n)^{op}\to \CG_*$, which we call the {\em Thom space model}. 
\end{defi}
We shall clarify the  homotopy type of $\TT(P)$.
\begin{defi}
Let $V$ be an affine space and $W\subset V$ an $m$-dimensional affine subspace. Let $N_1, N_2$ be two subsets of $V$. We say $(W, N_1, N_2)$ is an {\em $m$-dimensional spherical triple} if the following conditions hold:
\begin{enumerate}
\item $W\subset N_1$,
\item $N_2$ is a regular star-shaped subset of $V$ with a regular center in $W$, and
\item the inclusion $(W\cap N_2, W\cap \partial N_2)\to (N_1\cap N_2, N_1\cap \partial N_2)$ is a relative homotopy equivalence.
\end{enumerate}
Here, a map $f:(X,A)\to (Y,B)$ of pairs is a {\em relative homotopy equivalence} if it induces a homotopy equivalence $X/A\to Y/B$ between the quotient spaces.
\end{defi}
In the main examples we deal with, we take $V=\RR^{6n}$. By definition, $W\cap N_2$ is a  regular star-shaped subset of $W$, which easily implies the following lemma.
\begin{lem}\label{Lspherical}
For   an $m$-dimensional spherical triple $(W,N_1,N_2)$, the spaces $ (W\cap N_2)/(W\cap \partial N_2)$ and $(N_1\cap N_2)/(N_1\cap \partial N_2)$ are homotopy equivalent to the $m$-dimensional sphere $S^m$. \hfill\qedsymbol
\end{lem}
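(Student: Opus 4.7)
The plan is to reduce everything to the well-known fact that a compact regular star-shaped subset of an $m$-dimensional affine space, collapsed along its boundary, is homeomorphic to $S^m$. For the first quotient $(W\cap N_2)/(W\cap \partial N_2)$, I would first verify the identification $\partial_W(W\cap N_2)=W\cap \partial N_2$, where $\partial_W$ denotes the boundary computed inside $W$. This is the only point where the hypotheses do real work: if $y\in W\cap \partial N_2$ with $y\neq y_0$, then the ray from the regular center $y_0$ through $y$ lies entirely in $W$, and the uniqueness clause in the definition of regularity ensures that moving slightly past $y$ along this ray leaves $N_2$, so $y\in\partial_W(W\cap N_2)$. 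Conversely, any $y\in W\cap \operatorname{Int}(N_2)$ has a $V$-open neighborhood contained in $N_2$, whose intersection with $W$ is a $W$-open neighborhood of $y$ inside $W\cap N_2$, so $y\in \operatorname{Int}_W(W\cap N_2)$.

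With this identification, $W\cap N_2$ is, as the paper remarks, a compact regular star-shaped subset of the $m$-dimensional affine space $W$ with regular center $y_0$. Choose affine coordinates on $W$ making $y_0$ the origin. For each unit vector $u\in W$, let $r(u)>0$ be the unique scalar with $r(u)\cdot u\in \partial_W(W\cap N_2)$; existence follows from $y_0\in \operatorname{Int}_W(W\cap N_2)$ together with compactness, uniqueness comes from the regularity clause, and continuity of $r:S^{m-1}\to(0,\infty)$ follows from the combination of the two (without the uniqueness the ray could re-enter $N_2$, which is the only mechanism that could break continuity). Then $y\mapsto y/r(y/|y|)$ extends to a homeomorphism of pairs $(W\cap N_2,\,W\cap \partial N_2)\cong (D^m,S^{m-1})$, so
\[
(W\cap N_2)/(W\cap \partial N_2)\;\cong\; D^m/S^{m-1}\;\cong\; S^m.
\]

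For the second quotient, I would invoke condition (3) in the definition of a spherical triple: the inclusion $(W\cap N_2,W\cap \partial N_2)\to (N_1\cap N_2,N_1\cap \partial N_2)$ is a relative homotopy equivalence, which by the definition supplied in the paper means precisely that the induced map on quotients is a homotopy equivalence. Combined with the previous step, this gives $(N_1\cap N_2)/(N_1\cap \partial N_2)\simeq S^m$.

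The only delicate step is the continuity of $r$, which is exactly why regularity (rather than mere star-shapedness) is hypothesized; beyond this there is no real obstacle, consistent with the author's remark that the lemma is an easy consequence of the definition.
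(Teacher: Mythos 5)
Your proposal is correct and follows exactly the route the paper intends: the paper omits the proof, remarking only that $W\cap N_2$ is a regular star-shaped subset of $W$ (whence the first quotient is a sphere by the standard radial-rescaling homeomorphism) and that the second quotient is handled by condition (3) of the definition. Your fleshing-out of the boundary identification and the continuity of the radial function $r$ via uniqueness plus compactness is exactly the content being taken for granted.
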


\begin{lem}\label{Lstarshaped_T}
For $P\in \PP_n$ with $|P^\circ|=p$ and $\alpha, \beta\in P^\circ$, we set
\[
D_{\alpha\beta}^0=\left\{
\begin{array}{cc}
\{(x_\gamma,v_\gamma)\in \RR^{6p} \mid x_\alpha=x_\beta\} & if\ \ \alpha\not=\beta, \\
\{(x_\gamma,v_\gamma)\in \RR^{6p} \mid v_\alpha=0\} & if\ \ \alpha=\beta.
\end{array}
\right.
\]
For a graph $G\in \GG(P)$ (see Definition \ref{Dbar_Sigma}), we set $D_G=\cap_{(\alpha,\beta)\in E(G)}D_{\alpha\beta}$  and $D_G^0=\cap_{(\alpha,\beta)\in E(G)}D_{\alpha\beta}^0$.

The neighborhood $D_G$ of $D_G^0$ is fiberwise star-shaped, i.e. if $\pi^\vee:D_G\to D^0_G$ is the orthogonal projection, the fiber of $\pi^\vee$ over a point in $D^0_G$ is star-shaped and has the point in $D^0_G$ as its center.  
\end{lem}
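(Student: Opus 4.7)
The plan is to identify $(D_G^0)^\perp$ explicitly and then read off how each defining inequality of $D_G$ scales along a segment from the fiber center $y_0$ to a fiber point $y$. The affine subspace $D_G^0$ is cut out by the linear equations $x_\alpha - x_\beta = 0$ for non-loop edges $(\alpha,\beta)\in E(G)$ and $v_\alpha = 0$ for loops at $\alpha$, so $(D_G^0)^\perp$ is spanned by the vectors $e_{x_\alpha}-e_{x_\beta}$ for non-loop edges and $e_{v_\alpha}$ for loops. Given $y_0 = (\bar x_\gamma, \bar v_\gamma) \in D_G^0$ and a fiber point $y = y_0 + w$ with $w \in (D_G^0)^\perp$ and $y \in D_G$, set $y_t = y_0 + tw$ for $t \in [0,1]$. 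The decisive observation is the scaling
\[
(x_\alpha - x_\beta)(y_t) = t\,(x_\alpha - x_\beta)(y), \qquad v_\alpha(y_t) = t\,v_\alpha(y) \text{ for every loop vertex } \alpha,
\]
while $v_\gamma(y_t) = \bar v_\gamma$ at every non-loop vertex $\gamma$. The first identity uses $\bar x_\alpha = \bar x_\beta$ on each connected component of $G$, and the second uses $\bar v_\alpha = 0$ at loop vertices.

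The loop inequalities $|v_\alpha(y_t)| = t|v_\alpha(y)| \leq t\bar\eps_P \leq \bar\eps_P$ are then immediate. For a non-loop edge $(\alpha,\beta)$, set $f(t) := c^v_{\alpha\beta}(y_t)$, which is affine in $t$ and decomposes as $f(t) = tf(1) + (1-t)C$, where $C := f(0)$ collects the contributions $c_\gamma|\bar v_\gamma|$ from the non-loop vertices among $\{\alpha,\beta\}$. Since $|x_\alpha - x_\beta|(y_t) = t|x_\alpha - x_\beta|(y) \leq t\cdot d_{\alpha\beta}(P, v(y))$, the required inequality $|x_\alpha - x_\beta|(y_t) \leq d_{\alpha\beta}(P, v(y_t))$ reduces to the monotonicity-type bound
\[
d_{\alpha\beta}(P, v(y_t)) \geq t\cdot d_{\alpha\beta}(P, v(y)) \qquad (t \in [0,1]).
\]
When $d_{\alpha\beta}(P,v(y)) = \bar\eps_P$ this is clear from $d_{\alpha\beta}(P,v(y_t)) \geq \bar\eps_P$. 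When $d_{\alpha\beta}(P,v(y)) = f(1) - \eps_P > \bar\eps_P$, the bound $d_{\alpha\beta}(P,v(y_t)) \geq f(t) - \eps_P$ together with the affine decomposition of $f$ makes the difference $d_{\alpha\beta}(P,v(y_t)) - t\cdot d_{\alpha\beta}(P,v(y))$ at least $(1-t)(C - \eps_P)$, so it suffices to verify $C \geq \eps_P$.

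The crux is therefore the lower bound $C \geq \eps_P$, which is supplied by the numerical relation between the two scales. Loop vertices $\gamma$ satisfy $|v_\gamma(y)| \leq \bar\eps_P$ by the loop constraint, and since distinct pieces of $P$ are pairwise disjoint and $c_\gamma = |\gamma|/(20(n+2))$, one has $c_\alpha + c_\beta \leq 1/20$; hence the loop contribution to $f(1)$ is bounded by $\bar\eps_P/20$. If both $\alpha$ and $\beta$ were loop vertices (so $C=0$), this would force $f(1) \leq \bar\eps_P/20 < \bar\eps_P + \eps_P$, contradicting the standing inequality $f(1) > \bar\eps_P + \eps_P$; hence at least one of $\alpha, \beta$ is a non-loop vertex and the loop part of $f(1)$ being at most $\bar\eps_P/20$ forces $C > \bar\eps_P + \eps_P - \bar\eps_P/20 > \eps_P$. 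The main obstacle is precisely this careful bookkeeping of the two length scales $\eps_P \ll \bar\eps_P$ against the break-up of $f$ into loop and non-loop parts; the underlying geometric content is that retracting a point of $D_G$ toward its orthogonal projection shrinks the left-hand side of each defining inequality at least as fast as it could possibly shrink the right-hand side.
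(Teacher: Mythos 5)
Your proof is correct and takes essentially the same route as the paper's: retract linearly toward the orthogonal projection, observe that loop-vertex velocities scale by $t$ while non-loop-vertex velocities are fixed, and show that for an edge with $c^v_{\alpha\beta}-\eps_P>\bar\eps_P$ the non-loop contribution to $c^v_{\alpha\beta}$ exceeds $\eps_P$, so the bound $d_{\alpha\beta}(P,v)$ shrinks no faster than the factor $t$ on $|x_\alpha-x_\beta|$. Your bookkeeping (bounding the loop contribution by $\bar\eps_P/20$ and deducing $C>\tfrac{19}{20}\bar\eps_P+\eps_P$) is, if anything, slightly more careful than the paper's corresponding step.
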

\begin{proof} 
This might not be obvious since the width $d_{\alpha\beta}(P,v)$ of the neighborhood depends on $v$. 
Suppose $(x,v)\in D_G$. Let $(x^t,v^t)=t(x,v)+(1-t)\pi^\vee(x,v)$ for $0\leq t\leq 1$, so $(x^1,v^1)=(x,v)$.
If $(\alpha,\alpha)\in G$, $v^t$ slides $v_\alpha$   to zero, otherwise it does not change $v_\alpha$.  If $  c^v_{\alpha\beta}-\eps_P\leq \bar\eps_P$ for all $(\alpha,\beta)\in G$ with $\alpha\not=\beta$, the point $(x^t,v^t)$ clearly belongs to $D_G$. Suppose $ c_{\alpha\beta}^v-\eps_P>\bar \eps_P$ for some $(\alpha,\beta)\in G$.  We may assume $|v_\alpha|\geq |v_\beta|$, which implies $|v_\alpha|\geq 10\bar\eps_P$ and $(\alpha,\alpha)\not\in G$.  
It follows that  the $\alpha$-component of $v^t$ 
 is the same as $v$, and  we have
\[
\begin{split}
(  c_{\alpha\beta}^{v^t}-\eps_P)-t( c^v_{\alpha\beta}-\eps_P)& 
\geq (1-t)(  c_\alpha^v-\eps_P)\\
&
\geq (1-t)\Bigl(\frac{\bar \eps_P}{2(n+2)}-\eps_P\Bigr)\geq 0.\\
\therefore
|x_\alpha^t-x_\beta^t|=t|x_\alpha-x_\beta|  &  \leq td_{\alpha\beta}(P,v)<d_{\alpha\beta}(P, v^t).
\end{split}
\]
Thus,  $(x^t,v^t)\in D_G$.
\end{proof}
\begin{lem}\label{Lfunctor_neat}
With the notation of Lemma \ref{Lstarshaped_T}, the triple $(\pi_P^{-1}(D_G^0), \pi_P^{-1}(D_G), \bar \nu_P)$ is a $(6n-3c(G))$-dimensional spherical triple, where $\bar \nu_P$ is the closure of $\nu_P$, and $c(G)$ is the complexity, see Definition \ref{Dbar_Sigma}.
\end{lem}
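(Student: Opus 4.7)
My plan is to verify the three defining conditions of a spherical triple in sequence, with the main content in condition (3). For the dimension count: the linear subspace $D_G^0 \subset \RR^{6p}$ is cut out by the equations $x_\alpha = x_\beta$ for each non-loop edge $(\alpha,\beta) \in E(G)$ and $v_\alpha = 0$ for each loop. Each connected component of the non-loop subgraph $G_1$ with $k$ vertices contributes $3(k-1)$ independent position equations, summing to $3(|V(G_1)| - |\pi_0(G_1)|)$, while each of the $b$ loops contributes $3$ velocity equations, giving $\dim D_G^0 = 6p - 3c(G)$ and hence $\dim W = 6n - 3c(G)$. The inclusion $W \subset N_1$ holds because $D_G^0 \subset D_G$ (as $0 \leq \bar\eps_P \leq d_{\alpha\beta}(P,v)$). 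By Lemma \ref{Lnu_P_star}, $\bar\nu_P$ is regular star-shaped with regular center $e_P(0)$, and $e_P(0) \in W$ since $0 \in D_G^0$, which verifies conditions (1) and (2).

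The bulk of the work is condition (3). My strategy is to build a deformation retraction of $N_1 \cap N_2$ onto $W \cap N_2$ inside $N_1 \cap N_2$ that carries $N_1 \cap \partial N_2$ into $W \cap \partial N_2$. Start with the fiberwise orthogonal projection $\pi^\vee : D_G \to D_G^0$ of Lemma \ref{Lstarshaped_T}, and lift it to $\tilde\pi^\vee : N_1 \to W$ preserving the perpendicular component to $e_P(\RR^{6p})$: writing $(y,w) = e_P(x,v) + r$ with $r \in e_P(\RR^{6p})^\perp$ and $(x,v) = \pi_P(y,w)$, set $\tilde\pi^\vee(y,w) = e_P(\pi^\vee(x,v)) + r$. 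The straight-line homotopy $H_s(y,w) = e_P(s(x,v) + (1-s)\pi^\vee(x,v)) + r$ for $s \in [0,1]$ remains in $N_1$ by Lemma \ref{Lstarshaped_T}, and preserves $r$, so the distance-$\eps_P$ condition defining $\nu_P$ holds throughout.

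The hard part will be to show that the homotopy can be arranged to preserve the remaining conditions defining $\bar\nu_P$, namely the coordinate bounds $|y_i|, |w_i| \leq 1$ and the first-coordinate inequalities on $x_\alpha \pm c_\alpha v_\alpha$. Since $\pi^\vee$ can shift $x_\alpha$ by as much as $d_{\alpha\beta}(P,v)$, which is generally much larger than the margin $\eps_P$, a naive check fails. My plan is to exploit the hierarchy $\eps_P \ll \bar\eps_P \ll c^v$ built into the definitions, together with a term-by-term analysis analogous to those in the proofs of Lemmas \ref{Ldiagonal_bound} and \ref{Ldiagonal_incl}, to show the relevant inequalities persist: along non-loop edges the homotopy shifts $x_\alpha, x_\beta$ only toward their common midpoint so the first-coordinate bounds governed by $c^v_{\leq \alpha}$ and $c^v_{\geq \alpha}$ remain compatible with the new position, while along loops $v_\alpha$ shrinks toward $0$, which only tightens the $c^v$-terms appearing in the inequalities. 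Should a direct check not yield strict preservation near the boundary of $\bar\nu_P$, I would compose $H_s$ with a radial retraction from $e_P(0)$ into $\bar\nu_P$; since $W$ is affine through $e_P(0)$, this preserves $W$ and acts trivially on the $\partial N_2$-data. Once this deformation retract inside $N_1 \cap N_2$ respecting $\partial N_2$ is established, it furnishes a homotopy inverse to the inclusion on the pointed quotients, completing the proof that the triple is $(6n-3c(G))$-dimensional spherical.
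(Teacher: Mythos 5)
Your setup is the same as the paper's: the dimension count, the verification of conditions (1)--(2) via Lemma \ref{Lnu_P_star}, and the idea of retracting $\pi_P^{-1}(D_G)\cap\bar\nu_P$ onto $\pi_P^{-1}(D_G^0)\cap\bar\nu_P$ by the lifted fiberwise projection $\tilde\pi^\vee$ composed with a radial rescaling centered at $e_P(0)$ all match the paper's proof. You are also right that the naive straight-line homotopy fails to preserve $\bar\nu_P$ (the projection $\pi^\vee$ can move $x_\alpha$ by roughly $1/20$, far exceeding the margin $\eps_P$ in the first-coordinate inequalities), so the radial correction is genuinely needed.

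The gap is in your treatment of that radial correction. You assert it ``preserves $W$ and acts trivially on the $\partial N_2$-data,'' but the real issue is whether it preserves $N_1=\pi_P^{-1}(D_G)$, and this is where essentially all of the paper's work lies. Writing $(x^{st},v^{st})=s(x^t,v^t)$ for the projected coordinates of the rescaled point (note $\pi_P(e_P(0))=0$), one needs $|x^{st}_\alpha-x^{st}_\beta|=s|x^t_\alpha-x^t_\beta|\leq d_{\alpha\beta}(P,v^{st})$, and $D_G$ is \emph{not} a cone from $e_P(0)$: its width $d_{\alpha\beta}(P,v)=\max\{\bar\eps_P,\,c^v_{\alpha\beta}-\eps_P\}$ is neither constant nor homogeneous of degree one in $v$, so when $s<1$ one loses $(1-s)\eps_P$ of slack in the branch $c^v_{\alpha\beta}-\eps_P$, and when $s>1$ one overshoots $\bar\eps_P$ in the other branch. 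The paper resolves this by a case analysis on which defining inequality of $\partial\nu_P$ forces the value of $s$ (cases (i-a-1)--(i-a-3), (i-b), (ii)), deriving explicit bounds $s\geq 1/(1+(1-t)d)$ or $s\leq 1/(1-(1-t)d)$ and showing the resulting error is absorbed by the hierarchy $\eps_P\ll\bar\eps_P\ll c^v_\alpha$; it also must first check $(y^t,w^t)\neq e_P(0)$ so the rescaling is defined. Your proposal leaves this entire verification unaddressed, and it is not automatic. A secondary, more minor point: composing $H_s$ pointwise with a radial retraction does not immediately give a continuous deformation retraction of the pair; the paper instead constructs the retraction on $\partial\nu_P$ first and glues it to $(1-t)\mathrm{id}+t\pi^\vee$ through a collar.
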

\begin{proof}
We use the notations of the proof of the previous lemma. We extend the map $\pi^\vee$ to a map $\pi_P^{-1}(D_G)\to \pi_P^{-1}(D_{G}^0)$  by setting
\[
\pi^\vee(y,w):=e_P\{\pi^\vee(\pi_P(y,w))\}+(y,w)-e_P(\pi_P(y,w)).
\] 
The $\pi^\vee$ on the right-hand side is the one defined in the previous lemma. Let $(y,w)\in \pi_P^{-1}(D_G)\cap \partial \nu_P$. We set $(y^t,w^t)=t(y,w)+(1-t)\pi^\vee(y,w)$. By  the previous lemma, we have $(y^t,w^t)\in \pi^{-1}_P(D_G)$.  We first show $e_P(0)\not=(y^t,w^t)$ for any $t$. Intuitively, this is because the width of the regular neighborhood  $D_G$ is sufficiently small. For $\alpha\in P^\circ$,  we have 
\[
|\pi_P(y,w)_\alpha-\pi^\vee(\pi_P(y,w))_\alpha|\ \leq \ \frac{1}{| C|}\sum_{\beta\in C} \tilde d_{\alpha\beta}(P,v)+\bar \eps_P\ \leq \ 
\frac{1}{20}+\bar\eps_P,
\]
where for $z\in \RR^{6p}$,  $z_\alpha\in \RR^6$ denotes the component labeled with $\alpha$, and $|C|$ denotes the number of vertices in the connected component $C$ of $G$ including the vertex $\alpha$, and $\tilde d_{\alpha\beta}(P,v)$ is the minimum of $d_{\alpha,\gamma_1}(P,v)+d_{\gamma_1,\gamma_2}(P,v)+\cdots +d_{\gamma_a,\beta}(P,v)$ for the paths $(\alpha,\gamma_1,\dots, \gamma_a,\beta)$ in $G$ with $v$ being the velocity component of $\pi_P(y,w)$.   
Therefore, by definition of $\pi^\vee$, for $i\in \alpha$ we have
\[
|(y_i,w_i)-\pi^\vee(y,w)_i|=|e_{P}(\pi_P(y,w))_i-e_{P}(\pi^\vee(\pi_P(y,w)))_i|\leq (1+1/10)(\frac{1}{20}+\bar \eps_P)<1/2.
\]
Here, the subscript $i$ denotes the $i$-th component.
By this inequality, if $|y_i|=1$ or $|w_i|=1$, $|(y^t_i,w^t_i)|\geq 1/2$. In the case that one of the defining inequalities of $\nu_P$ on the first coordinate ( the inequalities involving $<_1$) becomes an equality, we see $y^t\not=0$ similarly. If the distance between $(y,w)$ and the image of $e_P$ is equal to $\eps_P$, the distance of $(y^t,w^t)$ from the image is also $\eps_P$ since $e_P\{\pi^\vee(\pi_P(y,w))\}-e_P(\pi_P(y,w))$ is parallel to the image of $e_P$. Thus,  we have $e_P(0)\not=(y^t,w^t)$ under the assumption $(y,w)\in\partial\nu_P\cap \pi_P^{-1}(D_G)$. \\
\indent By Lemma \ref{Lnu_P_star}, there is a unique $s>0$ such that $(y^{st},w^{st}):=(1-s)e_P(0)+s(y^t,w^t)\in \partial  \nu_{P}$ (see Figure \ref{Fretraction}). We shall show this point is also in $\pi_P^{-1}D_G$, which implies that we can define a retracting homotopy $h : (\pi_P^{-1}D_G)\cap\partial \nu_P\to (\pi_P^{-1}D^0_G)\cap\partial \nu_P$ on the boundary by $h((y,w),t)=(y^{st},w^{st})$ since $(y^{s,0},w^{s,0})\in \pi_P^{-1}D^0_G$.
We put $( x , v )=\pi_P(y,w)$, $( x ^t, v ^t)=\pi_P(y^t,w^t)$ and $(x^{st},v^{st})=\pi_P(y^{st},w^{st})$. Clearly, $( x ^t, v ^t)=t( x , v )+(1-t)\pi^\vee( x , v )$. \\
\indent (i) Let $(\alpha,\beta)\in G$ with $\alpha\not=\beta$. \\
\indent (i-a) Suppose $ c_{\alpha\beta}^{ v }-\eps_P\geq \bar \eps_P$. In this case, by the proof of the previous lemma, we have $| x ^t_\alpha- x ^t_\beta|\leq  c^{ v ^t}_{\alpha\beta}-\eps_P$.  If $s\geq  1$, obviously
\[
| x ^{st}_\alpha- x ^{st}_\beta|=s| x ^t_\alpha- x ^t_\beta|\leq s(c_{\alpha\beta}^{ v ^t}-\eps_P)\leq  sc^{ v ^t}_{\alpha\beta}-\eps_P\leq  c^{ v ^{st}}_{\alpha\beta}-\eps_P.
\]
So we assume $0<s<1$. 
If a number $i$ belongs to the minimum or maximum piece, the $i$-th component of the image $e_P(\RR^{6p})$ consists of   one point so the $i$-th components of $e_P(\pi_P(y,w))$ and $e_P(\pi^\vee \pi_P(y,w))$ are the same, and the $i$-th component of $(y^t,w^t)$ is stationary for $t$. Therefore,  $|y^t_i|<1$ and $|w^t_i|<1 $. By this observation, at least one of the following cases occurs :
\begin{enumerate}
\item[(i-a-1)] $|sy^t_i|=1$ or $|sw^t_i|=1$ for some $i\in P^\circ$ (see Definition \ref{Dpartition0}), \vspace{2mm}
\item[(i-a-2)] $s( x ^t_\gamma- c_\gamma  v ^t_\gamma)=_1 1- (c_{\geq \gamma}^{  v ^{st}}+c_\gamma^{v ^{st}})+\eps_P$ or $s ( x ^{t}_\gamma+ c_\gamma  v ^t_\gamma)=_1-1+ (c_{\leq \gamma}^{v ^{st}}+c_\gamma^{v ^{st}})-\eps_P$ for some $\gamma\in P^\circ$,  or\vspace{2mm}
\item[(i-a-3)] the distance between $(y^{st},w^{st})$ and the image of $e_P$ is equal to $\eps_P$.
\end{enumerate}

 We shall consider the  first case (i-a-1), in particular the case of $|sy^t_i|=1$ since the other case is similar. By the argument in the proof of $(y^t,w^t)\not=e_P(0)$, we have 
$
|y^t_i|\leq 1+(1-t)d
$
where
$
d=(1+1/10)(\frac{1}{20}+\bar \eps_P), 
$ 
 so
\[
1=|sy^t_i|\leq s(1+(1-t)d)\qquad \therefore \quad s\geq \frac{1}{1+(1-t)d}.
\]

Since $| x ^{st}_\alpha- x ^{st}_\beta|=|s x ^t_\alpha-s x ^t_\beta|\leq std_{\alpha\beta}(P, v )$, we want to show $  c^{v ^{st}}_{\alpha\beta}-\eps_P-st(c^{ v }_{\alpha\beta}-\eps_P)\geq 0$.  We may assume $| v _\alpha|\geq | v _\beta|$. By $  c_{\alpha\beta }^{ v }-\eps_P>\bar \eps_P$, we have $| v _\alpha|\geq 10\bar \eps_P$ and $v_\alpha^t=v_\alpha$. Then we have 
$
 c^{ v ^t}_{\alpha\beta}\geq  c^{ v }_\alpha
$ and 
\[
\begin{split}
 c^{ v ^{st}}_{\alpha\beta}-\eps_P-st( c^{ v }_{\alpha\beta}-\eps_P)& 
\geq  s(1-t) c^{ v }_\alpha-(1-st)\eps_P \\
&\geq \frac{(1-t)A}{1+(1-t)d}-\left(1-\frac{t}{1+(1-t)d}\right)\eps_P  \qquad (\text{where we set  $A = c^{ v }_\alpha$})\\
& = \frac{ A (1-t)}{1+(1-t)d}-\frac{1+(1-t)d-t}{1+(1-t)d}\eps_P \\
&=\frac{(- A +(d+1)\eps_P)t+ A -(d+1)\eps_P}{1+(1-t)d}.
\end{split}
\]
Clearly, we have
\[
\frac{\eps_P(d+1)}{ A }\leq \frac{(n+2)(1+1/10)(1/10 +2\bar \eps_P)\eps_P}{\bar \eps_P}<1.
\] Therefore,  we have $ A >\eps_P(d'+1)$ and  have proved the desired inequality. For the  case (i-a-2), suppose $ s(x^{t}_\gamma-c_\gamma v_\gamma)=_11- (c_{\geq \gamma}^{ v ^{st}}+c_\gamma^{v^{st}})+\eps_P$.  Put $T(v)=1- (c_{\leq \gamma}^{ v }+c^{ v }_\gamma)+\eps_P$. We have 
\[
T( v)-(1-t) \bar \eps_P\leq  T(v^t)\leq T(v^{st})=_1 s( x ^{t}_\gamma- c_\gamma v ^t_\gamma)\leq_1 s(T(v)+2(1-t)d),
\]
where $d$ is the number given in the previous case. By setting $d'=2d/T( v),\ \eps'=\bar \eps_P/T(v)$ and $r=1-t$, we have
$
s\geq (1-r\eps')/(1+rd')
$.
By this inequality, we have
\[
\begin{split}
 c^{ v ^{s t}}_{\alpha\beta}-\eps_P-st( c^{ v }_{\alpha\beta}-\eps_P)
& \geq  s(1-t) c^{ v }_{\alpha}-(1-st)\eps_P \\
&\frac{r(1-r\eps') A }{1+rd'}-\left(1-\frac{(1-r)(1-r\eps')}{1+rd'}\right)\eps_P \\
&=\frac{r A -r^2\eps' A -(1+rd'-(1-(1+\eps')r+r^2\eps'))\eps_P}{1+rd'}\\
&=\frac{r^2(-\eps' A +\eps'\eps_P)+r( A -\eps_P(d'+\eps'+1))}{1+rd'}.
\end{split}
\]
We easily see that last formula is $>0$ if $0<r<1$.  For the  case (i-a-3), as mentioned in the former part of this proof, the distance between $(y^t,w^t)$ and $e_P(\RR^{6p})$ is $\leq \eps_P$ so this case cannot occur under the assumption $0<s<1$. \\
\indent (i-b) Suppose $ c^{ v }_{\alpha\beta}-\eps_P<\bar \eps_P$. In this case, if $0<s\leq 1$, we have $| x ^{st}_\alpha- x ^{st}_\beta|\leq s\bar\eps_P\leq \bar \eps_P$ so we may assume $s>1$. Under this assumption, the distance between $(y,w)$ and $e_P(\RR^{6p})$ must be $<\eps_P$. Since we have assumed $(y,w)\in \partial \nu_P$, one of the following cases occurs
\begin{enumerate}
\item[(i-b-1)] $|y_i|=1$ or $|w_i|=1$ for some $i\in P^\circ$.
\item[(i-b-2)] $ x _\gamma- c_\gamma  v _\gamma=_1T(v)$ or $ x _\gamma+ c_\gamma  v _\gamma=_1-1+ (c^{ v }_{\leq \gamma}+c^{ v }_\gamma)-\eps_P$ for some $\gamma \in P^\circ$.
\end{enumerate}
In the first case, say $|y_i|=1$. We have $s(1-(1-t)d)\leq |sy_i^t|\leq 1$ so $s\leq 1/(1-(1-t)d)$. We have
\[
| x ^{st}_\alpha- x ^{st}_\beta|\leq st| x _\alpha- x _\beta|\leq \frac{t}{1-(1-t)d}\bar \eps_P<\bar \eps_P.
\]
In the second case, say $ x _\gamma- c_\gamma v _\gamma =_1T(v)$, we have $s(T(v)-2(1-t)d)\leq _1s( x _\gamma^t- c_\gamma  v ^t_\gamma)\leq _1T(v)+(1-t) \bar \eps_P/10$ and  $st\leq \frac{t(1+(1-t)\eps')}{1-(1-t)d'}<1$. \\
\indent (ii) For the case $(\alpha,\alpha)\in G$, we can verify the codition $| v ^{st}_\alpha|\leq \bar \eps_P$ similarly to the above case $ c^{ v }_{\alpha\beta}-\eps_P< \bar \eps_P$. \\
\indent Thus, we have proved $(y^{st},w^{st})\in \pi_P^{-1}D_G$ in any case. 
 The whole retracting homotopy\\ $(\pi_P)^{-1}(D_G)\cap \bar\nu_P\to (\pi_P)^{-1}(D^0_G)\cap \bar\nu_P$ is constructed by taking a `collar' (regular neighborhood) of $\pi_P^{-1}(D_G)\cap \partial \nu_P$ in $\pi_P^{-1}(D_G)\cap \bar \nu_P$ such that the image by  $\pi^{\vee}$ of the part of boundary of the collar which is not a part of $\partial \nu_P\cup \partial \pi_P^{-1}(D_G)$ is included in $\bar \nu_P$ and connecting $(1-t)id+t\pi^{\vee}$ and the constructed homotopy on the boundary through this collar.
\end{proof}
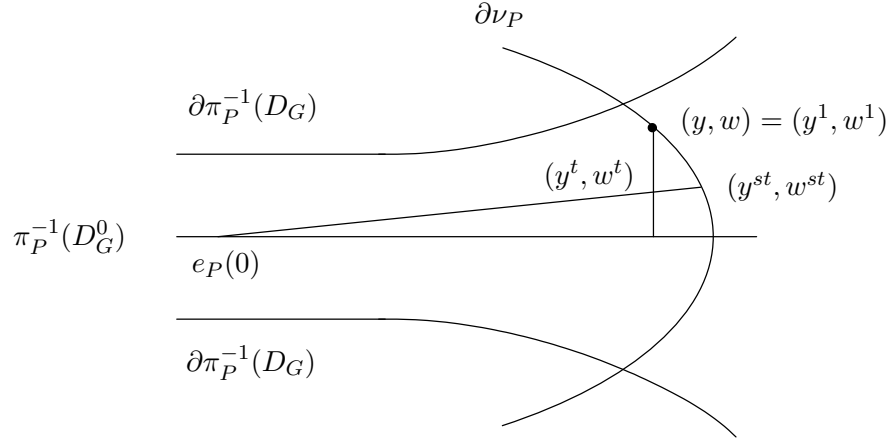
\begin{figure}
\begin{center}
{\unitlength 0.1in%
\begin{picture}(41.3600,22.7500)(0.2000,-23.0500)%
%
\special{pn 8}%
\special{pa 1121 1256}%
\special{pa 4156 1256}%
\special{fp}%
%
\special{pn 8}%
\special{ar 1337 1256 2592 1209 5.3252827 0.9592651}%
%
\special{pn 8}%
\special{pa 1121 1688}%
\special{pa 2212 1688}%
\special{fp}%
%
\special{pn 8}%
\special{pa 2178 1688}%
\special{pa 2274 1688}%
\special{pa 2370 1691}%
\special{pa 2434 1695}%
\special{pa 2530 1704}%
\special{pa 2561 1708}%
\special{pa 2593 1712}%
\special{pa 2657 1722}%
\special{pa 2688 1727}%
\special{pa 2720 1732}%
\special{pa 2752 1738}%
\special{pa 2783 1744}%
\special{pa 2815 1751}%
\special{pa 2846 1757}%
\special{pa 2877 1764}%
\special{pa 2909 1772}%
\special{pa 2940 1779}%
\special{pa 3002 1795}%
\special{pa 3033 1804}%
\special{pa 3063 1812}%
\special{pa 3094 1821}%
\special{pa 3125 1831}%
\special{pa 3155 1840}%
\special{pa 3186 1850}%
\special{pa 3216 1860}%
\special{pa 3247 1870}%
\special{pa 3337 1903}%
\special{pa 3397 1927}%
\special{pa 3426 1939}%
\special{pa 3456 1951}%
\special{pa 3485 1964}%
\special{pa 3515 1976}%
\special{pa 3544 1990}%
\special{pa 3573 2003}%
\special{pa 3660 2045}%
\special{pa 3718 2075}%
\special{pa 3774 2105}%
\special{pa 3802 2121}%
\special{pa 3830 2138}%
\special{pa 3884 2172}%
\special{pa 3911 2191}%
\special{pa 3936 2209}%
\special{pa 3962 2229}%
\special{pa 3987 2249}%
\special{pa 4011 2270}%
\special{pa 4034 2292}%
\special{pa 4048 2305}%
\special{fp}%
%
\special{pn 8}%
\special{pa 1121 824}%
\special{pa 2212 824}%
\special{fp}%
%
\special{pn 8}%
\special{pa 2178 824}%
\special{pa 2274 824}%
\special{pa 2370 821}%
\special{pa 2402 819}%
\special{pa 2434 816}%
\special{pa 2466 814}%
\special{pa 2498 810}%
\special{pa 2529 807}%
\special{pa 2593 799}%
\special{pa 2625 794}%
\special{pa 2656 789}%
\special{pa 2688 784}%
\special{pa 2720 778}%
\special{pa 2751 772}%
\special{pa 2783 766}%
\special{pa 2814 760}%
\special{pa 2846 753}%
\special{pa 2877 746}%
\special{pa 2908 738}%
\special{pa 2939 731}%
\special{pa 2971 723}%
\special{pa 3033 707}%
\special{pa 3095 689}%
\special{pa 3125 680}%
\special{pa 3156 671}%
\special{pa 3187 661}%
\special{pa 3247 641}%
\special{pa 3278 630}%
\special{pa 3308 620}%
\special{pa 3338 609}%
\special{pa 3368 597}%
\special{pa 3398 586}%
\special{pa 3428 574}%
\special{pa 3457 561}%
\special{pa 3487 549}%
\special{pa 3545 523}%
\special{pa 3661 467}%
\special{pa 3689 452}%
\special{pa 3718 437}%
\special{pa 3746 422}%
\special{pa 3774 406}%
\special{pa 3802 389}%
\special{pa 3829 373}%
\special{pa 3856 356}%
\special{pa 3883 338}%
\special{pa 3909 320}%
\special{pa 3936 301}%
\special{pa 3961 282}%
\special{pa 3987 263}%
\special{pa 4011 243}%
\special{pa 4036 222}%
\special{pa 4048 211}%
\special{fp}%
%
\special{pn 8}%
\special{pa 3616 685}%
\special{pa 3616 1256}%
\special{fp}%
\put(43.0000,-6.4500){\makebox(0,0){$(y,w)=(y^1,w^1)$}}%
%
\special{pn 8}%
\special{pa 1337 1256}%
\special{pa 3863 997}%
\special{fp}%
\put(32.8000,-9.3500){\makebox(0,0){$(y^t,w^t)$}}%
\put(42.9000,-9.9500){\makebox(0,0){$(y^{st},w^{st})$}}%
\put(13.8000,-14.0500){\makebox(0,0){$e_P(0)$}}%
\put(15.2000,-5.7500){\makebox(0,0){$\partial\pi_P^{-1}(D_G)$}}%
\put(15.1000,-19.1500){\makebox(0,0){$\partial\pi_P^{-1}(D_G)$}}%
\put(5.6000,-12.5500){\makebox(0,0){$\pi_P^{-1}(D_G^0)$}}%
\put(28.1000,-0.9500){\makebox(0,0){$\partial \nu_P$}}%
%
\special{sh 1.000}%
\special{ia 3610 685 21 21 0.0000000 6.2831853}%
\special{pn 8}%
\special{ar 3610 685 21 21 0.0000000 6.2831853}%
\end{picture}}%
\end{center}
\caption{definition of $(y^{st},w^{st})$ } \label{Fretraction}
\end{figure}
In view of Lemmas \ref{Lfunctor_neat} and \ref{Lspherical}, we have the following lemma.
\begin{prop}\label{PThomhomotopy}
With the notation of Lemma \ref{Lstarshaped_T}, we have the following homotopy equivalences
\[
\cap_{(\alpha,\beta)\in G}\TT_{\alpha\beta}\ \simeq\  (\pi_P^{-1}(D^0_{G})\cap \bar\nu_P)/ (\pi_P^{-1}(D^0_{G})\cap \partial \nu_P) \simeq\  S^{6n-6p}\wedge \{D^0_G\cap (D^3)^{2p}\}/\{D^0_G\cap \partial (D^3)^{2p}\}.
\]
Here, the first equivalence is induced by Lemmas \ref{Lspherical} and \ref{Lfunctor_neat} and the second one by the orthogonal decomposition $\RR^{6n}=e_P(\RR^{6p})\oplus \RR^{6n-6p}$. By  putting these  equivalences into together for all $G\in \GG(P)$, we have the following homotopy equivalence
\[
\TT(P)\ \simeq\  \bar\nu_P/\{(\cup_{(\alpha,\beta)}\pi_P^{-1}(D^0_{\alpha\beta})\cap \bar\nu_P)\cup \partial \nu_P\}\ \simeq\  S^{6n-6p}\wedge(D^3)^{2p}/\{(Fat\times (D^3)^p)\cup ((D^3)^p\times V_0)\cup \partial (D^3)^{2p}\}.
\]
Here, $(\alpha,\beta)$ runs through the range of $\alpha,\beta\in P^\circ$, $\alpha\leq\beta$, and $Fat$ and $V_0$ are the subspaces defined in the beginning of this subsection.\hfill\qedsymbol
\end{prop}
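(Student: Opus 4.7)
My plan is to deduce both homotopy equivalences from Lemma \ref{Lfunctor_neat} combined with an orthogonal product decomposition of $\bar\nu_P$, and then to assemble the pieces to handle $\TT(P)$ itself. The starting point is the identification $\TT_{\emptyset_P}=\RR^{6n}/(\RR^{6n}-\nu_P)\cong\bar\nu_P/\partial\nu_P$, which holds because $\nu_P$ is open and bounded in $\RR^{6n}$ so the quotient is the one-point compactification of $\nu_P$. Under this identification, for each $G\in\GG(P)$, the intersection $\cap_{(\alpha,\beta)\in E(G)}\TT_{\alpha\beta}$ corresponds to the pointed quotient $(\pi_P^{-1}(D_G)\cap\bar\nu_P)/(\pi_P^{-1}(D_G)\cap\partial\nu_P)$ with $D_G=\cap_{(\alpha,\beta)\in E(G)}D_{\alpha\beta}$.

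For the first $\simeq$ I would invoke Lemma \ref{Lfunctor_neat} directly: that lemma asserts $(\pi_P^{-1}(D_G^0),\pi_P^{-1}(D_G),\bar\nu_P)$ is a spherical triple, so by the third clause of the definition the inclusion of pairs $(\pi_P^{-1}(D_G^0)\cap\bar\nu_P,\pi_P^{-1}(D_G^0)\cap\partial\nu_P)\hookrightarrow(\pi_P^{-1}(D_G)\cap\bar\nu_P,\pi_P^{-1}(D_G)\cap\partial\nu_P)$ is a relative homotopy equivalence, which gives a homotopy equivalence between the quotient pointed spaces.

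For the second $\simeq$ I use the orthogonal decomposition $\RR^{6n}=e_P(\RR^{6p})\oplus W$ with $\dim W=6n-6p$. Since $D_G^0$ is a linear subspace of $e_P(\RR^{6p})$, one has $\pi_P^{-1}(D_G^0)=D_G^0\oplus W$. Condition (a) in the definition of $\nu_P$ forces the $W$-component to have norm strictly less than $\eps_P$, while conditions (b) and (c) depend essentially only on the parallel component; since the transverse contribution to (b) is at most $\eps_P\ll 1$, a standard fiberwise deformation retract identifies $\bar\nu_P\cap\pi_P^{-1}(D_G^0)$, rel boundary in $\partial\nu_P$, with a product $(D_G^0\cap U)\times\bar B_{\eps_P}(W)$, where $U\subset e_P(\RR^{6p})\cong\RR^{6p}$ is the parallel cross-section of $\bar\nu_P$ (which rescales to $(D^3)^{2p}$ up to homotopy). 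Passing to quotients turns the $W$-factor into $S^{6n-6p}$ and $(D_G^0\cap U)/(D_G^0\cap\partial U)$ into $\{D_G^0\cap(D^3)^{2p}\}/\{D_G^0\cap\partial(D^3)^{2p}\}$, producing the stated smash product.

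For the statement about $\TT(P)$ itself I specialize the above to $G=\emptyset_P$ to obtain $\TT_{\emptyset_P}\simeq S^{6n-6p}\wedge(D^3)^{2p}/\partial(D^3)^{2p}$, and I apply the argument simultaneously for all single-edge graphs $(\alpha,\beta)$ to identify the union $\TT_{fat}=\bigcup_{\alpha\leq\beta}\TT_{\alpha\beta}$ with the image, after the same retraction, of $\bigcup_{\alpha\leq\beta}\pi_P^{-1}(D^0_{\alpha\beta})\cap\bar\nu_P$ together with $\partial\nu_P$; taking the quotient then yields the claimed description. The main technical hurdle will be arranging the product decomposition and the fiberwise deformation retract of $\bar\nu_P$ to be compatible with every $D^0_{\alpha\beta}$ simultaneously, so that the individual retractions assemble into a single global homotopy descending to the quotient $\TT(P)$. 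This compatibility ultimately rests on Lemma \ref{Lstarshaped_T}: the fiberwise projection $\pi^\vee$ onto $D^0_{\alpha\beta}$ only averages the $\alpha,\beta$-coordinates of the parallel part and therefore respects each other $D_{\alpha'\beta'}$, while the small size of the transverse disk $\bar B_{\eps_P}(W)$ decouples the $W$-direction from all combinatorial constraints simultaneously.
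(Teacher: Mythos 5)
Your proposal is correct and follows essentially the same route as the paper, which states this proposition without a written proof as an immediate consequence of Lemmas \ref{Lspherical} and \ref{Lfunctor_neat} (the first equivalence coming from clause (3) of the definition of spherical triple applied to $(\pi_P^{-1}(D_G^0),\pi_P^{-1}(D_G),\bar\nu_P)$) together with the orthogonal decomposition $\RR^{6n}=e_P(\RR^{6p})\oplus\RR^{6n-6p}$ for the second, assembled over all $G\in\GG(P)$ for the statement about $\TT(P)$. Your elaborations on the product structure of $\bar\nu_P$ and on assembling the retractions supply detail the paper omits; the only imprecision is the parenthetical claim that the projection onto $D^0_{\alpha\beta}$ literally preserves every other $D_{\alpha'\beta'}$ (it moves $x_\alpha$ and hence perturbs $|x_\alpha-x_{\beta'}|$), but this is not needed once one invokes Lemma \ref{Lfunctor_neat} for each $G\in\GG(P)$ and glues the resulting relative equivalences over the poset of intersections.
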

\subsection{Equivalence between   $\PK$ and $\TT$}
\begin{lem}\label{Lnathomologyiso}
The natural transformation $\mathcal{C}\to \PK$ defined in Definition \ref{Dconfig_model}  induces a homology isomorphism at each $P\in \PP_n$.
\end{lem}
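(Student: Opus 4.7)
The plan is to prove that the inclusion $\iota_P : \mathcal{C}(P)\hookrightarrow \PK(P)$ of Definition \ref{Dconfig_model}(2) is a weak homotopy equivalence, which is stronger than and implies the desired homology isomorphism. I will exhibit an explicit deformation retraction by constructing a retraction $r$ and homotopies witnessing $r\circ \iota_P\simeq \mathrm{id}$ and $\iota_P\circ r\simeq \mathrm{id}$.

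First I construct $r:\PK(P)\to \mathcal{C}(P)$. For $\alpha\in P^\circ$, let $t_\alpha$ denote the midpoint of the $\alpha$-th surviving subinterval in the domain of $f\in\PK(P)$. Set $\tilde x_\alpha=f(t_\alpha)$ and $\tilde v_\alpha=f'(t_\alpha)$, which is nonzero since $f$ is an immersion, and rescale $\tilde v_\alpha$ continuously in $f$ to a vector $v_\alpha$ with $|v_\alpha|\in(10(n+2)\bar\eps_P,\,3/4)$. A small continuous adjustment of $\tilde x_\alpha$ to $x_\alpha$ (for instance by translating along the image of $f$) enforces the non-collision margin $|x_\alpha-x_\beta|>c^v_{\alpha\beta}-\eps_P/8$; the required magnitude is uniformly controlled because $f$ is an embedding on the compact surviving pieces. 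The remaining defining inequalities of $\mathcal{C}(P)$, namely $|x_\alpha|<1-c^v_\alpha$ and the first-coordinate bounds on $x_\alpha\pm c_\alpha v_\alpha$, follow from $f$ landing in $\mathrm{Int}(D^3)$ with the prescribed linear $1$-jet at $\{0,1\}$.

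Next, the composite $r\circ \iota_P$ is homotopic to $\mathrm{id}_{\mathcal{C}(P)}$ by a straight-line homotopy in $\RR^{6p}$: it differs from the identity only by a rescaling of each $v_\alpha$ and a small perturbation of each $x_\alpha$, both of which can be interpolated linearly within the open region $\mathcal{C}(P)$. For $\iota_P\circ r\simeq \mathrm{id}_{\PK(P)}$, I construct a straightening isotopy in two stages: a bump-function interpolation between $f$ and its linear first-order approximation $t\mapsto x_\alpha+(t-t_\alpha)v_\alpha$ on a central window of each surviving subinterval, followed by extension of this linear segment to fill the full $\alpha$-th surviving subinterval of length about $2c_\alpha|v_\alpha|$, matched smoothly to the original boundary data at the $\alpha$-puncture.

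The main obstacle is verifying that the straightening isotopy preserves embeddedness at every time: distinct pieces must remain pairwise disjoint, each piece must remain injective, and the whole image must stay inside $\mathrm{Int}(D^3)$. The numerical margins baked into Definition \ref{Dconfig_model}, namely the gap $\eps_P/8$ between centers, the boundary clearance $1-c^v_\alpha$, the first-coordinate bounds via $c^v_{\leq\alpha}$ and $c^v_{\geq\alpha}$, and the velocity cap $|v_\alpha|<3/4$, are calibrated precisely so that the linear segment produced by $\iota_P$ sits inside the image of each arc of $f$ with room to spare. Under these inequalities, the convex interpolation between $f$ and its piecewise-linear approximation remains an embedding throughout, completing the deformation retraction and establishing the lemma.
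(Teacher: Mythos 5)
Your proposal has a genuine gap at its central step, the claim that ``the convex interpolation between $f$ and its piecewise-linear approximation remains an embedding throughout.'' The numerical margins in Definition \ref{Dconfig_model} ($\eps_P/8$, the first-coordinate bands, the bound $|v_\alpha|<3/4$, etc.) constrain only the target data $(x_\alpha,v_\alpha)$; they say nothing about the global geometry of an arbitrary $f\in\PK(P)$, whose $p+2$ arcs may be long, may double back on themselves, and may be clasped around one another. The straight-line homotopy $(1-s)f+s\ell$ from such a configuration to the tiny tangent segments at the midpoints generically passes through self-intersections and intersections between different arcs, so it does not stay in $\PK(P)$. (The standard way to contract arcs to point--tangent data, as in Sinha's proof that the punctured-knot spaces are equivalent to configuration spaces, is to first precompose with a family of reparametrizations concentrating each subinterval at its midpoint --- this preserves embeddedness tautologically because one is only restricting an embedding --- and only then straighten, once every arc is uniformly $C^1$-close to linear. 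Your two-stage description does not do this, and as written the second stage is not a well-defined homotopy through embeddings.) A secondary issue: your retraction $r$ must produce $|x_\alpha-x_\beta|>c^v_{\alpha\beta}-\eps_P/8$, a fixed positive margin, whereas $f(t_\alpha)$ and $f(t_\beta)$ can be arbitrarily close as $f$ ranges over $\PK(P)$; ``translating along the image of $f$'' keeps the points on their respective arcs and cannot achieve this separation, so a genuine (if standard) spreading-out deformation of configurations is needed there too.

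Note also that the paper avoids all isotopy of arcs: it proves only the homology statement, by realizing $\mathcal{C}(P)$ as the complement of the thickened diagonals $\tilde D_{\alpha\beta}$ inside an auxiliary space $\tilde{\mathcal{C}}(P)$, showing each intersection $\tilde D_G$ is relatively spherical as in Lemma \ref{Lfunctor_neat}, and comparing the resulting computation of $H_*(\mathcal{C}(P))$ with the known homology of $\PK(P)$ (the argument of Lemma 2.7 of \cite{moriya3}). If you want to keep your stronger homotopy-theoretic route, the efficient fix is not to build the deformation retraction by hand but to use the two-out-of-three property: the evaluation map $\PK(P)\to \mathit{Conf}_p(Int(D^3))\times(\RR^3\setminus 0)^p$ is a weak equivalence by Sinha's theorem, and its composite with $\mathcal{C}(P)\to\PK(P)$ is readily seen to be one as well.
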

\begin{proof}
Let 
$\tilde{\mathcal{C}}(P)$ be the set of $(x,v)\in \RR^{6p}$ satisfying all the defining conditions of $\mathcal{C}(P)$ but $|x_\alpha-x_\beta|>  c_{\alpha\beta}^v-\eps_P/8$. For $\alpha\not=\beta$, we set $\tilde D_{\alpha\beta}=\{(x,v)\in \tilde{\mathcal{C}}(P)\mid |x_\alpha-x_\beta|\leq  c_{\alpha\beta}^v-\eps_P/8\}$ and \\ 
$\tilde D_{\alpha\beta}^0=\{(x,v)\in \tilde{\mathcal{C}}(P)\mid x_\alpha=x_\beta\}$. For a graph $G$ without loops, we set $\tilde D_G=\cap_{(\alpha,\beta)\in G}D_{\alpha\beta}$ and $\tilde D^0_G=\cap_{(\alpha,\beta)\in G}D^0_{\alpha\beta}$.
Similarly to the proof of Lemma \ref{Lfunctor_neat}, we see that the pair $(\tilde D_G, \tilde D_G\cap \partial \tilde{\mathcal{C}(P)})$ is relatively homotopy equivalent to $(\tilde D_G^0, \tilde D_G^0\cap \partial \tilde{ \mathcal{C}}(P))$.  This implies that the quotient space $\tilde D_G/(\tilde D_G\cap\partial\tilde{\mathcal{C}}(P))$ is homotopy equivalent to the sphere $S^{6p-3c(G)}$. With this fact, the proof is completely similar to Lemma 2.7 of  \cite{moriya3}.
\end{proof}

\begin{defi}\label{Dspectrafunctor} In this definition, we regard the sphere $S^k$ as the compactification $(\RR^k)^*$.
\begin{enumerate}

\item A {\em spectrum} $X$ is a sequence of pointed spaces $X_0, X_1, \dots$ with a  structure map $S^1\wedge X_k\to X_{k+1}$ for each $k\geq 0$. A {\em morphism (or map)} $f:X\to Y$ of spectra is a sequence of pointed maps $f_0:X_0\to Y_0, f_1:X_1\to Y_1,\dots$ compatible with the structure maps. Let $\SP$ denote the category of spectra and their maps. For a spectrum $X$,  $\pi_k(X)$ denotes the colimit of the sequence $\pi_k(X_0)\to \pi_{k+1}(X_1)\to \cdots $ defined by the structure maps. A map $f:X\to Y$ is called a {\em stable homotopy equivalence} if it induces an isomorphism $\pi_k(X)\to \pi_k(Y)$ for any integer $k$.
\item For a spectrum $X$ and unpointed space $U$, we define a spectrum $\Map(U,X)$ as follows. We define  $\Map(U,X)_k$ as the space  of (unpointed) continuous maps $U\to X_k$ with the compact-open topology. The basepoint is the constant map to the basepoint of $X_k$. The structure map is the one obviously induced by that of $X$.
\item We define a functor $\TT^S: \PP_n^{op}\to \SP$ as follows. Set $\TT^S(P)_k=S^{k-6n}\wedge \TT(P)$ if $k\geq 6n$,  and $\TT^S(P)_k=*$ otherwise. These spaces form a spectrum with the obvious structure map. The map corresponding to a map $P\to Q$ is also obviously induced from that of the Thom space model $\TT$. 
\item For a positive number $\delta$, we define a spectrum $\Sphere_\delta$ as follows. We set $\Sphere_{\delta, k}=\{ y\in \RR^k\}/\{y \mid |y|\geq \delta\}$. The structure map $S^1\wedge \Sphere_{\delta, k}\to \Sphere_{\delta, k+1}$ is the obvious collapsing map.
\item We define a functor $\mathcal{C}^\dagger:\PP_n^{op}\to \SP$ as follows.
Set $\mathcal{C}^{\dagger}(P)=\Map(\mathcal{C}(P),\Sphere_{\delta})$ where  $\delta =\eps_P/8$ (see Definition \ref{Dpunctured}). For a map $P\to Q$, the corresponding map is the pullback by the induced map $\mathcal{C}(P)\to \mathcal{C}(Q)$ followed by the pushforward by the collapsing map $\Sphere_{\eps_Q/8}\to \Sphere_{\eps_P/8}$.

\item We define a functor $\mathcal{C}^\vee:\PP_n^{op}\to \SP$ as follows. Let $\Sphere$ denote the sphere spectrum given by $\Sphere_k=S^k$.
Set $\mathcal{C}^{\vee}(P)=\Map(\mathcal{C}(P),\Sphere)$.   For a map $P\to Q$, the corresponding map is the pullback by the induced map.
\item We define a map $\TPhi= \TPhi_{P,k}:\RR^k\to \mathcal{C}^\dagger (P)_k$ by
\[
\RR^k\ni y\longmapsto \{(x_\gamma,v_\gamma) \mapsto (y-(0,e_P(x_\gamma,v_\gamma))\}\in\mathcal{C}^\dagger(P)_k.
\]
$\TT^S(P)_k$ is naturally identified with Thom space associated to the regular neighborhood  $\RR^{k-6n}\times \nu_P$ of the embedding $0\times e_P:\RR^{6p}\to \RR^k$ (with some extra collapsed points). $\TPhi_{P,k}$ factors through $\TT^S(P)_k$ as in Theorem  \ref{TAtiyahdual}, and  these maps form a natural transformation $\Phi : \TT^S\to \mathcal{C}^\dagger$. We see that this is well-defined below.
\item A natural transformation $p_* :\mathcal{C}^\vee\to \mathcal{C}^\dagger$ is defined by the pushforward by the obvious collapsing map $p :\Sphere\to \Sphere_{\delta}$. 
\end{enumerate}
\end{defi}
The following equivalence is a variation of the one given in \cite{moriya3}. If it is projected to the stable homotopy category, it is a special case of Atiyah duality which states an equivalence between the Spanier-Whitehead dual of a manifold and Thom spectrum of its normal bundle. We need point-set level compatibility so we have been taking care about parameters. 
\begin{thm}\label{TAtiyahdual}
Under the notations of Definition \ref{Dspectrafunctor},
the natural transformation $\Phi$ is well-defined, and the two natural transformations $\Phi$ and $p_*$ are termwise stable homotopy equivalences (i.e. they induce a stable homotopy equivalence at each object).
\end{thm}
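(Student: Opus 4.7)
The plan is to break the theorem into three parts: well-definedness of $\Phi$, the stable-equivalence claim for $p_*$, and the stable-equivalence claim for $\Phi$.

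\textbf{Well-definedness of $\Phi$.} One identifies $\TT^S(P)_k$ with the quotient of $\RR^k=\RR^{k-6n}\times\RR^{6n}$ by the subspace
\[
\RR^{k-6n}\times\bigl((\RR^{6n}-\nu_P)\cup(\pi_P^{-1}(\textstyle\bigcup_{\alpha,\beta\in P^\circ,\alpha\leq\beta}D_{\alpha\beta})\cap\bar\nu_P)\bigr).
\]
The task is to show that for every $y=(y_1,y_2)$ in this subspace and every $(x,v)\in\mathcal{C}(P)$, one has $|(y_1,y_2-e_P(x,v))|\geq\delta=\eps_P/8$, so that $\TPhi_{P,k}(y)$ is the constant basepoint map. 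The key observation is a margin comparison: the defining inequalities of $\mathcal{C}(P)$ leave buffers of size $\eps_P/8$, while those of $\nu_P$ and of $D_{\alpha\beta}$ use $\eps_P$, giving an available gap of $7\eps_P/8$. When $y_2\notin\nu_P$ I examine case by case which defining inequality fails and use that $e_P$ is an affine isometry onto its image. When $\pi_P(y_2)\in D_{\alpha\beta}$ with $y_2\in\bar\nu_P$, I exploit the $1$-Lipschitz property of $\pi_P$: $|y_2-e_P(x,v)|<\delta$ would force $|\pi_P(y_2)-(x,v)|<\delta$, and the contradicting $-\eps_P$ versus $-\eps_P/8$ margins finish the step. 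Naturality of $\Phi$ for a subdivision $P\to Q$ follows from the identity $e_Q\circ \pi_Q|_{e_Q(\RR^{6q})}=e_Q$ together with $e_Q= e_Q\circ e_{PQ}$ as affine maps (modulo factoring through $\RR^{6p}\to\RR^{6q}\to\RR^{6n}$), combined with Lemmas \ref{Ldiagonal_bound} and \ref{Ldiagonal_incl} which ensure the collapsings are compatible.

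\textbf{Stable equivalence for $p_*$.} The collapsing map $p_k:S^k\to\Sphere_{\delta,k}$ is a pointed homotopy equivalence, since both spaces are homeomorphic to $S^k$ and $p_k$ only collapses a contractible closed subset. Since $\mathcal{C}(P)$ is an open subset of Euclidean space and hence has the homotopy type of a CW complex, post-composition induces a weak equivalence $\Map(\mathcal{C}(P),S^k)\to\Map(\mathcal{C}(P),\Sphere_{\delta,k})$ for each $k$. Hence $p_*$ is termwise a homotopy equivalence of pointed spaces, and in particular a stable homotopy equivalence.

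\textbf{Stable equivalence for $\Phi$.} This is the substantive part and is an instance of Atiyah duality in the presence of the fat-diagonal quotient: $\mathcal{C}^\dagger(P)$ plays the role of the Spanier-Whitehead dual of $\mathcal{C}(P)$ realized via $\delta$-small mapping spectra, $\TT^S(P)$ is the Thom spectrum of the normal bundle of the embedding $e_P:\mathcal{C}(P)\hookrightarrow\RR^{6n}$ with the fat-diagonal neighborhoods collapsed, and $\TPhi_{P,k}$ is the Pontryagin--Thom collapse implementing the duality. I will filter both spectra in parallel using the intersections $\bigcap_{(\alpha,\beta)\in G}\TT_{\alpha\beta}$ as $G$ ranges over $\GG(P)$, use Proposition \ref{PThomhomotopy} to identify the associated graded pieces as spheres smashed with Thom pieces of the diagonals $D_G^0$, and match these against the corresponding open strata of $\mathcal{C}(P)$. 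By induction on the complexity of $G$ together with the five-lemma applied to cofibre sequences, this reduces to the base case $G=\emptyset_P$, where the statement is classical Atiyah duality for $\mathcal{C}(P)\hookrightarrow\RR^{6n}$ with the tubular-like neighborhood $\nu_P$. The main obstacle is precisely this last step: arranging the Pontryagin--Thom collapse to be simultaneously compatible with the delicately $\eps_P$-calibrated neighborhood $\nu_P$ and with the fat-diagonal stratification, along the lines of \cite{moriya3} but now incorporating the extra tangent-vector data that enters the widths $c_{\alpha\beta}^v$ and $\bar\eps_P$.
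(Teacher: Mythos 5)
Your well-definedness argument is the same margin comparison as the paper's (the paper runs it in the direct rather than contrapositive form: if $\TPhi(y,w)\neq *$ then $|\pi_P(y,w)-(x,v)|<\eps_P/4$ for some $(x,v)\in\mathcal{C}(P)$, and the $\eps_P/8$ versus $\eps_P$ buffers force $(y,w)\in\nu_P$ and $\pi_P(y,w)\notin D_{\alpha\beta}$), and your argument for $p_*$ is correct. The problem is the third part.

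The ``parallel filtration'' you propose on $\mathcal{C}^\dagger(P)=\Map(\mathcal{C}(P),\Sphere_{\eps_P/8})$ does not exist. The pieces $\TT_G=\bigcap_{(\alpha,\beta)\in G}\TT_{\alpha\beta}$ lie over the diagonal neighborhoods $D_G$, which are exactly what the defining inequalities of $\mathcal{C}(P)$ \emph{exclude}; there are no ``corresponding open strata of $\mathcal{C}(P)$'', and a mapping spectrum out of $\mathcal{C}(P)$ is not filtered by subspaces of the source of the Thom collapse. The base case is also misidentified: for $G=\emptyset_P$ the piece is $\TT_{\emptyset_P}=\RR^{6n}/(\RR^{6n}-\nu_P)\simeq S^{6n}$ (by Lemma \ref{Lnu_P_star}, $\bar\nu_P$ is regular star-shaped), i.e.\ it is dual to the contractible $\bar\nu_P$, not to $\mathcal{C}(P)$. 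Note that $\nu_P$ is a neighborhood of a region of the whole affine subspace $e_P(\RR^{6p})$, diagonals included, so it is not a ``tubular-like neighborhood'' of $e_P(\mathcal{C}(P))$. The duality between $\TT(P)$ and $\mathcal{C}(P)$ is the conclusion you are after, not the bottom of an induction, and Proposition \ref{PThomhomotopy} only identifies homotopy types of the $\TT_G$ --- it says nothing about $\TPhi$ being an equivalence. If you insist on an inductive scheme you would have to dualize the cofiber sequence $\TT_{fat}\to\TT_{\emptyset_P}\to\TT(P)$, prove duality separately for $\bar\nu_P$ and for the fat-diagonal part, and check compatibility of the duality maps with both sequences; your proposal does not set this up, and you yourself flag the key step as an unresolved ``obstacle''.

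The paper's route needs none of this. The well-definedness computation already shows that the locus not collapsed in $\TT^S(P)_{6n}$ contains the set $\{y\in\RR^{6n}\mid d(y,e_P(\mathcal{C}(P)))<\eps_P/8\}$, and $\TPhi_{P,6n}$ is literally the classical duality map $y\mapsto\bigl(x\mapsto y-e_P(x)\bigr)$ for this regular neighborhood of the embedded open set $e_P(\mathcal{C}(P))\subset\RR^{6n}$. One then quotes Atiyah/Spanier--Whitehead duality for regular neighborhoods (as in Browder) in a single step, with no stratification by graphs.
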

\begin{proof}
We shall show the map $\TPhi$ factors through $\TT^S(P)_k$. For notational simplicity, we consider the case of $k=6n$. The other cases will follow completely similarly.  Let $(y,w)\in \RR^{6n}$ be an element with $\TPhi(y,w)\not= *$.  There exists an element $(x_\gamma,v_\gamma)\in \mathcal{C}(P)$ such that  $| (y,w)- e_P(x_\gamma,v_\gamma)|<\eps_P/8$ holds, so  we have $|(y,w)- e_P(\pi_{P}(y,w))|<\eps_P/8$ and
\[
|\pi_P (y,w)-(x_\gamma,v_\gamma)|\leq|e_P(\pi_P y)-e_P(x_\gamma,v_\gamma)|\leq | e_P(\pi_P (y,w))-(y,w)|+|(y,w)-e_P(x_\gamma,v_\gamma)|<\eps_P/4.
\]
 If we write  $\pi_P(y,w)=(\bar x_\gamma,\bar v_\gamma)_\gamma$, it follows that $|(\bar x_\alpha,\bar v_\alpha)-(x_\alpha,v_\alpha)|<\eps_P/4$ for each  $\alpha\in P^\circ$. We have
\[
\begin{split}
\bar x_\alpha - c_\alpha \bar v_\alpha  &  <_1 1- (c^v_{\geq \alpha}+c^v_\alpha)+\frac{(1+ c_\alpha)\eps_P}{4} +\frac{\eps_P}{8}\\
& <_1 1- (c^{\bar v}_{\geq \alpha}+c^{\bar v}_\alpha)+\eps_P. \\
\end{split}
\]

By similar  argument, we can verify the other condition on the first coordinate, and $|y_j|, |w_j|<1$. Thus, we have $(y,w)\in \nu_P$. In other words, $\TPhi(\RR^{6n}-\nu_P)=*$. 
 We also see
\[
\begin{split}
|\bar x_\alpha -\bar x_\beta| & \geq  |x_\alpha-x_\beta|-|x_\alpha-\bar x_\alpha |-|x_\beta-\bar x_\beta| \\
 & >  c^v_{\alpha\beta}-\eps_P/8-\eps_P /2> c^{\bar v}_{\alpha\beta}-\eps_P \\
&>\bar \eps_P \qquad (\because |v_\gamma|\geq 10(n+2)\bar \eps_P).
\end{split}
\]
This implies $\TPhi(\pi_P^{-1}(D_{\alpha\beta}))=*$. Thus, $\TPhi$ factors through $\TT^S$. Now the claim of the theorem follows from the classical Atiyah duality  (see \cite{browder} for example).

\end{proof}
\begin{rem}
The construction of the Thom space model $\TT$ and the proof of Theorem \ref{TAtiyahdual} were hinted by Cohen's construction \cite{cohen}. Applications of Atiyah (or Dold-Puppe) duality to the embedding calculus or the little disks operads also have been  given by Ching-Salvatore \cite{CS} and Malin \cite{malin}.
\end{rem}
\subsection{A resolution of $\TT$}
\begin{defi}\label{Dresol_T}
For a space $X\in\CG$, let $X_+\in\CG_*$ denote  $X$ with a disjoint basepoint. We define a functor $\TT^c:\PP_n^{op}\to \CG_*$ as follows:
\begin{enumerate}
\item For  a graph $G\in\GG(P)$, we define a subspace $\TT_G\subset \TT_{\emptyset_P}$ by
\[
\TT_G=\bigcap_{(\alpha,\beta)\in G} \TT_{\alpha\beta}.
\]
\item For a partition $P\in \PP_n$, let $U^{\TT}(P)\subset \TT_{\emptyset_P}\wedge  (|\GG(P)|_+) $ denote the subspace consisting of (the basepoint and) elements $(x,v;u)$  satisfying $(x,v)\in \TT_{\LL(u)}$, where $\LL(u)$ is the last $\GG(P)$-element of $u$, see subsection \ref{SSNT}.  
\item We set 
\[
\TT^c(P)=U^{\TT}(P)/\sim.
\]
Here,  for an element of a form $(x,v; u=\tau_0G_0+\cdots+\tau_mG_m)$ with $\tau_0\not=0$, we declare that $(x,v;u)\sim *$ \ if and only if $G_0$ is not  the graph with the empty edge set (and there is no other identification given by $\sim$). For a subdivision $Q$ of $P$, the structure map $\TT^c(Q)\to \TT^c(P)$ is given by
\[
(x,v;\tau_0G_0+\cdots +\tau_mG_m)\mapsto (x,v; \tau_0\delta_{PQ}(G_0)+\cdots +\tau_m\delta_{PQ}(G_m)).
\]
See Definition \ref{Dbar_Sigma} for $\delta_{PQ}$. This map is well-defined by Lemma \ref{Ldiagonal_incl}. 
\end{enumerate}

\end{defi}
The following is obvious.
\begin{lem}
The natural transformation $\TT^c\to \TT$ given by collapsing the elements $(x,v; u)$ such that $\LL(u)$ has a non-empty edge set, induces a homotopy equivalence $\TT^c(P)\to \TT(P)$ for each object $P\in \PP_n$. \hfill\qedsymbol
\end{lem}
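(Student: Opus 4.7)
The plan is to identify $\TT^c(P)$ with the reduced mapping cone of an explicit map $\pi\colon \widetilde{W}\to \TT_{\emptyset_P}$, and then reduce to the identification $\widetilde{W}\simeq \TT_{fat}$. Set $\GG^{\circ}(P)=\GG(P)\setminus\{\emptyset_P\}$. Since $\emptyset_P$ is the minimum of the poset $\GG(P)$, the realization $|\GG(P)|$ is the (unreduced) cone on $|\GG^{\circ}(P)|$ with apex $\emptyset_P$, and any $u\in|\GG(P)|$ is uniquely written as $u=\tau\,\emptyset_P+(1-\tau)u'$ with $u'\in|\GG^{\circ}(P)|$ and $\tau\in[0,1]$ (where $u'$ is collapsed when $\tau=1$). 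In these coordinates the relation $\sim$ of Definition \ref{Dresol_T} identifies exactly the locus $\tau=0$ with the basepoint, while the constraint $(x,v)\in \TT_{\LL(u)}$ reads $(x,v)\in \TT_{\LL(u')}$ for $\tau<1$ and is automatic for $\tau=1$. Hence $\TT^c(P)$ is canonically homeomorphic to the reduced mapping cone in $\CG_*$ of
\[
\pi\colon \widetilde{W}\longrightarrow \TT_{\emptyset_P},\qquad ((x,v),u')\longmapsto (x,v),
\]
where $\widetilde{W}\subset \TT_{\emptyset_P}\wedge|\GG^{\circ}(P)|_+$ is the pointed subspace of pairs with $(x,v)\in \TT_{\LL(u')}$; under this homeomorphism, the natural transformation $\TT^c(P)\to \TT(P)$ corresponds to the Puppe collapse $\mathrm{Cone}(\pi)\to \TT_{\emptyset_P}/\pi(\widetilde{W})=\TT_{\emptyset_P}/\TT_{fat}$.

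The main step is to show that the factorization $\widetilde{W}\twoheadrightarrow \TT_{fat}\hookrightarrow \TT_{\emptyset_P}$ of $\pi$ has its first map a homotopy equivalence. Indeed, $\widetilde{W}$ is a standard nerve model for the homotopy colimit $\hocolim_{G\in \GG^{\circ}(P)^{op}}\TT_G$, whose strict colimit equals $\bigcup_{G\in\GG^{\circ}(P)}\TT_G=\TT_{fat}$. I would verify the comparison fiberwise: for $(x,v)\in \TT_{fat}$ set $T(x,v)=\{(\alpha,\beta):(x,v)\in \TT_{\alpha\beta}\}$; then the fiber of $\pi$ over $(x,v)$ is the realization of the subposet $\{G\in\GG^{\circ}(P):E(G)\subseteq T(x,v)\}\cong 2^{T(x,v)}\setminus\{\emptyset\}$, which has the maximum element $T(x,v)$ and hence contractible realization. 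Stratifying $\TT_{fat}$ by the combinatorial type of $T(x,v)$ makes $\pi$ a stratum-wise locally trivial fibration with contractible fibers, so Dold's quasifibration criterion, applied inductively along the stratification, yields $\widetilde{W}\simeq \TT_{fat}$.

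With this in hand the claim follows from the standard mapping-cone formalism: the inclusion $\TT_{fat}\hookrightarrow \TT_{\emptyset_P}$ is a closed cofibration (using a CW structure on $\TT_{\emptyset_P}$ in which each $\TT_G$ is a subcomplex, as afforded by the spherical-triple descriptions of Lemmas \ref{Lstarshaped_T}, \ref{Lfunctor_neat} and Proposition \ref{PThomhomotopy}), so $\mathrm{Cone}(\TT_{fat}\hookrightarrow \TT_{\emptyset_P})\simeq \TT(P)$; since the mapping-cone construction preserves weak equivalences on its source, $\TT^c(P)\simeq \mathrm{Cone}(\pi)\simeq \mathrm{Cone}(\TT_{fat}\hookrightarrow \TT_{\emptyset_P})\simeq \TT(P)$, and naturality in $P$ is automatic from the naturality of each step. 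The main obstacle, I expect, is the stratified-fibration step: the thresholds $d_{\alpha\beta}(P,v)$ depend continuously on $v$, so the subsets $\TT_G$ are cut out by curved rather than purely affine inequalities, and checking local triviality of $\pi$ along each stratum will require uniform geometric control refining the regularity arguments of Lemma \ref{Lfunctor_neat}.
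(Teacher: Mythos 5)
Your argument is correct; for comparison, the paper offers no proof at all here (the lemma is declared obvious), so there is nothing to match it against, and what you have written is a legitimate filling-in of the gap. The essential content you supply is exactly what makes the statement true: writing $|\GG(P)|$ as the cone on $|\GG^\circ(P)|$ with apex $\emptyset_P$ identifies $\TT^c(P)$ with the mapping cone of $\pi\colon\widetilde W\to\TT_{\emptyset_P}$, and the map to $\TT(P)$ with the collapse of the cone; the lemma then reduces to the comparison $\widetilde W\to\TT_{fat}$ between the nerve model of $\hocolim_{G}\TT_G$ and its strict colimit $\bigcup_G\TT_G$, which is an equivalence because each fiber is the nerve of $2^{T(x,v)}\setminus\{\emptyset\}$, a poset with a maximum. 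Two small remarks. First, the quasifibration/stratification step you worry about can be bypassed: since $\GG^\circ(P)$ is a finite poset and the family $\{\TT_G\}$ is closed under the relevant intersections ($\TT_G\cap\TT_H=\TT_{G\cup H}$), the standard ``hocolim of a closed cover closed under intersections computes the union'' argument (finite induction over the poset with the gluing lemma) gives $\widetilde W\simeq\TT_{fat}$ without any local triviality analysis; your worry about the $v$-dependent thresholds $d_{\alpha\beta}(P,v)$ then only enters through the NDR property of the inclusions $\TT_G\subset\TT_H$ and $\TT_{fat}\subset\TT_{\emptyset_P}$. Second, those NDR properties do not follow from the spherical-triple lemmas (which give homotopy types, not cell structures), but they do follow from semialgebraicity of all the sets involved (after squaring the norm inequalities), which is exactly how the paper itself dispatches all such point-set issues (see the discussion of NDR pairs in the subsection on the filtrations of $U_2$); citing that convention would be cleaner than asserting a CW structure.
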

\begin{defi}\label{Dreal}
Let $X:\Delta_l\to \CG_*$ be a functor. We denote by $|X|$  the {\em (geometric) realization of} $X$, defined by
\[
|X|=\bigvee_{0\leq i\leq l} X([i])\wedge (\Delta^i_+)/\sim
\]
where $\Delta^i$ is the topological standard $i$-simplex and the equivalence relation $\sim$ is generated by $(f^*x,a)\sim (x,f(a))$ with $(x,a)\in X([i])\wedge (\Delta^j_+)$ and $f:[j]\to [i]\in \Delta_l$.
\end{defi}
\begin{defi}\label{Dintermediate_space}
\begin{enumerate}
\item Suppose a positive integer $p(u)$, subspaces $D_u\subset \RR^{6p(u)}$, $N_u\subset \RR^{6n}$, and an affine map $E_u:\RR^{6p(u)}\to \RR^{6n}$ are given for each $u\in |\GP_n|$, see Definition \ref{Dbar_Sigma}. We define a subspace 
\[
\tilde U=\tilde U(D_u,E_u,N_u)\subset \RR^{6n}\times |\GP_n|
\]
by $\tilde U=\{(x,v;u)\mid (x,v)\in \Pi_u^{-1}(D_u)\cap N_u\}$, where $\Pi_u:\RR^{6n}\to E_u(\RR^{6p(u)})=\RR^{6p(u)}$ is the orthogonal projection. (If $D_u=\emptyset$ for an element $u\in |\GP_n|$, we understand that $\tilde U\cap \RR^{6n}\times  \{u\} =\emptyset$.)
We define a quotient space  $U(D_u,E_u,N_u)$ by
\[
 U(D_u,E_u,N_u)=(\tilde U)^*/\sim.
\]
Here $(\tilde U)^*$ is the one-point compactification of $\tilde U$ defined in part 1, and for an element $(x,v;u=\tau_0J_0+\cdots +\tau_mJ_m) $ with $\tau_0\not=0$, we declare that $(x,v;u)\sim *$ if and only if   $J_0$ has a non-empty edge set (and there is no other identification given by $\sim$).  (Note that $\tilde U(D_u,E_u,N_u)$ and $U(D_u,E_u,N_u)$ are not determined by the triple for one specific $u$ but by the family $\{(D_u,E_u,N_u)\mid u\in |\GP_n|\}$.)
\item For $X=U(D_u,E_u,N_u)$, we define the {\em cardinality filtration} $\{F_l(X)\}_l$ on $X$ by 
$F_l=\{*\}\cup \{ (x,v;u)\mid  |u|\leq l\}$.

\item For $u\in |\GP_n|$, let $P_u$ be the underlying abstract partition of $u$ and $G_u\in\GG(P_u)$ the graph of $u$. We set $p(u)=|P^\circ|$ and 
\[
U_{\TT}=U(D_u, E_u, N_u) \ \text{where}\ D_u=D_{G_u},\ E_u=e_{P_u},\ \text{and}\  N_u=\nu_{P_u}
\]
\end{enumerate}
\end{defi}
Since $D_G$ is closed, the boundary $\partial \pi_P^{-1}(D_G)\cap \nu_P$ is not collapsed in $U_{\TT}$ (as long as the first graph of $u$ has the empty edge set), while $\partial \nu_P$ is collapsed. \\
\indent For a functor $X:C\to \CG_*$, let $\bar C_*(X): C\to \CH_{\kk}$ denote the functor obtained by taking $\bar C_*$ in the termwise manner. Recall the functor $\CECHF$ from Definition \ref{Dnormal}. 
\begin{prop}\label{LThomSinha} 
\begin{enumerate}
\item A geometric partition $A$ with $|A|=p$ gives an element of $sc(A)\in \Delta^p=\{0\leq t_1\leq\dots\leq t_p\leq 1\}$ through the map $\frac{1+sc}{2}:[-\infty,\infty]\to [0,1]$, by forgetting the multiplicity of geometric points. The map 
\[
\RR^{6n}\times |\GP_n|\ni \ (x,v;\tau_0(A,G_0)+\cdots +\tau_m(A,G_m))\ \mapsto \ (x,v;\tau_0G_0+\cdots +\tau_mG_m;sc(A))\ \in \bigsqcup_{P\in\PP_n}
\TT^c(P)\times \Delta^{|P^\circ|}
\] 
induces a well-defined homeomorphism $\phi_{\TT}:U_{\TT}\to |\CECHF(\TT^c)|$.
\item The Eilenberg-Zilber shuffle map $\bar C_*(\CECHF(\TT^c)([k]))\otimes C_*(\Delta^k)\to \bar C_*(\CECHF(\TT^c)([k])\wedge (\Delta^k_+))$ induces a chain map
\[
EZ:\tot N\bar C_*(\CECHF(\TT^c))\to \bar C_*(|\CECHF(\TT^c)|)\stackrel{\phi_{\TT}}{\cong} \bar C_*(U_{\TT}).
\]
When we filter the total complex $\tot N\bar C_*(\CECHF(\TT^c))$ by the simplicial degree and  $\bar C_*(U_{\TT})$ by the cardinality filtration $\bar C_*(F_l)$, the map $EZ$ preserves the filtration and induces an isomorphism of the spectral sequences from the $E_1$-page. In particular, the spectral sequence associated to $\{\bar C_*(U_{\TT}), \bar C_*(F_l)\}$ is isomorphic to the $n$-truncated Sinha sequence from $E_2$ under the shift of degree $(-p,6n-q)\leftrightarrow (p,q)$.
\end{enumerate}
\end{prop}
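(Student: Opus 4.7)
The plan is to establish part 1 by directly decomposing both spaces into canonical pieces indexed by partitions, then use the standard Eilenberg-Zilber theorem for simplicial pointed spaces for part 2, and finally chain together the equivalences established earlier to identify the $E_1$-page with the $n$-truncated Sinha sequence.

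For part 1, I would first unpack the realization side. Using the non-degenerate simplex decomposition of $\CECHF(\TT^c)$, one has
\[
|\CECHF(\TT^c)|\ \cong\ \Bigl(\bigsqcup_{P\in\PP_n}\TT^c(P)\times \Delta^{|P^\circ|}\Bigr)\big/\sim,
\]
where the identifications come from face maps induced by subdivisions $Q$ of $P$ (coming from $\mathcal{F}$ and the contravariant functoriality of $\TT^c$) together with the collapsing relations defining $\TT^c$. On the $U_{\TT}$ side, every element $u\in|\GP_n|$ lies in a single simplex in which the geometric partition $A=(t_1,\dots,t_n)$ is constant (the morphisms of $\GP_n$ require $A_1=A_2$), so every $G_i$ belongs to the common $\GG(P)$ with $P$ the abstract partition of $A$, and $|A|=|P^\circ|$. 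The map $\phi_\TT$ thus sends $(x,v;\tau_0(A,G_0)+\cdots+\tau_m(A,G_m))$ to a pair in $\TT^c(P)\times\Delta^{|P^\circ|}$, which is well-defined because (i) the collapsing condition "first graph has nonempty edge set" is identical on both sides; (ii) topological degeneration of $A$ (two adjacent $t_i$'s coalescing, or an internal point escaping to $\pm\infty$) produces a coarser underlying abstract partition, which corresponds precisely to a face identification in the $\CECHF$ realization via the map $\delta_{PQ}$ from Definition \ref{Dbar_Sigma}; (iii) continuity follows from continuity of $sc$ and the definition of the topology of $\GP_n$. A two-sided continuous inverse is built locally on each $\TT^c(P)\times\Delta^{|P^\circ|}$ by reading off the geometric partition from the $\Delta^{|P^\circ|}$-coordinates via $sc^{-1}$, and these local inverses glue across face identifications. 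Hence $\phi_\TT$ is a homeomorphism.

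For part 2, the shuffle Eilenberg-Zilber transformation is a natural chain map
\[
EZ:\tot N\bar C_*(X_\bullet)\ \longrightarrow\ \bar C_*(|X_\bullet|)
\]
for any pointed simplicial space $X_\bullet$, and it is a quasi-isomorphism when the simplicial space is proper in the usual sense, which $\CECHF(\TT^c)$ satisfies termwise. Composed with $\phi_\TT$ this produces the asserted $EZ$ map to $\bar C_*(U_{\TT})$. To see compatibility with filtrations, observe that the image of $N_k\bar C_*(\CECHF(\TT^c))\otimes C_*(\Delta^k)$ lands in chains supported on the image of $\TT^c(P)\wedge\Delta^k_+$ with $|P^\circ|=k$, which sits inside $F_k(U_{\TT})$ under $\phi_\TT$. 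The induced map on $E^0$ identifies the filtered quotient $F_l/F_{l-1}$ with
\[
\bigvee_{\substack{P\in\PP_n\\ |P^\circ|=l}}\TT^c(P)\wedge(\Delta^l/\partial\Delta^l),
\]
and on this quotient $EZ$ is literally the cross product with the fundamental class of $\Delta^l$, hence a quasi-isomorphism. Thus $EZ$ induces an isomorphism of the two spectral sequences from the $E^1$-page onwards.

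Finally, to identify the resulting sequence with the $n$-truncated Sinha spectral sequence (with the claimed bidegree shift), I would chain the equivalences from earlier in the paper. Lemma \ref{Lcofinal} applied to $X=C^*(\SSS^{\leq n})$ shows that the $n$-truncated Sinha $E_2$-page equals the $E_2$-page of $\tot N\CECHF(\mathcal{F}^*C^*(\SSS^{\leq n}))$. By Theorem \ref{TSinha} one can replace $\mathcal{F}^*\SSS^{\leq n}$ by $\PK$, and by Lemma \ref{Lnathomologyiso} one can then replace $\PK$ by $\mathcal{C}$ objectwise (quasi-isomorphism on cochains). Finally, Theorem \ref{TAtiyahdual} converts the cochain complex of $\mathcal{C}(P)$ to the reduced chain complex of $\TT^S(P)$, and the map $\TT^c(P)\to\TT(P)$ of the previous lemma identifies $\TT^c$ with $\TT$ up to homotopy at each $P$. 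The degree shift $(-p,6n-q)\leftrightarrow(p,q)$ is exactly the one produced by Atiyah duality applied termwise: the simplicial degree $p$ becomes $-p$ under the dualization, and cohomological degree $q$ becomes homological degree $6n-q$ since the embedding $e_P:\RR^{6p}\to\RR^{6n}$ has normal dimension $6n-6p$, absorbed into the $6n$ coming from the Thom space of $\nu_P$.

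The hard part will be verifying the homeomorphism in part 1 carefully enough to track the full data of faces, degeneracies, and the topology on $\GP_n$, since both $|\GP_n|$ and $|\CECHF(\TT^c)|$ are built from many strata glued in overlapping ways, and one has to check the collapsing conditions (first graph empty) and the way non-regular partitional configurations interact with face identifications agree on the nose. A secondary subtlety is that the EZ quasi-isomorphism must preserve the cardinality filtration strictly and not merely up to a shift, which requires checking at the level of the normalized chain complex that the image of the shuffle map on a $k$-simplex lies in $F_k$ but not $F_{k-1}$, rather than being simply supported on $F_k$ with nonzero components in higher filtration levels.
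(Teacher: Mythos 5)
Your proposal is correct and follows essentially the same route as the paper's proof: the paper dismisses part 1 and the well-definedness/filtration-compatibility of $EZ$ as routine, obtains the $E_1$-isomorphism from the identification of the realization with the homotopy colimit (equivalent to your direct computation on the filtered quotients $\bigvee_P \TT^c(P)\wedge(\Delta^l/\partial\Delta^l)$), and identifies the result with the $n$-truncated Sinha sequence by the same chain of equivalences (Theorem \ref{TAtiyahdual}, Lemma \ref{Lnathomologyiso}, Theorem \ref{TSinha}, Lemma \ref{Lcofinal}). Your write-up merely spells out in more detail what the paper leaves implicit.
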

\begin{proof}
Part 1 is obvious routine work unwinding definitions.\\
\indent We shall prove part 2. The well-definedness of the map $EZ$ and the compatibility with the filtration are also routine. The functor $\CECHF(\TT^c)$ is extended to a simplicial space by the same formula as in Definition \ref{Dnormal} and the realization in the sense of Definition \ref{Dreal} and the standard realization of the simplicial space  are homeomorphic. Under a moderate condition which is satisfied by our simplicial space, the realization is (homotopy equivalent to) the homotopy colimit, which implies  the isomorphism from $E_1$. For the last claim, by Theorem \ref{TAtiyahdual}, we have a zigzag $\bar C_{6n-*}(\TT^c)\simeq C^*(\PK)$ of natural quasi-isomorphisms,  see  the proof of Proposition 3.8 of \cite{moriya3} for a detailed argument. With this fact, the claim  follows from  the compatibility of $\CECHF$ and the chain functor $\bar C_*$, Theorem \ref{TSinha} and Lemma \ref{Lcofinal}.
\end{proof}
Let $P_0\in \PP_n$ with $|P_0^\circ|=l_0$. The over-category $\PP_n/P_0$ is isomorphic to $\PP_{l_0}$. Let $U_\TT(P_0)\subset U_\TT$ be the subspace consisting of the basepoint and elements $(x,v;u)$ such that $P_0$ is equal to or a subdivision of the underlying abstract partition of $u$. The statement similar to Proposition \ref{LThomSinha} holds if we replace the triple $(\TT^c, U_{\TT}, \text{the $n$-truncated Sinha sequence})$ with the triple consisting of the restiction of $\TT^c$ to  $\PP_n/P_0$, the subspace $U_{\TT}(P_0)$, and the $l_0$-truncated Sinha sequence.

\section{The evaluation map}\label{SU_1}

In this and  following two sections, we connect the two spaces $\bar \Sigma$ and $U_{\TT}$ given in Definitions \ref{Dbar_Sigma}, \ref{Dintermediate_space} by a zigzag of maps and prove that the maps induce isomorphisms of spectral sequences.
\subsection{Outline of the construction}\label{SSintuition}
We want to connect the spaces $\bar \Sigma$ and $U_{\TT}$ by maps. The reader would find similarity between the definitions of $\bar \Sigma$ and $U_{\TT}$. Roughly speaking, $\bar \Sigma$ consists of pairs of a graph and polynomial map  which may have singularities.  The points in the domain $\RR$ which form a multiple point of the map correspond to  the vertices in a common connected component of the graph.  The space $U_{\TT}$ consists of pairs of a graph and point in (the inverse image of ) a regular neighborhood of the diagonal which equates the components of the product corresponding to the vertices in a common connected component of the graph.  Therefore, if we evaluate the polynomial map at the  vertices (points in $\RR$) of the graph, the sets of multiple points are mapped into the diagonals. This is the basic idea of construction of map $:\bar\Sigma\to U_\TT$ as mentioned in Introduction while we actually construct a zigzag for technical reasons. \\
\indent Our space of polynomial maps $\Gamma_n$ is $6n$-dimensional so we need to evaluate a polynomial (and its derivative) at {\em distinct} $n$ points so that the evaluation map induces a bijection $\Gamma_n\to \RR^{6n}$ and a well-defined map from the one-point compactification.   For a vertex of multiplicity $k$ (see Definition \ref{Dbar_Sigma}), we split  the vertex into $k$ distinct points by regarding the vertex as $k$ overlapping points and sliding the  points through different distances in its neighborhood. If we take the neighborhood sufficiently small, the image of evaluation map is included in a  regular neighborhood of the corresponding diagonal. In addition, if the points $\pm\infty$ have multiplicity $k$, we also slide  $k-1$ copies of $\pm\infty$  into $\RR$. Precisely speaking, we need to restrict the evaluation map to a compact subset of $\Gamma_n$ to determine a standard of ``sufficiently small neighborhood" in which we slide points, so we  take a sufficiently large disk in $\Gamma_n$ for each geometric partition. We need to take the sliding operation continuously on a partitional graph or its underlying  geometric partition. This needs some care. For example, consider a graph with two vertices of multiplicity $2$, $3$ respectively, and take the limit of these vertices getting closer. We  will first  define a width of slide depending only on each graph. 
The width of slide  should be getting smaller so that the slided points do not collide but if so, the width must tend to zero in the limit, which contradicts with continuity (see Figure \ref{Fslide}). To remedy this,  we connect the sliding operation of the graph with two vertices and that of the limit graph by homotopy. We will define a continuous map giving the sliding operation on the whole space of geometric partitions by induction on the number of distinct geometric points. \\
\indent In the above example, suppose the two vertices are connected by an edge. When they are sufficiently close, the sliding operation is approximated by that of the limit graph by the effect of the homotopy. Therefore,  the width of slide is not so small that the image by the evaluation map is  included in a designated regular neighborhood of the diagonal corresponding to the graph with two vertices. Nevertheless, it is included in the one corresponding to the limit graph (which has a loop). Because of this phenomenon, we define the target space of the evaluation map so that for a graph whose vertex set can be split into  subsets such that the vertices  in a common subset   are much closer   than the vertices in different subsets, the diagonal space associated to the graph is the one corresponding to the graph obtained by condensing the  vertices in each set into one vertex. In the example, we agree that the two vertices are  in a common subset if they are sufficiently close. (The subspace associated to the graph with the two close vertices is the subspace of elements whose velocity component corresponding to the limit vertex is zero, rather than a diagonal subspace). \\
\indent We will construct the following zigzag:

\[
\bar \Sigma\stackrel{\psi}{\longrightarrow} U_1 \stackrel{\phi_1}{\longleftarrow} U_2 \stackrel{\phi_2}{\longrightarrow}  U_3\stackrel{\phi_3}{\longleftarrow} U_4\stackrel{\phi_4}{\longrightarrow} U_5 \stackrel{\phi_5}{\longleftarrow}  U_6 \stackrel{\phi_6}{\longrightarrow}  U_7 \stackrel{\phi_7}{\longleftarrow}  U_{\TT}.
\]
The main technical difficulties in these three sections are construction of $\psi, U_1$ and $U_2$ and proof of properties of these spaces and map. These are given in sections \ref{SU_1} and  \ref{SU_2}. In the present section \ref{SU_1} we construct the evaluation map $\psi$. 
The function $\rho_?$ which will be introduced in Definition \ref{Dgen_partition} gives the width of slide mentioned above and $N(A)$ and a notion of frame in Definitions \ref{Dgen_partition} and  \ref{Dframe} give the standard for a graph having  subsets of vertices   such that  the vertices in a common  subset   are much closer to one another than the vertices in different subsets. The map $\bar\psi_0$ defined in the paragraphs after Lemma \ref{LN} gives the sliding operation.
The target of $\psi$, the space $U_1$ is defined by glueing Thom spaces of regular neighborhoods of diagonals as in the definition of  $U_\TT$, but the shape of  open sets the outside of which we collapse and the width of regular neighborhoods of the diagonals corresponding to graphs are different from those of $U_\TT$. 
 In  sections \ref{SU_2} and \ref{SU_3}, we make adjustments on these data.  In section \ref{SU_2}, we define the space $U_2$ and prove that the maps $\psi$ and  $\phi_1:U_2\to U_1$ induce isomorphisms between homology of filtered quotients. The spaces $U_3,\ U_4,\ U_5,\ U_6$ and $U_7$ (and maps between them) are defined in section \ref{SU_3} to make minor adjustments. The proofs in  section \ref{SU_3} are similar to those in section \ref{Stranslation} so most of them are omitted.
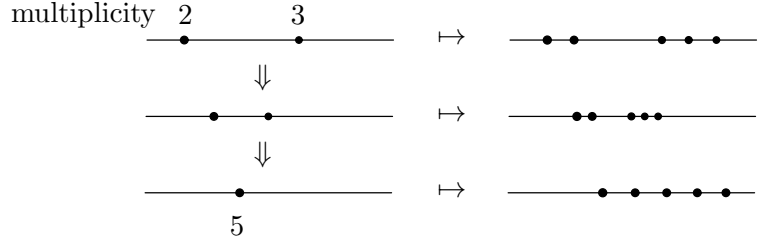
\begin{figure}
\begin{center}
{\unitlength 0.1in%
\begin{picture}(37.6500,11.0000)(0.3000,-11.3000)%
%
\special{pn 8}%
\special{pa 605 226}%
\special{pa 1895 226}%
\special{fp}%
%
\special{sh 1.000}%
\special{ia 800 226 20 20 0.0000000 6.2831853}%
\special{pn 8}%
\special{ar 800 226 20 20 0.0000000 6.2831853}%
%
\special{pn 8}%
\special{ar 1400 226 0 0 1.5707963 1.3654009}%
%
\special{sh 1.000}%
\special{ia 1400 226 17 17 0.0000000 6.2831853}%
\special{pn 8}%
\special{ar 1400 226 17 17 0.0000000 6.2831853}%
%
\special{pn 8}%
\special{pa 2505 226}%
\special{pa 3795 226}%
\special{fp}%
%
\special{sh 1.000}%
\special{ia 2700 226 21 21 0.0000000 6.2831853}%
\special{pn 8}%
\special{ar 2700 226 21 21 0.0000000 6.2831853}%
%
\special{pn 8}%
\special{ar 3300 226 0 0 1.5707963 1.3734008}%
%
\special{sh 1.000}%
\special{ia 3300 226 18 18 0.0000000 6.2831853}%
\special{pn 8}%
\special{ar 3300 226 18 18 0.0000000 6.2831853}%
%
\special{sh 1.000}%
\special{ia 2840 226 20 20 0.0000000 6.2831853}%
\special{pn 8}%
\special{ar 2840 226 20 20 0.0000000 6.2831853}%
%
\special{sh 1.000}%
\special{ia 3440 226 18 18 0.0000000 6.2831853}%
\special{pn 8}%
\special{ar 3440 226 18 18 0.0000000 6.2831853}%
%
\special{sh 1.000}%
\special{ia 3585 226 17 17 0.0000000 6.2831853}%
\special{pn 8}%
\special{ar 3585 226 17 17 0.0000000 6.2831853}%
%
\special{pn 8}%
\special{pa 600 626}%
\special{pa 1890 626}%
\special{fp}%
%
\special{sh 1.000}%
\special{ia 955 626 20 20 0.0000000 6.2831853}%
\special{pn 8}%
\special{ar 955 626 20 20 0.0000000 6.2831853}%
%
\special{sh 1.000}%
\special{ia 1240 626 17 17 0.0000000 6.2831853}%
\special{pn 8}%
\special{ar 1240 626 17 17 0.0000000 6.2831853}%
%
\special{pn 8}%
\special{pa 2500 626}%
\special{pa 3790 626}%
\special{fp}%
%
\special{sh 1.000}%
\special{ia 2855 626 21 21 0.0000000 6.2831853}%
\special{pn 8}%
\special{ar 2855 626 21 21 0.0000000 6.2831853}%
%
\special{sh 1.000}%
\special{ia 3140 626 18 18 0.0000000 6.2831853}%
\special{pn 8}%
\special{ar 3140 626 18 18 0.0000000 6.2831853}%
%
\special{sh 1.000}%
\special{ia 2935 626 20 20 0.0000000 6.2831853}%
\special{pn 8}%
\special{ar 2935 626 20 20 0.0000000 6.2831853}%
%
\special{sh 1.000}%
\special{ia 3210 626 17 17 0.0000000 6.2831853}%
\special{pn 8}%
\special{ar 3210 626 17 17 0.0000000 6.2831853}%
%
\special{sh 1.000}%
\special{ia 3280 626 17 17 0.0000000 6.2831853}%
\special{pn 8}%
\special{ar 3280 626 17 17 0.0000000 6.2831853}%
%
\special{pn 8}%
\special{pa 595 1026}%
\special{pa 1885 1026}%
\special{fp}%
%
\special{sh 1.000}%
\special{ia 1090 1026 20 20 0.0000000 6.2831853}%
\special{pn 8}%
\special{ar 1090 1026 20 20 0.0000000 6.2831853}%
%
\special{pn 8}%
\special{pa 2495 1026}%
\special{pa 3785 1026}%
\special{fp}%
%
\special{sh 1.000}%
\special{ia 2990 1026 21 21 0.0000000 6.2831853}%
\special{pn 8}%
\special{ar 2990 1026 21 21 0.0000000 6.2831853}%
%
\special{sh 1.000}%
\special{ia 3160 1026 20 20 0.0000000 6.2831853}%
\special{pn 8}%
\special{ar 3160 1026 20 20 0.0000000 6.2831853}%
%
\special{sh 1.000}%
\special{ia 3325 1026 20 20 0.0000000 6.2831853}%
\special{pn 8}%
\special{ar 3325 1026 20 20 0.0000000 6.2831853}%
%
\special{sh 1.000}%
\special{ia 3485 1026 20 20 0.0000000 6.2831853}%
\special{pn 8}%
\special{ar 3485 1026 20 20 0.0000000 6.2831853}%
%
\special{sh 1.000}%
\special{ia 3635 1026 20 20 0.0000000 6.2831853}%
\special{pn 8}%
\special{ar 3635 1026 20 20 0.0000000 6.2831853}%
\put(22.0000,-2.2000){\makebox(0,0){$\mapsto$}}%
\put(22.0000,-6.2000){\makebox(0,0){$\mapsto$}}%
\put(22.0000,-10.2000){\makebox(0,0){$\mapsto$}}%
\put(12.0000,-4.2000){\makebox(0,0){\rotatebox{-90}{$\Rightarrow$}}}%
\put(12.0000,-8.2600){\makebox(0,0){\rotatebox{-90}{$\Rightarrow$}}}%
\put(2.7500,-0.9500){\makebox(0,0){multiplicity}}%
\put(8.0500,-0.9500){\makebox(0,0){2}}%
\put(13.9500,-0.9500){\makebox(0,0){3}}%
\put(10.7500,-11.9500){\makebox(0,0){5}}%
\end{picture}}%
\end{center}
\caption{discontinuity of the sliding operation } \label{Fslide}
\end{figure}

\subsection{Definition of $U_1$ and $\psi:\bar \Sigma\to U_1$}\label{SSpsi}
Recall the spaces  $\tilde \Gamma_n$, $\chi(V,F)$, $AG_i$  and the map $I:\tilde \Gamma_n\to \tilde \Gamma_N$ from section \ref{Spreliminary}, and the distance $d_{sc}$ on $[-\infty,\infty]$ from subsection \ref{SSNT}.
\begin{defi}
\begin{enumerate}
\item For $0\leq l\leq n$, we define  subspaces $\PPP_{n,l}\subset \PPP_n$ and $\GP_{n,l}\subset \GP_n$ by
\[
 \PPP_{n,l}=\{ A\in \PPP_n \mid |A|\geq l\}, \qquad \GP_{n,l}=\{(A,G)\in \GP_n \mid  |A|\geq l\}.
\] 

\item Let $A=(t_1,\dots, t_n)$ be a geometric partition and $Q\in \PP_n$  a partition (which may not be the underlying partition of $A$). For a piece $\alpha \in Q$ , we set $t_\alpha=t_i$ with $i=\min\alpha$, where we agree that $t_0=-\infty$ and $t_{n+1}=\infty$.

\item For a geometric partition $A=(t_1,\dots,t_n)$ with underlying partition $P$, let $N_\infty(A)\subset \PPP_n$ be the subspace consisting of $A'=(t'_1,\dots,t'_n)$ satisfying the following condition: 
For  $\alpha\in P^\circ $ or $\alpha=\min P$ and for $j\in\alpha$, the  inequality
\[
 d_{sc}(t'_j, t_\alpha)\leq \frac{j-\min{\alpha}}{n+5}\min\{d_{sc}(t_\beta, t_\gamma)\mid \beta,\gamma \in P,  \beta\not=\gamma \}
\] 
holds, and for $\alpha=\max P$ and $j\in \alpha$,
\[
 d_{sc}(t'_j, \infty )\leq \frac{n+1-j}{n+5}\min\{d_{sc}(t_\beta, t_\gamma)\mid \beta,\gamma \in P, \beta\not=\gamma  \} 
\] holds.
The former inequality implies $t'_\alpha=t_\alpha$ for $\alpha\in P^\circ$ .

\item We fix a sequence $\{\WW_i\}_{i\geq 1}$ of compact neighborhoods  of $I(\tilde \Gamma_n)$ in $AG_{6n}$ such that  for each integer $i$, $\WW_{i}\supset \WW_{i+1}$, and $\WW_i$ satisfies the condition of Lemma \ref{Leval} for  $T=i$. For $A=(t_1,\dots, t_n)\in \PPP_n$, we set $\WW_A=\WW_{i_A}$ where  $i_A$ is the least positive integer  larger than \\ $\max\{ |t_i|\mid 1\leq i\leq n,\  t_i\not=\pm \infty\}$.
\item Let $V\subset \tilde \Gamma_N$ be an affine subspace and $J\in \GP_n$ a partitional graph. Let $F_J$ be the underlying classed configuration of $J$. We set $\chi(V,J):=\chi(V,F_J)$, see section \ref{Spreliminary}.
\item In the rest of the paper, we fix an embedding  $f_0\in \tilde\Gamma_N$.  $D_R(\tilde\Gamma_N)\subset \tilde \Gamma_N$ denotes the closed $6N$-dimensional disk centered at $f_0$ with radius  $R$ (see the paragraph before  Proposition \ref{Lisom_sigma} for the distance). For an affine subspace $V\subset \tilde \Gamma_N$,  we set $D_R(V)=V\cap D_R(\tilde\Gamma_N)$. For $A\in\PPP_n$, let $\langle A\rangle\subset \GP_n$ be the set of partitional graphs whose underlying geometric partition  coincides with $A$.  
\item Hereafter, we fix a continuous function
\[
R_?:\bigsqcup_{0\leq l\leq n}(\PPP_{n,l}-\PPP_{n,l+1})\to \RR_{>0},\qquad A\ \mapsto\  R_A
\]
stisfying the following conditions: 
\begin{enumerate}
 \item $D_{R_A/2}(\tilde \Gamma_N)\cap \chi(V,J)\not =\emptyset$ for any $A\in \PPP_n$, $J\in \langle A\rangle$ and $V\in \WW_{A}$, and
\item   $R_A\geq R_{A'}$ if $A\in N_\infty(A')$.
\end{enumerate}
Here, $\RR_{>0}$ is the set of positive real numbers (and $\PPP_{n,n+1}=\emptyset$).
\end{enumerate}
\end{defi}
For $A\in\PPP_n$ and an integer $l$, there is at most one $A'\in\PPP_n$ satisfying $A\in N_\infty (A')$ and $|A'|=l$. With this observation and Lemma \ref{Leval}, The function $R_?$ is easily constructed using a partition of unity on $\PPP_{n,l}-\PPP_{n,l+1}$. Note that it is impossible to extend $R_?$ to a continuous function on the entire space $\PPP_{n}$.
\begin{defi}
\begin{enumerate}
\item Put $v_0=\frac{1}{\sqrt[]{3}}(1,1,1)$ and let $\langle-,-\rangle$ be the standard inner product of $\RR^3$.  Let $T_A$ be the minimum of the  numbers $T$ satisfying the following condistions for any $f\in D_{R_A}(\tilde\Gamma_N)$:
\begin{enumerate}
\item $T\geq 10$, \vspace{2mm}
\item  $\langle f(-T/2), v_0\rangle\leq -T/2$ and $\langle f(T/2), v_0\rangle\geq T/2$,  \vspace{2mm}
\item  for $|t|\leq T/2$, \ \ $\langle f(-T/2),v_0\rangle \leq  \langle f(t),v_0\rangle \leq  \langle f(T/2),v_0\rangle$ holds, and    \vspace{2mm}
\item for $|t|\geq T/2$,\  \ $\langle f'(t),v_0\rangle\geq 10$ holds.
\end{enumerate}
Third condition implies that $f(t)$ with $|t|\leq T/2$ belongs to the domain bounded by the two planes which is perpendicular to $v_0$ and includes $f(-T/2)$ or $f(T/2)$. 
This defines a continuous function 
\[
T_?:\bigsqcup_{0\leq l\leq n}(\PPP_{n,l}-\PPP_{n,l+1})\to \RR_{>0},\qquad A\ \mapsto T_A\ .
\]
\end{enumerate}
\end{defi}
Clearly, we have $T_A\geq T_{A'}$ if $A\in N_\infty(A')$. 
\begin{defi}\label{Dgen_partition}
\begin{enumerate}

\item For $t\in \RR$, we set $|t|'=\min\{1, |t|\}$.

\item Let $V\subset \tilde \Gamma_N$ be an affine subspace and  $\delta$ a positive number,  and  $J=(A,G)\in \GP_n$ a partitional graph with $A=(t_1,\dots,t_n)$. Let $\NN_\delta(V,J)\subset V$ be the subset of elements $f$ which satisfy the following conditions:
\begin{enumerate}
\item If $(\alpha,\beta)\in G$ and $\alpha<\beta$,  then $|f(t_\alpha)-f(t_\beta)|\leq \delta |t_\alpha-t_\beta|'$, and
\item if $(\alpha,\alpha)\in G$,  then $|f'(t_\alpha)|\leq \delta$.
\end{enumerate} 


\item Hereafter, we fix  a continuous function 
\[
\delta_?:\bigsqcup_{0\leq l\leq n}(\PPP_{n,l}-\PPP_{n,l+1})\to \RR_{>0}
\] 
satisfying the following conditions:
\begin{enumerate}
\item  $\delta_A<1$ for any $A\in\PPP_n$, 
\item $\delta_A\leq \delta_{A'}$ if $A\in N_\infty(A')$, and 
\item for any $A\in \PPP_n$, $J\in \langle A\rangle $ and $V\in \WW_{A}$, the distance from any point of $\NN_{\delta_A}(V,J)\cap D_{R_A}(V)$ to the subspace $\chi(V,J)$ is $<R_A/10$.
\end{enumerate}

\item 

For $A\in \PPP_n$, we set 
\[
V_A=\max\{\, |f'(t)|\, \mid -T_A\leq t\leq T_A,\ \  f\in D_{R_A}(\tilde \Gamma_N)\}.
\]
Hereafter, we fix  a continuous function on
\[
\rho_?:\bigsqcup_{0\leq l\leq n}(\PPP_{n,l}-\PPP_{n,l+1})\to \RR_{>0}
\] 
satisfying the following conditions for any $A=(t_1,\dots, t_n) \in \PPP_n$:

\begin{enumerate}
\item  The following inequalities hold :
\[
0<\rho_A<\frac{\delta_A \cdot \min\{\,1,\, d_{sc}(t_\alpha,t_\beta) \mid \alpha,\beta\in P,\alpha\not=\beta\}}{8^{(n+5)(p+10)}V_A},\qquad \rho_A<1-sc(2T_A).
\] 
\item If  $f$ is an element of $D_{R_A}(\tilde\Gamma_N)$ and if two numbers $t, t'\in \RR$ satisfy $|t|\leq  T_A$ and $|t'-t|\leq \rho_A$,
the following  inequality holds :
\[
|f'(t')-f'(t)|\leq \frac{\delta_A}{ 8^{(n+5)(p+10)}}.
\]

\end{enumerate} 

\item Let $A\in \PPP_{n,l}-\PPP_{n,l+1}$ and $P$ the underlying partition of $A$. Let 
\[
N(A)\subset \PPP_{n,l}
\] be the subset consisting  of $A'=(t'_1,\dots,t'_n)$ satisfying the following conditions:\vspace{2mm}
\begin{enumerate}
\item If $\alpha\in P^\circ$ and $j\in\alpha$, \ \ $\displaystyle 
t'_{j}-t_\alpha\leq \frac{j-\min{\alpha}}{n+5}\rho_A
$.\\
\item If 
 $\alpha=\max P$ and $j\in \alpha$, \ \ $\displaystyle 1-sc(t'_j)\leq \frac{n+1-j}{n+5}\rho_A$. \\
\item If $\alpha=\min P$ and $j\in \alpha$, \ \ $\displaystyle sc(t'_j)-(-1)\leq \frac{j}{n+5}\rho_A$. 
\end{enumerate} 

\item Let $u=\tau_0J_0+\cdots +\tau_mJ_m\in |\GP_{n}|$ be an element with $\tau_m\not=0$. Put $l=|u|$.   We let $N(u)$ denote the subspace of $ |\GP_{n,l}|$ consisting of elements $u'$ which have an expression 
$u'=\tau'_0J'_0+\cdots +\tau'_{m'}J'_{m'}$  satisfying the following conditions: 
\begin{enumerate}
\item $m=m'$ and $\tau_i=\tau'_i$ for $0\leq i\leq m$.
\item  $A'\in N(A)$, where $A$ and $A'$ are the underlying geometric partitions of $J_i$ and $J'_i$. 
\item $\delta_{PQ}(G'_i)=G_i$, where $P$ and $Q$ are the underlying partitions of $u$ and  $u'$, and $G_i$ and $G_i'$ are the underlying abstract graphs of $J_i$ and $J'_i$ respectively.
\end{enumerate}
 Here, by the condition (b), $Q$  is equal to $P$, or a subdivision of $P$, so the map $\delta_{PQ}$ is well-defined. This definition of $N(u)$ does not depend on the expression of $u$.
\item For an element $u\in |\GP_n|$ with underlying geometric partition $A$, we set
\[
R_u =R_A,\quad T_u=T_A,\quad \delta_u=\delta_A,\quad \text{and} \quad \rho_u=\rho_A.
\]
\end{enumerate}
\end{defi}
The functions $\delta_A$ and $\rho_A$ are  constructed similarly to $R_A$. 
Intuitively speaking, the distance between two geometric points of an element of $N(A)$ which belong to a common piece of $A$ is much smaller than those which belong to different pieces. The number  $\rho_A$ is much smaller than $\rho_{A'}$ if $A\in N(A')$ and $A\not= A'$. Parts 1 and 2 of the following lemma follow from these observations and part 3 follows from the continuity of $\rho_?$. We will sometimes use this lemma  implicitly. 
\begin{lem}\label{LN}
\begin{enumerate}
\item Let $A', A''\in \PPP_n$. Suppose $N(A')\cap N(A'')\not=\emptyset$ and $|A'|\leq |A''|$. Then $A''\in N(A')$.
\item Let $A_1,A_2, A_3\in \PPP_n$ with $|A_1|\leq |A_2|\leq |A_3|$. Then, $A_2\in N(A_1)$ if and only if $A_3\in N(A_1)$.
\item Let $\partial N(A)$ be the boundary of $N(A)$ as a subspace of $N_\infty(A)$ so  for an element of $\partial N(A)$, at least one of the inequalities in  (5) of Definition \ref{Dgen_partition} is an equality. Then, we have
\[
\bigcup_{A}\partial N(A)=\partial \left(\bigcup_{A} N(A)\right)
\]
Here, $A$ runs through $\PPP_{n,l}-\PPP_{n,l+1}$ and the right-hand side is the boundary as the subspace of $\PPP_{n,l}$.\hfill\qedsymbol
\end{enumerate}
\end{lem}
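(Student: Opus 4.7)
The proof of all three parts rests on a scale hierarchy built into the definition of $\rho_?$: whenever $A\in N(A')$ with $A\ne A'$, the defining upper bound on $\rho_A$ (with its $8^{(n+5)(p+10)}$ denominator) forces $\rho_A$ to be minuscule compared with $\min\{d_{sc}(t'_\alpha,t'_\beta)\mid\alpha\ne\beta\}$ — and hence, combined with the corresponding bound for $\rho_{A'}$, much smaller than $\rho_{A'}$ itself. I will use this estimate throughout, together with the monotonicity $T_A\geq T_{A'}$ and $R_A\geq R_{A'}$ on $N_\infty(A')$ to handle behavior at $\pm\infty$.

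For part (1), I pick a witness $B\in N(A')\cap N(A'')$ and argue in two steps. First I verify that the underlying partition $P''$ of $A''$ refines (or equals) $P'$: if two indices $j,j'$ belonged to a common piece of $P''$ but to distinct pieces $\alpha,\alpha'$ of $P'$, then $B\in N(A'')$ would place $t^B_j$ and $t^B_{j'}$ within $O(\rho_{A''})$ of each other, while $B\in N(A')$ would pin them near $t'_\alpha$ and $t'_{\alpha'}$ respectively; the resulting estimate contradicts the lower bound $d_{sc}(t'_\alpha,t'_{\alpha'})\gg \rho_{A'}\gg\rho_{A''}$ supplied by the hierarchy. Once the compatibility of partitions is in hand, the defining inequalities of $N(A')$ for $A''$ follow by triangle-chaining $|t^B_j - t''_\beta|$ (controlled by $\tfrac{j-\min\beta}{n+5}\rho_{A''}$) with $|t^B_j - t'_\alpha|$ (controlled by $\tfrac{j-\min\alpha}{n+5}\rho_{A'}$); the exponential slack in the $\rho$-bounds absorbs the sum, and the cases $\alpha=\min P'$, $\alpha=\max P'$ are handled identically using the $sc$ coordinate.

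Part (2) is a direct consequence of part (1) combined with the same hierarchy: picking witnesses in the intersection $N(A_1)\cap N(A_2)$ (respectively $N(A_1)\cap N(A_3)$) and chaining the estimates through the three cardinality levels is clean because the tolerances $\rho_{A_2},\rho_{A_3}$ are exponentially smaller than $\rho_{A_1}$, so the inequalities defining $N(A_1)$ transfer between $A_2$ and $A_3$ freely. For part (3), the inclusion $\bigcup_A\partial N(A)\subseteq\partial(\bigcup_A N(A))$ is immediate from the definitions. For the reverse inclusion, take $A^*\in\partial(\bigcup_A N(A))$; by part (1) there is a unique $A_0$ of minimal cardinality with $A^*\in N(A_0)$, and the continuity of $\rho_?$ on the stratum containing $A_0$ ensures that any sequence approaching $A^*$ from outside $\bigcup_A N(A)$ must cross $\partial N(A_0)$ rather than drift to a different stratum; hence one of the defining inequalities of $N(A_0)$ is saturated at $A^*$, giving $A^*\in\partial N(A_0)$.

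The main obstacle I anticipate is the bookkeeping of scales in part (1): simultaneously tracking the hierarchy $\rho_{A''}\ll\rho_{A'}\ll\min d_{sc}$ across the three flavors of defining inequality (interior pieces, $\min P$, $\max P$), and confirming that the coefficients $\tfrac{j-\min\alpha}{n+5}$ remain compatible after triangle-chaining even though $\min\alpha$ may change between $P'$ and $P''$. Handling the endpoints $\pm\infty$ demands separate treatment via the $sc$ coordinate, but in each case the same hierarchy furnishes the required slack; the difficulty is organizational rather than conceptual.
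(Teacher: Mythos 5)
The paper offers no real proof of this lemma: it simply asserts that parts (1) and (2) ``follow from'' the two observations preceding the statement (points of $N(A)$ lying over a common piece of $A$ are much closer together than points over different pieces, and $\rho_{A}\ll\rho_{A'}$ when $A\in N(A')$), and that part (3) follows from continuity of $\rho_?$. Your strategy is exactly this, fleshed out, so there is no divergence of approach. However, part (1) as you write it is circular at its first step: to rule out a piece of $P''$ straddling two pieces $\alpha\ne\alpha'$ of $P'$ you invoke $d_{sc}(t'_\alpha,t'_{\alpha'})\gg\rho_{A'}\gg\rho_{A''}$ ``supplied by the hierarchy,'' but the hierarchy only yields $\rho_{A''}\ll\rho_{A'}$ once you know that $A''\in N(A')$ (or at least that $P''$ refines $P'$, so that the internal gaps of $A''$ are $\leq\rho_{A'}$ and the defining upper bound on $\rho_{A''}$ takes over) --- which is precisely the conclusion of part (1). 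A priori $\rho_{A''}$ is controlled only by the gaps of $A''$ itself, and these need not be small compared with the gaps of $A'$. The repair stays inside your toolkit: the defining inequalities of $N(\cdot)$ pin the minimum index of each interior piece exactly (this is how the paper uses the definition, cf.\ Lemma \ref{Lproperty_psi2}(3)), so $t^B_{\min\alpha}=t'_\alpha$ and $t^B_{\min\beta}=t''_\beta$, hence every $t''_\beta$ lies within $\rho_{A'}$ of some $t'_\alpha$. Since $|A''|\geq|A'|$, either two distinct $t''_\beta$'s land near the same $t'_\alpha$, or the assignment is a bijection and a straddling piece again forces two distinct $t''_\beta$'s within $\rho_{A'}+\rho_{A''}$ of each other; either way $\min\{d_{sc}(t''_\beta,t''_{\beta'})\}\leq\rho_{A'}+\rho_{A''}$, and feeding this into the defining bound for $\rho_{A''}$ (together with $V_{A''}\geq 10$, which follows from condition (d) in the definition of $T_{A''}$) bootstraps $\rho_{A''}\ll\rho_{A'}$. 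Only after this does your triangle-chaining close.

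A smaller point concerns part (3): the inclusion $\bigcup_A\partial N(A)\subseteq\partial\bigl(\bigcup_A N(A)\bigr)$ is not ``immediate from the definitions,'' because a point of $\partial N(A_0)$ could a priori lie in the interior of some other $N(A_1)$ and hence in the interior of the union. You need part (1) here as well: if $A^*\in N(A_0)\cap N(A_1)$ then one of $A_0,A_1$ lies in $N(\cdot)$ of the other, and one must check that the saturated inequality for $A_0$ at $A^*$ prevents $A^*$ from being interior to $N(A_1)$. With these two repairs your argument is sound and is, as far as one can tell, the argument the paper intends.
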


We shall define a continuous map
\[
\bar \psi_l:\PPP_{n,l}\to \RR^n
\]
inductively on $l$ in the decending order. Set
\[
\bar \psi_{n}(A)=A.
\]
This is well-defined since  all of the  geometric points of $A\in \PPP_{n,n}$ belong to $\RR$. \\
\indent  We set
\[
N_{l}=\cup_{A\in \PPP_{n,l}-\PPP_{n,l+1}}N(A)\subset \PPP_{n,l}.
\] 
Let $\partial N_l$ denote the boundary of $N_l$ in $\PPP_{n,l}$. We fix a continuous function $g:N_{l}\to [0,1]$ such that there are a neighborhood $N'$ of $\PPP_{n,l}-\PPP_{n,l+1}$ and a neighborhood $N''$ of $\partial N_{l}$ (both taken in $N_l$) such that $g(N')=1$ and $g(N'')=0$. This function exists by continuity of $\rho_?$. Define a map $\psi': N_{l}\to \RR^n$ as follows : For $A\in N_{l}$ there is a unique $A'\in \PPP_{n,l}-\PPP_{n,l+1}$ such that $A\in N(A')$ by Lemma \ref{LN}. Write $A'=(t'_1,\dots, t'_n)$ and let $P$ be the underlying partition of $A'$. The point $\psi'(A)=(s_i)_{1\leq i\leq n}$ is defined by

\[
s_{i}=\left\{
\begin{array}{ll}
t'_{\alpha}+\frac{(i-\min\alpha)}{n}\rho_{A'} & \text{if $i\in \alpha $ for some piece  $\alpha\in P^\circ$}, \\
&\\
sc^{-1}\left(1-\frac{n+1-i}{n}\rho_{A'}\right)  & \text{if $i$ belongs to  $\max P$,} \\
& \\
sc^{-1}\left(-1+\frac{i}{n}\rho_{A'}\right) & \text{if $i$ belongs to $\min P$.} 
\end{array}\right.\vspace{2mm}
\] 
Suppose $\bar\psi_{l+1}$ has been defined. We define $\bar \psi_l$ by
\[
\bar \psi_{l}=
\left\{
\begin{array}{cl} 
g\cdot \psi'+(1-g)\cdot\bar \psi_{l+1} & \text{ on } N_{l}, \\
\\
\bar \psi_{l+1} & \text{ on } \PPP_{n,l}-N_{l}.
\end{array}
\right.
\]
By the continuity of $\rho_?$, the subset $N_{l}$ is a closed neighborhood of $\PPP_{n,l}-\PPP_{n,l+1}$ in $\PPP_{n,l}$, so $\bar \psi_l$ is continuous. We have obtained $\bar \psi_0 :\PPP_n\to \RR^n$ by induction. We  write
 \[
\bar\psi_0(A)=(\bar\psi_0(A,1),\dots, \bar \psi_0(A,n)).
\] 
\begin{defi}\label{Dframe}
\begin{enumerate}
\item We say  $A'\in \PPP_n$ is the {\em frame} of $A$ if $A\in N(A')$ and the cardinality of $A'$ is the minimum among $A_1$ satisfying $A\in N(A_1)$. We say $u'\in|\GP_n|$ is the {\em frame} of $u$ if $u\in N(u')$ and the cardinality of $u'$ is the minimum among $u_1$ satisfying $u\in N(u_1)$.
\item Let $u\in |\GP_n|$. Let $A$ be the geometric partition of $u$ and $A'$ the frame of $A$. We denote by $P$ and $Q$  the abstract partitions of $A'$ and $A$, respectively, by $G$ the abstract graph of $u$. We say $u$ is {\em degenerate at $\infty$} if $\delta_{PQ}(G)$ has an edge incident with $\max P$, and $u$ is {\em degenerate at $-\infty$} if $\delta_{PQ}(G)$ has an edge incident with $\min P$. We say {\em $u$ is degenerate at $\pm\infty$} if $u$ is degenerate at $\infty$ {\em or} $-\infty$.
\end{enumerate}
\end{defi}
The frame is actually unique by Lemma \ref{LN}. The vertices $\pm\infty$ are discrete in a partitional graph by definition, so
the geometric partition of the frame of $u$ and the frame of the geometric partition of $u$ are the same if and only if $u$ is not degenerate at $\pm \infty$. 
The following lemma is a list of basic properties of $\bar \psi_0$. We will use implicitly this lemma many times.
\begin{lem}\label{Lproperty_psi2}
\begin{enumerate}
\item  Set 
\[
Q_{n,l}=\PPP_{n,l}-\PPP_{n,l+1}-\bigcup_{A\in \PP'_n-\PPP_{n,l}}N(A)\ .
\]
There is a positive number $d$ such that for any $A\in Q_{n,l}$ the distance of any two distinct geometric points of $A$ ( including $\pm\infty$, measured by $d_{sc}$) is  $\geq d$.
\item For any $A\in \PPP_n$ and  number $1\leq i\leq n-1$, $\bar \psi_0(A,i)<\bar\psi_0(A,i+1)$.
\item Let $A=(t_1,\dots, t_n)\in \PPP_n$. Let $P$ the underlying partition of the frame of $A$ and  $\alpha$  a piece of $P^\circ$. Set $i=\min\alpha$. Then we have $\bar\psi_0(A,i)=t_i\ (=t_\alpha)$.
\item Let $A=(t_1,\dots,t_n)\in \PP'_n$ with underlying partition $Q$, and $\alpha$ a piece of $Q^\circ$. Put $i=\min\alpha,\ j=\max\alpha$. Let $i',j'$ be two numbers with $i\leq i'<j'\leq j$. We have
$t_{j'}-t_{i'}\leq \bar\psi_0(A,j)-\bar\psi_0(A, i)$. 
\item For $A, A'\in \PPP_n$, suppose $A'$ is the frame of $A$. Let $P$ be the underlying partition of $A'$. For $\alpha\in P^\circ$, set $i=\min\alpha$ and $j=\max\alpha$. We have $\bar\psi_0(A,j)-\bar\psi_0(A,i)\leq \rho_{A'}$.
\end{enumerate}
\end{lem}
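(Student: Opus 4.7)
The plan is to treat (1) by a compactness argument, (2) and (4) by a direct induction on the downward construction of $\bar\psi_l$, and (3) and (5) by a more careful induction tracking the chain of frames at successive levels.

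For (1), identify $\PPP_n$ with its image in $[-1,1]^n$ via $sc$. I claim $Q_{n,l}$ is sequentially closed there: if $\{A_k\}\subset Q_{n,l}$ converged to a limit $A_\infty$ in which two originally distinct geometric points collided, then $|A_\infty|<l$, and continuity of $\rho_?$ would give $A_k\in N(A_\infty)$ for all sufficiently large $k$, contradicting membership in $Q_{n,l}$. The function $A\mapsto \min\{d_{sc}(t_\alpha,t_\beta)\mid\alpha\ne\beta\}$ is continuous and strictly positive on the compact set $Q_{n,l}$, so it attains a positive minimum $d$.

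For (2) I would induct downward on $l$, starting from $\bar\psi_n(A)=A$ whose coordinates are strictly increasing because $|A|=n$. In the inductive step, on $N_l$ the map $\bar\psi_l$ is the convex combination $g\psi'+(1-g)\bar\psi_{l+1}$; the inductive hypothesis gives strict monotonicity of $\bar\psi_{l+1}$ wherever it is evaluated, using that $g\equiv 1$ on a neighborhood of $\PPP_{n,l}-\PPP_{n,l+1}$ (where $\bar\psi_{l+1}$ is not defined). Strict monotonicity of $\psi'$ follows by inspection of its piecewise formula: consecutive outputs within a single frame-piece differ by $\rho_{A'}/n$, while the jump between successive pieces is dominated by the inter-piece distance of $A'$ thanks to the smallness bound on $\rho_?$ in Definition \ref{Dgen_partition}(4). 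A convex combination of strictly monotone maps is again strictly monotone. Part (4) is then immediate: for $i',j'$ in a single piece $\alpha$ of the partition $Q$ of $A$, $t_{i'}=t_{j'}$ so the left-hand side vanishes, while the right-hand side is non-negative by (2).

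For (3) and (5) I would use Lemma \ref{LN} to observe that for each level $|A^*|\le l\le n$ there is at most one $A_l\in\PPP_{n,l}-\PPP_{n,l+1}$ with $A\in N(A_l)$, and when it exists its partition $P_l$ subdivides the frame partition $P$ of $A^*$. For (3), fix $\alpha\in P^\circ$ and $i=\min\alpha$; then at every intermediate level the piece $\alpha^l$ of $P_l$ containing $i$ has $\min\alpha^l=i$ (since $\alpha^l\subset\alpha$ and $\alpha^l\ni i$), so the explicit formula reads $\psi'_l(A,i)=(t^l)_{\alpha^l}$, which the $N$-condition, read with the order structure on $\PPP_n$, identifies with $t_i$. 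Convex combinations preserve this equality, yielding $\bar\psi_0(A,i)=t_i$. For (5) I would prove by the same downward induction that $\bar\psi_l(A,j)-\bar\psi_l(A,i)\le\rho_{A^*}$ for $i=\min\alpha$, $j=\max\alpha$; at the top level this is $t_j-t_i\le\tfrac{j-i}{n+5}\rho_{A^*}$ from $A\in N(A^*)$ combined with (3), and at intermediate levels the $\psi'_l$-contribution telescopes across the sub-pieces of $P_l$ inside $\alpha$. The main obstacle is controlling $\rho_{A_l}$ in terms of $\rho_{A^*}$; I plan to handle this via continuity of $\rho_?$ together with the observation that as $l$ decreases toward $|A^*|$ the frame $A_l$ tends to $A^*$, so the defining bounds in Definition \ref{Dgen_partition}(4) automatically force $\rho_{A_l}\le\rho_{A^*}$ up to a multiplicative factor absorbed by the $1/n$ weights in the formula for $\psi'$.
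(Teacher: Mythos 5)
Your overall strategy coincides with the paper's: part (1) by compactness, parts (2)--(5) by direct inspection of the inductive construction of $\bar\psi_l$ (the paper simply declares (2)--(5) ``obvious from the construction''). Your treatments of (2)--(4) are sound, and in (5) you correctly isolate the control of $\rho_{A_l}$ against $\rho_{A^*}$ as the crux --- though the mechanism is not continuity of $\rho_?$ (which does not even extend continuously across the strata $\PPP_{n,l}-\PPP_{n,l+1}$), but the explicit smallness bound in Definition \ref{Dgen_partition} (4)(a): if $|A_l|>|A^*|$ and $A_l\in N(A^*)$, then $A_l$ has two distinct geometric points inside a common piece of the partition of $A^*$, so its minimal gap is at most of order $\rho_{A^*}$, which forces $\rho_{A_l}\ll\rho_{A^*}$ and lets the telescoped sum stay below $\rho_{A^*}$.

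In part (1) there is one step that fails as written: from $A_k\to A_\infty$ with $|A_\infty|<l$ you cannot conclude $A_k\in N(A_\infty)$ for large $k$. The set $N(A_\infty)$ is anchored at the actual geometric points $t_\alpha$ of $A_\infty$; the defining inequality $t'_j-t_\alpha\leq \frac{j-\min\alpha}{n+5}\rho_{A_\infty}$ forces in particular $t'_{\min\alpha}\leq t_\alpha$, and the $\min\alpha$-th points of $A_k$ may converge to $t_\alpha$ from above, so no tail of the sequence need lie in $N(A_\infty)$. The paper's proof repairs exactly this: for each $k$ it introduces the perturbed partition $A'_k$ with the same underlying abstract partition as $A_\infty$ but whose $\min\alpha$-th geometric points are those of $A_k$; then $A_k\in N(A'_k)$ for $k$ large, and $|A'_k|=|A_\infty|<l$ still yields the contradiction, since the union defining $Q_{n,l}$ ranges over all partitions of cardinality $<l$, not just the limit. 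With this one-line modification your argument is complete.
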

\begin{proof}
We shall prove part 1. Since the closure of $\PPP_{n,l}-\PPP_{n,l+1}$ in $\PP'_n$ is compact, suppose there is a sequence$\{A_p\}_{p\geq 0}$ in $Q_{n,l}$ converging to an element $A_\infty$ with $|A_\infty|\leq l-1$. Let $P$ be the underlying partition of $A_\infty$. Let $A'_p$ be the geometric partition such that its underlying partition  is $P$  and for each $\alpha\in P^\circ$, the $i$-th geometric points of $A_p$ and $A'_p$ are the same for $i=\min\alpha$. For a sufficiently large $p$, clearly we have $A_p\in N(A'_p)$ since $\rho_?$ is continuous. This is a contradiction.  Part 2 follows from part 1. Parts 3, 4 and 5 are obvious from the construction.
\end{proof}
\begin{defi}\label{Dpsi_0}
We define a map 
\[
\psi_0:|\GP_n|\to \RR^n\quad\text{ by }\quad \psi_0(u)=\bar \psi_0(A_u)
\] where $A_u$ is the underlying geometric partition of $u$. Similarly to $\bar \psi_0$, we write $\psi_0(u)=(\psi_0(u,1),\dots \psi_0(u,n))$ and set $\psi_0(u,\alpha)=\psi_0(u,\min\alpha)$ for a piece $\alpha$ of a partition with $\min\alpha\not=0,n+1$. 
\end{defi}

{\bf Convention} : Since $\PP_{n}'$ is homeomorphic to $\Delta^n$, there exists an  integer $i_0$ larger than \\
$\max\{|\bar \psi_0(A,i)|\mid i=1,n,\  A\in \PP_{n}'\}$. Hereafter, we fix such an $i_0$ and  assume that $\Gamma_n$ is an element of $\WW_{i_0}$ (as well as it satisfies the condition of Lemma \ref{Lgeneral_posi}). If $A\in \PPP_n$ is a frame, we have $\Gamma_n\in \WW_A$.

\begin{defi}\label{De_u} Let $u\in |\GP_n|$.
\begin{enumerate}
\item We set
\[
\chi(u)=\chi(\Gamma_n,F_u),\qquad \NN_\delta(u)=\NN_\delta(\Gamma_n,\LL(u)),
\] 
where $F_u$ is the underlying classed configuration of the last $\GP_n$-element $\LL(u)$ of $u$. Let $G$ be the underlying abstract  graph of $u$. We mean  $(\alpha,\beta)\in E(G)$ by $(\alpha,\beta)\in u$. 
\item Let $u'\in |\GP_n|$ be an element with  underlying partition $P$. Put $p=|u'|$. We define a map $\tilde e_{u,u'}:\RR^{6p}\to \RR^{6n}$ as follows: Let $(x_\gamma, v_\gamma)_{\gamma\in P^\circ}\in \RR^{6p}$. For $\alpha\in P^\circ$ and $j\in \alpha$, we set
\[
y_j=x_\alpha+(\psi_0(u,j)-\psi_0(u,\alpha))v_\alpha,\qquad w_j=v_\alpha.
\]

If $\alpha\in P-P^\circ$ and $j\in \alpha$, we set
\[
y_j=(2\psi_0(u,j),0,0),\qquad w_j=(2,0,0).
\] 
Then, we set 
\[
\tilde e_{u,u'}(x_\alpha,v_\alpha)=(y_j,w_j)_{1\leq j\leq n}.
\]
The map $\tilde e_{u,u'}$ only depends on the geometric partitions of $u, u'$ but we use this notation to ease  notations. \\
\indent Let  $\delta>0$. We denote by $\Delta(u,u',\delta)$  the subspace of $\RR^{6p}$ consisting of $(x_\gamma,v_\gamma)_{\gamma}$ satisfying the following conditions:
\begin{enumerate}
\item $\displaystyle |x_\alpha-x_\beta|\leq \frac{\delta}{8^{(n+5)(p+5)}}|\psi_0(u,\alpha)-\psi_0(u,\beta)|'$\  if $ (\alpha,\beta)\in u' $ and $\alpha\not=\beta$ (see Definition \ref{Dgen_partition}).\\
\item $\displaystyle |v_\alpha|\leq \frac{\delta}{8^{(n+5)(p+5)}}$ \ if $(\alpha,\alpha)\in u'$.
\end{enumerate}
\item We set 
\[
e_u=\tilde e_{u,u'},
\] where $u'$ is any element whose geometric partition equals the frame of the geometric partition of $u$. Let $\pi_u:\RR^{6n}\to e_u(\RR^{6p})=\RR^{6p}$ be the orthogonal projection.
\item  We set 
\[
 \Delta_u=
\left\{
\begin{array}{ll}
\emptyset & \ \text{if $u$ is  degenerate at $\pm \infty$}, \\
\Delta(u,u',\delta_{u'}) & \text{otherwise},
\end{array}
\right. 
\]
where $u'$ is the frame of $u$. 
\end{enumerate}
\end{defi}

\begin{defi}\label{DU_1}
\begin{enumerate}
\item We define a map 
\[
\psi_* :\Gamma_n\times |\GP_n|\to \RR^{6n}
\]
by 
\[
\begin{split}
\psi_*(f,u)& =M_0^{\times 2n} (f^{\times n}(\psi_0(u)), (f')^{\times n}(\psi_0(u)))\\
& \\
& \bigl(\ \  =\bigl(\,M_0\cdot f(\psi_0(u,i)),\,  M_0\cdot f'(\psi_0(u,i))\,\bigr)_{i}\ \ \bigr).
\end{split}
\]
Here, $M_0\in SO(3)$ is a fixed matrix such that $M_0\cdot \frac{1}{\ \sqrt[]{3}}(1,1,1)=(1,0,0)$. We define a map $\psi_u:\Gamma_n\to \RR^{6n}$ by $\psi_u(f)=\psi_*(f,u)$.
\item We set
\[
U_1= U(\Delta_u,e_u,N_u)\quad\text{with}\quad N_u=Int(\psi_u(D_{R_{u'}}(\Gamma_n))).
\]
Here, $u'$ is the frame of $u$ and $Int$ denotes the interior (see Definition \ref{Dintermediate_space}).

\item We define a based map $ \psi : \bar \Sigma\to U_1$ as follows.  Let $(f,u)\in \bar \Sigma-\{*\}$. If $u$ is  degenerate at $\pm\infty$, we set $\psi(f,u)=*$\ . Otherwise, we set $\psi(f,u)= (\psi_*(f,u), u)$, which is regarded as $*$ if $\psi_*(f,u)\not\in Int(\psi_u (D_{R_{u'}}(\Gamma_n)))$. 
\end{enumerate}
\end{defi}
 By the choice of $\Gamma_n$ (see the convention below Definition \ref{Dpsi_0}), $\psi_u$ is a bijective. Since $\Delta_u$ is closed, the boundary $\partial \pi_u^{-1}(\Delta_u)\cap   Int(\psi_u(D_{R_{u'}}(\Gamma_n)))$ is not collapsed in $U_1$ (as long as the first graph of $u$ has the empty edge set), while $\partial  \psi_u(D_{R_{u'}}(\Gamma_n))$ is collapsed. \\ If $u\in |\GP_n|$ tends to an element degenerate at $\pm\infty$, a point $(x,v;u)$ in $U_1$ tends to $*$ by the definition of $\Delta_u$. 
\subsection{Basic properties of  $U_1$ and $\psi$}

\begin{lem}\label{Lkeymap} We use the notations in Definitions \ref{De_u} and \ref{DU_1}.
\begin{enumerate}
\item The map  $ \psi : \bar \Sigma\to U_1$ is well-defined, i.e. for a pair $(f,u)$ with $f\in \chi(u)$ and $u$ being not degenerate at $\pm \infty$,  we have $\psi_*(f,u)\in \pi_u^{-1}(\Delta_u)$.
 \item Let $u\in |\GP_n|$ be an element which is not degenerate at $\pm \infty$, $u'$ the frame of  $u$.  The following inclusions hold:
\[
\chi(u')\cap D_{R_{u'}}(\Gamma_n)\ \subset\  \psi_{u}^{-1}(\pi_{u}^{-1}(\Delta_{u}))\cap D_{R_{u'}}(\Gamma_n)\ \subset\  \NN_{\delta_{u'}}(u')
\]
\item The map  $ \psi : \bar \Sigma\to U_1$ is continuous.
\end{enumerate}
\end{lem}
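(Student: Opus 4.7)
The plan is to prove the three parts sequentially, each reducing to a uniform Taylor-type estimate enabled by the extreme smallness of $\rho_?$ fixed in Definition~\ref{Dgen_partition}(4). For part~1, fix $(f,u)$ with $f\in\chi(u)$ and $u$ non-degenerate at $\pm\infty$; let $u'$ be the frame with partition $P$ of $u'$ and $Q$ of $u$, and let $G_m$, $G_m'=\delta_{PQ}(G_m)$ be the graphs of $\LL(u)$, $\LL(u')$. Setting $(x,v)=\pi_u(\psi_*(f,u))$, the orthogonal projection $\pi_u$ splits block-wise so that for each $\alpha'\in P^\circ$, $(x_{\alpha'},v_{\alpha'})$ is the affine least-squares fit of the data $(M_0 f(\psi_0(u,j)),\, M_0 f'(\psi_0(u,j)))_{j\in\alpha'}$ to the parametrized line $(y_j,w_j)=(x_{\alpha'}+(\psi_0(u,j)-\psi_0(u,\alpha'))v_{\alpha'},\,v_{\alpha'})$. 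For an edge $(\alpha',\beta')\in G_m'$ with $\alpha'\neq\beta'$, surjectivity of $\delta_{PQ}$ yields $(\alpha_1,\beta_1)\in G_m$ with $\alpha_1\subset\alpha'$, $\beta_1\subset\beta'$, so $f(t_{\alpha_1})=f(t_{\beta_1})$. Using $\psi_0(u,\min\alpha')=t_{\alpha'}$ from Lemma~\ref{Lproperty_psi2}(3) and $|\psi_0(u,j)-t_{\alpha'}|\le \rho_{u'}$ for $j\in\alpha'$, the mean value theorem with the bound $V_A$ on $|f'|$ gives $|f(\psi_0(u,j))-f(t_{\alpha_1})|\le V_A\rho_{u'}$; the analogous estimate at $\beta'$ combined with $f(t_{\alpha_1})=f(t_{\beta_1})$ forces $|x_{\alpha'}-x_{\beta'}|$ below a universal multiple of $V_A\rho_{u'}$, and the inequality (4)(a) of Definition~\ref{Dgen_partition} is designed precisely to dominate this by $\delta_{u'}/8^{(n+5)(p+5)}\cdot|\psi_0(u,\alpha')-\psi_0(u,\beta')|'$. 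A loop $(\alpha',\alpha')\in G_m'$ is handled identically: some $(\alpha_1,\alpha_1)\in G_m$ with $\alpha_1\subset\alpha'$ gives $f'(t_{\alpha_1})=0$, and (4)(b) bounds $|f'(\psi_0(u,j))|$, hence $|v_{\alpha'}|$.

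For part~2 the first inclusion reuses the above argument with ``$f\in\chi(u)$'' replaced by ``$f\in\chi(u')$'': the hypothesis directly supplies the equalities $f(t_{\alpha'})=f(t_{\beta'})$ for edges of $G_m'$ and $f'(t_{\alpha'})=0$ for loops, which is exactly what the block-projection estimate needs. For the second inclusion, suppose $(x,v)=\pi_u(\psi_u(f))\in\Delta_u$ and $f\in D_{R_{u'}}(\Gamma_n)$. The identity $\psi_0(u,\alpha')=t_{\alpha'}$ means $(M_0 f(t_{\alpha'}),M_0 f'(t_{\alpha'}))$ appears as an input coordinate of the $\alpha'$-block projection, so a standard projection bound yields $|M_0 f(t_{\alpha'})-x_{\alpha'}|\le C\, V_A\rho_{u'}$ with $C$ universal. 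Combining with the $\Delta_u$-inequality on $|x_{\alpha'}-x_{\beta'}|$, using $|\psi_0(u,\alpha')-\psi_0(u,\beta')|'\le|t_{\alpha'}-t_{\beta'}|'$, and absorbing the $V_A\rho_{u'}$ error into the huge slack $8^{(n+5)(p+5)}$, produces the desired bound $|f(t_{\alpha'})-f(t_{\beta'})|\le\delta_{u'}|t_{\alpha'}-t_{\beta'}|'$; the velocity inequality follows analogously via (4)(b).

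For part~3, continuity of $\psi_*:\Gamma_n\times|\GP_n|\to\RR^{6n}$ is immediate from continuity of $\psi_0$ (whose inductive construction relies on the continuous cutoff $g$ together with the continuous functions $\rho_?,\delta_?,R_?,T_?$, pieced together across the strata $\PPP_{n,l}-\PPP_{n,l+1}$ as permitted by Lemma~\ref{LN}) and joint continuity of $(f,t)\mapsto(f(t),f'(t))$. Continuity of $\psi$ as a map into $U_1$ then reduces to verifying that $\psi(f,u)$ tends to $*$ both when $f$ approaches $\partial D_{R_{u'}}(\Gamma_n)$ (built in via $\mathrm{Int}(\psi_u(D_{R_{u'}}(\Gamma_n)))=N_u$) and when $u$ limits to an element degenerate at $\pm\infty$; the latter holds because a copy of $\pm\infty$ splitting off into $\RR$ forces $|\psi_0(u,i)|$ to be unbounded, so $\psi_*(f,u)$ leaves every compact set and hence $N_u$. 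Compatibility across the discrete jumps in the frame is ensured by the monotonicity $R_A\ge R_{A'}$, $\delta_A\le\delta_{A'}$, and the rapid shrinking of $\rho_A$ with cardinality, which make the subspaces $\Delta_u$ and $N_u$ behave compatibly under frame changes. I expect the most delicate step to be this continuity, since it requires balancing the orthogonal-projection constraint $\pi_u^{-1}(\Delta_u)$ against the varying neighborhood constraint $N_u$ while the frame itself is only piecewise constant in $u$.
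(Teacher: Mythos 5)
Your overall strategy for parts 1 and 2 matches the paper's, but there are two genuine gaps. First, in part 1 you treat a loop $(\alpha',\alpha')$ of the frame's graph $G_m'=\delta_{PQ}(G_m)$ by asserting that it lifts to a loop $(\alpha_1,\alpha_1)\in G_m$, so that $f'(t_{\alpha_1})=0$. This is false in general: by Definition \ref{Dbar_Sigma} (2), a loop of $\delta_{PQ}(G_m)$ at $\alpha'$ only requires \emph{some} edge $(\alpha_1,\alpha_2)\in G_m$ with $\alpha_1,\alpha_2\subset\alpha'$, and $\alpha_1\neq\alpha_2$ is allowed. In that case $f'$ need not vanish anywhere near $t_{\alpha'}$; instead one must show that the double point $f(t_{\alpha_1})=f(t_{\alpha_2})$ at two parameters within $\rho_{u'}$ of each other forces the fitted velocity $\bar v_{\alpha'}$ of the block projection to be of order $\delta'$ (this is case (iii) of the paper's proof, and it is precisely the situation that motivates defining $\Delta_u$ through the frame rather than through $u$ itself, cf.\ subsection \ref{SSintuition}). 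Without this case, part 1 is not proved.

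Second, your continuity argument for part 3 in the delicate case --- $u_k\to u_\infty$ with $u_\infty$ degenerate at $\pm\infty$ but $u_k$ not --- rests on the claim that $|\psi_0(u_k,i)|$ becomes unbounded, so that $\psi_*(f_k,u_k)$ leaves every compact set. That premise is false: $\bar\psi_0$ takes values in a bounded region by construction (this is exactly the Convention fixing $i_0$ after Definition \ref{Dpsi_0}), and a vertex merging with $\max P$ in the frame is slid to a point of modulus at most $i_0$. The correct mechanism is different: for large $k$ the graph of $u_k$ has a non-discrete vertex $t^k_i$ with $t^k_i>T_{u'_\infty}$, so any $f\in\chi(u_k)$ must have a double point or critical point at a parameter where every element of $D_{R_{u'_\infty}}(\tilde\Gamma_N)$ is forced to be regular and monotone in the $v_0$-direction (conditions (c) and (d) defining $T_A$); hence $\chi(u_k)\cap D_{R_{u'_k}}(\Gamma_n)=\emptyset$ for large $k$ and $\psi(f_k,u_k)=*$. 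Aside from these two points, your treatment of the non-loop edges in part 1 and of part 2 follows essentially the paper's route (block least-squares estimates controlled by conditions (a) and (b) on $\rho_?$) and is sound.
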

\begin{proof}
We shall prove (1). Let $u'$ be the frame of $u$. We denote by $P$ and $Q$ the partitions of $u'$ and $u$, respectively. 

Let $f\in D_{R_{u'}}(\Gamma_n)$ and  $\alpha\in P^\circ$.  For $1\leq j\leq n$, we put $\psi_0(u,j)=s_j$. By Lemma \ref{Lproperty_psi2}, we have $s_j-s_\alpha \leq \rho_{u'}$ where  $j\in \alpha$, and by our convention, $s_\alpha=s_{\min\alpha}$. By the definition of $\rho_{u'}$, we have 
\begin{equation}\label{Eqeval}
|e_{u,\alpha}(f(s_\alpha),f'(s_\alpha))-(f(s_j), f'(s_j))_{j\in \alpha}|\ \leq\ \bigl(\sum_{j\in \alpha}(s_j-s_\alpha)^2+n\bigr)^{1/2} \  \delta'\leq 2n \delta',\quad \text{where}\quad \delta'=\frac{\delta_{u'}}{ 8^{(n+5)(p+10)}}
\end{equation}
where $p=|P^\circ|$ and $e_{u,\alpha}(-)$ is the $|\alpha|$-tuple of $j$-th components of $e_u(-)$ for $j\in\alpha$. Note that $e_{u,\alpha}(-)$ depends only on the $\alpha$-th component of an element of $\RR^{6p}$ and we regard it as a map $\RR^6\to \RR^{6|\alpha|}$.

We set $(\bar x_\gamma,\bar v_\gamma)_{\gamma\in P^\circ}=\pi_u((f(s_j),f'(s_j))_{1\leq j\leq n})$.  
The inequality (\ref{Eqeval}) implies $|f'(s_j)-\bar v_\alpha|\leq 2n\delta'$ for $j\in\alpha $. We have
\begin{equation}
\begin{split}
|f(s_j)-(f(s_\alpha)+(s_j-s_\alpha)\bar v_\alpha)|&\leq |f(s_j)-(f(s_\alpha)+(s_j-s_\alpha)f'(s_\alpha))|+|(s_j-s_\alpha)(f'(s_\alpha)-\bar v_\alpha)| \\
&\leq  \delta'(s_j-s_\alpha)+2n\delta'(s_j-s_\alpha)\\
&= (2n+1)\delta'(s_j-s_\alpha). \label{EQdiff_piece}
\end{split}
\end{equation}
Therefore,  we have 

\[
|(f(s_j),f'(s_j))_{j\in\alpha}-e_{u,\alpha}(\bar x_\alpha,\bar v_\alpha)|  \leq |(f(s_j),f'(s_j))_{j\in\alpha}-e_{u,\alpha}(f(s_\alpha),\bar v_\alpha)|\leq n(2n+1)\delta' (s_k-s_\alpha),
\]
where $k=\max\alpha$. In particular, we have 
\begin{equation}
 |\bar x_\alpha-f(s_\alpha)|  \leq (2n^2+n)\delta'(s_k-s_\alpha). \label{EQdiff_piece2}
\end{equation}
Let $(t_1,\dots, t_n)$ be the geometric partition of $u$.\\
\indent (i)\quad 
Let $(\alpha,\beta)\in u'$\ ($\alpha\not=\beta$). Since $u\in N(u')$, there are two pieces  $\alpha_1, \beta_1$ of $Q$ such that \\ 
$|t_{\alpha_1}-s_\alpha|, |t_{\beta_1}-s_\beta|\leq \rho_{u'}$, and $(\alpha_1,\beta_1)\in u$. For $f\in \chi(u)\cap D_{R_{u'}}(\Gamma_n)$, by definition of $\rho_{u'}$, we have 
\[
|f(s_\alpha)-f(s_\beta)| < |f(s_\alpha)-f(t_{\alpha_1})|+|f(t_{\beta_1})-f(s_\beta)|\leq 2\delta'|s_\alpha-s_\beta|'.
\]  
Hence  $|\bar x_\alpha-\bar x_\beta|\leq (4n^2+2n+2)\delta'|s_\alpha-s_\beta|'$ by the inequality (\ref{EQdiff_piece2}). \\
\indent (ii)\quad If $(\alpha,\alpha)\in u'$ and there is a piece $\alpha_1$ of $Q$ such that $\alpha_1\subset \alpha$ and  $(\alpha_1,\alpha_1)\in u$,  for $f\in \chi(u)\cap D_{R_{u'}}(\Gamma_n)$, we have $|f'(s_\alpha)|\leq \delta'$ by definition of $\rho_{u'}$. By the inequality (\ref{Eqeval}), we have $|\bar v_\alpha-f'(s_\alpha)|\leq 2n\delta'$ so 
\[
|\bar v_\alpha|\leq (2n+1)\delta'.
\]
\indent (iii)\quad If $(\alpha,\alpha)\in u'$ and there is an edge $(\alpha_1,\alpha_2)\in u$ with $\alpha_1\not=\alpha_2$ and $\alpha_1,\alpha_2\subset \alpha$,  for $f\in \chi(u)\cap D_{R_{u'}}(\Gamma_n)$, by the inequality (\ref{EQdiff_piece}), 
\[
\begin{split}
|(s_k-s_\alpha)\bar v_\alpha| & \leq |f(t_{\alpha_1})-f(t_{\alpha_2})|+|f(s_\alpha)-f(t_{\alpha_1})|+|f(t_{\alpha_2})-f(s_k)|+|f(s_\alpha)+(s_k-s_\alpha)\bar v_\alpha-f(s_k)| \\
&\leq \delta'|s_\alpha-t_{\alpha_1}|+\delta'|t_{\alpha_2}-s_k|+(2n+1)\delta'|s_k-s_\alpha|\leq (2n+4)\delta'|s_k-s_\alpha|.\\
\therefore\ |\bar v_\alpha|& \leq (2n+4)\delta'.
\end{split}
\]
Together with $(4n^2+2n+2)\delta',\ (2n+4)\delta'\leq \frac{\delta_{u'}}{8^{(n+5)(p+5)}}$, the inequalities obtained in (i),(ii) and (iii) imply $(\bar x_\gamma,\bar v_\gamma)\in \Delta_u$.\\
\indent The left inclusion of part 2 follows from an argument completely similarly to the proof of the previous part.\\
\indent 
Let us prove the right inclusion of part 2. Suppose $f\in D_{R_{u'}}(\Gamma_n)$ and $\psi_{u}(f)\in \pi^{-1}_{u}(\Delta_{u})$. Let $(\alpha, \beta)\in u'$ with $\alpha\not=\beta$ and put $k_1=\max\alpha$ and $k_2=\max\beta$. By the inequality in the proof of part 1,  we have
\[
\begin{split}
|f(s_\alpha)-f(s_\beta)|  &  \leq |f(s_\alpha)-\bar x_\alpha|+|\bar x_\alpha-\bar x_\beta|+ |\bar x_\beta-f(s_\beta)|\\
&  \leq (2n^2+n)\delta'(s_{k_1}-s_\alpha +s_{k_2}-s_\beta)+\frac{\delta_{u'}}{8^{(n+5)(p+5)}}|s_\alpha-s_\beta|'\leq \delta_{u'}|s_\alpha-s_\beta|'.
\end{split}
\]
By Lemma \ref{Lproperty_psi2}, $s_\alpha$ and $s_\beta$ are equal to the corresponding geometric points of $u'$.\\
Let $(\alpha,\alpha)\in u'$. We have
\[
\begin{split}
|f'(s_\alpha)|\leq |\bar v_\alpha|+|\bar v_\alpha-f'(s_\alpha)|&\leq \delta_{u'}.
\end{split}
\]
Thus,  we have $f\in \NN_{\delta_{u'}}(u')$.\\
\indent For part 3, consider a sequence $(u_k,f_k)$ in $\bar \Sigma$ with  limit $(u_\infty, f_\infty)$ and let $u_\infty'$ be the frame of $u_\infty$. The case which needs care is that $u_k$ is not degenerate at $\pm\infty$ for $k<\infty$ but $u_\infty$ is. Say it is degenerate at $\infty$. For any sufficiently large $k$, there is a  non-discrete geometric  point $t^k_i$ of $u_k$ such that $t^k_i>T_{u'_\infty}$ by definition of $\rho_?$. Therefore, by definition of $T_{u'_\infty}$, we have $\chi(u_k)\cap D_{R_{u'_\infty}}(\tilde \Gamma_N)=\emptyset$, which implies $\lim_{k\to \infty}\psi(u_k,f_k)=*$.
\end{proof}
\begin{lem}\label{Lneat}
Let $V\subset \Gamma_n$ be an affine subspace. Fix arbitrary element $f_1\in V$. Suppose $V$ is the set of $f\in \Gamma_n$ satisfying  the following simultaneous equation 
\[
\{q_{1j}a_1+\cdots q_{Mj}a_M=0\}_j.
\]
Here, $M=6n$, and   $q_{1j},\dots, q_{Mj}\in \RR^3$ are constants, and variables $a_1,\dots, a_M$ are given by $f-f_1=a_1v_1+\cdots +a_Mv_M$, where $v_1,\dots, v_M$ are a fixed basis of the vector space made  by performing the parallel transport  $-f_1$ to $V$. Let $N(V)\subset \Gamma_n$ be the subset given by the following simultaneous inequalities
\[
\{|q_{1j}a_1+\cdots q_{Mj}a_M|<\delta'_j\}_j 
\]
for some constants $\delta'_j>0$, and $u\in |\GP_n|$ the frame of some element of $|\GP_n|$. Put $D=D_{R_u}(\Gamma_n)$. Suppose $\dim V=\dim \chi(u)$ and  $\chi(u)\cap D\subset N(V)\cap D\subset \NN_{\delta_u}(u)$. Then,
$(V,N(V),D)$ is a $(6n-3c(u))$-dimensional spherical triple. 
Here $c(u)$ is the complexity, see Definition \ref{Dbar_Sigma}. Moreover, the inclusion $(\chi(u)\cap D, \chi(u)\cap \partial D)\to (N(V)\cap D, N(V)\cap \partial D)$ is a relative homotopy equivalence.
\end{lem}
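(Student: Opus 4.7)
My approach exploits the entirely linear structure of the setup: by the hypothesis $\dim V = \dim \chi(u) = 6n - 3c(u)$, after discarding redundant equations among $\{q_{1j}a_1 + \cdots + q_{Mj}a_M = 0\}_j$, I obtain a surjective linear map $Q : \Gamma_n - f_1 \to \RR^{3c(u)}$ with kernel (the translate of) $V$, and $N(V) = f_1 + Q^{-1}(U)$ for a bounded convex open neighborhood $U$ of the origin in $\RR^{3c(u)}$. Choosing a linear section $s$ of $Q$, the linear retraction $r : N(V) \to V$ defined by $r(a) = a - s(Q(a - f_1))$ will drive the proof. The same construction applied to $\chi(u)$ yields a parallel retraction $r_\chi : N(V) \to \chi(u)$, since $\chi(u)$ is cut out by linear equations of the same rank, and the inclusion $N(V) \cap D \subset \NN_{\delta_u}(u)$ ensures $r_\chi$ is well-defined there.

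For the spherical triple structure of $(V, N(V), D)$, I first verify that $V \cap \mathrm{Int}(D) \neq \emptyset$ and contains a regular center of the convex disk $D$. Picking $f \in \chi(u) \cap D_{R_u/2}(\tilde \Gamma_N)$ (nonempty by condition (a) in the choice of $R_?$), the sandwich $\chi(u) \cap D \subset N(V) \cap D \subset \NN_{\delta_u}(u)$ combined with condition (c) of Definition \ref{Dgen_partition} forces every point of $N(V) \cap D$ to lie within distance $R_u/10$ of $\chi(u)$, giving quantitative control on the width of the tube $N(V) \cap D$ and ensuring $r(f) \in \mathrm{Int}(D)$. Any interior point of $V \cap D$ then serves as a regular center of the convex set $D$. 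The relative homotopy equivalence $(V \cap D, V \cap \partial D) \hookrightarrow (N(V) \cap D, N(V) \cap \partial D)$ is produced by the straight-line homotopy $H_t(a) = r(a) + t(a - r(a))$: it stays in $N(V)$ by convexity of $U$, and I modify $H_t$ by a radial correction from the chosen regular center to keep its image in $D$ and to send $\partial D$ into $\partial D$.

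The ``moreover'' clause follows from the analogous construction using $r_\chi$ in place of $r$: the straight-line homotopy $\tilde H_t(a) = r_\chi(a) + t(a - r_\chi(a))$, adjusted by the same radial correction in $D$, gives a deformation retraction of $N(V) \cap D$ onto $\chi(u) \cap D$ preserving $\partial D$, which is exactly the required relative homotopy equivalence of pairs. Once again the essential input is that $N(V) \cap D \subset \NN_{\delta_u}(u)$ forces $\mathrm{dist}(a, \chi(u)) < R_u/10$ for every $a \in N(V) \cap D$ (by condition (c) in the definition of $\delta_?$), which keeps $r_\chi(a) \in \mathrm{Int}(D)$.

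The main obstacle I anticipate is ensuring that both linear retractions produce genuine deformation retractions of pairs — not merely homotopy equivalences up to boundary — so the straight-line homotopies must be modified to respect $\partial D$, with the modification depending continuously on $a$. The uniform bound $R_u/10$ afforded by condition (c) of $\delta_?$ is precisely what makes this feasible, since it controls the displacement of $H_t$ and $\tilde H_t$ uniformly on $N(V) \cap D$. A minor subtlety is that $Q$ has no distinguished orthonormal row structure, so $r$ and $r_\chi$ are not literal orthogonal projections but are built from dual bases; this does not affect the argument beyond mild bookkeeping.
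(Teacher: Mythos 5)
Your overall strategy coincides with the paper's: retract linearly onto $V$, then compose with a radial rescaling from a center of $D$ so that $D$ and $\partial D$ are preserved. But there is a genuine gap at exactly the point where the paper spends most of its effort: you never verify that the radially corrected homotopy stays inside $N(V)$. Convexity of $N(V)$ (equivalently of your $U$) only controls the uncorrected straight-line homotopy. The radial correction from the center is a dilation whose factor can exceed $1$ --- for $f\in N(V)\cap\partial D$ the linearly retracted point typically lies in $Int(D)$ and must be pushed \emph{outward} to return to $\partial D$ --- and a dilation with factor $>1$ can leave a convex set. So ``modify $H_t$ by a radial correction \dots to keep its image in $D$'' is precisely the step that needs an argument, and ``any interior point of $V\cap D$'' will not do as the center.

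The paper's fix has two ingredients you are missing. First, the center $g_0\in V\cap D$ is chosen so that $g_0\notin p_V(N(V)\cap\partial D)$, where $p_V$ is the orthogonal projection onto $V$; this choice is possible because $u$ is a frame, which (via the definitions of $R_?$ and $\delta_?$) bounds the angle between $V$ and $\chi(u)$ and hence controls the shadow $p_V(N(V)\cap\partial D)$. Second, one observes that translating $f$ by any vector in the linear part of $V$ --- in particular by $l(p_V(f)-g_0)$ --- leaves the values $|q_{1j}a_1+\cdots+q_{Mj}a_M|$ unchanged, so the point $f^{kt}$ where the ray from $g_0$ through the linearly retracted point meets the line $\{f+l(p_V(f)-g_0)\mid l\geq 0\}$ still lies in $N(V)$; the choice of $g_0$ forces $f^{kt}\notin Int(D)$, hence the rescaled point lies on the segment from $g_0$ to $f^{kt}$, which is contained in $N(V)$ by convexity. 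Without this (or an equivalent) argument your homotopies are not known to take values in $N(V)$, and the same objection applies verbatim to your second homotopy $\tilde H_t$ for the ``moreover'' clause, which the paper instead deduces from bijectivity of $p_V|_{\chi(u)}$ --- again a consequence of the angle bound rather than of the distance estimate $R_u/10$ alone.
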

\begin{proof}
Let $p_V:\Gamma_n\to V$ be the orthogonal projection.
Since $u$ is a frame, by definitions of $R_?$, $\delta_?$ and $T_0$ (see the paragraph under Definition \ref{Dpsi_0}), the tangent of the angle between $V$ and $\chi(u)$ is smaller than $\frac{1/10}{1/\sqrt[]{3}-1/10}$. By this observation, we can take $g_0\in V\cap D$ such that $g_0\not\in p_V(N(V)\cap \partial D)$. We fix  such a $g_0$.  We construct a retracting homotopy $H_t:\partial D\cap N(V) \to \partial D\cap V$ as follows.
For $f\in \partial D\cap N(V)$, we set $f^t=tp_V(f)+(1-t)f$ and $H_t(f)=f^{st}:=(1-s)g_0+sf^t$ for a unique positive number $s$ such that $f^{st}\in \partial D$. There is a unique number $k > 0$ such that $f^{kt}(:=(1-k)g_0+kf^t)=f+l(p_V(f)-g_0)$ for some $l\geq 0$. The coefficients of the linear combination of $v_1,\dots,v_M$ expressing the vector $p_V(f)-g_0$, $a_1,\dots, a_M$ satisfy the  equations
$
q_{1j}a_1+\cdots +q_{Mj}a_M=0,
$
so for the coefficients of $f^{kt}-f_1$ and $f-f_1$ the values of $|q_{1j}a_1+\cdots +g_{Mj}a_M|$ are the same and  the point $f^{kt}$ belongs to $N(V)$. By definition of $g_0$, we have $f^{kt}\not\in Int(D)$ so $k\geq s$.  Since $f^{st}$ is on the line segment between $g_0$ and $f^{kt}$, we have $f^{st}\in N(V)$ and the retracting homotopy $H_t$ is well defined. This is easily extended to a retractiong homotopy on $N(V)\cap D$. By the mentioned observation on the angle between $\chi(J)$ and $V$, the restriction $p_V|_{\chi(u)}:\chi(u)\to V$ is bijective, which implies  latter part of the claim. 
\end{proof}
\begin{lem}\label{LDelta0}
We use the notation of  Lemma \ref{Lkeymap} (2) and put $D=D_{R_{u'}}(\Gamma_n)$. Let $\Delta_u^0\subset \RR^{6|u'|}$ be the subspace consisting of $(x_\gamma,v_\gamma)$ satisfying the following conditions :
\begin{enumerate}
\item $x_\alpha=x_\beta$ if $(\alpha,\beta)\in u'$ and $\alpha\not=\beta$, and
\item  $v_\alpha=0$ if $(\alpha,\alpha)\in u'$. 
\end{enumerate} Under the assumption of Lemma \ref{Lkeymap} (2),  $(\pi_u^{-1}(\Delta_u^0), \pi_{u}^{-1}(\Delta_{u}), \psi_{u}(D))$ is a $(6n-3c(u'))$-dimensional spherical triple and the map  
\[
\psi_{u}: (\chi(u')\cap D, \ \chi(u')\cap \partial D)\to (\pi_{u}^{-1}(\Delta_{u})\cap \psi_{u}(D),\  \pi_{u}^{-1}(\Delta_{u})\cap \psi_{u}(\partial D))
\]
is a relative homotopy equivalence.
\end{lem}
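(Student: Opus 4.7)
The strategy is to pull everything back to $\Gamma_n$ via the affine bijection $\psi_u$, invoke Lemma \ref{Lneat}, and then push the conclusion forward. Concretely, I will set $V := \psi_u^{-1}(\pi_u^{-1}(\Delta_u^0))$, $N(V) := \psi_u^{-1}(\pi_u^{-1}(\Delta_u))$, and $D := D_{R_{u'}}(\Gamma_n)$, and apply Lemma \ref{Lneat} with $u'$ playing the role of ``$u$'' in that lemma.

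To verify the hypotheses, I would observe that $\Delta_u^0$ is cut out of $\RR^{6|u'|}$ by the vanishing of the $\RR^3$-valued linear expressions $x_\alpha - x_\beta$ for $(\alpha,\beta)\in u'$ with $\alpha\neq\beta$ together with $v_\alpha$ for $(\alpha,\alpha)\in u'$, while $\Delta_u$ is defined by bounding the norms of the \emph{same} expressions. Since $\pi_u$ and $\psi_u$ are affine, composing with $\pi_u\circ\psi_u$ preserves this equation/inequality parallelism, so $V$ and $N(V)$ fit the rigid form demanded by Lemma \ref{Lneat}. The dimension count $\dim V = 6n - 3c(u') = \dim \chi(u')$ follows because $\Delta_u^0$ has codimension $3c(u')$ in $\RR^{6|u'|}$. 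The sandwich $\chi(u')\cap D \subset N(V)\cap D \subset \NN_{\delta_{u'}}(u')$ is precisely Lemma \ref{Lkeymap}(2). The element $u'$ is the frame of itself (the inequalities of Definition \ref{Dgen_partition}(5) are trivially satisfied with equality for $A'=A$), so the hypothesis that ``$u$'' in Lemma \ref{Lneat} be a frame also holds.

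Lemma \ref{Lneat} then yields that $(V,N(V),D)$ is a $(6n-3c(u'))$-dimensional spherical triple in $\Gamma_n$ and that the inclusion $(\chi(u')\cap D,\ \chi(u')\cap\partial D)\hookrightarrow (N(V)\cap D,\ N(V)\cap \partial D)$ is a relative homotopy equivalence. Applying the affine homeomorphism $\psi_u$ transports affine subspaces, regular star-shaped sets, regular centers, and boundaries faithfully, so $(\pi_u^{-1}(\Delta_u^0),\pi_u^{-1}(\Delta_u),\psi_u(D))$ is again a spherical triple, and the map $\psi_u$ on the pair in the statement is the composite of the above inclusion with a homeomorphism, hence a relative homotopy equivalence. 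The main obstacle will be the coordinate bookkeeping needed to see that after composing with $\pi_u$ and $\psi_u^{-1}$ the subspaces $V$ and $N(V)$ are indeed cut out by the same linear forms in the coordinates of $\Gamma_n$; this rigidity is the sole property of the affine structure that is genuinely used, and unwinding Definitions \ref{De_u} and \ref{DU_1} makes it visible, though it deserves careful tracking of the identification $e_u$ induces between $\RR^{6|u'|}$ and its image in $\RR^{6n}$.
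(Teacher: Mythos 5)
Your proposal is correct and is essentially the paper's own proof: the paper likewise applies Lemma \ref{Lneat} with $V=\psi_{u}^{-1}(\pi_u^{-1}(\Delta^0_u))$, $N(V)=\psi_{u}^{-1}(\pi_u^{-1}(\Delta_u))$, and $u$ replaced by $u'$, using Lemma \ref{Lkeymap}(2) for the sandwich of inclusions. Your additional verification of the linear-form/inequality parallelism and the dimension count only fills in details the paper leaves implicit.
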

\begin{proof}
Apply Lemmas \ref{Lkeymap} and \ref{Lneat} to $V=\psi_{u}^{-1}(\pi_u^{-1}(\Delta^0_u))$, $N(V)=\psi_{u}^{-1}(\pi_u^{-1}(\Delta_u))$, and $u$ in the lemma replaced with $u'$.
\end{proof}

\section{The space $ U_2 $}\label{SU_2}
\begin{defi}\label{DU_n}
\begin{enumerate}

\item Let $u\in |\GP_n|$ and $A'\in \PPP_n$.  Let $P$ be the underlying partition of $A'$. Write $\alpha_0=\min P$ and $\alpha_{p+1}=\max P$. Set $\psi_0(u,j)=s_j$ for $1\leq j\leq n$ and 
\[
\bar\delta_{u,A',i,j}=\max\{|f'(t)-f'(s_i)|\mid f\in D_{R_{A'}}(\Gamma_n), \ s_i\leq t\leq s_j\}.
\] We also set
\[
\begin{split}
\bar R_{u,A'} & =\max\{\, |f(s_k)|\ \mid\ \  f\in D_{R_{A'}}(\Gamma_n),\  k=1,\dots,  n\}, \\
V_{u,A',i} & =\max\{\, |f'(t)|\ \mid \ \  f\in D_{R_{A'}}(\Gamma_n),\ \  |t|\leq \max\{T_{A'}, |s_i|\}\ \}
\end{split}
\] for $1\leq i\leq n$. We denote by $\nu(u,A')\subset \RR^{6n}$ the subspace consisting of $(y_i,w_i)_i$ satisfying the  following conditions: 
\begin{enumerate}
\item For each piece $\alpha\in P$, and for each pair $i,j\in \alpha$ with $1\leq i<j\leq n$, 
\[
\begin{split}
|y_j-(y_i+(s_j-s_i)w_i)| & < \bar\delta_{u,A',i,j}(s_j-s_i),\qquad\text{and} \\
|w_i-w_j| & < \bar \delta_{u,A',i,j}.\\
\end{split}
\] 

\item 
For any $1\leq i\leq n$, the inequalities \ \ $|y_i|<  \bar R_{u,A'}$ \ \ and \ \ $|w_i|< V_{u,A',i}$\ \  hold.\vspace{2mm}
\item  Putting $i_0=\max\alpha_0$,\quad  $i_1=\min\alpha_{p+1}$, the following inequalities hold.\vspace{2mm}
\begin{enumerate}
\item $y_i-y_{i-1}>_1 s_i-s_{i-1}$ \quad for \quad $i\in \alpha_0 \cup \alpha_{p+1}-\{0,1,i_1, n+1\}$,  \vspace{2mm}

\item $w_i>_1 1$ \quad for \quad $i\in \alpha_0\cup \alpha_{p+1}-\{0,n+1\}$, \vspace{2mm}
\item $y_{j}-y_{i_0}>_1-T_{A'}-s_{i_0}$  \quad and \quad $y_{i_1}-y_{j}>_1 s_{i_1}-T_{A'}$\quad for \quad $j\in P^\circ$, \vspace{2mm}
\item  $y_{i_0}<_1s_{i_0}$ and  $y_{i_1}>_1s_{i_1}$, \vspace{2mm}
\end{enumerate}
where in (iii) and (iv), the conditions involving $i_0$ (resp. $i_1$) are empty if $i_0=0$ (resp. $i_1=n+1$). 
\end{enumerate}

\item For  $u\in |\GP_n|$,  we set
\[
\nu_u=\nu(u,A'),
\]
where $A'$ is the frame of the geometric partition of  $u$.
\item  We set
\[
 U_2 = U(\Delta_u,e_u, \nu_u).
\]
See Definition \ref{De_u}  for $\Delta_u$ and $e_u$, and see also Definition \ref{Dintermediate_space}. Clearly, the inclusion $Int(\psi_u(D_{R_{u'}}(\Gamma_n)))\subset \nu_u$ holds. We denote by  $\phi_1: U_2\to U_1$ the collapsing map.
\end{enumerate}
\end{defi} 
The space $\nu_u$ depends only on the geometric partition of $u$.\\
\indent We easily see that $\tilde U(\Delta_u,e_u, Int(\psi_u(D_{R_{u'}}(\Gamma_n))))$ is open in $\tilde U(\Delta_u,e_u, \nu_u)$ so the map $\phi_1$ is continuous (see Definition \ref{Dintermediate_space}). We sometimes call the inequalities in (c) of (1) of Definition \ref{DU_n} the {\em conditions on the first coordinates}. These are used to prove Lemma \ref{Lneighbor_incl} which is, in turn, used to prove Lemma \ref{Lndr}, see also Remark \ref{Rfirst_coord}.
\begin{rem}
We shall explain the intuition on the definition of the neighborhood $\nu_u$. We want to connect the spaces $U_1$ and $U_{\TT}$ by maps. To do so, it is natural to try to connect the compactification of $\pi^{-1}_u(\Delta_u)\cap Int(\psi_{*,u}(D_{R_u}(\Gamma_n)))$ and the space $\TT_G$,  which serves as a building block of $U_\TT$ (see Definition \ref{Dresol_T}) by a chain of collapsing maps. It would be simple if there were an inclusion from  the intersection of the $\eps$-neighborhood of $e_u(\RR^{6p})$ and a product of 3-disks to the space $ Int(\psi_{u}(D_{R_u}(\Gamma_n)))$ or  of reverse direction for sufficiently small $\eps$, but this seems to be too much to hope because it is difficult to control the behaviour of  values of polynomial maps $f(t)$ for large $t$. If we cut off the components corresponding to the minimum and maximum pieces of the underlying partition and consider the $\eps$-neighborhood for the rest of components, this problem is resolved but this modification does not  do well with  the NDR-property of filtrations which we discuss in next subsection. The definition of $\nu_u$ resolves these issues. It has the inclusions from both of $Int(\psi_{u}(D_{R_u}(\Gamma_n)))$ and the intersection of $\eps$-neighborhood of the image of $e_u$ and the product of disks.
\end{rem}
\begin{lem}\label{Lneat_U_n}
Let $u\in |\GP_n|$. Suppose that $u$ is not degenerate at $\pm\infty$. The triple
$(\pi_u^{-1}(\Delta_u^0), \pi_u^{-1}(\Delta_u), \bar \nu_u)$ is a $(6n-3c(u))$-dimensional spherical. Here, we use the notations of Definitions  \ref{De_u}, \ref{DU_1}, \ref{DU_n}, Lemma \ref{LDelta0} and $\bar \nu_u$ is the closure of $\nu_u$ in $\RR^{6n}$. 
\end{lem}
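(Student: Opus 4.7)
The plan is to parallel the proofs of Lemmas \ref{Lnu_P_star}, \ref{Lstarshaped_T}, and \ref{Lfunctor_neat}, with $\nu_u$ replacing $\nu_P$ and $\Delta_u$ replacing $D_G$. First, I would exhibit a regular center of $\bar\nu_u$ lying in $\pi_u^{-1}(\Delta_u^0)$. A natural candidate is $e_u(0)\in\RR^{6n}$, where $e_u$ is the affine map of Definition \ref{De_u} associated with the frame $u'$. One checks directly that $e_u(0)\in\pi_u^{-1}(\Delta_u^0)$ (since $\pi_u(e_u(0))=0$ satisfies the defining equations of $\Delta_u^0$) and that it satisfies each defining inequality of $\nu_u$: item (a) is trivial as both sides vanish; item (b) uses that $|f(s_k)|$ grows like a high-degree polynomial in $s_k$ on $D_{R_{u'}}(\Gamma_n)$, dominating $|2 s_k|$; and the first-coordinate inequalities in (c) follow by elementary calculation, using the nondegeneracy of $u$ at $\pm\infty$ to ensure the canonical embedding $t\mapsto(2t,0,0)$ on the minimum and maximum pieces is valid, together with $T_{u'}>|s_{i_0}|,|s_{i_1}|$ being guaranteed only up to sign which is consistent with the minimum/maximum location. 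Regularity of the center is then straightforward since each defining condition of $\nu_u$ is a strict affine inequality.

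Second, I would show that $\Delta_u$ is fiberwise star-shaped over $\Delta_u^0$ via the orthogonal projection $\pi^\vee:\Delta_u\to\Delta_u^0$ (collapsing each $x_\alpha$ in a connected component of the non-loop edges to the barycenter and each looped $v_\alpha$ to zero). Unlike in Lemma \ref{Lstarshaped_T}, the width of the neighborhood defining $\Delta_u$ is independent of the point of $\Delta_u^0$ one is perturbing from, so the straight-line homotopy $t(x,v)+(1-t)\pi^\vee(x,v)$ manifestly remains inside $\Delta_u$. I then extend $\pi^\vee$ to $\pi_u^{-1}(\Delta_u)$ by the formula
\[
\pi^\vee(y,w):=e_u(\pi^\vee(\pi_u(y,w)))+\bigl((y,w)-e_u(\pi_u(y,w))\bigr),
\]
exactly as in Lemma \ref{Lfunctor_neat}.

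The core construction is a retracting homotopy on $\pi_u^{-1}(\Delta_u)\cap\partial\nu_u$: given $(y,w)$ in this set, set $(y^t,w^t)=t(y,w)+(1-t)\pi^\vee(y,w)$ and then $(y^{st},w^{st})=(1-s)\cdot e_u(0)+s(y^t,w^t)$ for the unique $s>0$ placing the point on $\partial\nu_u$. The main obstacle, and the bulk of the work, is verifying that $(y^{st},w^{st})$ remains in $\pi_u^{-1}(\Delta_u)$. This requires a case analysis on which defining inequality of $\nu_u$ becomes an equality at $(y,w)$, paralleling cases (i-a), (i-b), (ii) of Lemma \ref{Lfunctor_neat} and adding cases for the first-coordinate conditions in item (c) of Definition \ref{DU_n}. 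The critical quantitative input is the inequality $\delta_{u'}/8^{(n+5)(p+5)}\ll \bar\delta_{u,A',i,j}$ and the smallness of $\rho_{u'}$ from Definition \ref{Dgen_partition}, which together ensure that the perturbation introduced by $\pi^\vee$ on each $x_\alpha,\,v_\alpha$ component is dwarfed by the margins present in every defining inequality of $\nu_u$, so the rescaling factor $s$ never drives the point outside $\pi_u^{-1}(\Delta_u)$.

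Finally, I extend this boundary retracting homotopy to all of $\pi_u^{-1}(\Delta_u)\cap\bar\nu_u$ by choosing a collar of $\pi_u^{-1}(\Delta_u)\cap\partial\nu_u$ inside $\pi_u^{-1}(\Delta_u)\cap\bar\nu_u$ whose inner boundary maps into $\bar\nu_u$ under $\pi^\vee$, and interpolating between the constructed boundary homotopy on the outside of the collar and the straight-line homotopy $(1-t)\mathrm{id}+t\pi^\vee$ on the inside, exactly as at the end of the proof of Lemma \ref{Lfunctor_neat}. This yields the relative homotopy equivalence $(\pi_u^{-1}(\Delta_u^0)\cap\bar\nu_u,\partial)\to(\pi_u^{-1}(\Delta_u)\cap\bar\nu_u,\partial)$ required by the third condition of a spherical triple, and the dimension $6n-3c(u)$ follows because $\dim\Delta_u^0=6p-3c(u')=6p-3c(u)$ (using that $u'$ is the frame) while $e_u$ contributes the remaining $6n-6p$ directions fiberwise, as in Proposition \ref{PThomhomotopy}.
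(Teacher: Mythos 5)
Your overall strategy (adapt Lemma \ref{Lfunctor_neat}: exhibit $e_u(0)$ as a regular center, show $\Delta_u$ is fiberwise star-shaped over $\Delta_u^0$, extend $\pi^\vee$, rescale from the center, extend via a collar) is the right template, and your observation that the constant widths of $\Delta_u$ make the fiberwise star-shapedness easier than in Lemma \ref{Lstarshaped_T} is correct. But your ``critical quantitative input'' is false, and it is false in exactly the direction that breaks the core step. For $i,j$ in a common piece $\alpha\in P^\circ$ one has $s_j-s_i\leq\rho_{A'}$, so by condition (b) in the definition of $\rho_?$ (Definition \ref{Dgen_partition}(4)) the quantity $\bar\delta_{u,A',i,j}$ is bounded by $\delta_{A'}/8^{(n+5)(p+10)}$, which is \emph{much smaller} than $\delta_{u'}/8^{(n+5)(p+5)}$, not much larger; and the actual margin of the inequalities in item (a) of Definition \ref{DU_n} is $\bar\delta_{u,A',i,j}(s_j-s_i)\leq \bar\delta_{u,A',i,j}\,\rho_{A'}$, smaller still. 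Meanwhile the displacement $(y^t,w^t)-(y,w)=(1-t)\bigl(e_u(\pi^\vee\pi_u(y,w))-e_u(\pi_u(y,w))\bigr)$ has components of order $\delta_{u'}/8^{(n+5)(p+5)}$. So the perturbation introduced by $\pi^\vee$ is \emph{not} dwarfed by the margins of every defining inequality of $\nu_u$; for the inequalities of type (a) it overwhelms them by many orders of magnitude. As written, your argument that $(y^t,w^t)\neq e_u(0)$ (needed for $s$ to exist) and that the rescaling factor $s$ satisfies $st\leq 1$ both collapse precisely when the active boundary condition is one of type (a).

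What saves the construction is not a size comparison but an exact algebraic identity: since $e_u(x,v)_j=(x_\alpha+(s_j-s_\alpha)v_\alpha,\,v_\alpha)$ for $j\in\alpha\in P^\circ$, the extension of $\pi^\vee$ to $\RR^{6n}$ preserves the functionals $y_j-(y_i+(s_j-s_i)w_i)$ and $w_i-w_j$ for $i,j$ in a common piece (and fixes the components indexed by $\min P\cup\max P$ outright, since $e_u$ is constant there). Hence if an inequality of type (a) is an equality at $(y,w)$ it remains an equality at $(y^t,w^t)$, which forces $s\leq 1$ and keeps $e_u(0)$ out of the image; this is the observation the paper's proof isolates, and without it your case analysis cannot be completed. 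Separately, the paper handles the first-coordinate conditions (c) differently from you: rather than rescaling all $6n$ coordinates from $e_u(0)$ and adding cases for (c)(iii)--(iv), it freezes the components in $\min P\cup\max P$, treats the first coordinates of $y_{\max\alpha_0}$ and $y_{\min\alpha_{p+1}}$ as parameters $(a,b)$, and performs the retraction only on the middle block via a family $H_t^{a,b}$ depending continuously on $(a,b)$. Your global-rescaling variant could probably be pushed through with the corrected mechanism above plus the extra casework you only gesture at, but as it stands the proposal rests on a wrong inequality at its most delicate point.
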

\begin{proof}
The proof is similar to Lemma \ref{Lfunctor_neat} but we need  slight extra consideration because of the conditions on the first coordinate of components in $P-P^\circ$. We fix two numbers $a, b$ such that $a\leq -T_{u'},\ T_{u'}\leq b$. Let $\alpha_0, \alpha_{p+1}$ be the first and last pieces of $P$ respectively. Put $l=n-|\alpha_0|-|\alpha_{p+1}|$. We let a component of an element $(y_i,w_i)_i\in \RR^{6l}$ labeled by $i=|\alpha_0|+1,\dots, n-|\alpha_{p+1}|$. Let $\nu'\subset \RR^{6l}$ be the set of $(y,w)$ which satisfies the inequalities involving $\bar \delta_{u,A',i,j}$ and  the inequalities on $|y_i|$, $|w_i|$ among the defining inequalities of $\nu_u$, and the inequality
\[
a-s_{i_0}-T_{u'}<_1 y_i<_1b-s_{i_1}+T_{u'}\qquad\text{where}\qquad i_0=\max\alpha_0, \ 
i_{1}=\min\alpha_{p+1}.
\]
Intuitively speaking, $\nu'$ is the set obtained by forgetting the first $|\alpha_0|$ and last $|\alpha_{p+1}|$ components from $\nu_u$. Let $e':\RR^{6p}\to \RR^{6l}$ be the affine map given by removing the corresponding components from $e_u$, and $\pi':\RR^{6l}\to e'(\RR^{6p})=\RR^{6p}$ the orthogonal projection. Let $\pi^\vee:\Delta_u\to \Delta^0_u$ be the orthogonal projection and we extend this map to a map $\RR^{6l}\to \RR^{6l}$ by
\[
\pi^\vee(y,w)=(y,w)+e'(\pi^\vee(\pi'(y,w)))-e'(\pi'(y,w))
\]
for $(y,w)\in \RR^{6l}$ as in the proof of Lemma \ref{Lfunctor_neat}. If one of the inequalities involving $\bar\delta_{u,A',i,j}$ becomes an equality for $(y,w)$, the same holds for $\pi^\vee(y,w)$ so $e'(0)\not\in \pi^\vee((\pi')^{-1}(\Delta_u)\cap \partial \nu')$. By this observation, we can construct a retracting homotopy $(\pi')^{-1}(\Delta_u)\cap \partial \nu'\to (\pi')^{-1}(\Delta_u^0)\cap \partial \nu'$ and extends it to a whole homotopy $H^{a,b}_t:\pi^{-1}(\Delta_u)\cap \bar \nu'\to \pi^{-1}(\Delta_u^0)\cap \bar \nu'$ which depends continuously on $(a,b)$, similarly to the lemma. We can define a homotopy 
$\tilde H_t:\pi_u^{-1}(\Delta_u)\cap \bar \nu_u \to \pi_u^{-1}(\Delta_u^0)\cap \bar \nu_u$ by
\[
\tilde H_t(x,v)=(y^0,w^0, H_t^{a,b}(y^1,w^1), y^2,w^2),
\]
where $(y^0,w^0)$ and $(y^2,w^2)$ are the first $|\alpha_0|$ and last  $|\alpha_{p+1}|$ components of $(y,w)$, respectively, $(y^1,w^1)$ is the rest of components, and $a$ and $ b$ are the first components of $y_{i_0}$ and $y_{i_1}$, respectively.
\end{proof}

\subsection{NDR-property of the filtrations on  $U_2$}
\begin{defi}
Let $X$ be a topological space and $A\subset X$ a closed subspace. We say $(X,A)$ is an {\em NDR pair} if there is a neighborhood $N$ of $A$ such that $A$ is a deformation retract of $N$.
\end{defi}
If $(X,A)$ is an NDR pair, the natural map $C_*(X)/C_*(A)\to \bar C_*(X/A)$ is a quasi-isomorphism. Let $\{F_l\}$ be a filtration on a space such that $(F_{l+1},F_l)$ is an NDR-pair. Then the $E_1$-page of the associated spectral sequence is $\oplus_l H_*(F_{l+1}/F_l)$. This fact is fundamental for Vassiliev's and our arguments. The verification of the property that the pair is NDR is often omitted. For the filtrations on the spaces $\Sigma$ and $\bar \Sigma$ defined in section \ref{Spreliminary}, this property holds since the involved spaces are semi-algebraic sets. The somewhat technical definition of the space $\nu_u$ used in the definition of $U_2$ is motivated by ensuring the NDR-property of filtrations so we give an outline of the proof of the property.
\begin{defi}\label{DX[P_0]}
Let $X$ be a space of the form $X=U(D_u,E_u,N_u)$ (see Definition \ref{Dintermediate_space}), and $P_0\in \PP_n$ a partition.
\begin{enumerate}
\item We denote by $X[P_0]\subset X$ the subspace of the basepoint and the elements $(x,v;u)$ such that $P_0$ is equal to or a subdivision of the  partition of the frame of $u$. 

\item We define the {\em rough cardinality filtration} $\{F_l\}$ on $X[P_0]$ by \\
$(y,w;u)\in F_l \iff (y,w;u)=*$ or $|u'|\leq l$, where $u'$ is the frame of $u$.\\
We define the {\em rough complexity filtration $\{\mathscr{F}_k\}$} on $F_l/F_{l-1}$ by \\
$
(y,w; u)\in \mathscr{F}_k \iff (y,w;u)=*$ or  $c(u')\leq k$, where $u'$ is the frame of $u$.\\
We also define the rough cardinality and rough complexity filtrations on a subquotient of $X[P_0]$ similarly to Definition \ref{Dbar_Sigma}.

\end{enumerate}
\end{defi}
We  will apply this definition to $X=U_1,\dots, U_7$.

\begin{lem}\label{Lneighbor_incl}
Let $u, u_1\in |\GP_n|$. Let $A, A_1$ be the geometric partitions of $u, u_1$ respectively, and $A'$  the frame of the geometric partition of $u$. Suppose that $A\in \partial N(A')$ (see Lemma \ref{LN}), $u\in N(u_1)$ and that there is a sequence $\{\tilde u_i\}_i$ in $|\GP_n|$ such  that $\lim_{i\to\infty}\tilde u_i=u$ and  the abstract partition of the frame of $\tilde u_i$ is the same as the partition of $u_1$ for each $i$. Then,
\begin{enumerate}
\item $\bar \nu_u\ (\ =\overline{\nu(u,A')}\ )\ \subset \nu(u,A_1)$, and
\item $\bar \nu_u\cap \pi^{-1}_{u,u_1}(\Delta(u,u_1,\delta_{u_1}))\subset Int(\pi_u^{-1}(\Delta_u))$.
\end{enumerate} 
Here, $\pi_{u,u_1}$ denotes the orthogonal projection $\RR^{6n}\to \tilde e_{u,u_1}(\RR^{6|u_1|})=\RR^{6|u_1|}$  and see Definition \ref{De_u} for other notations.
\end{lem}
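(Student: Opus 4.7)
The plan is to check each of the two inclusions clause by clause from the definitions, exploiting the monotonicity of the parameters $R_?, T_?, \delta_?, \bar\delta_{u,?,i,j}$ along the partial order $N_\infty$. The essential first observation is that, since $A'$ is the frame of $A$, one has $|A'| \le |A_1|$; combined with $A \in N(A') \cap N(A_1)$ and Lemma~\ref{LN}(1) this yields $A_1 \in N(A')$, hence $A_1 \in N_\infty(A')$. Consequently $R_{A_1} \ge R_{A'}$, $T_{A_1} \ge T_{A'}$, $\delta_{A_1} \le \delta_{A'}$, and therefore $\bar R_{u,A_1} \ge \bar R_{u,A'}$, $V_{u,A_1,i} \ge V_{u,A',i}$, and $\bar\delta_{u,A_1,i,j} \ge \bar\delta_{u,A',i,j}$. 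Moreover the partition $P_1$ of $A_1$ is a (weak) subdivision of the partition $P'$ of $A'$, with $\min P_1 \subset \min P'$ and $\max P_1 \subset \max P'$.

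For part (1), given $(y,w) \in \overline{\nu(u,A')}$ I would verify each defining inequality of $\nu(u,A_1)$. The bounds on $|y_i|, |w_i|$ and the $\bar\delta$-conditions follow directly from the monotonicity above, since any pair $i,j$ in a common piece of $P_1$ already lies in a common piece of $P'$. The first-coordinate conditions (c) of Definition~\ref{DU_n}(1) require more care: indices in $\min P' \setminus \min P_1$ (and symmetrically for the max piece) carry slope conditions in $\nu(u,A')$ but only $\bar\delta$-conditions in $\nu(u,A_1)$, so one must translate the slope inequality into the $\bar\delta$-bound for $P_1$ using that $s_i - s_{i-1}$ is of order $\rho_{A'}$ (from Lemma~\ref{Lproperty_psi2}) while $\bar\delta_{u,A_1,i,j}$ is much larger than $\rho_{A'}$ by construction of $\rho_?$. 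The remaining clauses (iii), (iv) become looser because $T_{A_1} \ge T_{A'}$ and the relevant $\psi_0(u,j)$ are comparable across frames.

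For part (2), let $(y,w) \in \bar\nu_u$ with $\pi_{u,u_1}(y,w) \in \Delta(u,u_1,\delta_{u_1})$. The sequence hypothesis $\tilde u_i \to u$ with frames of abstract partition $P_1$ guarantees that every edge $(\alpha,\beta)$ of the abstract graph $G_{u'}$ of the frame $u'$ of $u$ is the image under $\delta_{P'P_1}$ of an edge of the abstract graph $G_{u_1}$ of $u_1$. I would then compute the components of $\pi_u(y,w)$ by averaging the components of $\pi_{u,u_1}(y,w)$ over $\delta_{P'P_1}^{-1}(\gamma)$, with error terms of order $\bar\delta_{u,A',i,j}(s_j - s_i)$ coming from $(y,w) \in \overline{\nu(u,A')}$. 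Since $|u_1| \ge |u'|$ and $\delta_{u_1} \le \delta_{u'}$, the factor-$8^{n+5}$ gap between the slack $\delta_{u_1}/8^{(n+5)(|u_1|+5)}$ in $\Delta(u,u_1,\delta_{u_1})$ and the slack $\delta_{u'}/8^{(n+5)(|u'|+5)}$ in $\Delta_u$ dominates all error terms and gives strict inequalities, placing $(y,w)$ in $Int(\pi_u^{-1}(\Delta_u))$.

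The main obstacle will be the reclassification of indices in part (1)(c): ensuring that indices migrating from the extremal pieces of $P'$ to interior pieces of $P_1$ have their slope conditions translate cleanly into the new $\bar\delta$-bounds. This requires combining the boundary hypothesis $A \in \partial N(A')$ with the size estimates $\rho_{A'} \ll \bar\delta_{u,A_1,i,j}$ built into the choice of $\rho_?$. Once this combinatorial accounting is settled, the exponential gap in the $8^{(n+5)(p+5)}$ factors absorbs all error terms automatically, and part (2) follows from the same averaging computation used for the well-definedness of $\psi$ in Lemma~\ref{Lkeymap}(1).
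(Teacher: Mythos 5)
Your overall strategy for part (2) — compare components of $\pi_u(y,w)$ and $\pi_{u,u_1}(y,w)$, with errors of order $\bar\delta_{u,A',i,j}(s_j-s_i)$ absorbed by the gap between the slacks $\delta_{u_1}/8^{(n+5)(|u_1|+5)}$ and $\delta_{u'}/8^{(n+5)(|u'|+5)}$ — is the same kind of computation the paper runs (and part (1) is indeed essentially immediate from monotonicity; the paper dismisses it in one line). But you have misidentified what the hypotheses $A\in\partial N(A')$ and the sequence $\{\tilde u_i\}$ are for, and this hides the step on which the whole comparison rests. The edge-lifting property you attribute to the sequence hypothesis is automatic: since $u\in N(u_1)$ and $u\in N(u')$, the graph of $u'$ is $\delta_{P'P_1}$ of the graph of $u_1$ by definition of $N(-)$ and of the frame. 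What the boundary and sequence hypotheses actually buy (via the inductive construction of $\bar\psi_0$ and Lemma \ref{Lproperty_psi2}(3) applied to the $\tilde u_i$ and passed to the limit) is that $\psi_0(u,\min\beta)$ equals the genuine geometric point $t_{\min\beta}$ for every interior piece $\beta$ of $P_1$, not just of $P'$. Without this, your averaging does not close. The sharpest instance is when a loop $(\alpha,\alpha)$ of $u'$ is the image of a non-loop edge $(\alpha_1,\alpha_2)$ of $u_1$ with $\alpha_1\cup\alpha_2\subset\alpha$: one must extract $|v_\alpha|\le\delta_{u'}/8^{(n+5)(p+5)}$ from $|x_{\alpha_1}-x_{\alpha_2}|\le \mathrm{const}\cdot|s_{\alpha_1}-s_{\alpha_2}|$ by dividing through by $s_{\alpha_2}-s_{\alpha_1}$, and the error terms $\bar\delta_{u,A',i,k}(s_k-s_i)$ only divide out because the intra-piece spreads $s_{l-1}-s_i$ and $s_k-s_l$ are negligible compared to $s_l-s_i=|s_{\alpha_1}-s_{\alpha_2}|$ — which is exactly the content of the identity $s_{\alpha_c}=t_{\alpha_c}$ together with $\rho_{A_1}\ll|t_{\alpha_1}-t_{\alpha_2}|$. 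Your proposal never produces or uses this identity, and the ``exponential gap absorbs everything'' claim does not substitute for it, since the ratio $(s_k-s_i)/(s_l-s_i)$ is a priori unbounded.

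The second gap is that you omit the case where $u$ is degenerate at $\pm\infty$. There $\Delta_u=\emptyset$, so part (2) asserts $\bar\nu_u\cap\pi_{u,u_1}^{-1}(\Delta(u,u_1,\delta_{u_1}))=\emptyset$, and no averaging argument applies; the paper proves emptiness by playing the first-coordinate conditions (c) in the definition of $\nu_u$ against the near-collinearity forced by $\Delta(u,u_1,\delta_{u_1})$. This case is not a technicality — it is the entire reason the first-coordinate conditions were built into $\nu_u$, and it is what makes the retraction in Lemma \ref{Lndr} continuous (cf.\ Remark \ref{Rfirst_coord}). A complete proof must treat it separately. As a minor point on part (1): the translation of slope conditions into $\bar\delta$-bounds that you flag as the main obstacle is not needed, because condition (a) of Definition \ref{DU_n} already imposes the $\bar\delta$-bounds on pairs inside the extremal pieces of $P'$ and these dominate the corresponding bounds for $P_1$ by monotonicity; what does require a (short) check is the new instances of (c)(iii)--(iv) for the smaller extremal pieces of $P_1$, obtained by chaining the slope inequalities of $\nu(u,A')$.
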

\begin{proof}
Part 1 is clear. (In this part, we only use the condition $u\in N(u_1)$.)\\
We shall prove part 2. Let $u'$ be the frame of $u$. We first consider the case that $u$ is not degenerate at $\pm\infty$. For simplicity we assume that $u_1=u$ and $|u|=|u'|+1$ (so $A_1=A$). Let $P, Q$ denote the abstract partitions of $u', u$ respectively. Set $p=|u'|$.  We may assume that the piece $\alpha$ of $P$ which is subdivided by $Q$ is  non-discrete. (If it is dicrete, the proof is trivial.)  Put $i=\min\alpha$, $k=\max\alpha$, and $s_j=\psi_0(u,j)$.
Let $(y,w)\in \bar \nu_u\cap \pi^{-1}_{u,u}(\Delta(u,u,\delta_{u}))$. Set $( x, v)=\pi_u(y,w)$.
We have the following inequality similarly to the proof of Lemma \ref{Lkeymap}.
\[
|(y_j,w_j)_{j\in\alpha}-e_{u,\alpha}( x_\alpha, v_\alpha)|  \leq |(y_j,w_j)_{j\in\alpha}-e_{u,\alpha}(y_i, v_\alpha)|\leq 
n(2n+1)\bar \delta (s_k-s_i).
\]
Here,
$\bar\delta=\bar\delta_{u,A',i,k}$ and $e_{u,\alpha}$ is the map defined in the proof.
Consequently,  we have $|y_i- x_\alpha|\leq n(2n+1)\bar \delta (s_k-s_i)$.
Let $\alpha_1<\alpha_2$ be the two pieces of $Q$ with $\alpha_1\cup\alpha_2=\alpha$. Put $l=\min{\alpha_2}$. Let $ x_{\alpha_c}$ be the $\alpha_c$-component of $\pi_{u,u}(y,w)$ for $c=1,2$. Similarly to the above inequality, we have
\[
|y_i- x_{\alpha_1}|\leq n(2n+1)\bar \delta (s_{l-1}-s_i),\qquad |y_l- x_{\alpha_2}|\leq 2n(2n+1)\bar\delta(s_k-s_l).
\]
Write  $A=(t_1,\dots, t_n)$. By  the condition $A\in \partial N(A')$, the existence of the sequence $\{\tilde u_i\}$, and Lemma \ref{Lproperty_psi2}, we have
\begin{equation}
s_{\alpha_1}(=s_i)=t_{\alpha_1}, \qquad s_{\alpha_2}(=s_l)=t_{\alpha_2}. \label{EQgeom}
\end{equation}
\indent (i)\quad If $(\alpha_1,\beta)\in u$ with some $\beta\not=\alpha_1,\alpha_2$, we have $| x_{\alpha_1}- x_{\beta}|\leq \frac{\delta_{u}}{8^{(n+5)(p+6)}}|s_{\alpha_1}-s_\beta|'$. Here, we note that the $\beta$-components are the same for the projections $\pi_u$, $\pi_{u,u}$ and $s_{\alpha_1}=s_\alpha$. 
It follows that 
\[
| x_\alpha- x_\beta|\leq | x_\alpha-y_i|+|y_i- x_{\alpha_1}|+| x_{\alpha_1}- x_{\beta}|\leq \frac{\delta_{u'}}{8^{(n+5)(p+5)}}|s_\alpha-s_\beta|'.
\]
\indent (ii)\quad If $(\alpha_2,\beta)\in u$ with some $\beta\not=\alpha_1,\alpha_2$, we have $| x_{\alpha_2}- x_{\beta}|\leq \frac{\delta_{u}}{8^{(n+1)(p+6)}}|s_{\alpha_2}-s_\beta|'$. Since $u$ is not degenerate at $\pm\infty$, by definition of $T_{A'}$, we have $|s_i|\leq T_{A'}$ and $|w_i|\leq V_{A'}$. Similarly to the equation (\ref{EQgeom}),  $s_\beta$ equals the corresponding geometric point of $A'$.  By these observations, together with definition of  $\rho_{u'}$, we have
\[
\begin{split}
|y_i-y_l| & \leq |y_l-(y_i+(s_l-s_i)w_i)|+(s_l-s_i)|w_i| \\
& \leq \bar\delta (s_l-s_i)+\frac{\delta_{u'}}{V_{A'}8^{(n+5)(p+10)}}|s_\alpha-s_\beta|'\cdot V_{A'} \\
& \leq \bar\delta (s_l-s_i)+\frac{\delta_{u'}}{8^{(n+5)(p+10)}}|s_\alpha-s_\beta|'.
\end{split}
\]

Combining these inequalities with
\[
| x_\alpha- x_\beta|\leq | x_\alpha- y_i|+| y_i- y_l|+| y_l- x_{\alpha_2}|+| x_{\alpha_2}- x_\beta|,
\]
 we  have $| x_\alpha- x_\beta| \leq \frac{\delta_{u'}}{8^{(n+5)(p+5)}}|s_\alpha-s_\beta|'$ since $|s_{\alpha_2}-s_\alpha|$ is sufficiently smaller than $|s_\alpha-s_\beta|'$.\\
\indent (iii)\quad If $(\alpha_1,\alpha_2)\in u$,  we have
\[
| x_{\alpha_1}- x_{\alpha_2}|\leq \frac{\delta_{u}}{8^{(n+5)(p+6)}}|s_{\alpha_1}-s_{\alpha_2}|.
\]
By the equalities (\ref{EQgeom}) and $\rho_u\ll \rho_{u'}$, we see $|s_i-s_{l-1}|$, $|s_l-s_k|\ll |s_{\alpha_1}-s_{\alpha_2}|$.  This observation, together  with the inequalities obtained before, implies
\[
\begin{split}
| y_{i}-y_l| & \leq \frac{3\delta_{u}}{8^{(n+5)(p+6)}}|s_{\alpha_1}-s_{\alpha_2}| \\
\therefore \ |(s_l-s_i) v_\alpha| & \leq |y_i-y_l|+|y_l-(y_i+(s_l-s_i) v_\alpha)|\leq \frac{\delta_{u'}}{8^{(n+5)(p+5)}}(s_l-s_i) \\
\therefore \ | v_\alpha| &< \frac{\delta_{u'}}{8^{(n+5)(p+5)}}.
\end{split}
\]
Thus, we have $( x_\alpha, v_\alpha)\in Int(\Delta_u)$.\\
\indent We shall consider the case that $u$ is degenerate at $\pm\infty$. The claimed inclusion is  rephrased as $\bar \nu_u\cap\pi_{u,u_1}^{-1}(\Delta(u,u_1,\delta_{u_1}))=\emptyset $. Putting the condition $A\in \partial N(A')$,  the existence of the sequence $\tilde u_i$,  the definition of  $\rho_?$,  Lemma \ref{Lproperty_psi2}, and the condition on $\nu_u$ involving $\bar \delta_{u,A',i,j}$ into together, we see that the distance $|(y_{j'},w_{j'})-(y_{j}+(s_{j'}-s_j)w_j,w_{j})|$ for $j<j'$ in a common piece which is neither $\max Q$ nor $\min Q$, is sufficiently small. This  implies that the condition $(y,w)\in \pi_{u,u_1}^{-1}(\Delta(u,A,\delta_{u_1}))$ and the conditions on the first coordinates in the definition of $\nu_u$ cannot hold simultaneously. Thus,  we have the claim.
\end{proof}

\begin{lem}\label{Lndr}
Let $X$ be either of $U_1$ or $U_2$, and $P_0$ a partition.  Let $\{F_l\}$ the rough cardinality filtration on $X[P_0]$ and $\{\mathscr{F}_k\}$ the rough complexity filtration on $F_{l+1}/F_l$. Then, the pairs $(F_{l+1},F_l)$ and $(\mathscr{F}_{k+1},\mathscr{F}_k)$ are NDR-pairs.
\end{lem}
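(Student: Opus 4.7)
The plan is to construct explicit deformation retractions onto each filtration stage, using Lemma \ref{Lneighbor_incl} as the principal technical input; that lemma was in fact set up precisely to supply the compatibility needed here, so the bulk of the work is organizational.

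For the pair $(F_{l+1}, F_l)$, points of $F_{l+1}\setminus F_l$ are characterized by having frame of cardinality exactly $l+1$, and such a point enters $F_l$ when the parameter $u\in|\GP_n|$ crosses from the complement of $N(u_0)$ into $N(u_0)$ for some $u_0$ with $|u_0|\leq l$. I will build a neighborhood $\mathcal{N}$ of $F_l$ in $F_{l+1}$ by taking points $(y,w;u)$ such that $u$ lies in a small open collar of $\overline{N(A_0)}$ in $|\GP_n|$ for some $A_0$ with $|A_0|\leq l$; these collars can be patched together by descending induction on frame cardinality using partitions of unity on $|\GP_n|$ and a consistent choice of collar direction on each $\partial N(A_0)$, which is possible because $N(A_0)$ is cut out by the explicit inequalities of Definition \ref{Dgen_partition}. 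The retraction $h_t$ pushes $u$ along the collar toward a base point in $N(A_0)$, at which the frame drops to have cardinality $|A_0|\leq l$ and the image lies in $F_l$, while the $(y,w)$ coordinates are held fixed. The crucial check is that $(y,w)$ remains in the allowed subset throughout: Lemma \ref{Lneighbor_incl}(1) gives $\bar\nu_u\subset \nu(u,A_0)$ on $\partial N(A_0)$, so the $\nu_u$-constraint extends continuously across the boundary, and Lemma \ref{Lneighbor_incl}(2) gives $\bar\nu_u\cap \pi_{u,u_0}^{-1}(\Delta(u,u_0,\delta_{u_0}))\subset \mathrm{Int}(\pi_u^{-1}(\Delta_u))$, so the diagonal constraint transitions correctly into the diagonal constraint for the smaller frame. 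For $X=U_1$, the argument is strictly simpler since $N_u$ is just $\mathrm{Int}(\psi_u(D_{R_{u'}}(\Gamma_n)))$: only Lemma \ref{Lneighbor_incl}(2) is needed, with the neighborhood part reducing to Lemma \ref{Lkeymap} and the continuity of the evaluation map.

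For the pair $(\mathscr{F}_{k+1},\mathscr{F}_k)$ inside $F_{l+1}/F_l$, the frame cardinality is fixed at $l+1$ and the complexity of the frame graph drops by at least one, which corresponds combinatorially to the removal of one or more edges (including loops) from the frame graph. Edge removal only enlarges the subspace $\Delta_u$ (fewer equations define it), so a neighborhood of $\mathscr{F}_k$ in $\mathscr{F}_{k+1}$ can be taken to consist of points where the ``extra'' edge-constraints in $\Delta_u$ are approximately satisfied, and the retraction collapses the $(y,w)$ coordinates onto the subspace defined by the smaller edge set, using the fiberwise star-shaped structure established in Lemma \ref{Lstarshaped_T}. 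On the parameter side, edge deletion is already encoded as a boundary face incidence in the nerve $|\GP(P)|$, so the retraction of $u$ itself is standard.

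The main obstacle will be the combinatorial bookkeeping of collars and patches near higher-codimension strata of $|\GP_n|$, where several boundaries $\partial N(A_j)$ meet and where the frame can jump to any of several different lower-cardinality frames simultaneously. The definition of $\nu_u$ in Definition \ref{DU_n}, especially the conditions on first coordinates, was tailored so that Lemma \ref{Lneighbor_incl} continues to hold at such meetings, and I expect that a careful inductive construction working downward from maximal-cardinality frames, together with repeated use of both parts of Lemma \ref{Lneighbor_incl}, will deliver the NDR property cleanly. Once the retraction is built at each stratum, the compatibility between strata follows because the deformations agree (up to the equivalence relation in $U(D_u,E_u,N_u)$) on the intersection of any two collars.
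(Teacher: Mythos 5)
Your treatment of the cardinality pair $(F_{l+1},F_l)$ matches the paper's: the retraction moves only the parameter $u$ (via a retraction of $\cup_{|A|\leq l+1}N(A)$ onto $\cup_{|A|\leq l}N(A)$ built by the same descending induction used for $\bar\psi_l$), holds $(y,w)$ fixed, and uses Lemma \ref{Lneighbor_incl} to control the constraints. One caveat: the inclusion that actually holds is $\nu_{r'(u)}\subset\nu_u$, not the reverse, so $(y,w)$ does \emph{not} automatically "remain in the allowed subset"; it may leave $\nu_{r'(u)}$, in which case the point must be sent to the basepoint (which lies in $F_l$, so this is harmless but needs to be said). Part 2 of Lemma \ref{Lneighbor_incl} then handles the diagonal constraint for those $(y,w)$ that do remain.

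The complexity pair is where your argument breaks. Membership in $\mathscr{F}_k$ is a condition on $u$ alone --- the complexity of the frame, i.e.\ of the last graph $\LL(u)$ of the chain --- and is independent of where $(y,w)$ sits. Consequently (i) your proposed neighborhood, ``points where the extra edge-constraints in $\Delta_u$ are approximately satisfied,'' is not a neighborhood of $\mathscr{F}_k$: a generic point of $\mathscr{F}_k$ satisfies no extra constraints at all, while every point of $\mathscr{F}_{k+1}\setminus\mathscr{F}_k$ satisfies them \emph{exactly}, by definition of $U(D_u,E_u,N_u)$; and (ii) collapsing $(y,w)$ onto a smaller diagonal leaves $c(\LL(u))=k+1$ unchanged, so the endpoint of your deformation is still outside $\mathscr{F}_k$. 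The correct neighborhood and retraction live entirely in the $u$-direction: take $N''$ to be the chains $u=\tau_0J_0+\cdots+\tau_mJ_m$ whose total weight on vertices of rough complexity $k+1$ is $\leq 1/2$, and let $r''$ truncate the chain at the last $J_p$ of rough complexity $\leq k$ and renormalize. Then $(y,w)$ need not move at all: truncation preserves the geometric partition (so $\nu_{r''(u)}=\nu_u$ and $e_{r''(u)}=e_u$) and only deletes edges (so $\Delta_u\subset\Delta_{r''(u)}$). Your parenthetical ``the retraction of $u$ itself is standard'' is in fact the whole proof, and the $(y,w)$-retraction is both unnecessary and misattributed --- Lemma \ref{Lstarshaped_T} concerns the neighborhoods $D_G$ of the Thom space model, not $\Delta_u$. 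The one genuinely delicate point you would still need is continuity of the resulting retraction where $u$ degenerates at $\pm\infty$, which is precisely what the first-coordinate conditions in $\nu_u$ and the degenerate case of Lemma \ref{Lneighbor_incl}(2) are for (cf.\ Remark \ref{Rfirst_coord}); you mention these conditions only in connection with the collar patching for the cardinality filtration, which is not where they are used.
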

\begin{proof}
We shall give  a proof for $X=U_2$. For simplicity, suppose that $P_0$ is the partition consisting of singletons, so $X[P_0]=X$.
For the first pair, we can construct a neighborhood $N'$ of $\cup_{|A|\leq l}N(A)$ in $\cup_{|A|\leq l+1}N(A)$ and a retraction $r':N'\to \cup_{|A|\leq l}N(A)$ by using an induction similar to the construction of the map $\bar \psi_l$ in section \ref{SU_1}. Let $\tilde N'\subset U_2$ be the subspace of elements $(y,w;u)$ such that the underlying partition of $u$ belongs to $N'$. We define a retraction $\tilde r':\tilde N'\to F_l$ by 
\[
\tilde r'(y,w;u)=
(y,w;r'(u)). 
\] Here, 
for $u=\tau_0J_0+\cdots +\tau_mJ_m$ with $J_i=(A,G_i)$, we set $r'(J_i)=(r'(A),\delta_{PQ}G_i)$, where $P$ and $Q$ are the partitions of $r'(A)$ and $A$, respectively, and set 
$
r'(u)=\tau_0r'(J_0)+\cdots +\tau_mr'(J_m)
$. The right-hand side is regarded as $*$ if $(y,w)\not \in \nu_{r'(u)}$.
 If we take $N'$ sufficiently small and take $r'$ so that $A$ and $r'(A)$ are sufficiently close,  we have $\nu_u\supset \nu_{r'(u)}$ and $\pi_u^{-1}(\Delta_u)\cap \nu_{r'(u)}\subset \pi_{r'(u)}^{-1}(\Delta_{r'(u)})$, and also have  $(y,w)\not\in \nu_{r'(u)}$ in the case of $\delta_{PQ}(G_m)\not\in\GG(P)$ by Lemma \ref{Lneighbor_incl}. Therefore,  $\tilde r'$ is well-defined. \\
\indent For the second pair, let $N''$ be the subspace of  $\cup_{c(u)\leq k+1}N(u)$ consisting of $u=\tau_0J_0+\cdots +\tau_mJ_m$ such that the sum of $\tau_i$'s such that $J_i$ has the rough complexity $k+1$ is $\leq 1/2$. Here, the rough complexity of $J_i$ is the complexity of  $J'_i$ when we write the frame of $u$ as $\tau_0J'_0+\cdots +\tau_mJ'_m$. A retraction $r'':N''\to \cup_{c(u)\leq k}N(u)$ is given by

\[
r''(u)=\frac{1}{\tau_0+\cdots +\tau_p}(\tau_0J_0+\cdots +\tau_pJ_p)
\] where $p$ is the maximum number such that $J_p$ has rough complexity $\leq k$. A retraction $\tilde r'':\{(y,w;u)\in F_{l+1}/F_l \mid u\in N''\}\to \mathscr{F}_k$ is defined by $\tilde r''(y,w;u)=(y,w;r''(u))$ under the obvious inclusion $\Delta_u\subset \Delta_{r''(u)}$. The map $\tilde r''$ is continuous by the case of degenerate $u$ in the part 2 of Lemma \ref{Lneighbor_incl}. We omit the construction of deformation homotopies since they are obvious extensions of $\tilde r'$ or $\tilde r''$. The proof for $X=U_1$ is similar. The statement corresponding to the part 1 of Lemma \ref{Lneighbor_incl} for $U_1$ is obvious from the definition of $R_?$ and the one corresponding to the part 2  follows from the part 2 itself since $\psi_u(D_{R_{u'}}(\Gamma_n))\subset \nu_u$. 
\end{proof}
\begin{rem}\label{Rfirst_coord}
Note that the retraction $\tilde r''$ in the previous proof is not continuous without the condition  on the first coordinates  in the definition of $\nu_u$. 
For example, set $k=0,\ n=2$ and consider a sequence $u_i=\frac{1}{2}J_{0,i}+\frac{1}{2}J_{1,i}$, where  $J_{0,i}$ is the graph with two vertices $1,a_i$  (of multiplicity 1) other than $\pm\infty$ and with no edge, and $J_{1,i}$ is the graph with the unique edge $(1,a_i)$ (and the same vertex set as $J_{0,i}$). Suppose that the limit $a_\infty=\lim _i a_i$ satisfies $\{-\infty, 1,a_\infty,\infty \}\in \partial(N(\{-\infty,1,\infty^2\}))$ for the geometric partitions, where $\infty^2$ denotes the point of multiplicity 2.  The space $\Delta_{u_i}$ does not get further from the origin when $i\to \infty$. If it were not for the condition, it  could hold that $\lim_{i\to \infty} r''(y,w;u_i)=(y,w;\bar u)\not=*=r''(\lim_{i\to \infty}(y,w;u_i))$  for some $(y,w)$, where $\bar u$ is  the element of $|\GP_n|$ identified with the graph with the vertex set $\{-\infty, 1,a_\infty,\infty \}$ and the empty edge set.

\end{rem}

\subsection{Homological properties of the maps $\psi:\bar \Sigma\to U_1$ and $\phi_1:U_2\to U_1$ }

\begin{prop}\label{Phomology_U_1} Let $P_0$ be a partition. 
\begin{enumerate}
\item Suppose $|P_0^\circ|\leq 3n/5$. When we consider  the cardinality filtration on $\bar \Sigma(P_0)$ and the rough cardinality filtration on $U_1[P_0]$, the restriction $\bar \Sigma(P_0)\to U_1[P_0]$  of the map $\psi$ defined in subsetion \ref{SSpsi} preserves the filtrations and induces an isomorphism between the homology of the filtered quotients $H_*(F_{l+1}/F_l)$ for each $l$.
\item When we consider the rough cardinality filtrations on $U_1[P_0]$ and $U_2[P_0]$, the restriction
$ U_2[P_0] \to U_1[P_0]$ of $\phi_1$ (see Definition \ref{DU_n}) preserves the filtrations and induces an isomorphism between the homology of the filtered quotients $H_*(F_{l+1}/F_l)$ for each $l$.
\end{enumerate}
\end{prop}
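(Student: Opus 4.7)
The plan is to adapt the stratification argument from the proof of Proposition~\ref{Lisom_sigma}, refining the (rough) cardinality filtration by an auxiliary (rough) complexity filtration $\{\mathscr{F}_k\}$ on each quotient $F_{l+1}/F_l$. Preservation of filtrations is immediate: for part 1, since the frame $u'$ of $u$ always satisfies $|u'|\leq|u|$; for part 2, since $\phi_1$ leaves the coordinate $u$ unchanged. By Lemma~\ref{Lndr}, each pair $(\mathscr{F}_k,\mathscr{F}_{k-1})$ is an NDR-pair on the $U_i$-sides, and the analogous property on $\bar\Sigma(P_0)$ follows from semi-algebraicity as in \cite{vassiliev}. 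Since the (rough) complexity is bounded, the spectral sequences associated to the auxiliary filtrations converge to $H_*(F_{l+1}/F_l)$, and it suffices to prove an isomorphism on the $E_1$-page, i.e.\ on $H_*(\mathscr{F}_k/\mathscr{F}_{k-1})$ for each $k$.

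For part 1, as in the proof of Proposition~\ref{Lisom_sigma}, the one-point compactifications of the connected components of $\mathscr{F}_k/\mathscr{F}_{k-1}$ on the $\bar\Sigma(P_0)$-side decompose, up to equivalence, into a smash product of a simplex-quotient $|\GG^H|/|\GG^{<H}|$ labeled by a classed configuration $H$, a free sphere factor recording the geometric points of the underlying geometric partition that are not points of $H$, and the Thom-type factor $\chi(\Gamma_n,H)^*$. On the $U_1[P_0]$-side, the analogous decomposition replaces $\chi(\Gamma_n,H)^*$ by the one-point compactification of $\pi_u^{-1}(\Delta_u)\cap\psi_u(D_{R_{u'}}(\Gamma_n))$ relative to its boundary. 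The map $\psi$ matches the combinatorial and simplicial factors by the identity on the coordinate $u$, and on the Thom-space base factors through the relative homotopy equivalence of Lemma~\ref{LDelta0}, giving a homology isomorphism on each component.

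For part 2, $\phi_1$ preserves the coordinate $u$ and only restricts the ambient neighborhood from $\nu_u$ to $\psi_u(D_{R_{u'}}(\Gamma_n))$. On each component of $\mathscr{F}_k/\mathscr{F}_{k-1}$, $\phi_1$ induces a map of one-point compactifications of the two Thom-type bases $\pi_u^{-1}(\Delta_u)\cap\bar\nu_u$ (on $U_2$) and $\pi_u^{-1}(\Delta_u)\cap\psi_u(D_{R_{u'}}(\Gamma_n))$ (on $U_1$). By Lemmas~\ref{LDelta0} and~\ref{Lneat_U_n}, both triples $(\pi_u^{-1}(\Delta_u^0),\pi_u^{-1}(\Delta_u),\bar\nu_u)$ and $(\pi_u^{-1}(\Delta_u^0),\pi_u^{-1}(\Delta_u),\psi_u(D_{R_{u'}}(\Gamma_n)))$ are $(6n-3c(u'))$-dimensional spherical triples over the same $\pi_u^{-1}(\Delta_u^0)$, and the inclusion $\psi_u(D_{R_{u'}}(\Gamma_n))\subset\nu_u$ is compatible with the deformation retractions onto $\Delta_u^0$; consequently $\phi_1$ induces a degree-one map on the quotient spheres, hence a homology isomorphism.

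The main obstacle is the asymmetry in part 1 between the cardinality $|u|$ used on the source and the frame cardinality $|u'|$ on the target. The quotient $F_{l+1}/F_l$ on the source collects only elements with $|u|=l+1$, while the target quotient collects all $(y,w;u)$ with $|u'|=l+1$ regardless of $|u|$; meanwhile, source elements with $|u|=l+1$ but $|u'|<l+1$ are collapsed to the basepoint on the target. The resolution is to show that these asymmetric contributions are homologically trivial by exhibiting the ``extra-refinement'' parameter spaces in $|\GP_n|$ as contractible simplex-like subspaces, and using the NDR-property of Lemma~\ref{Lndr} to deformation-retract them away onto the locus where $u$ equals its own frame. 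Tracking this bookkeeping precisely through the auxiliary complexity filtration, and matching combinatorial labels on both sides, will be the technical heart of the argument.
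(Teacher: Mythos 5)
Your proposal is correct and follows essentially the same route as the paper: refine the (rough) cardinality quotients by the auxiliary (rough) complexity filtration, identify each summand of $\mathscr{F}_{k+1}/\mathscr{F}_k$ as a smash product of a configuration sphere, the poset quotient $|\GG^H|/|\GG^{<H}|$, and a Thom-type sphere, and match the last factor via Lemmas \ref{LDelta0} and \ref{Lneat_U_n}. The ``technical heart'' you defer is handled in the paper exactly as you sketch: the extra-refinement locus $M_{\tilde H_0}$ is exhibited as the total space of a trivial bundle over the contractible base $N_{P}$ with fiber the Thom-type piece, and a proper homotopy equivalence $M_{\tilde H_0}\to C_{\tilde H_0}'\times(\GG^{H_0}-\GG^{<H_0})$ collapses the refinement directions, after which the three factors are compared separately.
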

To proving this, we need the following lemma.
\begin{lem}\label{Lneighbor_partition}
Let $P$ be a partition with $|P^\circ|=l+1$. Let $[P]$ be the set of geometric partitions whose underlying abstract partition is $P$, and $\bar {[P]}$ the closure of $[P]$ in $\PPP_n$. 
The pair $(\bar{[P]}, \bar{[P]}\cap(\cup_{A\in \partial [P]}N(A)))$ is relatively homotopy equivalent to $(\Delta^{l+1},\partial\Delta^{l+1})$. 
\end{lem}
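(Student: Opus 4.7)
The plan is to identify $(\bar{[P]}, U)$, where $U:=\bar{[P]}\cap\bigcup_{A\in\partial[P]}N(A)$, with a pair of the form $(\Delta^{l+1},V)$ for an open neighborhood $V$ of $\partial\Delta^{l+1}$ deformation retracting onto it; the inclusion of pairs $(\bar{[P]},\partial\bar{[P]})\hookrightarrow(\bar{[P]},U)$ is then a relative homotopy equivalence, and composition with the resulting homeomorphism yields the statement. For coordinates, write $P^\circ=\{\alpha_1<\cdots<\alpha_{l+1}\}$; any $A\in\bar{[P]}$ is determined by the values $t_{\alpha_i}\in[-\infty,\infty]$ (the remaining coordinates are pinned), and with the conventions $s_0:=-1$, $s_{l+2}:=1$ the map $A\mapsto(sc(t_{\alpha_1}),\ldots,sc(t_{\alpha_{l+1}}))$ gives a homeomorphism
\[
\bar{[P]}\ \cong\ \{(s_1,\ldots,s_{l+1})\in[-1,1]^{l+1}: -1\leq s_1\leq\cdots\leq s_{l+1}\leq 1\}.
\]
Passing to gap coordinates $g_i:=s_i-s_{i-1}$ realizes this as the standard simplex $\Delta^{l+1}=\{(g_1,\ldots,g_{l+2})\in\RR_{\geq 0}^{l+2}:\sum_i g_i=2\}$, with $\partial\bar{[P]}=\bar{[P]}-[P]$ mapping to $\partial\Delta^{l+1}=\{\exists i,\,g_i=0\}$.

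Next I would prove that $U$ is an open neighborhood of $\partial\bar{[P]}$ admitting a deformation retraction onto it. Since $A\in N(A)$ trivially, $\partial\bar{[P]}\subset U$. Given $A'\in\bar{[P]}$ sufficiently close to $\partial\bar{[P]}$, some gap $g_i(A')$ is small; coalescing that gap (by identifying the adjacent pieces of $A'$'s partition, or by sending the extremal point to $\pm\infty$ if $i=1$ or $i=l+2$) yields $A\in\partial[P]$, and the defining inequalities of $N(A)$ hold at $A'$ whenever $g_i(A')$ is at most a quantity comparable to $\rho_A$. By continuity of $\rho_?$ together with Lemma~\ref{LN}, the union $U$ therefore contains $\{A'\in\bar{[P]}:\min_i g_i(A')<\eta(A')\}$ for some continuous positive $\eta$ on $\bar{[P]}$, and is in particular an open neighborhood of $\partial\bar{[P]}$. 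A deformation retraction of $U$ onto $\partial\bar{[P]}$ is then a standard polyhedral construction in gap coordinates: pick a continuous $\lambda:U\to[0,1]$ equal to $1$ on $\partial\bar{[P]}$ and supported in a slightly smaller neighborhood, and at time $t$ replace each sufficiently small gap $g_i$ by $(1-t\lambda(A'))g_i$, redistributing the removed mass uniformly over the remaining gaps so that $\sum_i g_i=2$ is preserved. At $t=1$ this sends $U$ into $\partial\bar{[P]}$ and is the identity on $\partial\bar{[P]}$ throughout.

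The main obstacle is uniformity in the second step at higher-codimension corners of $\partial\bar{[P]}$: $\rho_A$ degenerates as $A$ approaches such a corner, so the neighborhood $N(A)$ itself shrinks and cannot by itself cover the nearby points of $\bar{[P]}$. At the corner, however, one has a frame $A^*$ of strictly lower cardinality whose $\rho_{A^*}$ is bounded below, and Lemma~\ref{LN} implies $N(A)\subset N(A^*)$ whenever $A\in N(A^*)$. This nesting is what allows the $N$-neighborhoods coming from different strata of $\partial[P]$ to paste into an honest neighborhood of $\partial\bar{[P]}$; verifying it explicitly requires a short induction on cardinality, parallel to the inductive construction of $\bar\psi_0$ in subsection~\ref{SSpsi}.
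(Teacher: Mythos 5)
Your proposal follows essentially the same route as the paper's own (very terse) proof: identify $\bar{[P]}$ with $\Delta^{l+1}$ by forgetting multiplicities, observe that $\bigcup_{A\in\partial[P]}N(A)$ is a neighborhood of the boundary, and produce the retraction by an induction over the strata of $\partial[P]$ modeled on the construction of $\bar\psi_l$ in subsection \ref{SSpsi}, which is exactly how the paper handles the degeneration of $\rho_A$ near higher-codimension corners. One small caveat: your explicit formula $g_i\mapsto(1-t\lambda(A'))g_i$ with $\lambda=1$ only on $\partial\bar{[P]}$ does not land in $\partial\bar{[P]}$ at $t=1$ away from the boundary (and the $N(A)$ are closed, so $U$ is a closed rather than open neighborhood), but the honest retraction is supplied by the stratum-by-stratum induction you invoke in your final paragraph, so the argument is sound.
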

\begin{proof}

Clearly, $\bar{[P]}$ is homeomorphic to $\Delta^{l+1}$ by forgetting the multiplicity of points. Let $\partial_i[P]\subset \partial [P]$ be the union of open faces of dimension $\geq i$. We can construct a retracting homotopy $\cup_{A\in \partial_i[P]}N(A)\to \partial _i[P]$ inductively on $i$ in a manner similar to the construction of $\bar \psi_l$ in subsection \ref{SSpsi} and extend it to the homotopy inverse of the inclusion $(\Delta^{l+1},\partial \Delta^{l+1})\to (\bar{[P]}, \bar{[P]}\cap(\cup_{A\in \partial [P]}N(A)))$. 
\end{proof}

\begin{proof}[Proof of Proposition \ref{Phomology_U_1}]
We shall show part 1. Part 2 is similar. For $X=\bar\Sigma(P_0)$ (resp. $U_1[P_0]$), we denote by $\{F_l=F_l(X)\}$ the cardinality filtration (resp. the rough cardinality filtration) and by $\{\mathscr{F}_k\}$ the complexity (resp. the rough complexity) filtration on $F_{l+1}(X)/F_l(X)$. Clearly, the restriction of $\psi$ preserves the filtrations.  Let $PC$ be the space of partitional configurations and  $PC_{P_0}\subset PC$ be the subspace of partitional configurations $(A,H)$ such that $|A|={l+1}$, $c(H)=k+1$, and $P_0$ is equal to or a subdivision of the underlying partition of $A$ (see Definition \ref{Dpart_config}).  The space $\mathscr{F}_{k+1}/\mathscr{F}_{k}$ is homotopy equivalent to 
\[
\bigvee_{(A,H)}S^{l+1}\wedge (|\GG^H|/|\GG^{<H}|)\wedge S^{6n-3k-3}.
\]
Here, $(A,H)$ runs through the equivalence classes of configurations belonging to $PC_{P_0}$, and $\GG^H$ and $\GG^{<H}$ are the posets of graphs defined in the proof of Proposition \ref{Lisom_sigma}. For $X=\bar \Sigma$, this was already seen in the proof. We shall show this claim for $X=U_1$. We fix a partitional configuration $\tilde H_0=(A_0,H_0)\in PC_{P_0}$. Let $C_{\tilde H_0}\subset PC_{P_0}$ be the subspace of configurations equivalent to $\tilde H_0$. 
The underlying partitional configuration of $u\in |\GP_n|$ is by definition, the pair of the geometric partition of $u$ and the underlying classed configuration of  $\LL(u)$. 

 Let $\tilde N(A)\subset PC$ be the subset of  configurations whose geometric partition belongs to $N(A)$. Let $P$ be the underlying partition of $A_0$. Set 
\[
\begin{split}
C_{\tilde H_0}' & =C_{\tilde H_0}-\cup_{A\in \partial [P]}\tilde N(A),\\
N_{P} &=\cup_{A'}N(A'),
\end{split}
\]
where $A'$ runs through $[P]-\cup_{A\in \partial [P]}N(A)$  (see the previous lemma). 
 We fix an abstract graph $\bar G_0\in \GG(P)$ which is the underlying graph of an element of $ |\GP_n|$ whose underlying configuration is  $\tilde H_0$.
 Let $\Pi: E\to N_{P}$ be the fiber bundle whose fiber over $A\in N_P$ is  $\nu_{u_A}\cap (\pi_{u_A}^{-1}\Delta_{u_A})$, where $u_A$ is an element such that its underlying geometric partition is $A$ and the image of the abstract graph of $\LL(u_A)$  by $\delta_{PQ}$ is $\bar G_0$, where $Q$ is the underlying abstract partition of $A$. The space $\nu_{u_A}\cap (\pi_{u_A}^{-1}\Delta_{u_A})$ does not depend on choices of $u_A$ and $E$ is well-defined. We easily see that $N_P$ is contractible similarly to Lemma \ref{Lneighbor_partition}, so   the bundle $E$ is trivial. Let $M_{\tilde H_0}\subset |\GP_n|$ be the union of $N(u')$ for $u'$ such that the underlying  configuration of $u'$ belongs  to $C_{\tilde H_0}'$.  Let $M_{\tilde H_0}\to N_{P}$ be the map sending $u$ to its partition. The wedge summand of $\mathscr{F}_{k+1}/\mathscr{F}_{k}(X)$ labeled by $\tilde H_0$ is homeomorphic to the one-point compactification of 
\[
M_{\tilde H_0}\times _{N_{P}}E.
\] 
We can define a proper homotopy equivalence $M_{\tilde H_0} \to C_{\tilde H_0}'\times (\GG^{H_0}-\GG^{<H_0})$ by 
\[
u=\tau_0J_0+\cdots +\tau_mJ_m \quad \longmapsto \quad (H_u, \tau_0\delta_{PQ}(G_0)+\cdots +\tau_m\delta_{PQ}(G_m)),
\] 
where $H_u$ is the underlying partitional configuration of the frame of $u$, $G_i$ is the underlying graph of $J_i$, and $Q$ is the partition of $u$. We have
\[
M_{\tilde H_0}\times _{N_{P}}E\simeq M_{\tilde H_0}\times_{N_{P}}(N_{P}\times (\nu_{u_A}\cap \pi_{u_A}^{-1}(\Delta_{u_A}))\simeq C_{\tilde H_0}'\times  (\GG^{H_0}-\GG^{<{H_0}})\times (\nu_{u_A}\cap \pi_{u_A}^{-1}(\Delta_{u_A})) .
\] This equivalence is proper and we also have equivalences  $(\nu_{u_A}\cap \pi_{u_A}^{-1}(\Delta_{u_A}))^*\simeq S^{6n-3k-3}$ and $(C_{\tilde H_0}')^*\simeq S^{l+1}$ (see Lemma \ref{Lneighbor_partition}). Thus,  we have proved the claimed homotopy equivalence. We also easily see that  the map between $\mathscr{F}_{k+1}/\mathscr{F}_{k}$ induced by $\psi$  is homotopic to the wedge of homotopy equivalences on each of the three factors by Lemmas \ref{Lneat_U_n} and \ref{LDelta0}. Therefore,   it induces an isomorphism between the filtered quotients.
\end{proof}
\section{The spaces  $ U_3,\  U_4,\  U_5,\ U_6$ and $U_7$}\label{SU_3}
\begin{defi}\label{Dcyl} Let $u\in |\GP_n|$. We denote by $A'$ the frame of the underlying geometric partition of $u$. Let $P$ be the underlying partition of $A'$. We  put $s_i=\psi_0(u,i)$ and $p=|A'|$. 
\begin{enumerate}
\item We set
\[
\begin{split}
\mu_0 &=\frac{\min\{s_{i+1}-s_i\mid 1\leq i\leq n-1\}}{8(s_n-s_1)}\ , \\
& \\
\eps(u) &=\mu_0^{(n+5)(p+5)}\min\{\bar\delta_{u,A',i,j}\mid i,j\text{ belongs to the same piece of $P$}\}\ , \\
& \\
\bar \eps(u) &= \mu_0^{(n+5)(3n-p+5)}\delta_{A'}.
\end{split}
\]
See Definition \ref{DU_n} for $\bar \delta_{u,A',i,j}$.

\item Let $\tau \in [0,1]$.
We set $ e_{u,\tau}=(1-\tau) e_{u}+\tau e_P$, where the sum and scalar multiple are taken in the pointwise manner (see Definitions \ref{De_u} and  \ref{Dpunctured}), and
\[
c^v_{\alpha\beta,\tau}=(1-\tau)(|\alpha|_u\cdot |v_\alpha|+|\beta|_u\cdot |v_\beta|)+\tau  c^v_{\alpha\beta},
\]
where 
\[
|\gamma|_u=s_{\max\gamma}-s_{\min\gamma}+\min\{s_{i+1}-s_i\mid 1\leq i\leq n-1\}\quad \text{for}\ \gamma\in P^\circ.
\]
Let $\pi_{u,\tau}:\RR^{6n}\to e_{u,\tau}(\RR^{6p})=\RR^{6p}$ be the orthogonal projection. Let 
$
\nu_{u,\tau}\subset \RR^{6n}
$
be the subset consisting of elements $(y,w)$ satisfying the following conditions.
\begin{itemize}
\item The distance between $(y,w)$ and $e_{u,\tau}(\RR^{6p})$ is $<\eps(u)$.\vspace{2mm}
\item $|y_i|\leq (1-\tau)\bar R_{A'}+\tau ,\ |w_i|\leq 3(1-\tau)+\tau$ for each $1\leq i \leq n$.\vspace{2mm}
\item Putting $( x^\tau, v^\tau)= \pi_{u,\tau}(y,w)$,  for $\alpha\in P^\circ$,
\[
\begin{split}
(1-\tau)(-2T_{A'}-\frac{p}{n})+\tau(-1+c_{\leq \alpha }^{ v}+c_\alpha^{ v}-\eps(u)) & <_1  x_\alpha^\tau +\tau  c_\alpha  v_\alpha^\tau \qquad \text{and} \\
 x_\alpha^\tau -\tau  c_\alpha  v_\alpha ^\tau &  <_1 (1-\tau)(2T_{A'}+\frac{p}{n})+\tau(1- (c_{\geq \alpha}^{ v}+c_\alpha^{ v})+\eps(u)).
\end{split}
\]
\end{itemize}

\item 
We define a subset $\Delta_{u,\tau}\subset \RR^{6p}$ as follows:
\begin{enumerate}
\item If $u$ is degenerate at $\pm\infty$, we set $\Delta_{u,\tau}=\emptyset$.
\item Otherwise,  $\Delta_{u,\tau}$ is  the subset of elements $(x,v)$ such that \vspace{2mm}
\begin{itemize}
\item $|x_\alpha-x_\beta|\leq \max\{c^v_{\alpha\beta,\tau}-\eps(u), \bar\eps(u)\}$ for $(\alpha,\beta)\in u'$, and \vspace{2mm}
\item $|v_\alpha|\leq \bar \eps(u)$ for $(\alpha,\alpha)\in u'$,\vspace{2mm}
\end{itemize} 
where $u'$ is the frame of $u$.
\end{enumerate}

\end{enumerate}
\end{defi}

\begin{defi}\label{DU_3} We use the notations in the previous definition.
\begin{enumerate}
\item We set
\[
U_3=U(\Delta_u, e_u, \nu_{u,0}),
\]
where $\Delta_u$  is defined in Definition \ref{De_u}. 
  We easily see $\nu_{u,0}\subset \nu_u$ so we have the obvious collapsing map $\phi_2:U_2 \to  U_3$.
\item  We set
\[
U_4= U(\Delta_{u,0}, e_u,  \nu_{u,0})
\] 
Clearly, $\Delta_{u,0}\subset \Delta_u$ so we have the map $\phi_3:U_4\to  U_3$ induced by the inclusion. 

\item We define a subset
\[
\begin{split}
 \tilde U_5 & \subset \RR^{6n}\times|\GP_n|\times [0,1] \quad \text{by}\quad \\
(y,w ; u,\tau)& \in \tilde U_5 \iff (y,w ; u)\in \tilde U(\Delta_{u,\tau},e_{u,\tau},\nu_{u,\tau}), 
\end{split}
\]
and set
\[
 U_5=(\tilde U_5)^*/\sim, 
\]
where for $(y,w ; u=k_0J_0+\cdots +k_mJ_m,\tau) \in \tilde U_5$ with $k_0\not =0$, we declare $(y,w;u,\tau)\sim *$ if and  only if $J_0$ has a non-empty edge set (see Definition \ref{Dintermediate_space}).
 We define a map $\phi_4:U_4\to U_5$ to be the inclusion to $\tau=0$.
\item We set
\[
 U_6 = U(\Delta_{u,1},  e_{u,1},  \nu_{u,1}).
\]
 We define a map $\phi_5:U_6\to U_5$ to be the inclusion to $\tau=1$.
\end{enumerate}
\end{defi}
The following lemma is proved completely similarly to Lemma  \ref{Lfunctor_neat}
\begin{lem}\label{Lneat_cyl}
Let $\tau\in [0,1]$ and $u\in |\GP_n|$. Let $u'$ be the frame of $u$. If $u'$ is not degenerate at $\pm\infty$, the triple $(\pi_{u,\tau}^{-1}(\Delta_{u}^0), \pi_{u,\tau}^{-1}(\Delta_{u,\tau}),\bar  \nu_{u,\tau})$ is a $(6n-3c(u'))$-dimensional spherical triple.  Furthermore a retracting homotopy $\pi_{u,\tau}^{-1}(\Delta_{u,\tau})\cap \bar \nu_{u,\tau}\to \pi_{u,\tau}^{-1}(\Delta_{u}^0)\cap \bar\nu_{u,\tau}$ preserving $\partial \nu_{u,\tau}$ can be taken continuously on $\tau$ (see Lemma \ref{LDelta0} for $\Delta_u^0$).\hfill \qedsymbol
\end{lem}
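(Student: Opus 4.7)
The plan is to mimic the proof of Lemma \ref{Lfunctor_neat} in the $\tau$-parametrized setting, tracking continuous dependence on $\tau$ throughout. First I would establish a parametrized analogue of Lemma \ref{Lstarshaped_T}: the set $\Delta_{u,\tau}$ is fiberwise star-shaped over $\Delta_u^0$ with respect to the orthogonal projection $\pi^\vee\colon \Delta_{u,\tau}\to \Delta_u^0$ obtained by equating coordinates labeled by edges and zeroing velocity coordinates labeled by loops. This reduces, for each edge $(\alpha,\beta)\in u'$, to checking that the upper bound $\max\{c^v_{\alpha\beta,\tau}-\eps(u),\bar\eps(u)\}$ stays under the radial contraction $(x,v)\mapsto t(x,v)+(1-t)\pi^\vee(x,v)$, exactly as in the proof of Lemma \ref{Lstarshaped_T}; the convex interpolation in the definition of $c^v_{\alpha\beta,\tau}$ preserves the required monotonicity, and $|v_\alpha|$ (resp.\ $|v_\beta|$) controls the corresponding term under contraction.

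Next I would extend $\pi^\vee$ to a map $\pi_{u,\tau}^{-1}(\Delta_{u,\tau})\to \pi_{u,\tau}^{-1}(\Delta_u^0)$ by setting
\[
\pi^\vee(y,w)\ :=\ e_{u,\tau}\bigl(\pi^\vee(\pi_{u,\tau}(y,w))\bigr)+(y,w)-e_{u,\tau}(\pi_{u,\tau}(y,w)),
\]
so that $(y^t,w^t):=t(y,w)+(1-t)\pi^\vee(y,w)$ lies in $\pi_{u,\tau}^{-1}(\Delta_{u,\tau})$ by the previous step. I would then verify that $e_{u,\tau}(0)\neq (y^t,w^t)$ whenever $(y,w)\in \partial\nu_{u,\tau}\cap \pi_{u,\tau}^{-1}(\Delta_{u,\tau})$: the displacement $e_{u,\tau}\pi^\vee\pi_{u,\tau}(y,w)-e_{u,\tau}\pi_{u,\tau}(y,w)$ is parallel to $e_{u,\tau}(\RR^{6p})$, and its norm is bounded by the extremely small widths $\eps(u),\bar\eps(u)$ (chosen with the $\mu_0^{(n+5)(p+5)}$ safety factor in Definition \ref{Dcyl}), whereas a point on $\partial\nu_{u,\tau}$ is pushed away from $e_{u,\tau}(0)$ by a much larger quantity coming from whichever defining inequality of $\nu_{u,\tau}$ becomes an equality.

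With these preparations, the main step is the radial rescaling: using that $\bar\nu_{u,\tau}$ is regular star-shaped with regular center $e_{u,\tau}(0)$ (the parametrized analogue of Lemma \ref{Lnu_P_star}, which follows directly from the convex form of the defining inequalities), there is a unique $s>0$ with $(y^{st},w^{st}):=(1-s)e_{u,\tau}(0)+s(y^t,w^t)\in \partial\nu_{u,\tau}$. The core calculation is to show $(y^{st},w^{st})\in \pi_{u,\tau}^{-1}(\Delta_{u,\tau})$ by case analysis on which of the three defining bounds of $\nu_{u,\tau}$ is saturated: (i) $|y_i|=(1-\tau)\bar R_{A'}+\tau$ or $|w_i|=3(1-\tau)+\tau$; (ii) one of the first-coordinate inequalities on $x^\tau_\alpha\pm \tau c_\alpha v^\tau_\alpha$; (iii) the distance of $(y,w)$ from $e_{u,\tau}(\RR^{6p})$ equals $\eps(u)$. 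Each case runs parallel to the corresponding case in Lemma \ref{Lfunctor_neat}, replacing $\eps_P,\bar\eps_P$ by $\eps(u),\bar\eps(u)$ and using the $\tau$-blended constants. Case (iii) is ruled out as before because the parallel component of the distance to $e_{u,\tau}(\RR^{6p})$ is invariant under the operation, while in (i) and (ii) the same algebraic estimates as in Lemma \ref{Lfunctor_neat} apply, with the sole novelty being that all coefficients are continuous functions of $\tau$, which is what we need.

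The homotopy on all of $\pi_{u,\tau}^{-1}(\Delta_{u,\tau})\cap \bar\nu_{u,\tau}$ is then obtained by interpolating between the identity in the interior and the boundary retraction through a collar of $\partial\nu_{u,\tau}\cap \pi_{u,\tau}^{-1}(\Delta_{u,\tau})$, exactly as at the end of the proof of Lemma \ref{Lfunctor_neat}. Continuity in $\tau$ is automatic since $e_{u,\tau}$, $\pi_{u,\tau}$, $\nu_{u,\tau}$, $\Delta_{u,\tau}$, and the center $e_{u,\tau}(0)$ all depend continuously on $\tau$, and the retraction is assembled from them by continuous operations (orthogonal projection, affine interpolation, solving for the unique $s$ landing on the regular star-shaped boundary, and a $\tau$-continuous choice of collar). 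The combinatorial data $u'$ and the graph $G$ are locally constant in $\tau$, so no discontinuity can arise from the combinatorial side. I expect the main obstacle to be the bookkeeping in case (ii): the first-coordinate bounds of $\nu_{u,\tau}$ blend a $T_{A'}$-scale term $(1-\tau)(-2T_{A'}-p/n)$ with a width-$\eps(u)$-scale term $\tau(-1+c^v_{\leq\alpha}+c^v_\alpha-\eps(u))$, and one must carefully verify that the estimate $s\geq 1/(1+(1-t)d)$ (and its variants) of Lemma \ref{Lfunctor_neat} still controls $c^{v^{st}}_{\alpha\beta,\tau}-\eps(u)-st(c^v_{\alpha\beta,\tau}-\eps(u))\geq 0$ uniformly in $\tau$, which amounts to observing that the ratios $\eps(u)/c^v_\alpha$ and $\bar\eps(u)/c^v_\alpha$ remain small enough thanks to the choice of exponents in the definitions of $\eps(u)$ and $\bar\eps(u)$.
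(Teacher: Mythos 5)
Your proposal is correct and follows essentially the same route as the paper, which simply states that Lemma \ref{Lneat_cyl} "is proved completely similarly to Lemma \ref{Lfunctor_neat}"; you have written out exactly that adaptation (fiberwise star-shapedness, extension of $\pi^\vee$, radial rescaling onto $\partial\nu_{u,\tau}$, case analysis on the saturated inequality, collar extension, and continuity in $\tau$). No discrepancy with the paper's intended argument.
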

The proof of the following claim  is completely similar to the proof of Proposition \ref{Phomology_U_1} in view of the previous lemma and the NDR-property of the filtration on $U_5$, which is similar to the proof of Lemmas \ref{Ldiagonal_bound} and \ref{Ldiagonal_incl} (rather than Lemma \ref{Lneighbor_incl}). 
\begin{prop}\label{PU_2U_6}
Let $P_0\in \PP_n$. When we consider the rough cardinality filtrations $F_l$ on the spaces $U_2[P_0],\dots, U_6[P_0]$,
the restrictions of the maps $\phi_2,\dots,\phi_5$ given in Definition \ref{DU_3} to the subspaces preserve the filtrations and induce isomorphisms between the homology of  filtered quotients $H_*(F_{l+1}/F_l)$ for each $l$. Here, $U_5[P_0]$ is the subspace of $U_5$ defined by the condition on the underlying partition similar to  $X[P_0]$ in Definition  \ref{DX[P_0]}. \hfill \qedsymbol
\end{prop}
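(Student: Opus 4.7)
The plan is to follow the two-step filtration strategy used in the proof of Proposition \ref{Phomology_U_1}. First I would verify that each of $\phi_2,\phi_3,\phi_4,\phi_5$ preserves the rough cardinality filtration; this is immediate from the definitions, since the rough cardinality of $(y,w;u)$ or $(y,w;u,\tau)$ depends only on $u$ (through the cardinality of its frame), and none of the four maps touches $u$. Next, on each filtered quotient $F_{l+1}/F_l$ I would introduce the rough complexity filtration $\{\mathscr{F}_k\}$ (as in Definition \ref{DX[P_0]}), establish the NDR-property of $(\mathscr{F}_{k+1},\mathscr{F}_k)$ for each of $U_3,U_4,U_5,U_6$, and then identify $\mathscr{F}_{k+1}/\mathscr{F}_k$ as a wedge of the shape
\[
\bigvee_{(A,H)} S^{l+1}\wedge (|\GG^H|/|\GG^{<H}|)\wedge S^{6n-3k-3},
\]
where $(A,H)$ runs through equivalence classes of partitional configurations with $|A|=l+1$, $c(H)=k+1$, whose partition refines $P_0$.

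For the NDR-property on $U_3,U_4,U_6$, the retraction construction is essentially the one in the proof of Lemma \ref{Lndr}, but using that the width parameters $\eps(u)$, $\bar \eps(u)$ and the straightened neighborhood $\nu_{u,0}$, $\nu_{u,1}$ decrease sufficiently when passing to a subdivided frame; this is the analogue of the argument from Lemmas \ref{Ldiagonal_bound} and \ref{Ldiagonal_incl}, and the key inclusions $\nu_{u',\tau}\supset \nu_{u,\tau}$ and $\pi_{u',\tau}^{-1}(\Delta_{u',\tau})\cap \nu_{u,\tau}\subset \pi_{u,\tau}^{-1}(\Delta_{u,\tau})$ for a subdivision $u\to u'$ follow from the elementary comparison $\eps(u)\ll \eps(u')$ and a direct estimate analogous to the proof of Lemma \ref{Ldiagonal_incl}. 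For $U_5$, one runs the same construction, with a parameter $\tau\in[0,1]$, using the fact that all the defining quantities of $\nu_{u,\tau}$ and $\Delta_{u,\tau}$ vary continuously in $\tau$.

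For the identification of $\mathscr{F}_{k+1}/\mathscr{F}_k$ and the homotopy equivalence statement, I would follow the proof of Proposition \ref{Phomology_U_1} verbatim. Fix a partitional configuration $\tilde H_0=(A_0,H_0)$ with underlying partition $P$, and let $N_P=\bigcup_{A'\in [P]-\cup_{A\in\partial [P]}N(A)}N(A')$, which is contractible. Over $N_P$ consider the bundle whose fiber at a point whose associated frame element is $u$ is the compactification of $\pi_{u,\tau}^{-1}(\Delta_{u,\tau})\cap \nu_{u,\tau}$ (with $\tau=0,\,1$, or a parameter over $[0,1]$ in the case of $U_5$); by Lemma \ref{Lneat_cyl} this fiber is a $(6n-3k-3)$-sphere, and the retracting homotopy in that lemma can be chosen continuously in $\tau$. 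Trivializing over the contractible base and compactifying, one recovers the stated wedge decomposition. The compatibility of each $\phi_i$ with these decompositions is then straightforward from Lemma \ref{Lneat_cyl}: each map factors, on the wedge summand indexed by $(A,H)$, as the smash of the identity on the first two factors with a map between sphere fibers that is a homotopy equivalence by the spherical-triple conclusion.

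The main obstacle I expect is the cylinder $U_5$: one must verify the NDR-property of $(\mathscr{F}_{k+1},\mathscr{F}_k)$ in a way compatible with the parameter $\tau$, and confirm that the inclusions at $\tau=0$ and $\tau=1$ are deformation retracts. The continuity in $\tau$ of the retraction from $\pi_{u,\tau}^{-1}(\Delta_{u,\tau})\cap\bar\nu_{u,\tau}$ onto $\pi_{u,\tau}^{-1}(\Delta_u^0)\cap\bar\nu_{u,\tau}$ guaranteed by Lemma \ref{Lneat_cyl} handles the fiberwise spherical structure, but packaging this with the global retractions $\tilde r'$, $\tilde r''$ (analogues of those in the proof of Lemma \ref{Lndr}) across the $\tau$-direction is the step that requires the most care and is where I would spend the bulk of the technical effort.
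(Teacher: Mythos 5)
Your proposal follows exactly the route the paper takes: the paper's proof of this proposition is precisely the observation that the argument of Proposition \ref{Phomology_U_1} carries over verbatim once one has Lemma \ref{Lneat_cyl} and the NDR-property of the filtrations on $U_3,\dots,U_6$, the latter being established as in Lemmas \ref{Ldiagonal_bound} and \ref{Ldiagonal_incl} (rather than Lemma \ref{Lneighbor_incl}), with the $\tau$-continuity of the retractions handling the cylinder $U_5$. Your identification of the two-step filtration, the wedge decomposition of $\mathscr{F}_{k+1}/\mathscr{F}_k$, and the role of the $\tau$-parameter all match the paper's (deliberately terse) argument.
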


The reader would find that $U_6$ and  $U_\TT$ (see Definition \ref{Dintermediate_space}) resemble strongly. The differences are the numbers $\eps$, $\bar\eps$ used to define the `width' of neighborhoods and whether we consider the margin $N(u)$ or not. The folloing $U_7$ adjusts the difference of $\eps$ and $\bar \eps$.

\begin{defi}\label{DU_7}
Let $u\in |\GP_n|$. We use the notations of Definition \ref{Dcyl}. 
\begin{enumerate}

\item We set
\[
\begin{split}
\eps_\tau(u) & =(1-\tau)\eps(u)+\tau\eps_P \\
\bar \eps_\tau(u) & =(1-\tau)\bar \eps(u)+\tau\bar\eps_P
\end{split}
\]
Here,  $P$ is the abstract partition of the frame of the geometric partition of $u$. 
\item We define a subset $\Delta'_{u,\tau}\subset \RR^{6p}$ as follows.
\begin{itemize}
\item $\Delta'_{u,\tau}=\emptyset$ if $u$ is degenerate at $\pm\infty$.
\item Otherwise, it is the set of $(x,v)$ 
such that 
$|x_\alpha-x_\beta|\leq \max\{ c_{\alpha\beta}^v-\eps_\tau(u),\ \bar \eps_\tau(u)\}$  for $ (\alpha,\beta)\in u'$ with $\alpha\not=\beta$, and $ |v_\alpha|\leq \bar\eps_\tau(u)$ for $(\alpha,\alpha)\in u'$, where $u'$ is the frame of $u$.

\end{itemize}
\item
Let $P$ be the abstract partition of the frame of the geometric partition of $u$.
Let $
\nu_{u,\tau}'\subset \RR^{6n}
$ be the subset consisting of 
$(y,w)$ satisfying the following conditions. 
\begin{itemize}
\item The distance between $(y,w)$ and $e_{P}(\RR^{6p})$ is $<\eps_\tau(u)$.
\item $|y_i|<  1,\ |w_i|< 1$ for each $1\leq i \leq n$.
\item Putting $ \pi_{P}(y,w)=( x, v)$, for any $\alpha\in P^\circ$,
\[
-1+ c_{\leq \alpha}^{ v}+c^v_{\alpha}-\eps_\tau(u)<_1 x_\alpha+c_\alpha v_\alpha, \qquad x_\alpha-c_\alpha v_\alpha<_1 1- (c_{\geq \alpha}^{ v}+c^v_\alpha)+\eps_\tau(u).
\]
\end{itemize}

\item We define a subset
\[
\begin{split}
 \tilde U_7 & \subset \RR^{6n}\times|\GP_n|\times [0,1] \quad \text{by}\quad \\
(y,w;u,\tau)& \in \tilde U_7 \iff (y,w; u)\in \tilde U(\Delta'_{u,\tau},e_{u}',\nu_{u,\tau}' ),
\end{split}
\]
where $e_u'=e_P$ with $P$ being the partition of the frame of $u$.
We set
\[
U_7=(\tilde U_7)^*/\sim, 
\]
where for $(y,w,u=k_0J_0+\cdots +k_mJ_m,\tau)$ with $k_0\not=0$, we declare that $(y,w,u,\tau)\sim *$ $\iff$ $J_0$ has a non-empty edge set.
\item We let $\phi_6:U_6\to U_7$ be the inclusion to $\tau=0$. We define a map  $\phi_7:U_{\TT}\to U_7$ by 
\[
\phi_7(x,v;u)=i_1(\delta'_{PQ}(x,v) ; u),
\]
where $Q$ and $P$ are the underlying abstract partitions of $u$ and of the frame of the geometric partition of $u$, respectively, $\delta'_{PQ}$ is the map given in Lemma \ref{Ldiagonal_bound}, and $i_1$ is the inclusion to $\tau=1$. 

\end{enumerate}

\end{defi}
Since  $\cup_{|A|\leq l}N(A)$ is closed in $\PPP_{n,l}$ which is open in $\PPP_n$, the map $\phi_7$ is continuous. The following claim is proved completely similarly to Proposition \ref{Phomology_U_1}.
\begin{prop}\label{Pphi_6}
Let  $P_0\in \PP_n$.  When we consider the rough cardinality filtration on $U_6[P_0]$ and $U_7[P_0]$ and the cardinality filtration on $U_\TT(P_0)$, the following maps given by the restrictions of $\phi_6$, $\phi_7$  preserve the filtrations and induce isomorphisms between homology of the filtered quotients:
\[
U_6[P_0]\stackrel{\phi_6}{\longrightarrow}U_7[P_0]\stackrel{\phi_7}{\longleftarrow} U_\TT (P_0).
\]
See the paragraph after Proposition \ref{LThomSinha} for $U_\TT(P_0)$.
\end{prop}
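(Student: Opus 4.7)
Plan: The proof follows the template of Proposition \ref{Phomology_U_1}, adapted to cylinder-shaped spaces. First, both maps preserve the respective filtrations: $\phi_6$ inserts at $\tau=0$ and $\phi_7$ applies $\delta'_{PQ}$ and inserts at $\tau=1$, and in both cases the underlying element $u\in |\GP_n|$ (and hence the frame and its cardinality) is unchanged, so the rough cardinality filtrations match up with the cardinality filtration on $U_{\TT}(P_0)$. I then introduce on each $F_{l+1}/F_l$ the rough complexity subfiltration $\{\mathscr{F}_k\}$ as in the proof of Proposition \ref{Phomology_U_1}, and check the NDR property for both $(F_{l+1},F_l)$ and $(\mathscr{F}_{k+1},\mathscr{F}_k)$ on $U_7[P_0]$ following Lemma \ref{Lndr}: the only new point is that the inequalities defining $\nu'_{u,\tau}$ vary continuously and remain strict under the required small perturbations because $\eps_\tau(u),\bar\eps_\tau(u)$ are positive convex combinations.

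Next I identify each filtered quotient $\mathscr{F}_{k+1}/\mathscr{F}_k$ as a wedge, indexed by equivalence classes of partitional configurations $(A,H)$ with $P_0$ coarser than the partition of $A$, $|A|=l+1$, $c(H)=k+1$. Exactly as in the proof of Proposition \ref{Phomology_U_1}, the wedge summand in $U_6[P_0]$ is homotopy equivalent to
\[
S^{l+1}\wedge (|\GG^H|/|\GG^{<H}|)\wedge \bigl(\pi_{u_A,0}^{-1}(\Delta_{u_A,0})\cap \bar\nu_{u_A,0}\bigr)/\bigl(\pi_{u_A,0}^{-1}(\Delta_{u_A,0})\cap\partial \nu_{u_A,0}\bigr),
\]
and the summand in $U_\TT(P_0)$ admits the corresponding description with $\pi_P^{-1}(D_G)\cap \bar\nu_P$ via Proposition \ref{LThomSinha}. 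The summand in $U_7[P_0]$ is the total space of the $\tau$-parameterized family of these fibers, together with a sphere and simplicial factor; the essential point is that by the cylinder version of Lemma \ref{Lneat_cyl} for the triple $(\pi'^{-1}_{u}(\Delta_u^0),\pi'^{-1}_u(\Delta'_{u,\tau}),\bar\nu'_{u,\tau})$, every $\tau$-fiber is a $(6n-3c(u'))$-dimensional sphere and the retracting homotopies to $\Delta_u^0$ can be chosen continuously in $\tau$.

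It follows that both inclusions at $\tau=0$ and $\tau=1$ into the cylinder $U_7[P_0]$ are homotopy equivalences on each wedge summand. For $\phi_6$, this directly gives an equivalence on $\mathscr{F}_{k+1}/\mathscr{F}_k$. For $\phi_7$, the $\tau=1$ fiber is by definition modeled on $e_P,\eps_P,\bar\eps_P$; the map $\delta'_{PQ}$ of Lemma \ref{Ldiagonal_bound} (combined with Lemma \ref{Ldiagonal_incl}) sends the $U_\TT(P_0)$-fiber for a graph $G$ over a subdivision $Q$ of $P$ into the $U_7[P_0]$-fiber over the frame $P$, and on the underlying ``frame" region this map is a homotopy equivalence of spherical triples. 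Extending along the simplicial and sphere factors and summing over wedge summands, $\phi_7$ induces an isomorphism on $\mathscr{F}_{k+1}/\mathscr{F}_k$, and by the already established NDR-property this lifts to an isomorphism on $F_{l+1}/F_l$.

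The main obstacle will be verifying the NDR and collapse statements in the cylinder, especially checking that the geometric partitions outside $\cup_{A}N(A)$ are carried to the basepoint in a continuous manner at $\tau=1$ (so that $\phi_7$ is well defined and preserves the stratification by frames). Once this is in place, the rest is a mechanical adaptation of the proof of Proposition \ref{Phomology_U_1}.
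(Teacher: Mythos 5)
Your proposal is correct and follows essentially the same route as the paper: the paper's own proof consists of noting the continuity of $\phi_7$ (from $\cup_{|A|\leq l}N(A)$ being closed in the open subset $\PPP_{n,l}$) and then invoking the argument of Proposition \ref{Phomology_U_1} verbatim, which is precisely the adaptation you carry out — filtration preservation, the NDR check, the wedge decomposition of $\mathscr{F}_{k+1}/\mathscr{F}_k$ by partitional configurations, the $\tau$-parametrized spherical triples, and the comparison of the $\tau=1$ slice with $U_\TT(P_0)$ via $\delta'_{PQ}$ and Lemmas \ref{Ldiagonal_bound}, \ref{Ldiagonal_incl}.
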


\section{Proof of the results in Introduction and miscellaneous topics}\label{Sproof}
 Putting Propositions \ref{Lisom_sigma}, \ref{LThomSinha}, \ref{Phomology_U_1}, \ref{PU_2U_6} and \ref{Pphi_6} into together, we obtain the following theorem. 
\begin{thm}\label{TunstableVS}
Let $l_0$ be an integer with $l_0<\frac{3n}{5}$. The homology spectral sequence associated to $\Sigma_{\leq l_0}=F_{l_0}(\Sigma)$ with the cardinality filtration is isomorphic to the $l_0$-truncated Sinha sequence from the $E_2$-page under the degree shift $(l,d)\leftrightarrow (-l, 6n-d)$. \hfill \qedsymbol
\end{thm}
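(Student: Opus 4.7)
The plan is to assemble the results of Propositions \ref{Lisom_sigma}, \ref{LThomSinha}, \ref{Phomology_U_1}, \ref{PU_2U_6}, and \ref{Pphi_6}, which were set up precisely for this purpose. First I would fix a partition $P_0 \in \PP_n$ with $|P_0^\circ| = l_0$; such a $P_0$ exists because $l_0 < 3n/5 \leq n$. By Proposition \ref{Lisom_sigma}, the map $\phi_0 : \bar \Sigma(P_0) \to \Sigma_{\leq l_0}$ induces an isomorphism between the $E_2$-pages of the spectral sequences associated to $(\bar C_*(\bar \Sigma(P_0)), \{\bar C_*(F_l)\})$ and $(\bar C_*(\Sigma_{\leq l_0}), \{\bar C_*(F_l)\})$ with the cardinality filtrations.

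Next I would chain together the zigzag of maps restricted to $[P_0]$ (or $(P_0)$) constructed across sections \ref{SU_1}--\ref{SU_3}:
\[
\bar \Sigma(P_0) \stackrel{\psi}{\to} U_1[P_0] \stackrel{\phi_1}{\leftarrow} U_2[P_0] \stackrel{\phi_2}{\to} U_3[P_0] \stackrel{\phi_3}{\leftarrow} U_4[P_0] \stackrel{\phi_4}{\to} U_5[P_0] \stackrel{\phi_5}{\leftarrow} U_6[P_0] \stackrel{\phi_6}{\to} U_7[P_0] \stackrel{\phi_7}{\leftarrow} U_\TT(P_0).
\]
By Propositions \ref{Phomology_U_1}, \ref{PU_2U_6}, and \ref{Pphi_6}, each of these nine maps preserves the appropriate cardinality or rough cardinality filtration and induces an isomorphism on the homology of filtered quotients $H_*(F_{l+1}/F_l)$ for every $l$. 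Over the field $\kk$, this amounts to an isomorphism on $E_1$-pages and therefore on $E_r$-pages for all $r \geq 1$, in particular for $r = 2$.

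Finally, I would invoke the $P_0$-truncated version of Proposition \ref{LThomSinha} indicated in the paragraph immediately following that proposition, applied to the restriction of $\mathcal{F}$ to the over-category $\PP_n/P_0 \cong \PP_{l_0}$: the spectral sequence of $(\bar C_*(U_\TT(P_0)), \{\bar C_*(F_l)\})$ is isomorphic from the $E_2$-page onward to the $l_0$-truncated Sinha spectral sequence under the shift $(-p, 6n-q) \leftrightarrow (p, q)$. Composing this identification with the chain of $E_2$-isomorphisms assembled above yields the desired isomorphism between the cardinality-filtration sequence for $\Sigma_{\leq l_0}$ and the $l_0$-truncated Sinha sequence, with the composite degree shift $(l, d) \leftrightarrow (-l, 6n-d)$ exactly as claimed.

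The theorem is essentially formal given the earlier propositions, so I do not anticipate a genuine obstacle; the main thing to verify carefully is simply bookkeeping—that the restriction to $[P_0]$ or $(P_0)$ is consistent across the zigzag, that the various cardinality and rough cardinality filtrations match along each map (which is the content of the NDR analysis in Lemma \ref{Lndr} and the filtration-preserving claims in the cited propositions), and that the composite of the degree shifts occurring at either end of the zigzag reduces to $(l,d) \leftrightarrow (-l, 6n-d)$ rather than some reindexed variant.
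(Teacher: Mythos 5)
Your proposal is correct and follows exactly the paper's argument: the theorem is stated with a \qedsymbol precisely because it is the formal assembly of Propositions \ref{Lisom_sigma}, \ref{LThomSinha}, \ref{Phomology_U_1}, \ref{PU_2U_6} and \ref{Pphi_6} along the zigzag $\bar\Sigma(P_0)\to U_1[P_0]\leftarrow\cdots\leftarrow U_\TT(P_0)$, with the isomorphisms on homology of filtered quotients upgrading to isomorphisms of spectral sequences from $E_1$ (hence $E_2$) onward. Your bookkeeping of the restriction to $P_0$, the role of the NDR property, and the composite degree shift matches the intended proof.
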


\begin{proof}[Proof of Theorem \ref{TE_infinity}]
We give a detailed proof since the Vassiliev sequence does not directly come from a filtered complex. Let $\FFF_k$ and $F_l$ be the complexity and cardinality filtrations on $\Sigma$, respectively. 
For a subspace $X$ of $\Sigma$, and filtration $F'_i=\FFF_k\ or\ F_l$ on $\Sigma$, we denote by $ ^{F'}E_r(X)$ the spectral sequence associated to $(\bar C_*(X), \{\bar C_*(X\cap F'_i)\})$.

Note that the homogeneous elements of the $E_1$-page of the Vassiliev sequence and the $E_2$-page of the Sinha sequence are equipped with a cardinality $l$ and complexity $k$. The corresponding bidegrees are $(-k,-l+3k)$ and $(-l, 2k)$, respectively, so   these labels are  inherited to the homogeneous elements of $E_\infty$-pages. Let 
\[
^VE_\infty(k,l)\subset {}^VE_\infty,\qquad {}^SE_\infty(k,l)\subset {}^SE_\infty
\] be the vector subspaces generated by elements labeled with cardinality $\leq l$ and complexity $\leq k$. We consider the similar subspaces $^\mathscr{F}E_\infty(\mathscr{F}_{k_1})(k,l)$,  $^FE_\infty({F}_{l_1})(k,l)$ of $^\mathscr{F}E_\infty(\mathscr{F}_{k_1})$ and $^FE_\infty({F}_{l_1})$ for integers $k_1,\ l_1$. These are finite dimensional. If we take $k_1$, $l_1$ and $n$, the integer used to define the space $\Gamma_n$ of polynomial maps, so that $k,l\ll k_1, l_1\ll n$, we have 
\[
\dim {}^\mathscr{F}E_\infty(\mathscr{F}_{k_1})(k,l)=\dim {}^VE_\infty(k,l),\qquad \dim {}^FE_\infty({F}_{l_1})(k,l) =\dim {}^SE_\infty(k,l).
\]
The first equality follows from the definition of  Vassiliev sequence and the second from Theorem \ref{TunstableVS}.
Set  $X_{kl}=\mathscr{F}_k\cap F_l\subset \Sigma$. The space ${}^\mathscr{F}E_\infty(\mathscr{F}_{k_1})(k,l)$ is the associated graded of the image of the map $\bar H_*(X_{kl})\to \bar H_*(\mathscr{F}_{k_1})$ induced by the inclusion, and  the space ${}^FE_\infty(F_{l_1})(k,l)$ is that of the image of the map $\bar H_*(X_{kl})\to \bar H_*(F_{l_1})$. If we take $k_1, l_1$ so that $2k_1<l_1$ (in addition to the above inequalities), we have $\dim {}^\mathscr{F}E_\infty(\mathscr{F}_{k_1})(k,l)\leq \dim {}^FE_\infty({F}_{l_1})(k,l)$ as $\FFF_{k_1}\subset F_{l_1}$, so $\dim {}^VE_\infty(k,l)\leq \dim {}^SE_\infty(k,l)$. If $l_1<2k_1$, we have the opposite inequality. Thus,  we have  $\dim {}^VE_\infty(k,l)= \dim {}^SE_\infty(k,l)$.
\end{proof}

\begin{proof}[Proof of Theorem \ref{Cdegree0}]
The diagonal part of the Vassiliev or  Sinha sequence is a submodule of a free $\kk$-module, which is also free since $\kk$ is a principal ideal domain. The rest of the proof is similar to  Theorem \ref{TE_infinity}.
\end{proof}
\begin{rem}
Kontsevich's configuration space integral which was used in the proof of  degeneracy of the diagonal part has been generalized extensively to give interesting construction of non-trivial cocycles of embedding spaces beyond knot spaces, see e.g. Cattaneo--Cotta-Ramusino--Longoni \cite{CCL}, Cattaneo-Rossi \cite{CR}, and Sakai-Watanabe \cite{SW}.
\end{rem}
Recall the space of long knots modulo immersions $\bar K_3$ from Introduction.  
\begin{thm}[Boavida de Brito-Horel\cite{BH}]
Let $p$ be a prime number and $\kk=\FF_p$ or $\ZZ_{(p)}$. The only possibly non-trivial differentials in the cohomological  Sinha spectral sequence $\bar E_r$ for $\bar K_3$ are $d_{1+2k(p-1)}$ for $k\geq 0$.
\end{thm}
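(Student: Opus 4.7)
The approach, following Boavida de Brito--Horel \cite{BH,BH1}, is to use the action of the ``cyclotomic'' subgroup $\ZZ_p^\times$ of the pro-$p$ Grothendieck--Teichm\"uller group on the $p$-completed little $3$-disks operad, which extends to an action on Sinha's cosimplicial model for $\bar K_3$ and therefore on every page of the spectral sequence $\bar E_r$. A weight argument under this action will then restrict which pages can carry non-trivial differentials.

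First I would transfer the $\ZZ_p^\times$-action to the spectral sequence. The $p$-completed little $3$-disks operad $(E_3)^\wedge_p$ carries a natural action of $\ZZ_p^\times$ coming from the cyclotomic character part of the pro-$p$ Grothendieck--Teichm\"uller group, as constructed in \cite{BH1}. Since Sinha's cosimplicial model for $\bar K_3$ is built functorially from $E_3$, this action extends to the $p$-completed cosimplicial model and hence induces a $\ZZ_p^\times$-action on every page of the Bousfield--Kan spectral sequence. Second, I would identify the weights on the $E_2$-page: by (relative) formality, $\bar E_2$ is given by a graph-cohomology complex generated by Arnold-type classes $g_{ij}$ of cohomological degree $d-1=2$, and the cyclotomic character $\lambda\in \ZZ_p^\times$ acts on each $g_{ij}$ by multiplication by $\lambda$. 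Hence a class in $\bar E_2^{-p,q}$, being represented by a sum of products of $q/2$ such generators, has weight $q/2$, and this weight grading passes to every subsequent page.

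Third, the constraint on $r$ would then follow directly: the differential $d_r:\bar E_r^{-p,q}\to \bar E_r^{-p-r,q-r+1}$ shifts the weight by $\frac{q-r+1}{2}-\frac{q}{2}=-\frac{r-1}{2}$. Over $\FF_p$ or $\ZZ_{(p)}$ the $\ZZ_p^\times$-action factors through the cyclic group $\FF_p^\times$ of order $p-1$, so $\ZZ_p^\times$-equivariance of $d_r$ forces $\frac{r-1}{2}\equiv 0\pmod{p-1}$, which is precisely $r=1+2k(p-1)$ for some $k\geq 0$.

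The hard part will be the first step: building the $\ZZ_p^\times$-action on Sinha's cosimplicial model compatibly with its cosimplicial structure, and verifying that the induced action on $E_2$ is exactly the weight character described above. This is the content of \cite{BH,BH1}, which uses Fresse--Willwacher-type formality results for the little disks operads together with the construction of the Grothendieck--Teichm\"uller action on $p$-completed operads; given that input, the weight computation and the resulting vanishing of $d_r$ for $r\neq 1+2k(p-1)$ are essentially formal.
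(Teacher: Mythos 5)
Your argument is essentially the same as the paper's: both deduce the claim from the Boavida de Brito--Horel $\ZZ_p^\times$-action on the $p$-completed Sinha cosimplicial model, the fact that the induced action on $H^{2k}$ of the configuration spaces is cyclotomic (so that $\lambda$ acts by $\lambda^{k}$ in your normalization), and equivariance of the differentials, which forces $(r-1)/2\equiv 0 \pmod{p-1}$. The only cosmetic difference is that you route the weight identification through formality and the graph-complex description of $E_2$, whereas the paper places the cyclotomic weight directly on the cohomology of the cosimplicial terms; both give the same constraint.
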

\begin{proof}
This is essentially proved in \cite{BH}. We give an outline of proof for $\kk=\FF_p$. In \cite{BH}, a $p$-completed version of Sinha cosimplicial space $L_pK_3^\bullet$ for the space of long knots modulo immersions is constructed. It admits a map from the original cosimplicial space $K_3^\bullet$ (for knots modulo immersions) in the homotopy category of cosimplicial spaces such that the  map $K_3^l\to L_pK_3^l$ at each cosimplicial degree $l$ is the $p$-completion. Therefore,  with $\FF_p$ coefficients,  Bousfield-Kan cohomology spectral sequences for $L_pK_3^\bullet$ and $K_3^\bullet$ are isomorphic. The space $K_3^l$ is homotopy equivalent to the ordered configuration space of $l$ points in $\RR^3$ so its cohomology group is free. An action of a version of Grothendieck-Teichm\"uller group $GT_p$ on $L_pK_3^\bullet$ is constructed in \cite{BH1, BH} and the action induces an action  on $\bar E_r$. In \cite{BH}, the action on $H^k(L_pK_3^l,\ZZ_p)$ is proved to be cyclotomic of weight $2k$, i.e. it factors through the action of $(\ZZ_p)^\times$ by multiplication of the $2k$-th power, where $\ZZ_p$ is the $p$-adic integers. Since the natural isomorphism $H^k(L_pK_3^l,\ZZ_p)\otimes \FF_p\to H^k(L_pK_3^l,\FF_p)$ is compatible with the $GT_p$-action, the action on $H^k(L_pK_3^l,\FF_p)$ is also cyclotomic of weight $2k$ by Proposition 2.1 of \cite{BH}. This implies the action on $\bar E_r^{*,2k}$ is also cyclotomic of weight $2k$. Since maps between modules with cyclotomic actions of different weights must be zero, this implies the claim.
\end{proof}

\begin{proof}[Proof of Corollary \ref{Cinvariants}]
 The natural inclusion from the cosimplicial model of $\bar K_3$ to that of $K_3$  induces a map between spectral sequences ${}^SE_r\to \bar E_r$. In \cite{turchin}, this map is proved to be a monomorphism at the $E_2$-page. 
The part of total degree 1 and complexity $\leq 2$ is zero. The part of complexity $k$ is $^SE_2^{*,2k}$ and the differential $d_{1+2(p-1)}$ decreases the degree on the right-hand side by $2(p-1)$. Therefore,   a sufficient condition for $E^{-2k,2k}_2$ to be stationary after $E_2$ is $2k-2(p-1)\leq 4$. This implies $k\leq p+1$.
\end{proof}
We shall compare Corollary \ref{Cinvariants}  with a result of Boavida de Brito-Horel.
For $f,g\in \pi_0(K_3)$, write $f\sim_n g$ if the values of any type $n$ invariants at $f$ and $g$ coincide. The relation $\sim_n$ is an equivalence relation, and the set 
$\KKK_n=\pi_0(K_3)/\sim_n$ is a finitely generated abelian group under the connected sum (see \cite{gusarov}). A monoid map from $\pi_0(K_3)$ to an abelian group $A$ which factors through $\KKK_n$ is called an {\em  additive (finite) type $n$ invariant}. Boavida de Brito-Horel \cite{BH} proved that $\KKK_n\otimes \ZZ_{(p)}$ is isomorphic to the space of indecomposable Feynman diagrams $\oplus_{s\leq n}\mathcal{A}^I_s$, a combinatorial object similar to the weight system  if $n\leq p+1$. While the additive finite type invariants are included in the (not-neccesarily additive) finite type invariants, the corollary does not imply the result of Boavida de Brito-Horel (and vice versa). One reason for this is that $\KKK_n$ may have higher order torsions. (Any torsion of $\KKK_n$ is not known at present.) The other is that the relation between the additive  and non-additive invariants is not enough clear for coefficients other  than $\QQ$.  \\
\subsection{A conjecture on naturality of the isomorphism}
 Recall the subspace $U_{\TT}(P_0)\subset U_{\TT}$ from the paragraph after Proposition \ref{LThomSinha}. Since the total complex of the normalization of a functor $\Delta_n^{op}\to \CH_\kk$ is naturally quasi-isomorphic to the homotopy colimit of the functor, using the natural transformations given in section \ref{Stranslation} we have a zigzag of quasi-isomorphisms 
$\bar C_*(U_\TT)\simeq \mathrm{hocolim}_{\PP_n} C^*(\PK)$. Composing this with the canonical maps $\mathrm{hocolim}_{\PP_n} C^*(\PK)\to C^*(\mathrm{holim}_{\PP_n}\PK)\to C^*(K_3)$ and the map $C_*(U_\TT(P_0))\to C_*(U_\TT)$ induced by the inclusion, we have a map 
$\iota_0:\bar H_*(U_\TT(P_0))\to H^*(K_3)$. Composing the duality map $\bar H_*(\Sigma)\cong H_*( \Gamma_n^*, Sing^*)\cong H^*(\Gamma_n-Sing)$ and the map $H^*(K_3)\to H^*(\Gamma_n-Sing)$ induced by the inclusion, we obtain a map $\iota_1:H^*(K_3)\to \bar H_*(\Sigma)$ (see section \ref{Spreliminary}). The maps $\psi,\phi_1,\dots, \phi_7$ given in previous sections induce an isomorphism $\bar H_*(U_{\TT}(P_0))\cong \bar H_*(\bar \Sigma(P_0))$ if $|P_0^\circ|\leq 3n/5$. Composing 
\begin{spacing}{1.2}
\noindent
 this with the map induced by $\phi_0:\bar \Sigma\to \Sigma$, we have a map $\tilde \phi :\bar H_*(U_\TT(P_0))\to \bar H_*(\Sigma)$.
\end{spacing}
\vspace{-0.5\baselineskip}
\begin{conj}\label{Conj_compati}
Let $P_0$ be a partition with $|P_0^\circ|\leq 3n/5$. Under the above notations, the following diagram would be commutative
\[
\xymatrix{ H^*(K_3)\ar[rd]^{\iota_1} & \\
\bar H_*(U_{\TT}(P_0))\ar[u]^{\iota_0}\ar[r]^{\tilde \phi} & \bar H_*(\Sigma)\ .}
\]
\end{conj}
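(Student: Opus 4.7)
The plan is to promote the diagram from ordinary (co)homology to a homotopy-commutative diagram of chain complexes (or of spaces), where both sides factor through a common intermediate object. Both $\iota_1\circ\iota_0$ and $\tilde\phi$ are constructed from essentially the same geometric data (polynomial knots in $\Gamma_n$, configuration-type data on $[0,1]$), but processed through different dualities: Atiyah duality on the Thom-space side and Alexander duality on the Vassiliev side. The strategy is to identify these dualities with a single pairing.

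First I would try to promote the map $\psi:\bar\Sigma\to U_1$ to a natural ``evaluation/pairing'' map at the level of all of $\Gamma_n$. Concretely, I would extend the recipe $(f,u)\mapsto(\psi_*(f,u),u)$ to a map $\Psi:\Gamma_n^*\wedge|\GP_n|_+\to U_7'$, where $U_7'$ is a slight variant of $U_7$ that does not yet collapse the complement of the regular neighborhoods. This $\Psi$ should restrict on $\Sigma\subset\Gamma_n^*\wedge|\GP_n|_+$ to (a chain representative of) $\tilde\phi$ after collapsing, and should induce on the other factor the natural map $\Gamma_n^*\to\mathcal{C}^\dagger(P)_{6n}$ from Definition~\ref{Dspectrafunctor}, i.e.\ a chain representative of Atiyah duality. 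Commutativity of the conjectured diagram would then follow once I show two things: (i) the Atiyah-duality equivalence $\Phi:\TT^S\to\mathcal{C}^\dagger$ combined with the natural transformation $\mathcal{C}\to\PK$ realizes $\iota_0$ at the chain level, and (ii) the Alexander-duality isomorphism $\bar H_*(\Sigma)\cong H^*(\Gamma_n-Sing)$ used to define $\iota_1$ is compatible with this pairing.

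For (i), I would exploit the zigzag $\bar C_*(\TT^c)\simeq\mathcal{C}^\vee\to\mathcal{C}^\dagger\simeq \mathcal{C}^\vee$ of Theorem~\ref{TAtiyahdual}, the natural homology equivalence $\mathcal{C}\to\PK$ of Lemma~\ref{Lnathomologyiso}, and the cofinality Lemma~\ref{Lcofinal}, to express $\iota_0$ in terms of evaluation at configurations. For (ii), the point is that Alexander duality in $\tilde\Gamma_N$ pairs a relative cycle on $(\Gamma_n^*,Sing^*)$ with a cocycle on $\Gamma_n-Sing$ by intersection; the resolvent $\Sigma$ is designed so that this intersection pairing is literally the evaluation $f\mapsto f(\psi_0(u,-))$ carried out by $\psi$. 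Thus both $\iota_1\circ\iota_0$ and $\tilde\phi$ should reduce to the single ``evaluate a polynomial at the geometric points of $u$'' operation.

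The main obstacle is that $\iota_0$ passes through $C^*(\holim_{\PP_n}\PK)$ and then through the canonical map $K_3\to\holim_{\PP_n}\PK$, whose homotopy-theoretic properties are poorly understood in dimension three; the author cannot appeal to a convergence statement for the Goodwillie–Weiss tower. Concretely, the hard part will be finding a chain-level lift of $\iota_0$ that does not require the tower to converge, e.g.\ by working with $\Gamma_n$-restricted versions of both the Thom-space model and the punctured-knot model and showing that the analogous $\Gamma_n$-restricted diagram commutes, then taking an appropriate (homotopy) limit in $n$. A secondary subtlety is tracking signs and orientations through the many zigzag stages of sections~\ref{Stranslation}–\ref{SU_3}; this is tedious but should be routine given the very concrete nature of the affine maps $e_u$ and $\tilde e_{u,u'}$ and of the regular-neighborhood constructions $\nu_u,\ \nu_{u,\tau}$. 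If (i) and (ii) go through, compatibility with the filtrations (cf.\ Remark~\ref{Rfiltration}) would be automatic, and would give the stronger statement that the conjectured equality of maps respects the spectral-sequence structures, which in turn would imply the ``$\pi_0$ inverse limit'' consequence mentioned in the introduction.
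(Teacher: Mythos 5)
The statement you are addressing is not proved in the paper: it is stated explicitly as Conjecture~\ref{Conj_compati}, i.e.\ an open problem, and the author only remarks that the construction of $\tilde\phi$ being ``mostly space-level'' makes such compatibility plausible. So there is no proof in the paper to compare yours against, and the standard your proposal must meet is that of an actual proof of an open statement.

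Measured against that standard, what you have written is a research program, not a proof, and you say as much yourself (``the main obstacle is\ldots'', ``the hard part will be\ldots''). The two steps you label (i) and (ii) are precisely where the content lies, and neither is carried out. For (ii), the claim that the Alexander duality pairing on $(\Gamma_n^*, Sing^*)$ ``is literally the evaluation $f\mapsto f(\psi_0(u,-))$'' is an assertion, not an argument: the duality isomorphism $\bar H_*(\Sigma)\cong H^*(\Gamma_n-Sing)$ is mediated by linking/intersection in the $6n$-dimensional affine space $\tilde\Gamma_N$, whereas $\psi$ evaluates $f$ and $f'$ at the slid geometric points $\bar\psi_0(A)$ and lands in a Thom space of diagonals in $\RR^{6n}$; identifying these two pairings compatibly with all the collapsing maps $\phi_1,\dots,\phi_7$ and the frame/neighborhood bookkeeping of sections~\ref{SU_1}--\ref{SU_3} is exactly the unproven content of the conjecture. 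For (i), the composite $\iota_0$ factors through $\mathrm{hocolim}_{\PP_n}C^*(\PK)\to C^*(\mathrm{holim}_{\PP_n}\PK)\to C^*(K_3)$, and your proposed fix --- replacing everything by ``$\Gamma_n$-restricted versions'' and taking a limit in $n$ --- is not defined: there is no map from $K_3$ (or from $\Gamma_n-Sing$) into the punctured-knot tower that is known to interact correctly with the evaluation map $\psi$, and inventing one is again the substance of the conjecture rather than a reduction of it. Your geometric intuition (both composites should reduce to evaluating polynomial knots at configurations) matches the motivation the paper itself gives for posing the conjecture, but no step of the proposal closes the gap between that intuition and a commutative diagram in homology.
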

This conjecture is related to a conjecture by Budney-Conant-Koytcheff-Sinha which claims  equivalence between the embedding calculus invariants and additive finite type invariants. Precisely speaking, the canonical map $\pi_0(K_3)\to \pi_0(T_{n+1}K_3)$ is known to factor through $\KKK_n$ and the induced map $\alpha_n:\KKK_n\to \pi_0(T_{n+1}K_3)$ forms a map between the two towers  $\{\KKK_n\}_n$ and $\{\pi_0(T_{n+1}K_3)\}_n$. 
Budney-Conant-Koytcheff-Sinha \cite{BCKS} conjectured that {\em the map $\alpha_n:\KKK_n\to \pi_0(T_{n+1}K_3)$ would be bijective for any $n$}. This conjecture has been attracting much attention recently. Kosanovi\'c \cite{kosanovic} proved that the map is surjective and Boavida de Brito-Horel \cite{BH} proved  that the map is bijective after tensored with $\ZZ_{(p)}$ if $n\leq p+1$. If Conjecture \ref{Conj_compati} is true, it follows that the map induces a bijection between the {\em inverse limit} of the towers {\em without any Grothendieck-Teichm\"uller action or operad formality}. This is because we can use the conjecture to take a map $\beta_n:\pi_0(T_{2n}K)\to \KKK_{n}$ such that the following diagram is commutative:
\[
\xymatrix{\KKK_{2n}\ar[r] \ar[d]& \pi_0(T_{2n}(K_3)) \ar[ld]^{\beta_n}\ar[d]\\
\KKK_n\ar[r] & \pi_0(T_n(K_3)), }
\]
where the vertical maps are the structure maps of the towers and the horizontal maps are $\alpha_{2n}$ and $\alpha_n$ followed by the structure maps.

\subsection{The unstable spectral sequence}\label{SSunstable}
We use the notations in Definition \ref{Dunreduced_Vassiliev} and the proof of Theorem \ref{TunstableVS}. Let $\mathscr{F}_k$ (resp. $F_l$) be the complexity (resp. cardinality) filtration of $\Sigma$. We denote by $F_{l,r}^E$ the image of the map $^{\mathscr{F}}E_r(F_l)\to E_r(n)$ induced by the inclusion. The maximum of complexity of generating graphs of cardinality $n$ is $2n-1$ so $F_n$ intersects with  the unstable range of complexity $>6n/5$. Vassiliev conjectured that {\em for all $p,q$, there would exist a number $n_0$ such that  for any $n\geq n_0$ and $r$, we have $E^{p,q}_r(n)\cong {}^V\!E^{p,q}_r$}. He verified this for $p+q=0$ (see \cite{vassiliev}). This conjecture is related to whether the Vassiliev $E_\infty$-page gives non-trivial elements of the genuine cohomology of $K_3$, and is true if all the differentials coming from the unstable range $p< -6n/5$ and  going into the stable range are zero. The difficulty in the analysis of the unstable range is that the corresponding subspace of $\mathscr{F}_k/\mathscr{F}_{k-1}$ does not have a natural cell structure. As a byproduct of the construction of the isomorphism of the  two spectral sequences, we have the following partial computation.
\begin{prop}\label{Punstable_diff}
Suppose $\kk=\QQ$. Under the above notations, for $x\in F_{n,r}^E$ and $r\geq 1$, if $d_r(x)$ belongs to the stable range $\{(p,q)\mid p\geq -6n/5\}$, we have $d_r(x)=0$.
\end{prop}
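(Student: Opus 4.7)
The plan is to leverage the chain-level comparison between the cardinality filtration on $\bar{C}_*(\Sigma)$ and the truncated Sinha spectral sequence developed in sections \ref{Stranslation}--\ref{SU_3}, together with the rational collapse of the Sinha sequence established in \cite{LTV,moriya,tsopmene}. First I would lift $x \in F_{n,r}^E$ to a class in ${}^{\mathscr{F}}E_r(F_n) = {}^{\mathscr{F}}E_r(\Sigma)$ (noting that the maximum cardinality on $\Sigma$ is $n$, so $F_n=\Sigma$) and pick a chain representative $c \in \bar{C}_*(\FFF_k)$ with $\partial c \in \bar{C}_*(\FFF_{k-r})$, where $k=-p>6n/5$ is the unstable source complexity and $k-r\leq 6n/5$ is the stable target complexity. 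Showing $d_r(x)=0$ then amounts to producing $d\in \bar{C}_*(\FFF_k)$ with $\partial c - \partial d \in \bar{C}_*(\FFF_{k-r-1})$.

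Next I would transport $\partial c$ through the zigzag $\bar{\Sigma}\stackrel{\psi}{\to}U_1\stackrel{\phi_1}{\leftarrow}U_2\to \cdots \stackrel{\phi_7}{\leftarrow}U_{\TT}$ at the chain level. By Propositions \ref{Phomology_U_1}, \ref{PU_2U_6} and \ref{Pphi_6}, each map preserves the (rough) cardinality filtration and induces an isomorphism on the associated graded homology, hence a quasi-isomorphism of cardinality-filtered chain complexes. On the $U_{\TT}$ side, Proposition \ref{LThomSinha} identifies the associated spectral sequence from $E_2$ with the $n$-truncated Sinha sequence. The rational collapse of the full Sinha sequence forces the truncated sequence to collapse at $E_2$ as well, so the image of $\partial c$ in $\bar{C}_*(U_{\TT})$ is, over $\QQ$, a boundary plus terms of strictly lower cardinality. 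Pulling such a primitive back through the zigzag and composing with $\phi_0$ from Proposition \ref{Lisom_sigma} produces a candidate $d$ in $\bar{C}_*(\Sigma)$.

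The main obstacle is that the collapse invoked above is for the cardinality filtration, while the differential we aim to kill is with respect to the complexity filtration, and the two are not proportional. Cells of stable target complexity $k-r\leq 6n/5$ range over cardinalities roughly between $(k-r+1)/2$ and $2(k-r)$, possibly exceeding the threshold $3n/5$ below which the isomorphism of Theorem \ref{TunstableVS} directly applies. The crucial step, which I expect to be the hardest part, will be a bi-filtered chain-level bookkeeping built on the explicit spherical cell decomposition of $\FFF_k/\FFF_{k-1}$ described in the proof of Proposition \ref{Lisom_sigma}: decompose $\partial c$ by the (cardinality, complexity) bigrading, apply the cardinality collapse stratum by stratum within the stable range, and reassemble the primitives so that the resulting $d$ lies in $\FFF_k$ with $\partial d$ matching $\partial c$ modulo $\FFF_{k-r-1}$. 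Handling cells whose cardinality exceeds $3n/5$ will likely require choosing the representative $c$ with cardinality support strictly below $3n/5$, using that $E_1^{\mathscr{F}}$ is generated over $\QQ$ by cycles indexed by generating graphs whose complexity alone determines their position in the spectral sequence.
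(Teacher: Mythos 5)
Your overall strategy (transport through the zigzag plus rational collapse of the Sinha sequence) is the right one, and you correctly identify the complexity/cardinality mismatch as the crux; but your resolution of that mismatch does not work, and you are missing two structural steps that the paper's proof depends on. First, a factual error: $F_n\neq\Sigma$. The cardinality of a generating graph is not bounded by $n$; for complexity $k$ it ranges roughly from $(k+1)/2$ up to $2k$, so $F_n$ is a proper subspace, and the restriction to $F_{n,r}^E$ in the statement is essential --- it is precisely what permits comparison with $\bar\Sigma$, whose partitional graphs have at most $n$ interior vertices. Second, your proposed fix of choosing a representative $c$ with cardinality support strictly below $3n/5$ is impossible: a class of complexity $k>6n/5$ is necessarily supported on configurations of cardinality at least about $(k+1)/2>3n/5$, and the target cells of complexity $k-r$ can have cardinality up to $\min(n,2(k-r))$, which also exceeds $3n/5$. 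So the bi-filtered chain-level bookkeeping you sketch has no way to remain in the range where Proposition \ref{Lisom_sigma} and Theorem \ref{TunstableVS} apply.

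The paper's proof avoids the chain level and does not route through the cardinality-filtered comparison at all. It re-runs the entire zigzag $\bar\Sigma\to U_1\leftarrow U_2\to\cdots\leftarrow U_\TT$ with the \emph{complexity} filtration (this is what Remark \ref{Rfiltration} foreshadows), obtaining a map ${}^{\FFF}E_r(\bar\Sigma)\to{}^{\FFF}E_r(U_\TT)$ that is an isomorphism on the stable part $p\geq -6n/5$ of the $E_1$-page; the $E_1$-page of the complexity-filtered sequence for $U_\TT$ is identified with the Sinha $E_2$-page (including the unstable part), hence degenerates over $\QQ$, which kills every differential landing in the stable range. The other missing ingredient is the passage from $F_n(\Sigma)$ to $\bar\Sigma$: the comparison map $\phi_0$ points the wrong way for your lifting step, and Proposition \ref{Lisom_sigma} only covers cardinality $\leq 3n/5$. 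The paper instead builds a functor $Y:\Delta_n^{op}\to\CH_\kk$ out of the $E_1$-page data of the auxiliary cardinality filtration on the complexity-graded quotients (with face maps $\tilde\partial_i$ extracted from $d_1$) and applies the cofinality Lemma \ref{Lcofinal} to show that $\phi_0$ induces an isomorphism ${}^{\FFF}E_r(\bar\Sigma)\cong{}^{\FFF}E_r(F_n(\Sigma))$ for all $r\geq 1$, whence a surjection onto $F_{n,r}^E$ from the first page. Neither of these steps is a routine patch of your outline.
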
 
\begin{proof}
We consider the complexity filtration $\mathscr{F}_k=\FFF_k(X)$ on $X=\bar\Sigma$ or $F_n(\Sigma)$, the $n$-th stage of the cardinality filtration on $\Sigma$, and the cardinality filtration $\bar F_l=\bar F_l(X)$ on $\FFF_k/\FFF_{k-1}$. The space $\bar F_l'=\bar F_l'(X)=\bar F_l(X)/\bar F_{l-1}(X)$ is a wedge sum of components labeled by the equivalence classes of classed configurations (resp. partitional configurations) of complexity $k$ and cardinality $l$ for $X=F_n(\Sigma)$ (resp. $\bar \Sigma$), see Definition \ref{Dpart_config}. This is the case even if $k$ belongs to the unstable range.   Consider the case $X=F_n(\Sigma)$. Let $d_1:\bar H_*(\bar F'_l)\to \bar H_*(\bar F'_{l-1})$ be the first differential of the sequence associated to $\bar F_l$. Let $S$ be a component (wedge summand) of $\bar F'_l$ labeled by a classed configuration $H$. Let $\partial_iH$ be the classed configuration obtained by identifying the $i$-th and $i+1$-th geometric points of $H$ (in the order on $\RR$). Let $\partial '_{i,S}:\bar F'_{l-1}\to \bar F'_{l-1}$ be the map collapsing all  components but the one labeled by $\partial_i H$. We define a map $\tilde \partial_i:\bar H_*(\bar F'_l)\to \bar H_*(\bar F'_{l-1})$ by $\tilde \partial_i=(-1)^i(\partial'_{i,S})_*\circ d_1$ on $\bar H_*(S)$. We have 
$\tilde \partial_i\circ \tilde \partial_j=\tilde \partial_{j-1}\circ \tilde \partial_i$ for $i<j$. We use this map to define a functor $Y:\Delta_n^{op}\to \CH_{\kk}$. We set
\[
Y([l])=\bigoplus_{f:[l]\to [k]}(f, \bar H_*(\bar F'_k(F_n(\Sigma))))
\]
where $f$ runs through the order-preserving surjections and $(f, \bar H_*(\bar F'_k(F_n(\Sigma))))$ is a copy of $\bar H_*(\bar F'_k(F_n(\Sigma)))$ regarded as a complex with zero differential. Let $s^i:[l+1]\to [l]$ be the map repeating $i$, and $\partial^i:[l-1]\to [l]$ the map skipping $i$. We define a degeneracy map $s_i:Y([l])\to Y([l+1])$ by $s_i(f,x)=(f\circ s^i,x)$ and a face map $\partial_i:Y([l])\to Y([l-1])$ by
\[
\partial_i(f,x)=
\left\{
\begin{array}{ll}
(f\circ \partial^i, x) & \text{if $f\circ\partial^i$ is surjective,} \\
(f', \tilde \partial_{f(i)}x) & \text{otherwise},
\end{array}
\right.
\]
where $f':[l-1]\to [k-1]$ is the map such that $f\circ \partial^i=\partial^{f(i)}\circ f'$. Verification of the simplicial identity for $s_i,\partial_i$ is routine and the functor $Y$ is well-defined (this is a left Kan extension). Let $S_1$ be the component  of $\bar F_l'(\bar \Sigma)$ labeled with a partitional configuration $(A_0,H_0)$. The map $\phi_0:\bar \Sigma\to \Sigma$ induces a homeomorphism from $S_1$ to a component of $\bar F_l'(F_n(\Sigma))$ or the constant map from $S_1$, according to whether $(A_0,H_0)$ is regular or not. By this observation, the map between $E_1$-pages of the spectral sequence associated to the filtration $\bar F_l$, induced by $\phi_0:\bar \Sigma_n\to F_n(\Sigma)$, is identified with $N\CECHF \mathcal{F}^*Y\to NY$. Therefore,  by Lemma \ref{Lcofinal}, $\phi_0$ induces an isomorphism ${}^\FFF\! E_r(\bar \Sigma)\to {}^\FFF\! E_r(F_n(\Sigma))$ for $r\geq 1$.  The composition of this map and the  defining map ${}^\FFF E_r(F_n(\Sigma))\to F^E_{n,r}$ is a surjection ${}^{\FFF}E_r(\bar \Sigma)\to F^E_{n,r}$ from the $E_1$-page. On the other hand, the maps $\psi, \phi_1,\dots, \phi_7$ induce a map $\bar \phi: {}^\FFF E_r(\bar \Sigma) \to {}^\FFF E_r(U_\TT)$ which is an isomorphism for the part $p\geq -6n/5$ of the first page $E_1^{p,*}$, which can be proved similarly to the proof of Proposition \ref{Phomology_U_1} swapping the complexity and cardinality filtrations (see Remark \ref{Rfiltration}). The $E_1$ -page of ${}^\FFF E_r(U_\TT)$ is isomorphic to the $E_2$-page of the  $n$-truncated  Sinha sequence up to the standard degree shift (including the unstable range) since the sequence for  the cardinality filtration of  $U_\TT$ is isomorphic to Sinha's one by the results in section \ref{Stranslation}. The truncated  Sinha sequence degenerates at $E_2$ for $\kk=\QQ$ so ${}^\FFF E_r(U_\TT)$ degenerates at $E_1$. These facts imply that the differentials $d_r$ of ${}^\FFF E_r(\bar \Sigma)$  going into the stable range are zero for $r\geq 1$ so are the differentials of $F^E_{n,r}$.
\end{proof}

\end{document}